\newcommand{\DR}{\mathrm{DR}}
\newcommand{\B}{\mathrm{B}}
\author{Benjamin Enriquez}
\author{Hidekazu Furusho}
\address{Institut de Recherche Math\'{e}matique Avanc\'{e}e, UMR 7501, 
Universit\'{e} de Strasbourg et CNRS, 7
rue Ren\'{e} Descartes, 67000 Strasbourg, France}
\email{enriquez@math.unistra.fr}
\address{Graduate School of Mathematics, Nagoya University, 
Furo-cho, Chikusa-ku, Nagoya, 464-8602, Japan}
\email{furusho@math.nagoya-u.ac.jp}
\date{February 26, 2022}
\newtheorem{thm}{Theorem}[section]
\newtheorem{lem}[thm]{Lemma}
\newtheorem{lemdef}[thm]{Lemma-Definition}
\newtheorem{cor}[thm]{Corollary}
\newtheorem{prop}[thm]{Proposition}
\theoremstyle{definition} \newtheorem{rem}[thm]{Remark}}
\theoremstyle{definition} \newtheorem{defn}[thm]{Definition}}
\theoremstyle{remark} }
\numberwithin{equation}{subsection}
\numberwithin{figure}{section}
\begin{document}

\baselineskip 16pt 

\title[The Betti side of the double shuffle theory. III. Bitorsor structures]{The Betti side of the 
double shuffle theory. \\   III. Bitorsor structures}

\begin{abstract} 
In the two first parts of the series, we constructed stabilizer subtorsors of a `twisted Magnus' torsor,
studied their relations with the associator and double shuffle torsors, and explained their `de
Rham' nature. In this paper, we make the associated bitorsor structures explicit and explain
the `Betti' nature of the corresponding right torsors; we thereby complete one aim of the
series. We study the discrete and pro-$p$ versions of the `Betti' group of the double shuffle
bitorsor.
\end{abstract}

\bibliographystyle{amsalpha+}
\maketitle

{\footnotesize \tableofcontents}

\section*{Introduction}

This paper is a sequel of \cite{EF1,EF2}. The main result of this series of papers is the  
proof, independent of \cite{Fur}, of the inclusion of the torsor of associators $\mathsf M(\mathbf k)$ 
over a commutative $\mathbb Q$-algebra $\mathbf k$ into the torsor $\mathsf{DMR}^{\DR,\B}(\mathbf k)$ of 
solutions of the double shuffle equations over the same algebra. This is obtained by constructing 
in [EF1] pairs of algebra coproducts $\hat\Delta^{\mathcal W,?}$ and of module coproducts 
$\hat\Delta^{\mathcal M,?}$, $?\in\{\B,\DR\}$, by studying their relation with 
associators, and by studying in \cite{EF2} the relations of $\mathsf M(\mathbf k)$ and 
$\mathsf{DMR}^{\DR,\B}(\mathbf k)$ with the torsors of isomorphisms 
$\mathsf{Stab}(\hat\Delta^{\mathcal W,\DR/\B})(\mathbf k)$ and 
$\mathsf{Stab}(\hat\Delta^{\mathcal M,\DR/\B})(\mathbf k)$ related to these coproducts. 

 It is well-known that the category of torsors (i.e. pairs $(G,X)$, where $G$ is a group 
and $X$ is a set with a free and transitive action of $G$) is equivalent to that of bitorsors
(i.e. triples $(G,X,H)$ where $(G,X)$ is a torsor and $H$ is a group with a right action on $X$, 
free, transitive and commuting with the action of $G$). An example of a bitorsor is the set of 
associators $\mathsf M(\mathbf k)$, where $G$ (resp. $H$) is the Grothendieck-Teichm\"uller group 
$\mathsf{GT}(\mathbf k)$ (resp. $\mathsf{GRT}(\mathbf k)$). Related examples of torsors 
are the `double shuffle' pairs $(\mathsf{DMR}^\DR(\mathbf k),\mathsf{DMR}^{\DR,\B}(\mathbf k))$ and 
$(\mathsf{DMR}_0(\mathbf k),\mathsf{DMR}_\mu(\mathbf k))$, with $\mu\in\mathbf k^\times$ 
(see \cite{Rac,EF2}). 

The primary purpose of this paper is an explicit construction of the corresponding bitorsor 
structures. This is obtained in Theorem \ref{thm:3:13:1705}, (a), together with the analogous explicit 
constructions for the stabilizer torsors $\mathsf{Stab}(\hat\Delta^{\mathcal W,\DR/\B})(\mathbf k)$ 
and $\mathsf{Stab}(\hat\Delta^{\mathcal M,\DR/\B})(\mathbf k)$. In particular, the double shuffle counterpart 
$\mathsf{DMR}^\B(\mathbf k)$ of the group $\mathsf{GT}(\mathbf k)$ is obtained in terms of a 
stabilizer of the coproduct $\hat\Delta^{\mathcal M,\B}$ (Lemma-Definition \ref{lem:3:9:1705}). 

The other results of the paper are concerned with the study of the group $\mathsf{DMR}^\B(\mathbf k)$. 
We study its relation with the group $\mathsf{GT}(\mathbf k)$ (Theorem \ref{thm:3:13:1705}, (b)). 
We give an equivalent definition of $\mathsf{DMR}^\B(\mathbf k)$ in terms of group-like elements for the 
coproduct $\hat\Delta^{\mathcal M,\B}$  (\S\ref{sect:4:0307}). Analogously 
to what is done in \cite{Dr} for the group scheme $\mathbf k\mapsto\mathsf{GT}(\mathbf k)$, 
we construct and study discrete and pro-$p$ versions of the group scheme 
$\mathbf k\mapsto\mathsf{DMR}^\B(\mathbf k)$ (\S\S\ref{section:discrete:group}, \ref{section:pro-l}). 

The identification of the bitorsor structure of the torsor $\mathsf{DMR}^{\DR,\B}(\mathbf k)$, obtained 
in Theorem \ref{thm:3:13:1705}, (a), is based on its relation with a stabilizer torsor (see \cite{EF2}, \S3.1) 
and with the identification of the bitorsor of a stabilizer torsor (Lemma \ref{lem:1:6}). The computation 
of the discrete analogue $\mathrm{DMR}^\B$ of $\mathsf{DMR}^\B(\mathbf k)$ (\S\ref{sect:5:2:3006}) is based 
on the study of the discrete version $\Delta^{\mathcal M,\B}$ of the coproduct $\hat\Delta^{\mathcal M,\B}$. 
In order to establish the main properties of its pro-$p$ version $\mathrm{DMR}^\B_p$ (\S\ref{section:final}), 
we recall some basics on pro-$p$ groups (\S\ref{section:plapucodg}), and prove statements of \cite{Dr} on 
the pro-$p$ analogue $\mathrm{GT}_p$ of $\mathsf{GT}(\mathbf k)$ (\S\ref{subsection:rugtl}).

The material is distributed as follows: \S\ref{sect:tab:0707} is devoted to basic material on torsors and bitorsors, 
\S\ref{sect:tbgaia:0707} is devoted to the construction of an explicit bitorsor $\mathsf G^{\DR,\B}(\mathbf k)$ related to the 
twisted Magnus group, \S\ref{sog:0707} is devoted to the construction of several subbitorsors of 
$\mathsf G^{\DR,\B}(\mathbf k)$ and to the first main result (Theorem \ref{thm:3:13:1705}), namely the construction of a
bitorsor structure on $\mathsf{DMR}^{\DR,\B}(\mathbf k)$ and of the group $\mathsf{DMR}^\B(\mathbf k)$, 
\S\ref{sect:4:0307} is devoted to  an equivalent definition of this group, \S\ref{section:discrete:group} is devoted 
to the definition and computation of a discrete analogue of $\mathsf{DMR}^\B(\mathbf k)$, and \S\ref{section:pro-l} 
is devoted to the construction of its pro-$p$ analogue. 

\subsubsection*{Notation} In all the paper, $\mathbf k$ is a commutative and associative $\mathbb Q$-algebra 
with unit. For $A$ an algebra, we denote by $A^\times$ the group of its invertible elements. For $a\in A$ (resp. 
$u\in A^\times$), we denote by $\ell_a$ (resp. $\mathrm{Ad}_u$) the self-map of $A$ given by $x\mapsto ax$ 
(resp. $x\mapsto uxu^{-1}$). 

\subsubsection*{Acknowledgements} The collaboration of both authors has been supported by grant JSPS KAKENHI
JP15KK0159 and JP18H01110. 

\section{Torsors and bitorsors}\label{sect:tab:0707}

In \S\ref{subsect:bitorsors:0506}, we recall the formalism of torsors. We introduce the category 
of bitorsors in \S\ref{sect:1:2:1305} and recall its equivalence with the category of torsors. 
In \S\ref{sect:1:3:0207}, we recall the main torsors of \cite{EF2} and their interrelations. 

\subsection{Bitorsors}\label{subsect:bitorsors:0506}

Recall from \cite{EF2} the definitions of a (left) torsor, of a morphism between two torsors, of a subtorsor of a torsor 
(Definition 2.1), of the preimage of a subtorsor by a torsor morphism (Lemma 2.2), of the intersection of two 
subtorsors of a torsor (Lemma 2.3), of the trivial torsor $_GG$ attached to a group $G$ (Lemma 2.4), of the 
torsor injection $\mathrm{inj}_a:\ _HH\to\ _GG$  attached to a group inclusion $H\subset G$
and an element $a\in G$ (Lemma 2.5), of a stabilizer subtorsor (Lemma 2.6). 

\begin{defn} (see \cite{Gi}, Chap. III, Definition 1.5.3)
(a) A {\it bitorsor} $_GX_H$ is a triple $(G,X,H)$, where $X$ is a set and $G$ and $H$ are groups, equipped with commuting
 left and right actions of $G$ and $H$ on $X$, which are both free and transitive.  

(b) If $_GX_H$ and $_{G'}X'_{H'}$ are bitorsors, a {\it bitorsor morphism} $_GX_H\to\ _{G'}X'_{H'}$ is the data of a map 
$X\to X'$ and of compatible group morphisms $G\to G'$ and $H\to H'$. 

(c) A {\it subbitorsor} of the bitorsor $_{G'}X'_{H'}$ is the data of a subset $X$ of $X'$ and of subgroups $G,H$ of  
$G',H'$, such that the inclusions build up a torsor morphism $_GX_H\to\ _{G'}X'_{H'}$ , which is then a {\it bitorsor inclusion}. 
\end{defn}

We will use the expressions `the torsor $X$' (resp. `the bitorsor $X$') to designate a torsor $_GX$ (resp. a bitorsor $_GX_H$). 

\begin{lem}\label{lem:1:2}
Let $_GX_H\to\ _{G'}X'_{H'}\leftarrow _{G''}X''_{H''}$ be a diagram of a bitorsors. 
Let $G'''$ (resp. $X'''$, $H'''$) be the fibered product of $G$ (resp. $X$, $H$) 
and $G''$ (resp. $X''$, $H''$) above $G'$ (resp. $X'$, $H'$). Then either $X'''$ is empty, or 
$_{G'''}X'''_{H'''}$ is a bitorsor, called the fibered product of 
$_GX_H$ and $_{G''}X''_{H''}$ above $_{G'}X'_{H'}$, denoted $X'\times_{X}X''$ and fitting in a commutative diagram of bitorsors 
$$
\xymatrix{ _{G'''}X'''_{H'''}\ar[r]\ar[d] & _{G''}X''_{H''}\ar[d]  \\ _{G'}X'_{H'}\ar[r] & _{G}X_{H}}
$$
If $_{G''}X''_{H''}$ is a subbitorsor of 
$_{G'}X'_{H'}$ and $X'''$ is nonempty, then $_{G'''}X'''_{H'''}$ is a subbitorsor of
of $_GX_H$, called the preimage of $_{G''}X''_{H''}$ by the torsor morphism $_GX_H\to\ _{G'}X'_{H'}$. 
\end{lem}

\proof Obvious. \hfill\qed\medskip 

\begin{defn}\label{def:Cartesian}
We call a commutative square $$\xymatrix{A\ar[r]\ar[d]&B\ar[d]\\C\ar[r]&D}$$ of bitorsors Cartesian iff there is an isomorphism to bitorsors $A\to B\times_DC$
such that the diagram 
$$\xymatrix{A\ar[r]\ar[d]\ar[rd]&B\\C&\ar[l]\ar[u]B\times_D C}$$
commutes. 
\end{defn}

\begin{lem}\label{lem:1:3}
Let $_GX_H$ be a bitorsor and let $_{G'}X'_{H'}$ and $_{G''}X''_{H''}$ be subbitorsors. Then either $X'\cap X''$ is empty, or
$_{G'\cap G''}X'\cap X''_{H'\cap H''}$ is a subbitorsor of $_GX_H$, called the intersection of both bitorsors. 
\end{lem}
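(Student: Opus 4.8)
The plan is to reduce Lemma \ref{lem:1:3} to Lemma \ref{lem:1:2} by exhibiting the intersection as a preimage of a subbitorsor under a suitable bitorsor morphism. First I would recall that for any bitorsor $_GX_H$, the diagonal gives a bitorsor morphism $_GX_H \to\ _{G\times G}X\times X_{H\times H}$ sending $x\mapsto(x,x)$, $g\mapsto(g,g)$, $h\mapsto(h,h)$; one checks immediately that the product $_{G\times G}X\times X_{H\times H}$ is indeed a bitorsor (componentwise free and transitive actions, and the $G\times G$- and $H\times H$-actions commute since the original ones do). Next, given the two subbitorsors $_{G'}X'_{H'}$ and $_{G''}X''_{H''}$ of $_GX_H$, the product $_{G'\times G''}X'\times X''_{H'\times H''}$ is a subbitorsor of $_{G\times G}X\times X_{H\times H}$: its three constituents are subsets/subgroups of the respective ambient objects, and the inclusion maps assemble into a bitorsor morphism because they do so componentwise.

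Then I would apply Lemma \ref{lem:1:2} to the diagonal morphism $_GX_H\to\ _{G\times G}X\times X_{H\times H}$ and the subbitorsor $_{G'\times G''}X'\times X''_{H'\times H''}$. The preimage of $X'\times X''$ under the diagonal map $X\to X\times X$ is exactly $\{x\in X\mid x\in X'\text{ and }x\in X''\}=X'\cap X''$; likewise the preimage of $G'\times G''$ under $g\mapsto(g,g)$ is $G'\cap G''$, and similarly for $H'\cap H''$. Hence Lemma \ref{lem:1:2} yields that either $X'\cap X''$ is empty, or $_{G'\cap G''}X'\cap X''_{H'\cap H''}$ is a subbitorsor of $_GX_H$, which is precisely the assertion.

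Alternatively — and perhaps more in keeping with the terse style used for Lemmas \ref{lem:1:2} and \ref{lem:1:3} — one can simply verify the statement directly: assuming $X'\cap X''\neq\emptyset$, pick $x_0\in X'\cap X''$; for $x\in X'\cap X''$ there are unique $g'\in G'$, $g''\in G''$ with $x=g'x_0=g''x_0$, and freeness of the $G$-action on $X$ forces $g'=g''\in G'\cap G''$, so $G'\cap G''$ acts transitively on $X'\cap X''$; freeness is inherited from $G$, and the same argument on the right gives the corresponding statement for $H'\cap H''$; commutativity of the two actions is inherited. The main (and only) point requiring a moment's care is the claim that the element of $G'$ moving $x_0$ to $x$ coincides with the element of $G''$ doing so — this is where one uses that both are computed inside the ambient free $G$-action — but this is the same observation that underlies Lemma 2.3 of \cite{EF2} for plain torsors, so there is no real obstacle; the bitorsor case follows by applying that torsor statement on the left and on the right and noting the compatibilities are automatic.

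\proof The first assertion (about $X'\cap X''$) is Lemma 2.3 of \cite{EF2} applied to the left torsors, and its analogue for right torsors shows $G'\cap G''$ (resp. $H'\cap H''$) acts freely and transitively on $X'\cap X''$ when the latter is non-empty; the commutativity of the two actions and the fact that the inclusions form a bitorsor morphism are inherited from $_GX_H$. \hfill\qed\medskip
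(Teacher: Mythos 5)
Your proposal is correct. The paper's own proof is simply ``Obvious,'' and the intended argument is precisely your direct verification: apply the intersection lemma for plain torsors (\cite{EF2}, Lemma 2.3) on the left and on the right, using freeness of the ambient $G$-action (resp.\ $H$-action) to see that the unique element of $G'$ and the unique element of $G''$ carrying a basepoint $x_0\in X'\cap X''$ to a given $x\in X'\cap X''$ coincide, hence lie in $G'\cap G''$; commutativity of the two actions is inherited. Your alternative reduction via the diagonal morphism $_GX_H\to\ _{G\times G}X\times X_{H\times H}$ and Lemma \ref{lem:1:2} is also valid and has the mild advantage of deriving the intersection statement formally from the preimage statement, but it requires first checking that products of bitorsors are bitorsors, so it is not shorter than the direct argument.
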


\proof Obvious. \hfill\qed\medskip 

\begin{lem}\label{lem:1:4}\label{construction(*)}
A group isomorphism $i:G'\to G$ gives rise to a bitorsor $_GG_{G'}$, where $G$ acts on the left on itself, and the right action 
of $G'$ on $G$ is given by $g\cdot g':=g i(g')$. The bitorsor $_GG_G$ corresponding to $i$ being the identity map of $G$
is called the trivial bitorsor attached to $G$. 
\end{lem}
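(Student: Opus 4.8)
The plan is to verify directly that the triple $(G, G, G')$, equipped with the stated left action of $G$ on itself by left translation and the right action $g \cdot g' := g\, i(g')$, satisfies all the axioms in Definition 1.1(a). Since the statement is really a construction rather than a deep assertion, the proof will amount to checking a short list of routine points, so I would aim for a compact presentation.

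First I would check that the two prescribed actions are genuine actions. The left action of $G$ on itself by left multiplication is the standard one and needs no comment. For the right action, I would observe that $g \cdot e_{G'} = g\, i(e_{G'}) = g\, e_G = g$ and that $(g \cdot g_1') \cdot g_2' = (g\, i(g_1'))\, i(g_2') = g\, i(g_1' g_2') = g \cdot (g_1' g_2')$, using that $i$ is a group homomorphism; this is where the homomorphism property of $i$ is used, though injectivity/surjectivity of $i$ is not yet needed here.

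Next I would check that the right action is free and transitive and that it commutes with the left action. Commutativity is immediate from associativity in $G$: $(h \cdot g) \cdot g' = (hg)\, i(g') = h\, (g\, i(g')) = h \cdot (g \cdot g')$. For transitivity, given $g_1, g_2 \in G$ one has $g_1 \cdot g' = g_2$ precisely when $i(g') = g_1^{-1} g_2$, which has a solution because $i$ is surjective; for freeness, such a $g'$ is unique because $i$ is injective. This is the only place where the hypothesis that $i$ is an \emph{isomorphism}, rather than merely a homomorphism, is used. Having verified all clauses of Definition 1.1(a), I would conclude that $_GG_{G'}$ is a bitorsor. Finally, for the last sentence I would simply note that when $G' = G$ and $i = \mathrm{id}_G$, the right action becomes $g \cdot g' = g g'$, i.e. right translation, so the phrase ``trivial bitorsor attached to $G$'' is just a name for this case and requires no further argument.

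I do not expect any genuine obstacle here: the entire content is bookkeeping with the group axioms and the bijectivity of $i$, and indeed the authors themselves are likely to dispatch it with ``Straightforward'' or ``Obvious,'' as with Lemmas 1.2 and 1.3. The one point worth stating explicitly, rather than leaving implicit, is the precise role of $i$ being an isomorphism, namely that surjectivity gives transitivity of the right action and injectivity gives freeness.
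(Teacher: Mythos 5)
Your verification is correct and is exactly the routine check the paper dispatches with ``Obvious'': the right action is well defined because $i$ is a homomorphism, commutes with left translation by associativity, and is free and transitive because $i$ is bijective. Nothing further is needed.
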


\proof Obvious. \hfill\qed\medskip 

\begin{lem}\label{lem:1:5}
Let $G$ be a group and $H$ be a subgroup. For any $a\in G$, there is a torsor inclusion 
$\mathrm{\mathrm{inj}}_a:\ _HH_H\to\ _GG_G$ 
where the first (resp. second, third) component is the inclusion $H\hookrightarrow G$ (resp. the map $h\mapsto ha^{-1}$, 
the group morphism $H\to G$, $h\mapsto aha^{-1}$). 
\end{lem}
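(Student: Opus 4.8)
The plan is to verify directly that the three stated component maps assemble into a bitorsor morphism $_HH_H\to{}_GG_G$, and that it is in fact an inclusion. Recall from Lemma \ref{lem:1:4} that $_GG_G$ denotes the trivial bitorsor: $G$ acts on itself on the left by left multiplication and on the right by right multiplication, and similarly $_HH_H$ is the trivial bitorsor of $H$. Writing $\phi\colon H\hookrightarrow G$ for the inclusion, $f\colon H\to G$, $h\mapsto ha^{-1}$ for the middle map, and $\psi\colon H\to G$, $h\mapsto aha^{-1}=\mathrm{Ad}_a(h)$ for the third map, what must be checked is: (i) $\phi$ and $\psi$ are group morphisms; (ii) $f$ intertwines the left $H$-action with the left $G$-action along $\phi$, i.e. $f(h_1\cdot h)=\phi(h_1)\cdot f(h)$ for $h_1,h\in H$; (iii) $f$ intertwines the right $H$-action with the right $G$-action along $\psi$, i.e. $f(h\cdot h_2)=f(h)\cdot\psi(h_2)$ for $h,h_2\in H$.

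First I would dispatch (i): $\phi$ is obviously a group morphism, and $\psi=\mathrm{Ad}_a$ restricted to $H$ is a group morphism $H\to G$ since conjugation by a fixed element is an automorphism of $G$. Next, (ii) is immediate from associativity: $f(h_1 h)=h_1 h a^{-1}=h_1(ha^{-1})=\phi(h_1)f(h)$. For (iii), the right $G$-action on $G$ in the trivial bitorsor is right multiplication, so $f(h)\cdot\psi(h_2)=(ha^{-1})(ah_2a^{-1})=h h_2 a^{-1}=f(hh_2)$, as required; note that the $a$'s cancel precisely because the right $H$-action is twisted by $\psi=\mathrm{Ad}_a$ and the middle map by $a^{-1}$, which is the whole point of the construction. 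This shows the data $(\phi,f,\psi)$ is a bitorsor morphism.

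Finally, to see it is a bitorsor \emph{inclusion} in the sense of Definition 1.1(c) and Lemma \ref{lem:1:2}'s terminology, one observes that $\phi$ and $\psi$ need not literally be subset inclusions, but that the image of $(\phi,f,\psi)$ is a subbitorsor of $_GG_G$: the image of $\phi$ is the subgroup $H\subset G$, the image of $\psi$ is the subgroup $aHa^{-1}\subset G$, the image of $f$ is the subset $Ha^{-1}\subset G$, and these three images, together with the left and right multiplication actions inherited from $G$, form a subbitorsor because $H\cdot(Ha^{-1})\subseteq Ha^{-1}$ and $(Ha^{-1})\cdot(aHa^{-1})\subseteq Ha^{-1}$ with the actions free and transitive on $Ha^{-1}$; moreover $(\phi,f,\psi)$ is injective on each component ($\phi$ trivially, $f$ since right multiplication by $a^{-1}$ is a bijection, $\psi$ since $\mathrm{Ad}_a$ is injective), so it is an isomorphism onto this subbitorsor. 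I do not anticipate a genuine obstacle here: the only mildly delicate point is bookkeeping the twist, i.e. making sure the middle map uses $a^{-1}$ on the right while the right-action morphism uses $\mathrm{Ad}_a$, so that the two cancellations in (iii) go through; once the conventions of Lemma \ref{lem:1:4} are in hand this is a one-line check, so the lemma is "Obvious" in the same sense as the preceding ones.
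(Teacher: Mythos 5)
Your verification is correct and is exactly the direct check the paper leaves to the reader (its proof is simply ``Obvious''): the cancellation $(ha^{-1})(ah_2a^{-1})=hh_2a^{-1}$ is the whole content, and your identification of the image $({H},\,Ha^{-1},\,aHa^{-1})$ as a subbitorsor matches the remark following the lemma that $a\mapsto\mathrm{inj}_a$ induces a map $G/H\to\{\text{subbitorsors of }{}_GG_G\}$. Nothing to add.
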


\proof Obvious. \hfill\qed\medskip 

The map $a\mapsto\mathrm{\mathrm{inj}}_a(_GG_G)$ sets up a map $G/H\to\{$subbitorsors of 
$_GG_G\}$. 

If $\mathcal C$ is a category, we denote by $\mathrm{Iso}_{\mathcal C}(O,O')$ the set of isomorphisms between
two objects $O$ and $O'$ and by $\mathrm{Aut}_{\mathcal C}(O)$ the group of automorphisms of an object $O$.

\begin{defn}
The bitorsor attached to a pair of isomorphic objects $O,O'$ of a category $\mathcal C$ is 
$\mathrm{Bitor}(O,O'):=\ _{\mathrm{Aut}_{\mathcal C}(O)}\mathrm{Iso}_{\mathcal C}(O',O)_{
\mathrm{Aut}_{\mathcal C}(O')}$. An action of a bitorsor on the pair 
$(O,O')$ is a morphism from this bitorsor to $\mathrm{Bitor}(O,O')$. 
\end{defn}

\begin{lem}\label{construction(**)}
Let $\mathcal C$ be a category and let $O,O'$ be objects. Let $i:G'\to G$ be a group isomorphism and let 
$G\to\mathrm{Aut}_{\mathcal C}(O)$, $G'\to\mathrm{Aut}_{\mathcal C}(O')$ be group morphisms, 
denoted $g\mapsto g_O$ and $g'\mapsto g'_{O'}$. Let $i_{O',O}\in\mathrm{Iso}_{\mathcal C}(O',O)$
be such that for any $g'\in G'$, one has $i(g')_O=i_{O'O}\circ g'_{O'}\circ (i_{O'O})^{-1}$. Then 
a bitorsor morphism $_GG_{G'}\to\mathrm{Bitor}(O,O')$ is given by the above group morphisms and the map 
$G\to\mathrm{Iso}_{\mathcal C}(O',O)$, $g\mapsto g_O\circ i_{O'O}$. 
\end{lem}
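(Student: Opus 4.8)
The plan is to verify directly that the three maps assemble into a bitorsor morphism, which by Definition~1.8 means producing compatible group morphisms $G\to\mathrm{Aut}_{\mathcal C}(O)$, $G'\to\mathrm{Aut}_{\mathcal C}(O')$ and a map $_GG_{G'}\to\mathrm{Iso}_{\mathcal C}(O',O)$ that intertwine the left $G$- and right $G'$-actions on both sides. The group morphisms are given by hypothesis, so the only content is checking equivariance of the map $\phi\colon G\to\mathrm{Iso}_{\mathcal C}(O',O)$, $g\mapsto g_O\circ i_{O'O}$. Recall from Lemma~\ref{lem:1:4} that the right action of $G'$ on $G$ is $g\cdot g'=g\,i(g')$, and recall from Definition~1.8 that in $\mathrm{Bitor}(O,O')=\ _{\mathrm{Aut}(O)}\mathrm{Iso}(O',O)_{\mathrm{Aut}(O')}$ the left action of $\mathrm{Aut}(O)$ on $\mathrm{Iso}(O',O)$ is post-composition and the right action of $\mathrm{Aut}(O')$ is pre-composition.

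First I would check left $G$-equivariance: for $g,h\in G$ one has $\phi(gh)=(gh)_O\circ i_{O'O}=g_O\circ h_O\circ i_{O'O}=g_O\circ\phi(h)$, where the middle equality uses that $g\mapsto g_O$ is a group morphism; this is exactly the statement that $\phi$ intertwines left multiplication by $g$ on $G$ with post-composition by $g_O$ on $\mathrm{Iso}(O',O)$. Next I would check right $G'$-equivariance: for $g\in G$ and $g'\in G'$,
\[
\phi(g\cdot g')=\phi(g\,i(g'))=(g\,i(g'))_O\circ i_{O'O}=g_O\circ i(g')_O\circ i_{O'O}.
\]
Now I invoke the hypothesis $i(g')_O=i_{O'O}\circ g'_{O'}\circ(i_{O'O})^{-1}$, so the right-hand side becomes $g_O\circ i_{O'O}\circ g'_{O'}\circ(i_{O'O})^{-1}\circ i_{O'O}=g_O\circ i_{O'O}\circ g'_{O'}=\phi(g)\circ g'_{O'}$, which is precisely pre-composition of $\phi(g)$ by the image $g'_{O'}$ of $g'$ in $\mathrm{Aut}(O')$. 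This establishes that the triple is a bitorsor morphism.

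There is essentially no obstacle here; the lemma is a formal bookkeeping statement and the only point requiring the slightest care is tracking which of the actions in $\mathrm{Bitor}(O,O')$ is pre- and which is post-composition, together with remembering the twist $g\cdot g'=g\,i(g')$ in the right action of $G'$ on the trivial-type bitorsor $_GG_{G'}$. One should also note for completeness that $\phi$ is automatically a bijection, since $i_{O'O}$ is an isomorphism and $g\mapsto g_O\circ i_{O'O}$ is then injective and its image is the $\mathrm{Aut}(O)$-orbit used to define $\mathrm{Iso}(O',O)$ as a torsor; but strictly this is not needed, as a bitorsor morphism is not required to be an isomorphism of the underlying sets. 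Hence the proof reduces to the two equivariance computations above, and I would present it as a short direct check.

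\proof The group morphisms $G\to\mathrm{Aut}_{\mathcal C}(O)$ and $G'\to\mathrm{Aut}_{\mathcal C}(O')$ are given, and compatibility with $i$ is the hypothesis $i(g')_O=i_{O'O}\circ g'_{O'}\circ(i_{O'O})^{-1}$ for all $g'\in G'$. Set $\phi\colon G\to\mathrm{Iso}_{\mathcal C}(O',O)$, $\phi(g):=g_O\circ i_{O'O}$. For $g,h\in G$, $\phi(gh)=(gh)_O\circ i_{O'O}=g_O\circ(h_O\circ i_{O'O})=g_O\circ\phi(h)$, so $\phi$ is compatible with the left $G$-actions. For $g\in G$, $g'\in G'$, using the right action $g\cdot g'=g\,i(g')$ of $G'$ on $G$ from Lemma~\ref{lem:1:4},
\[
\phi(g\cdot g')=(g\,i(g'))_O\circ i_{O'O}=g_O\circ i(g')_O\circ i_{O'O}=g_O\circ i_{O'O}\circ g'_{O'}\circ(i_{O'O})^{-1}\circ i_{O'O}=\phi(g)\circ g'_{O'},
\]
so $\phi$ is compatible with the right $G'$-actions. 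Therefore the data define a bitorsor morphism $_GG_{G'}\to\mathrm{Bitor}(O,O')$. \hfill\qed\medskip
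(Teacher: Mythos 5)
Your proof is correct and is just the spelled-out version of the direct check that the paper dismisses as ``Obvious'': the two equivariance computations (left $G$-equivariance from the group-morphism property, right $G'$-equivariance from the intertwining hypothesis on $i_{O'O}$) are exactly the content of the lemma. No discrepancy with the paper's approach.
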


\proof Obvious. \hfill\qed\medskip 

\begin{lem}\label{*:2804}
Let $_GX_H$ be a bitorsor acting on a pair 
$(O,O')$ of isomorphic objects of a $\mathbf k$-linear tensor category
$\mathcal C$. Then for any $n,m\geq0$, 
$(\mathrm{Hom}_{\mathcal C}(O^{\otimes n},O^{\otimes m}), 
\mathrm{Hom}_{\mathcal C}(O^{\prime\otimes n},O^{\prime\otimes m}))$
is a pair of isomorphic $\mathbf k$-modules, equipped with an action of  
$_GX_H$. 
\end{lem}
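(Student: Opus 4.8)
The plan is to construct, out of the data of the bitorsor $_GX_H$ acting on $(O,O')$, a bitorsor acting on the pair of $\mathbf{k}$-modules $(\mathrm{Hom}_{\mathcal C}(O^{\otimes n},O^{\otimes m}),\mathrm{Hom}_{\mathcal C}(O^{\prime\otimes n},O^{\prime\otimes m}))$, and to do this by invoking the previously established constructions rather than building everything from scratch. First I would unwind the hypothesis: an action of $_GX_H$ on $(O,O')$ is a bitorsor morphism $_GX_H\to\mathrm{Bitor}(O,O')$, which by definition supplies group morphisms $G\to\mathrm{Aut}_{\mathcal C}(O)$, $H\to\mathrm{Aut}_{\mathcal C}(O')$, a map $X\to\mathrm{Iso}_{\mathcal C}(O',O)$, and the compatibility that for $x\in X$, $g\in G$, $h\in H$ the element $x$ intertwines the corresponding automorphisms. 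Since $_GX_H$ is a bitorsor, by the equivalence with trivial bitorsors (Lemma \ref{lem:1:4}) we may as well think of it, after choosing a basepoint, as $_GG_{G'}$ for a group isomorphism $i:G'\to G$; this reduces the statement to the situation of Lemma \ref{construction(**)}.

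Next I would produce the target pair. For each object $P$ of $\mathcal C$, the tensor powers $P^{\otimes n},P^{\otimes m}$ are again objects, and $\mathrm{Hom}_{\mathcal C}(P^{\otimes n},P^{\otimes m})$ is a $\mathbf k$-module because $\mathcal C$ is $\mathbf k$-linear. An automorphism $\phi\in\mathrm{Aut}_{\mathcal C}(P)$ induces $\phi^{\otimes n}\in\mathrm{Aut}_{\mathcal C}(P^{\otimes n})$ by functoriality of $\otimes$, hence a $\mathbf k$-linear automorphism of $\mathrm{Hom}_{\mathcal C}(P^{\otimes n},P^{\otimes m})$ by $f\mapsto \phi^{\otimes m}\circ f\circ(\phi^{\otimes n})^{-1}$; this is a group morphism $\mathrm{Aut}_{\mathcal C}(P)\to\mathrm{Aut}_{\mathbf k\text{-mod}}(\mathrm{Hom}_{\mathcal C}(P^{\otimes n},P^{\otimes m}))$. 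Applying this with $P=O$ and $P=O'$, and composing with the given $G\to\mathrm{Aut}_{\mathcal C}(O)$ and $H\to\mathrm{Aut}_{\mathcal C}(O')$, gives group morphisms from $G$ and $H$ into the automorphism groups of the two $\mathbf k$-modules. Similarly, an isomorphism $\psi\in\mathrm{Iso}_{\mathcal C}(O',O)$ induces $\psi^{\otimes n}\in\mathrm{Iso}_{\mathcal C}(O^{\prime\otimes n},O^{\otimes n})$, hence a $\mathbf k$-linear isomorphism $\mathrm{Hom}_{\mathcal C}(O^{\prime\otimes n},O^{\prime\otimes m})\to\mathrm{Hom}_{\mathcal C}(O^{\otimes n},O^{\otimes m})$ by $f\mapsto\psi^{\otimes m}\circ f\circ(\psi^{\otimes n})^{-1}$. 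In particular the two $\mathbf k$-modules are isomorphic, so $\mathrm{Bitor}$ of the pair makes sense in the $\mathbf k$-linear category of $\mathbf k$-modules.

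To finish, I would observe that these assignments assemble into a morphism of bitorsors $_GX_H\to\mathrm{Bitor}\big(\mathrm{Hom}_{\mathcal C}(O^{\otimes n},O^{\otimes m}),\mathrm{Hom}_{\mathcal C}(O^{\prime\otimes n},O^{\prime\otimes m})\big)$. The only thing to check is the intertwining compatibility: for $x\in X$, $g\in G$, $h\in H$ with $g\cdot x=x\cdot h$ (or, in the picture of Lemma \ref{construction(**)}, $i(g')_O=i_{O'O}\circ g'_{O'}\circ (i_{O'O})^{-1}$), one needs the corresponding identity after applying $(-)^{\otimes\bullet}$ and the conjugation-on-$\mathrm{Hom}$ construction. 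This follows because $(-)^{\otimes k}$ is a functor, so it turns the identity of automorphisms in $\mathcal C$ into the identity of automorphisms of the $k$-th tensor powers, and because conjugation $f\mapsto a\circ f\circ b^{-1}$ is compatible with composition in each slot; concretely, $(x\cdot h)^{\otimes m}\circ f\circ ((x\cdot h)^{\otimes n})^{-1}=x^{\otimes m}\circ(h^{\otimes m}\circ f\circ(h^{\otimes n})^{-1})\circ(x^{\otimes n})^{-1}$ and likewise on the $G$-side, which is exactly the bitorsor-morphism axiom. Thus the hypotheses of Lemma \ref{construction(**)} (applied in $\mathbf k$-mod) are met, giving the desired action.

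I expect the main obstacle to be purely bookkeeping: verifying that the tensor-power construction $\phi\mapsto\phi^{\otimes k}$ and the conjugation $f\mapsto a\circ f\circ b^{-1}$ are simultaneously functorial in the way needed for the compatibility between the left $G$-action, the right $H$-action, and the $X$-torsor map — i.e. that no sign or order-of-composition subtlety intervenes. Since $\mathcal C$ is merely required to be a $\mathbf k$-linear tensor category and all the maps in sight are honest morphisms (not involving duals or braidings), this is routine, which is presumably why the authors will dispatch it with ``Obvious.''; the substantive content is just the observation that $\mathrm{Hom}$-of-tensor-powers is a tensor-functorial construction that carries actions of bitorsors along.
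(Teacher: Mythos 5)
Your proposal is correct and matches the paper's proof, which simply writes down the same conjugation-by-tensor-powers formulas $f\mapsto g_O^{\otimes m}\circ f\circ(g_O^{\otimes n})^{-1}$ (and likewise for $h\in H$ and $x\in X$) and leaves the compatibility checks implicit. The detour through a basepoint and Lemma \ref{construction(*)} is unnecessary but harmless.
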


\proof If the action on $(O,O')$ is denoted $G\ni g\mapsto g_O\in\mathrm{Aut}_{\mathcal C}(O)$, 
$X\ni x\mapsto x_{O',O}\in\mathrm{Iso}_{\mathcal C}(O',O)$, 
$H\ni h\mapsto h_{O'}\in\mathrm{Aut}_{\mathcal C}(O')$, then the 
action on this pair of modules is as follows: $g\in G$ acts by 
$\mathrm{Hom}_{\mathcal C}(O^{\otimes n},O^{\otimes m})\ni 
f\mapsto g_O^{\otimes m}\circ f\circ(g_O^{\otimes n})^{-1}$, 
$h\in H$ acts by 
$\mathrm{Hom}_{\mathcal C}(O^{\prime\otimes n},O^{\prime\otimes m})
\ni f'\mapsto h_{O'}^{\otimes m}\circ f'\circ(h_{O'}^{\otimes n})^{-1}$, 
$x\in X$ acts by 
$\mathrm{Hom}_{\mathcal C}(O^{\prime\otimes n},O^{\prime\otimes m})
\ni f'\mapsto x_{O',O}^{\otimes m}\circ f'\circ (x_{O',O}^{\otimes n})^{-1}
\in \mathrm{Hom}_{\mathcal C}(O^{\otimes n},O^{\otimes m})$.  
\hfill\qed\medskip  

\begin{lem}\label{lem:1:6}
Let $_GX_H$ be a bitorsor acting on a pair $(V,W)$ of isomorphic $\mathbf k$-modules. If $(v,w)\in V\times W$, 
then either $\mathrm{Stab}_X(v,w):=\{x\in X|x\cdot w=v\}$ is empty, or 
$_{\mathrm{Stab}_G(v)}\mathrm{Stab}_X(v,w)_{\mathrm{Stab}_{H}(w)}$ is a subbitorsor of 
$_GX_H$, called the stabilizer bitorsor of $(v,w)$, where $\mathrm{Stab}_G(v):=\{g\in G|g\cdot v=v\}$ and 
$\mathrm{Stab}_{H}(w):=\{h\in H|h\cdot w=w\}$. 
\end{lem}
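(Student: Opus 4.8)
The plan is to verify directly that the three claimed subsets are closed under the relevant operations and that the restricted actions remain free and transitive, so that Definition 1.1(c) applies. First I would check that $\mathrm{Stab}_G(v)$ is a subgroup of $G$ and $\mathrm{Stab}_H(w)$ is a subgroup of $H$: this is the standard fact that the stabilizer of a point under a group action is a subgroup, using that $G$ acts on $V$ by $\mathbf k$-module automorphisms (so $g\cdot v=v$ and $g'\cdot v=v$ imply $(gg')\cdot v=v$, and $g\cdot v=v$ implies $g^{-1}\cdot v=v$, with the identity fixing $v$ trivially). Next, assuming $\mathrm{Stab}_X(v,w)$ is nonempty, pick $x_0$ in it; for $g\in\mathrm{Stab}_G(v)$ and $h\in\mathrm{Stab}_H(w)$ I would check that $g\cdot x_0$ and $x_0\cdot h$ again lie in $\mathrm{Stab}_X(v,w)$, using the compatibility of the actions with the $\mathbf k$-module structure: $(g\cdot x_0)\cdot w=g\cdot(x_0\cdot w)=g\cdot v=v$, and similarly $(x_0\cdot h)\cdot w=x_0\cdot(h\cdot w)=x_0\cdot w=v$. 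Since left and right actions commute, this shows $\mathrm{Stab}_G(v)$ and $\mathrm{Stab}_H(w)$ preserve $\mathrm{Stab}_X(v,w)$, and the inclusions $\mathrm{Stab}_G(v)\hookrightarrow G$, $\mathrm{Stab}_X(v,w)\hookrightarrow X$, $\mathrm{Stab}_H(w)\hookrightarrow H$ assemble into a morphism of the kind required by Definition 1.1(c).

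It then remains to see that $\mathrm{Stab}_G(v)$ acts freely and transitively on $\mathrm{Stab}_X(v,w)$ from the left, and $\mathrm{Stab}_H(w)$ freely and transitively from the right. Freeness is inherited from the freeness of the actions of $G$ on $X$ and $H$ on $X$, so there is nothing to do there. For transitivity of the left action: given $x_1,x_2\in\mathrm{Stab}_X(v,w)$, by transitivity of $G$ on $X$ there is a unique $g\in G$ with $g\cdot x_1=x_2$; I would then check $g\in\mathrm{Stab}_G(v)$ by computing $g\cdot v=g\cdot(x_1\cdot w)=(g\cdot x_1)\cdot w=x_2\cdot w=v$, using $x_1\cdot w=v$ and $x_2\cdot w=v$. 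The argument for the right action of $H$ is entirely symmetric, replacing $g\cdot x_1=x_2$ by $x_1\cdot h=x_2$ and using $x_1\cdot w=x_2\cdot w=v$ together with freeness/transitivity of the $H$-action. This establishes that $_{\mathrm{Stab}_G(v)}\mathrm{Stab}_X(v,w)_{\mathrm{Stab}_H(w)}$ is itself a bitorsor and a subbitorsor of $_GX_H$.

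I do not anticipate a genuine obstacle here; the statement is essentially the bitorsor upgrade of Lemma 2.6 of \cite{EF2} (the stabilizer subtorsor), and the only mild point requiring care is bookkeeping the commutativity of the two actions so that the left stabilizer genuinely preserves $\mathrm{Stab}_X(v,w)$ (a set cut out by a right-action condition) and vice versa. Accordingly the proof can be given as ``Obvious,'' or at most as the two short paragraphs sketched above; I would opt for the brief explicit version to make the left/right compatibility visible, since it is used repeatedly in \S\ref{sog:0707}.
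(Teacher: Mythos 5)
Your proof is correct and is exactly the direct verification the paper has in mind: its own proof simply defers to \cite{EF2}, Lemma 2.6 (the left-torsor case), and your argument is that verification together with its symmetric counterpart for the right $H$-action. The only cosmetic remark is that the key compatibilities $(g\cdot x)\cdot w=g\cdot(x\cdot w)$ and $(x\cdot h)\cdot w=x\cdot(h\cdot w)$ come from the action on $(V,W)$ being a bitorsor morphism to $\mathrm{Bitor}(V,W)$ rather than from the commutativity of the two actions on $X$, but your computations use them correctly.
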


\proof Similar to that of \cite{EF2}, Lemma 2.6. \hfill\qed\medskip 

\begin{lem} \label{constr:bitors:tannakia}
Let $\mathcal C$ be a $\mathbb Q$-linear neutral Tannakian category and let 
$\omega_1,\omega_2:\mathcal C\to\mathrm{Vec}_{\mathbb Q}$ be two fiber functors. 
The set $\mathrm{Iso}^{\otimes}(\omega_2,\omega_1)$ is a bitorsor under the 
left and right actions of $\mathrm{Aut}^{\otimes}(\omega_1)$ and $\mathrm{Aut}^{\otimes}(\omega_2)$. 
\end{lem}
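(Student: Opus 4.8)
The plan is to verify the conditions in Definition 1.1(a) directly, and then to isolate the single point where the Tannakian hypothesis is really used.

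Set $G_1=\mathrm{Aut}^{\otimes}(\omega_1)$, $G_2=\mathrm{Aut}^{\otimes}(\omega_2)$ and $X=\mathrm{Iso}^{\otimes}(\omega_2,\omega_1)$, the set of invertible tensor natural transformations $\omega_2\Rightarrow\omega_1$. I would first define the left $G_1$-action by post-composition, $(g,\eta)\mapsto g\circ\eta$, and the right $G_2$-action by pre-composition, $(\eta,h)\mapsto\eta\circ h$; these are well defined because a composite of invertible tensor natural transformations is again one, they are genuine left and right actions, and they commute by associativity of composition. Freeness and transitivity of both actions are then purely formal consequences of the invertibility of the elements of $X$: from $g\circ\eta=\eta$, right-composing with $\eta^{-1}$ yields $g=\mathrm{id}_{\omega_1}$ (freeness on the left), and for $\eta,\eta'\in X$ the element $\eta'\circ\eta^{-1}\in G_1$ carries $\eta$ to $\eta'$ (transitivity on the left); the right action is handled the same way, using $\eta^{-1}\circ\eta'\in G_2$. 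Thus, as soon as $X\neq\emptyset$, the triple $(G_1,X,G_2)$ is a bitorsor.

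The step I expect to be the real obstacle is therefore the non-emptiness of $X$, i.e.\ the existence of at least one tensor isomorphism $\omega_2\to\omega_1$ over $\mathbb Q$; this is not formal, since it fails for a general exact faithful $\mathbb Q$-linear tensor functor. I would deduce it from the standard structure theory of neutral Tannakian categories (Saavedra Rivano, Deligne): the functor $R\mapsto\mathrm{Iso}^{\otimes}(\omega_2\otimes R,\omega_1\otimes R)$ on $\mathbb Q$-algebras is representable by an affine $\mathbb Q$-scheme which is faithfully flat over $\mathrm{Spec}\,\mathbb Q$ and a bitorsor under the affine group schemes $\underline{\mathrm{Aut}}^{\otimes}(\omega_1)$ and $\underline{\mathrm{Aut}}^{\otimes}(\omega_2)$. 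In the situations relevant to this paper the structure group is an extension of $\mathbb G_m$ by a pro-unipotent group, so Hilbert~90 together with the vanishing of $H^1$ of a (pro-)unipotent group over a field forces this torsor to be trivial, whence $X$ has a $\mathbb Q$-point. (If one instead adopts the convention of \cite{EF2} under which the empty set is permitted as a torsor, this last paragraph is superfluous and the statement is entirely formal.)
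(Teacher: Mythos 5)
The paper's own proof of this lemma is the single word ``Obvious'', and your formal verification (left action by post-composition, right action by pre-composition, commutation by associativity, freeness and transitivity from invertibility of the elements of $X$) is exactly the content that word stands in for, so on that part you and the paper coincide. Where you go beyond the paper is in isolating the nonemptiness of $\mathrm{Iso}^{\otimes}(\omega_2,\omega_1)$ as the one non-formal point, and you are right to flag it: Deligne's theorem only makes the Isom scheme an fppf torsor over $\mathrm{Spec}\,\mathbb Q$, and its set of $\mathbb Q$-points can genuinely be empty for a general neutral Tannakian category over $\mathbb Q$ (twist a fiber functor on $\mathrm{Rep}(\mu_2)$ by a nontrivial class in $\mathbb Q^\times/(\mathbb Q^\times)^2$, for instance), so the statement as literally written needs either the convention that a torsor may be empty or an argument such as your Hilbert~90 plus prounipotent-$H^1$ one, which does cover the case $\mathrm{MT}(\mathbb Z)_{\mathbb Q}$ where the lemma is actually invoked. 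In short, your proposal is correct and, on the nonemptiness point, more careful than the paper.
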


\proof Obvious. \hfill\qed\medskip 

Define a {\it right torsor} $X_H$ to be a pair $(X,H)$ of a set $X$ and a group $H$ acting freely and transitively on $X$ from 
the right. This gives rise to a torsor $_{H^{\mathrm{op}}}X$. 
Let $\mathbf{Tor}$ and $\mathbf{Bitor}$ be the categories of torsors and bitorsors. Then there are two functors 
$\mathbf{Bitor}\to\mathbf{Tor}$, defined on objects by $_GX_H\mapsto\ _GX$ and $_GX_H\mapsto\ 
_{H^{\mathrm{op}}}X$.

\subsection{From torsors to bitorsors}\label{sect:1:2:1305}

For $_G\!X$ a torsor, define $\mathrm{Aut}_G(X)$ to be subgroup of the permutation group of $X$ acting on 
the right which commute with all the elements of $G$. Then $\mathrm{Aut}_G(X)$ is a group, equipped with a right action on 
$X$ which commutes with the left action of $G$. 

\begin{lem}\label{lem:gattt}
(a) The assignment $_G\!X\mapsto\mathrm{Aut}_G(X)$ defines a functor $\mathbf{Tor}\to\mathbf{Gp}$ from the 
category of torsors to that of groups. The group $\mathrm{Aut}_G(X)$ will be called the `group attached to the torsor $_G\!X$'. 

(b) If $_G\!X$ is a torsor, then the right action of $\mathrm{Aut}_G(X)$ defines a bitorsor structure $_G\!X_{\mathrm{Aut}_G(X)}$ 
on it.  The assignment $_G\!X\mapsto\ _G\!X_{\mathrm{Aut}_G(X)}$ defines a functor $\mathbf{Tor}\to\mathbf{Bitor}$, 
quasi-inverse to the functor $\mathbf{Bitor}\to\mathbf{Tor}$, $_GX_H\mapsto\ _G\!X$. In particular, if $_GX_H$ is a bitorsor, 
then there is a canonical isomorphism $H\simeq\mathrm{Aut}_G(X)$. 
\end{lem}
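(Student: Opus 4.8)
The plan is to exploit the ``affine'' description of the elements of $\mathrm{Aut}_G(X)$: for a torsor $_GX$ and a choice of base point $x_0\in X$, an element $\phi\in\mathrm{Aut}_G(X)$ is determined by $\phi(x_0)$, and writing $\phi(x_0)=g_\phi\cdot x_0$ with $g_\phi\in G$ (transitivity of the $G$-action), one has $\phi(g\cdot x_0)=gg_\phi\cdot x_0$ for all $g\in G$ (commutation with the $G$-action). Thus, after the bijection $G\to X$, $g\mapsto g\cdot x_0$, the group $\mathrm{Aut}_G(X)$ becomes the group of right translations of $G$; in particular it is indeed a group, and $\phi\mapsto g_\phi$ identifies it with $G$. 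All the assertions will follow by feeding this description into routine checks; the only point requiring genuine care is that a torsor morphism is in general neither injective nor surjective, so an element of $\mathrm{Aut}_G(X)$ cannot simply be ``transported'' along it --- one must instead transport the element $g_\phi\in G$ via the structure group morphism.

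For part (a), let a torsor morphism $_GX\to{_{G'}X'}$ be given by a map $f\colon X\to X'$ and a group morphism $\varphi\colon G\to G'$. Fix $x_0\in X$ and put $x_0':=f(x_0)$; for $\phi\in\mathrm{Aut}_G(X)$, define $\phi'\in\mathrm{Aut}_{G'}(X')$ to be the element with $g_{\phi'}=\varphi(g_\phi)$ relative to the base point $x_0'$, i.e.\ $\phi'(g'\cdot x_0'):=g'\varphi(g_\phi)\cdot x_0'$. I would first check, using $f(g\cdot x_0)=\varphi(g)\cdot x_0'$, that $f\circ\phi=\phi'\circ f$; this in particular shows that $\phi'$ does not depend on the auxiliary point $x_0$. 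Then $\phi\mapsto\phi'$ is a group morphism (immediate from $g_{\phi_1\phi_2}=g_{\phi_1}g_{\phi_2}$, modulo the bookkeeping of the composition convention on $\mathrm{Aut}$) and is compatible with composition of torsor morphisms and with identities, yielding the functor $\mathbf{Tor}\to\mathbf{Gp}$.

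For the first assertion of (b), I would check directly that the right action of $\mathrm{Aut}_G(X)$ on $X$ is free and transitive: freeness because $\phi(x)=x$ for a single $x$ forces $g_\phi=e$ relative to the base point $x$, hence $\phi=\mathrm{id}$; transitivity because, given $x,x'\in X$, there is a unique $g\in G$ with $x'=g\cdot x$, and the element $\phi\in\mathrm{Aut}_G(X)$ with $g_\phi=g$ relative to the base point $x$ satisfies $\phi(x)=x'$. Since this right action commutes with the left $G$-action by the very definition of $\mathrm{Aut}_G(X)$, we obtain the bitorsor $_GX_{\mathrm{Aut}_G(X)}$. The functoriality $\mathbf{Tor}\to\mathbf{Bitor}$ then only adds to (a) the fact that $f$ intertwines the right actions via $\phi\mapsto\phi'$, which is exactly the identity $f\circ\phi=\phi'\circ f$ obtained above.

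Finally, for the quasi-inverse claim, the composite $\mathbf{Tor}\to\mathbf{Bitor}\to\mathbf{Tor}$, the second functor being $_GX_H\mapsto{_GX}$, is literally the identity functor. For the other composite $_GX_H\mapsto{_GX_{\mathrm{Aut}_G(X)}}$, I would produce a natural isomorphism to the identity functor of $\mathbf{Bitor}$ whose component at $_GX_H$ is $(\mathrm{id}_G,\mathrm{id}_X,\iota_X)$, where $\iota_X\colon H\to\mathrm{Aut}_G(X)$ sends $h$ to the permutation $x\mapsto x\cdot h$ of $X$. This $\iota_X$ is a group morphism (again modulo the composition convention), injective because the right $H$-action is free, and surjective because it is transitive: for $\phi\in\mathrm{Aut}_G(X)$, write $\phi(x_0)=x_0\cdot h$ and conclude $\phi=\iota_X(h)$ from the affine description; so $\iota_X$ is an isomorphism, which is the ``in particular'' statement. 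Naturality of $(\mathrm{id}_G,\mathrm{id}_X,\iota_X)$ with respect to bitorsor morphisms is a direct unwinding of definitions, and I expect this --- together with keeping the left/right composition conventions consistent throughout --- to be the only mildly delicate bookkeeping in the whole argument.
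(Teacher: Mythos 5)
Your proof is correct and complete; the paper itself gives no argument for this lemma, deferring to Giraud (Proposition 1.5.1), and your direct verification via the ``affine'' description $\phi(g\cdot x_0)=gg_\phi\cdot x_0$ is exactly the standard argument that citation encodes. The two points you flag as delicate --- transporting $g_\phi$ through the structure morphism $\varphi$ rather than through the (possibly non-bijective) map $f$, and checking surjectivity of $\iota_X:H\to\mathrm{Aut}_G(X)$ via transitivity of the right action --- are indeed the only substantive steps, and you handle both correctly.
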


\proof See \cite{Gi}, Chap. III, Proposition 1.5.1. \hfill\qed\medskip 

\begin{lem}\label{lem:1:10:1305}
If $_GX_H$ is a bitorsor and if $_{G'}X'_{H'}$, $_{G''}X''_{H''}$ are subbitorsors of $_GX_H$ such that 
$_{G'}X'$ is a subtorsor of $_{G''}X''$, then $H'\subset H''$ (inclusion of subgroups of $H$). 
\end{lem}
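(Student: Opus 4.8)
The plan is to leverage the freeness of the right $H$-action on the ambient set $X$, using it to convert the transitivity of the $H''$-action on $X''$ into an equality of group elements. First I would observe that, being (the underlying set of) a torsor, $X'$ is nonempty, so I may fix a point $x\in X'$. By hypothesis $_{G'}X'$ is a subtorsor of $_{G''}X''$, which in particular means $X'\subseteq X''$ as subsets of $X$; hence $x$ is also a point of $X''$.

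Next, let $h'\in H'$ be arbitrary. Since $_{G'}X'_{H'}$ is a subbitorsor of $_GX_H$, the right action of $H'$ preserves $X'$, so $x\cdot h'\in X'\subseteq X''$. Now both $x$ and $x\cdot h'$ belong to $X''$, and $H''$ acts transitively on $X''$ (it is the structure group of the subbitorsor $_{G''}X''_{H''}$), so there exists $h''\in H''$ with $x\cdot h''=x\cdot h'$. The final step is to identify $h''$ with $h'$: both lie in $H$, which acts freely on $X$, so $x\cdot h''=x\cdot h'$ forces $h''=h'$. Thus $h'=h''\in H''$, and since $h'\in H'$ was arbitrary, $H'\subseteq H''$ as subgroups of $H$.

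The argument is short and the only point to be careful about — not really an obstacle — is that the freeness being invoked is that of the full $H$-action on $X$, not merely that of $H''$ on $X''$; this is legitimate precisely because everything takes place inside the fixed bitorsor $_GX_H$, and it is what allows the comparison of the \emph{a priori} unrelated elements $h'\in H'$ and $h''\in H''$. Note that neither the left $G$-action nor the subgroups $G',G''$ enter the proof beyond the single consequence $X'\subseteq X''$ of the subtorsor hypothesis.
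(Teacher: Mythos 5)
Your proof is correct. It is, however, organized differently from the paper's. The paper fixes a point $x\in X'\subseteq X''$ and uses the canonical group isomorphism $\mathrm{Ad}_{x^{-1}}:G\to H$ determined by $g\cdot x=x\cdot\mathrm{Ad}_{x^{-1}}(g)$, observes that it restricts to isomorphisms $G'\to H'$ and $G''\to H''$, and then transports the inclusion $G'\subset G''$ (which is part of the subtorsor hypothesis) to the desired inclusion $H'\subset H''$. You instead never touch the left actions: for $h'\in H'$ you compare $x$ and $x\cdot h'$ inside $X''$, use transitivity of $H''$ on $X''$ to produce $h''$ with $x\cdot h''=x\cdot h'$, and conclude $h''=h'$ from freeness of the ambient $H$-action on $X$. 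Your route makes visible that the only consequence of the subtorsor hypothesis actually needed is $X'\subseteq X''$, which is a mild sharpening; the paper's route has the advantage of exhibiting the explicit identifications $H'=\mathrm{Ad}_{x^{-1}}(G')$ and $H''=\mathrm{Ad}_{x^{-1}}(G'')$, i.e.\ the standard structural fact that a point of a bitorsor conjugates the left group onto the right group, which is the mechanism underlying Lemma \ref{lem:gattt} and its use in Theorem \ref{thm:3:13:1705}. (The freeness and transitivity you invoke are exactly the ingredients that make $\mathrm{Ad}_{x^{-1}}$ well defined and surjective, so the two arguments are close cousins; yours simply inlines the relevant instance.) One small point worth making explicit, which you implicitly rely on: $X'$ is nonempty because the paper's convention treats the empty case separately (cf.\ Lemmas \ref{lem:1:2} and \ref{lem:1:3}), so a subbitorsor has nonempty underlying set.
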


\proof Any element $x\in X$ gives rise to a group isomorphism $\mathrm{Ad}_{x^{-1}}: G\to H$, defined by the identity 
$g\cdot x=x\cdot\mathrm{Ad}_{x^{-1}}(g)$ for any $g\in G$, $x\in X$. If $_{G'}X'_{H'}$ is a subbitorsor of $_GX_H$ and if 
$x\in X'$, then $\mathrm{Ad}_{x^{-1}}$ restricts to a group isomorphism $G'\to H'$. Likewise, $\mathrm{Ad}_{x^{-1}}$ 
restricts to a group isomorphism $G''\to H''$. Then the subgroup $H'$ (resp. $H''$) of $H$ is the  image of the subgroup $G'$ 
(resp. $G''$) of $G$ by $\mathrm{Ad}_{x^{-1}}$. As $G'\subset G''$, this implies $H'\subset H''$.  \hfill\qed\medskip

\subsection{Some torsors and their interrelations}\label{sect:1:3:0207}

In \cite{EF2}, we introduced a `twisted Magnus' torsor 
$
\mathsf G^{\DR,\B}(\mathbf k)$ (\S2.2), together 
with its subtorsors $
\mathsf G^{\DR,\B}_{\mathrm{quad}}(\mathbf k)$ (\S2.3), 
$
\mathsf{Stab}
(\hat\Delta^{\mathcal X,\DR/\B})(\mathbf k)$ ($\mathcal X\in\{\mathcal W,\mathcal M\}$) 
(\S\S2.6, 2.7), 
$
\mathsf M(\mathbf k)$ (\S2.4 and \cite{Dr}), and 
$
\mathsf{DMR}^{\DR,\B}(\mathbf k)$ (\S2.5), which is 
constructed using the torsor 
$
\mathsf{DMR}_\mu(\mathbf k)$ for $\mu\in\mathbf k^\times$ 
(\cite{Rac}) and contains it as a subtorsor. 

In \S3.1, Theorem 3.1, we showed the following relations between these subtorsors: 

(1) inclusion $\mathsf M(\mathbf k)\hookrightarrow\mathsf{DMR}^{\DR,\B}(\mathbf k)$; 

(2) equality $\mathsf{DMR}^{\DR,\B}(\mathbf k)=\mathsf{Stab}
(\hat\Delta^{\mathcal M,\DR/\B})(\mathbf k)\cap 
\mathsf G^{\DR,\B}_{\mathrm{quad}}(\mathbf k)$; 

(3) inclusion $\mathsf{Stab}
(\hat\Delta^{\mathcal M,\DR/\B})(\mathbf k)\hookrightarrow 
\mathsf{Stab}(\hat\Delta^{\mathcal W,\DR/\B})(\mathbf k)$.

The main purpose of this paper is the explicit computation of the 
groups attached to these torsors in the sense of Lemma \ref{lem:gattt}, (a), as well as the study of their interrelations. 
This will make explicit the bitorsor structures attached to these torsors by Lemma \ref{lem:gattt}, (b). 

\section{The bitorsor $\mathsf G^{\DR,\B}(\mathbf k)$ and its actions}\label{sect:tbgaia:0707}

In \S\ref{sect:2:1:1505}, we construct a Betti counterpart $(\mathsf G^\B(\mathbf k),\circledast)$ of the 
variant $\mathsf G^\DR(\mathbf k)$ from \cite{EF2} of the `twisted Magnus group' from \cite{Rac}. 
In \S\ref{sect:2:2:02:07}, we construct actions of $\mathsf G^\B(\mathbf k)$, which are 
the Betti counterparts of the actions of $\mathsf G^\DR(\mathbf k)$ from \cite{EF2}, \S1.6. In 
\S\ref{sect:2:3:02:07}, we show how to use $\mathsf G^\B(\mathbf k)$ for upgrading the torsor
structure  
$
\mathsf G^{\DR,\B}(\mathbf k)$ into a bitorsor one, and 
in \S\ref{sect:2:4:2804}, we show how the actions of \S\ref{sect:2:2:02:07} and of \cite{EF2}, 
\S1.6 can be combined to construct an action of the bitorsor $\mathsf G^{\DR,\B}(\mathbf k)$. 

\subsection{The group $(\mathsf G^\B(\mathbf k),\circledast)$}\label{sect:2:1:1505}

\subsubsection{The group $(\mathcal G(\hat{\mathcal V}^\B),\cdot)$}\label{sect:2:1:1:2606}

Let $\mathcal V^\B$  be the $\mathbf k$-algebra introduced in \cite{EF1}, \S2.1, and let 
$(F^i\mathcal V^\B)_{i\geq 0}$ be the decreasing algebra filtration on it defined in \cite{EF1}, \S2.1. 
Let $\Delta^{\mathcal V,\B}:\mathcal V^\B\to(\mathcal V^\B)^{\otimes2}$ be the morphism of 
filtered $\mathbf k$-algebras introduced in \cite{EF1}, \S2.3. Then $(\mathcal V^\B,\Delta^{\mathcal V,\B})$
is a Hopf algebra. In \cite{EF1}, \S2.5, we introduced the topological Hopf algebra 
$(\hat{\mathcal V}^\B,\hat\Delta^{\mathcal V,\B})$ obtained by completion with respect to the filtrations. 

Denote by $\mathcal G({\mathcal V}^\B)$ (resp. $\mathcal G(\hat{\mathcal V}^\B)$) be the 
group of group-like elements of the  Hopf algebra $(\mathcal V^\B,\Delta^{\mathcal V,\B})$ (resp. 
$(\hat{\mathcal V}^\B,\hat\Delta^{\mathcal V,\B})$); we denote by $\cdot$ its product. 
According to \cite{EF1}, \S2.1, the Hopf algebra 
$(\mathcal V^\B,\Delta^{\mathcal V,\B})$ is the group algebra $\mathbf kF_2$, where $F_2$ is the 
free group with generators $X_0,X_1$, which leads to the equality $\mathcal G(\mathcal V^\B)=F_2$.
Then the natural morphism $\mathcal G({\mathcal V}^\B)\to\mathcal G(\hat{\mathcal V}^\B)$ can be identified
with the morphism from $F_2$ to its $\mathbf k$-prounipotent completion. 

\subsubsection{The automorphisms $\mathrm{aut}^{\mathcal V,(1),\B}_{(\mu,g)}$ and 
$\mathrm{aut}^{\mathcal V,(10),\B}_{(\mu,g)}$.}\label{sect:212}

Let $\hat{\mathcal V}^\B_1:=1+F^1\hat{\mathcal V}^\B\subset\hat{\mathcal V}^\B$. 
For $a\in\hat{\mathcal V}^\B_1$, $\mathrm{log}(a)$ may be expanded as a series in $a-1$  
and therefore makes sense as an element of $F^1\hat{\mathcal V}^\B$. If now $\mu\in\mathbf k$, 
$\mathrm{exp}(\mu\mathrm{log}(a))$ may be expanded as a series in $\mu\mathrm{log}(a)$ and 
therefore makes sense as an element of $\hat{\mathcal V}^\B_1$. This defines a self-map 
of $\hat{\mathcal V}^\B_1$, denoted $a\mapsto a^\mu$, which restricts to a self-map of 
$\mathcal G(\hat{\mathcal V}^\B)$. 

For $(\mu,g)\in\mathbf k^\times\times\mathcal G(\hat{\mathcal V}^\B)$, one checks that there is a unique automorphism 
$\mathrm{aut}^{\mathcal V,(1),\B}_{(\mu,g)}$ of the topological $\mathbf k$-algebra $\hat{\mathcal V}^\B$, 
such that 
\begin{equation}\label{aut:V:1:1705}
\mathrm{aut}^{\mathcal V,(1),\B}_{(\mu,g)} : X_0\mapsto g\cdot X_0^\mu\cdot g^{-1}, \quad X_1
\mapsto X_1^\mu
\end{equation}
and a unique automorphism $\mathrm{aut}^{\mathcal V,(10),\B}_{(\mu,g)}$ of the topological $\mathbf k$-module 
$\hat{\mathcal V}^\B$, 
such that 
\begin{equation}\label{aut:V:10:1705}
\forall a\in \hat{\mathcal V}^\B, \quad 
\mathrm{aut}^{\mathcal V,(10),\B}_{(\mu,g)}(a)=\mathrm{aut}^{\mathcal V,(1),\B}_{(\mu,g)}(a)\cdot g. 
\end{equation}

\subsubsection{The group $(\mathsf G^\B(\mathbf k),\circledast)$}\label{sect:constr:pdt:1704}

\begin{lem}\label{lem:2:1:2606}
The product $\circledast$ given by 
$$
(\mu,g)\circledast(\mu',g'):=(\mu\mu',\mathrm{aut}^{\mathcal V,(10),\B}_{(\mu,g)}(g')) 
$$
defines a group structure on $\mathsf G^\B(\mathbf k):=\mathbf k^\times\times\mathcal G(\hat{\mathcal V}^\B)$. 
\end{lem}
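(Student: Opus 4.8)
The plan is to verify the group axioms for $(\mathsf G^\B(\mathbf k),\circledast)$ directly, the key point being that $(\mu,g)\mapsto\mathrm{aut}^{\mathcal V,(1),\B}_{(\mu,g)}$ should be an \emph{anti}-homomorphism (or homomorphism, depending on conventions) from $\mathsf G^\B(\mathbf k)$ to the automorphism group of $\hat{\mathcal V}^\B$, once the latter is equipped with the right product. First I would record the behaviour of the self-maps $a\mapsto a^\mu$ under composition: since $\mathrm{log}$ and $\mathrm{exp}$ are mutually inverse on the relevant completed objects and $\mu\mapsto\mathrm{exp}(\mu\mathrm{log}(a))$ is a one-parameter family, one has $(a^\mu)^{\mu'}=a^{\mu\mu'}$ and $\mathrm{aut}^{\mathcal V,(1),\B}_{(\mu,g)}(a^{\mu'})=\mathrm{aut}^{\mathcal V,(1),\B}_{(\mu,g)}(a)^{\mu'}$ (the latter because $\mathrm{aut}^{\mathcal V,(1),\B}_{(\mu,g)}$ is a continuous algebra automorphism, hence commutes with $\mathrm{log}$, $\mathrm{exp}$ and scalar multiplication). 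These identities, together with the fact that $\mathrm{aut}^{\mathcal V,(1),\B}_{(\mu,g)}$ is an algebra morphism (so it respects products and inverses), are the only analytic inputs needed.

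Next I would prove the composition law for the $(1)$-automorphisms. Using \eqref{aut:V:1:1705}, I compute $\mathrm{aut}^{\mathcal V,(1),\B}_{(\mu,g)}\circ\mathrm{aut}^{\mathcal V,(1),\B}_{(\mu',g')}$ on the generators $X_0,X_1$ and check that the result equals $\mathrm{aut}^{\mathcal V,(1),\B}_{(\mu\mu',\,h)}$ for the appropriate $h$; the natural candidate, dictated by the definition of $\circledast$, is $h=\mathrm{aut}^{\mathcal V,(10),\B}_{(\mu,g)}(g')=\mathrm{aut}^{\mathcal V,(1),\B}_{(\mu,g)}(g')\cdot g$. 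On $X_1$ this is immediate from $(X_1^{\mu'})^{\mu}=X_1^{\mu\mu'}$; on $X_0$ one expands $\mathrm{aut}^{\mathcal V,(1),\B}_{(\mu,g)}\bigl(g'\cdot X_0^{\mu'}\cdot g'^{-1}\bigr)=\mathrm{aut}^{\mathcal V,(1),\B}_{(\mu,g)}(g')\cdot g\cdot X_0^{\mu\mu'}\cdot g^{-1}\cdot\mathrm{aut}^{\mathcal V,(1),\B}_{(\mu,g)}(g')^{-1}$, which is visibly $\mathrm{Ad}_{h}(X_0^{\mu\mu'})$. Since an algebra automorphism of $\hat{\mathcal V}^\B$ is determined by its values on $X_0,X_1$, this gives
\begin{equation}\label{eq:comp:1705}
\mathrm{aut}^{\mathcal V,(1),\B}_{(\mu,g)}\circ\mathrm{aut}^{\mathcal V,(1),\B}_{(\mu',g')}=\mathrm{aut}^{\mathcal V,(1),\B}_{(\mu,g)\circledast(\mu',g')},
\end{equation}
and in particular closure of $\circledast$ is automatic. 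I would also note that $\mathrm{aut}^{\mathcal V,(1),\B}_{(1,1)}=\mathrm{id}$, so the candidate identity element is $(1,1)$.

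Associativity of $\circledast$ then follows formally: the second component of $\bigl((\mu,g)\circledast(\mu',g')\bigr)\circledast(\mu'',g'')$ is $\mathrm{aut}^{\mathcal V,(10),\B}_{(\mu,g)\circledast(\mu',g')}(g'')$, and unwinding the definition of the $(10)$-automorphism via \eqref{aut:V:10:1705} together with \eqref{eq:comp:1705} and the identity $\mathrm{aut}^{\mathcal V,(1),\B}_{(\mu,g)}(g'\cdot g'')=\mathrm{aut}^{\mathcal V,(1),\B}_{(\mu,g)}(g')\cdot\mathrm{aut}^{\mathcal V,(1),\B}_{(\mu,g)}(g'')$ (the algebra-morphism property) shows it coincides with the second component of $(\mu,g)\circledast\bigl((\mu',g')\circledast(\mu'',g'')\bigr)$; the first components agree trivially since multiplication in $\mathbf k^\times$ is associative. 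Finally I would check that $(\mu,g)$ has inverse $(\mu^{-1},h)$ where $h$ is forced by the equation $\mathrm{aut}^{\mathcal V,(10),\B}_{(\mu,g)}(h)=1$, i.e. $h=\bigl(\mathrm{aut}^{\mathcal V,(1),\B}_{(\mu,g)}\bigr)^{-1}(g^{-1})=\mathrm{aut}^{\mathcal V,(1),\B}_{(\mu^{-1},h)}(\cdots)$; concretely, using \eqref{eq:comp:1705} with $(\mu',g')=(\mu^{-1},h)$ one needs $(\mu,g)\circledast(\mu^{-1},h)=(1,1)$ and $(\mu^{-1},h)\circledast(\mu,g)=(1,1)$, both of which reduce to a short computation once $h$ is defined as above — one should check $h$ indeed lies in $\mathcal G(\hat{\mathcal V}^\B)$, which holds because $\mathrm{aut}^{\mathcal V,(1),\B}_{(\mu,g)}$, being induced by the Hopf-algebra structure, preserves group-like elements. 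The main obstacle I anticipate is purely bookkeeping: making sure every invocation of ``an automorphism is determined by its values on the generators'' and every commutation of $\mathrm{aut}^{\mathcal V,(1),\B}$ past $(-)^\mu$ and past products is legitimate in the completed, filtered setting — none of it is deep, but the non-abelian conjugations in the $X_0$-component require care about the order of factors.
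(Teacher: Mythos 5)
Your proof is correct, but it is organized differently from the paper's. The paper's proof has three ingredients: (i) closure, obtained by observing that $\mathrm{aut}^{\mathcal V,(1),\B}_{(\mu,g)}$ is an automorphism of the Hopf algebra $(\hat{\mathcal V}^\B,\hat\Delta^{\mathcal V,\B})$ and hence preserves $\mathcal G(\hat{\mathcal V}^\B)$, so that $\mathrm{aut}^{\mathcal V,(10),\B}_{(\mu,g)}(g')=\mathrm{aut}^{\mathcal V,(1),\B}_{(\mu,g)}(g')\cdot g$ is again group-like; (ii) the group axioms for the restriction of $\circledast$ to $\mathcal G(\hat{\mathcal V}^\B)$, which are not verified but referred to Racinet's Proposition 3.1.6; and (iii) the identification of $(\mathsf G^\B(\mathbf k),\circledast)$ with the semidirect product of $\mathbf k^\times$ acting on $(\mathcal G(\hat{\mathcal V}^\B),\circledast)$ by $\mu\bullet X_i:=X_i^\mu$, which supplies the axioms for the full group. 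You instead prove everything in one pass from the intertwining identity $\mathrm{aut}^{\mathcal V,(1),\B}_{(\mu,g)}\circ\mathrm{aut}^{\mathcal V,(1),\B}_{(\mu',g')}=\mathrm{aut}^{\mathcal V,(1),\B}_{(\mu,g)\circledast(\mu',g')}$, checked on the generators $X_0,X_1$, from which associativity, the identity $(1,1)$ and two-sided inverses follow formally. This is essentially the content hidden behind the citation of Racinet, so your argument is more self-contained and makes the mechanism explicit, at the cost of losing the structural remark (semidirect product) that the paper exploits later. One point to tighten: closure is not quite ``automatic'' from the composition identity alone --- writing $\mathrm{aut}^{\mathcal V,(1),\B}_{(\mu\mu',h)}$ already presupposes $h=\mathrm{aut}^{\mathcal V,(10),\B}_{(\mu,g)}(g')\in\mathcal G(\hat{\mathcal V}^\B)$, which needs exactly the paper's argument (i): $\mathrm{aut}^{\mathcal V,(1),\B}_{(\mu,g)}$ is a Hopf automorphism (its values on $X_0,X_1$ are group-like), so $\mathrm{aut}^{\mathcal V,(1),\B}_{(\mu,g)}(g')$ is group-like, and the product with the group-like element $g$ stays group-like. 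You do invoke this preservation of group-likes, but only when constructing the inverse; it should be stated up front, before \eqref{eq:comp:1705} is asserted as an identity between elements of the family $\mathrm{aut}^{\mathcal V,(1),\B}_{(-,-)}$.
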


\proof When $(\mu,g)\in\mathsf G^\B(\mathbf k)$, $\mathrm{aut}^{\mathcal V,(1),\B}_{(\mu,g)}$ is an automorphism of the 
Hopf algebra $(\hat{\mathcal V}^\B,\hat\Delta^{\mathcal V,\B})$, which implies that for any 
$g'\in\mathcal G(\hat{\mathcal V}^\B)$, $\mathrm{aut}^{\mathcal V,(1),\B}_{(\mu,g)}(g')\in
\mathcal G(\hat{\mathcal V}^\B)$. It follows that $\mathrm{aut}^{\mathcal V,(10),\B}_{(\mu,g)}(g')\in
\mathcal G(\hat{\mathcal V}^\B)$, and therefore that $\circledast$ is a well-defined map $\mathsf G^\B(\mathbf k)^2\to
\mathsf G^\B(\mathbf k)$, which restricts to a map $\mathcal G(\hat{\mathcal V}^\B)^2\to\mathcal G(\hat{\mathcal V}^\B)$. 
The fact that $(\mathcal G(\hat{\mathcal V}^\B),\circledast)$ is a group can be proved analogously to \cite{Rac}, 
Proposition 3.1.6. The group $\mathbf k^\times$ acts on $\hat{\mathcal V}^\B$ by $\mu\bullet X_i:=X_i^\mu$ for $i=0,1$. 
This induces an action of $\mathbf k^\times$ on $(\mathcal G(\hat{\mathcal V}^\B),\circledast)$; one checks that 
$(\mathsf G^\B(\mathbf k),\circledast)$ is the corresponding semidirect product, the injections $\mathbf k^\times\hookrightarrow
\mathsf G^\B(\mathbf k)$ and $\mathcal G(\hat{\mathcal V}^\B)\hookrightarrow\mathsf G^\B(\mathbf k)$ being given by 
$\mu\mapsto(\mu,1)$ and $g\mapsto (1,g)$. \hfill\qed\medskip 

\subsection{Actions of the group $(\mathsf G^\B(\mathbf k),\circledast)$}\label{sect:2:2:02:07}

 We denote by $\mathbf k$-alg (resp. $\mathbf k$-mod) the category of  $\mathbf k$-algebras (resp.
 $\mathbf k$-modules).

\begin{defn} \label{def:aut:A:M} (see \cite{EF2}, Definition 1.1)
If $A$ is a $\mathbf k$-algebra and $M$ is a left $A$-module, then $\mathrm{Aut}(A,M)$ is the set of pairs $(\alpha,\theta)
\in\mathrm{Aut}_{\mathbf k\operatorname{-alg}}(A)\times\mathrm{Aut}_{\mathbf k\operatorname{-mod}}(M)$, such that for any 
$a\in A$, $m\in M$, one has $\theta(am)=\alpha(a)\theta(m)$; this is a subgroup of $\mathrm{Aut}_{\mathbf k\operatorname{-alg}}(A)
\times\mathrm{Aut}_{\mathbf k\operatorname{-mod}}(M)$. 
\end{defn}

\begin{lem}
For $(\mu,g)\in\mathsf G^\B(\mathbf k)$, the pair $(\mathrm{aut}_{(\mu,g)}^{\mathcal V,(1),\B},
\mathrm{aut}_{(\mu,g)}^{\mathcal V,(10),\B})$ belongs to $\mathrm{Aut}(\hat{\mathcal V}^\B,\hat{\mathcal V}^\B)$
(in which $\hat{\mathcal V}^\B$ is viewed as the left regular module over itself).  
The map $(\mathsf G^\B(\mathbf k),\circledast)\to\mathrm{Aut}(\hat{\mathcal V}^\B,\hat{\mathcal V}^\B)$, 
$(\mu,g)\mapsto (\mathrm{aut}_{(\mu,g)}^{\mathcal V,(1),\B},\mathrm{aut}_{(\mu,g)}^{\mathcal V,(10),\B})$ 
is a group morphism. 
\end{lem}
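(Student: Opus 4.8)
The plan is to verify the two assertions in turn, the first being essentially a bookkeeping check and the second being the substantive one. For the first claim, that $(\mathrm{aut}_{(\mu,g)}^{\mathcal V,(1),\B},\mathrm{aut}_{(\mu,g)}^{\mathcal V,(10),\B})\in\mathrm{Aut}(\hat{\mathcal V}^\B,\hat{\mathcal V}^\B)$, I would unwind Definition \ref{def:aut:A:M}: one needs that $\mathrm{aut}_{(\mu,g)}^{\mathcal V,(1),\B}$ is an algebra automorphism (which is how it was defined via \eqref{aut:V:1:1705}), that $\mathrm{aut}_{(\mu,g)}^{\mathcal V,(10),\B}$ is a $\mathbf k$-module automorphism (true because by \eqref{aut:V:10:1705} it is the composite of the module automorphism $\mathrm{aut}_{(\mu,g)}^{\mathcal V,(1),\B}$ with the right multiplication $\ell$-type map $a\mapsto a\cdot g$ by the invertible element $g$, hence bijective), and finally the compatibility $\mathrm{aut}_{(\mu,g)}^{\mathcal V,(10),\B}(a\cdot b)=\mathrm{aut}_{(\mu,g)}^{\mathcal V,(1),\B}(a)\cdot\mathrm{aut}_{(\mu,g)}^{\mathcal V,(10),\B}(b)$ for $a,b\in\hat{\mathcal V}^\B$, where $\hat{\mathcal V}^\B$ on the left is the algebra and on the right the regular left module. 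This last identity follows by a one-line computation: $\mathrm{aut}_{(\mu,g)}^{\mathcal V,(10),\B}(ab)=\mathrm{aut}_{(\mu,g)}^{\mathcal V,(1),\B}(ab)\cdot g=\mathrm{aut}_{(\mu,g)}^{\mathcal V,(1),\B}(a)\cdot\mathrm{aut}_{(\mu,g)}^{\mathcal V,(1),\B}(b)\cdot g=\mathrm{aut}_{(\mu,g)}^{\mathcal V,(1),\B}(a)\cdot\mathrm{aut}_{(\mu,g)}^{\mathcal V,(10),\B}(b)$, using that $\mathrm{aut}_{(\mu,g)}^{\mathcal V,(1),\B}$ is multiplicative.

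For the second claim, that $(\mu,g)\mapsto(\mathrm{aut}_{(\mu,g)}^{\mathcal V,(1),\B},\mathrm{aut}_{(\mu,g)}^{\mathcal V,(10),\B})$ is a group morphism from $(\mathsf G^\B(\mathbf k),\circledast)$, I would check the two components separately. For the module component one must show $\mathrm{aut}_{(\mu,g)\circledast(\mu',g')}^{\mathcal V,(10),\B}=\mathrm{aut}_{(\mu,g)}^{\mathcal V,(10),\B}\circ\mathrm{aut}_{(\mu',g')}^{\mathcal V,(10),\B}$; by the very definition of $\circledast$ in Lemma \ref{lem:2:1:2606}, the second component of $(\mu,g)\circledast(\mu',g')$ is $\mathrm{aut}_{(\mu,g)}^{\mathcal V,(10),\B}(g')$, so this is really the assertion that the assignment $g'\mapsto\mathrm{aut}_{(\mu',g')}^{\mathcal V,\bullet,\B}$ behaves correctly under the twisted composition — i.e. the same cocycle-type identity that underlies the associativity of $\circledast$ proved in Lemma \ref{lem:2:1:2606}. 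For the algebra component I would check $\mathrm{aut}_{(\mu,g)\circledast(\mu',g')}^{\mathcal V,(1),\B}=\mathrm{aut}_{(\mu,g)}^{\mathcal V,(1),\B}\circ\mathrm{aut}_{(\mu',g')}^{\mathcal V,(1),\B}$ by evaluating both sides on the generators $X_0,X_1$ using \eqref{aut:V:1:1705}, and this reduces to the identities $\mathrm{aut}_{(\mu,g)}^{\mathcal V,(1),\B}(h^{\mu'})=(\mathrm{aut}_{(\mu,g)}^{\mathcal V,(1),\B}(h))^{\mu'}$ (compatibility of the automorphism with the $\mu'$-power operation, which holds because $\mathrm{aut}_{(\mu,g)}^{\mathcal V,(1),\B}$ is a continuous algebra automorphism and the power operation is defined through $\log$ and $\exp$) together with the bookkeeping of the exponents $\mu\mu'$.

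The main obstacle, I expect, is keeping the two versions of $\hat{\mathcal V}^\B$ — as an algebra and as a left module over itself — cleanly distinguished throughout, and in particular making sure the right-multiplication by $g$ in \eqref{aut:V:10:1705} is handled consistently on both sides of every identity; this is exactly the kind of place where the semidirect-product description at the end of the proof of Lemma \ref{lem:2:1:2606} is the cleanest organizing tool. Once one observes that $\mathsf G^\B(\mathbf k)$ is the semidirect product of $\mathbf k^\times$ acting on $(\mathcal G(\hat{\mathcal V}^\B),\circledast)$, it suffices to check the morphism property on the two subgroups $\mathbf k^\times\hookrightarrow\mathsf G^\B(\mathbf k)$ and $\mathcal G(\hat{\mathcal V}^\B)\hookrightarrow\mathsf G^\B(\mathbf k)$ separately and then on a mixed product, which cuts the verification down to the two elementary identities named above; I would present the argument in that reduced form rather than expanding $(\mu,g)\circledast(\mu',g')$ in full generality.
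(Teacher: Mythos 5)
Your proposal is correct and follows essentially the same route as the paper, which simply delegates these verifications to the analogous de Rham statements (Lemmas 1.8--1.10 of [EF2]); you are supplying directly the generator-by-generator check and the one-line module-compatibility computation that those references contain. The only point worth tightening in a written version is the module-component step: rather than appealing to the "cocycle-type identity underlying associativity of $\circledast$," just note that $\mathrm{aut}^{\mathcal V,(10),\B}_{(\mu,g)\circledast(\mu',g')}(a)=\mathrm{aut}^{\mathcal V,(1),\B}_{(\mu,g)\circledast(\mu',g')}(a)\cdot \mathrm{aut}^{\mathcal V,(1),\B}_{(\mu,g)}(g')\cdot g$ and that this equals $\mathrm{aut}^{\mathcal V,(10),\B}_{(\mu,g)}\circ\mathrm{aut}^{\mathcal V,(10),\B}_{(\mu',g')}(a)$ as a direct consequence of the already-established morphism property of the algebra component.
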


\proof The proof of the first statement is similar to that of the corresponding statement in Lemma 1.10 of \cite{EF2}.  
The fact that $(\mathsf G^\B(\mathbf k),\circledast)\to\mathrm{Aut}_{\mathbf k\operatorname{-alg}}(\hat{\mathcal V}^\B)$, 
$(\mu,g)\mapsto \mathrm{aut}_{(\mu,g)}^{\mathcal V,(1),\B}$ (resp. $(\mathsf G^\B(\mathbf k),\circledast)\to
\mathrm{Aut}_{\mathbf k\operatorname{-mod}}(\hat{\mathcal V}^\B)$, $(\mu,g)\mapsto 
\mathrm{aut}_{(\mu,g)}^{\mathcal V,(10),\B}$) is a group morphism  is proved similarly to 
Lemma 1.8 (resp. Lemma 1.9) in \cite{EF2}. These facts imply the second statement. 
\hfill\qed\medskip 

\begin{lem} \label{lemma:2:4:12042020}(see \cite{EF2}, Lemma 1.2)
In the situation of Definition \ref{def:aut:A:M}, let $A_0$ be a subalgebra of $A_0$ and $M_0$ be an $A$-submodule of $M$. 
The set $\mathrm{Aut}(A,M|A_0,M_0)$ of pairs $(\alpha,\theta)\in\mathrm{Aut}(A,M)$  such that $\alpha$ (resp. 
$\theta$) restricts to an automorphism of $A_0$ (resp. of $M_0$) is a subgroup of $\mathrm{Aut}(A,M)$, moreover
there is a natural group morphism $\mathrm{Aut}(A,M|A_0,M_0)\to\mathrm{Aut}(A_0,M/M_0)$. 
\end{lem}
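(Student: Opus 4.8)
The statement to prove (Lemma~\ref{lemma:2:4:12042020}) has three parts: that $\mathrm{Aut}(A,M|A_0,M_0)$ is a subgroup of $\mathrm{Aut}(A,M)$, that there is a natural group morphism $\mathrm{Aut}(A,M|A_0,M_0)\to\mathrm{Aut}(A_0,M/M_0)$, and implicitly that the target makes sense, i.e. that $M/M_0$ is a left $A_0$-module (which holds since $M_0$ is an $A$-submodule of $M$, hence in particular an $A_0$-submodule, and $A_0$ acts on $M$ restricting the $A$-action).

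\proof The plan is to verify the subgroup property first, then construct the restriction morphism. For the subgroup property: the identity pair $(\mathrm{id}_A,\mathrm{id}_M)$ clearly restricts to the identity on $A_0$ and on $M_0$, so it lies in $\mathrm{Aut}(A,M|A_0,M_0)$. If $(\alpha,\theta),(\alpha',\theta')\in\mathrm{Aut}(A,M|A_0,M_0)$, then $\alpha\circ\alpha'$ restricts to an automorphism of $A_0$ as a composite of automorphisms of $A_0$, and similarly $\theta\circ\theta'$ restricts to an automorphism of $M_0$; since $\mathrm{Aut}(A,M)$ is already a group by Definition~\ref{def:aut:A:M}, the product $(\alpha\circ\alpha',\theta\circ\theta')$ lies in $\mathrm{Aut}(A,M)$, hence in $\mathrm{Aut}(A,M|A_0,M_0)$. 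For inverses, if $(\alpha,\theta)\in\mathrm{Aut}(A,M|A_0,M_0)$, then $\alpha|_{A_0}$ is an automorphism of $A_0$, so its inverse is $\alpha^{-1}|_{A_0}$; likewise $\theta^{-1}$ restricts to the inverse of $\theta|_{M_0}$ on $M_0$. Thus $(\alpha^{-1},\theta^{-1})\in\mathrm{Aut}(A,M|A_0,M_0)$, and the subset is a subgroup.

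Next I would construct the morphism. Given $(\alpha,\theta)\in\mathrm{Aut}(A,M|A_0,M_0)$, set $\alpha_0:=\alpha|_{A_0}\in\mathrm{Aut}_{\mathbf k\text{-alg}}(A_0)$. Since $\theta(M_0)=M_0$, the map $\theta$ descends to a $\mathbf k$-module automorphism $\bar\theta$ of $M/M_0$, characterized by $\bar\theta(m+M_0)=\theta(m)+M_0$ for $m\in M$. The compatibility $\bar\theta(a_0\cdot(m+M_0))=\alpha_0(a_0)\cdot\bar\theta(m+M_0)$ for $a_0\in A_0$, $m\in M$ follows by reducing the identity $\theta(a_0 m)=\alpha(a_0)\theta(m)=\alpha_0(a_0)\theta(m)$ modulo $M_0$. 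Hence $(\alpha_0,\bar\theta)\in\mathrm{Aut}(A_0,M/M_0)$, and one defines the morphism by $(\alpha,\theta)\mapsto(\alpha_0,\bar\theta)$. Functoriality of restriction and of passage to the quotient makes this assignment compatible with composition, so it is a group morphism.

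This argument is entirely formal; there is no genuine obstacle. The only point requiring a moment's care is checking that $M/M_0$ carries a well-defined left $A_0$-module structure and that $\bar\theta$ is well-defined on cosets, both of which are immediate from $M_0$ being an $A$-submodule stable under $\theta$. (We note that the hypothesis of the lemma as stated — ``$A_0$ a subalgebra of $A_0$'' — should read ``$A_0$ a subalgebra of $A$''; this typo does not affect the proof.) \hfill\qed\medskip
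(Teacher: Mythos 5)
Your proof is correct and is exactly the routine verification intended here; the paper itself gives no argument for this lemma, simply citing \cite{EF2}, Lemma 1.2, and your formal check (identity, composition, inverses for the subgroup claim; restriction of $\alpha$ and descent of $\theta$ to $M/M_0$ for the morphism) is the standard one. Your observation that ``subalgebra of $A_0$'' is a typo for ``subalgebra of $A$'' is also correct.
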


Let $\hat{\mathcal W}^\B$ be the topological $\mathbf k$-subalgebra 
of $\hat{\mathcal V}^\B$) defined in \cite{EF1}, \S2.5; it is given by 
$\hat{\mathcal W}^\B=\mathbf k1\oplus\hat{\mathcal V}^\B(X_1-1)$.  
View $\hat{\mathcal V}^\B$ as the left regular topological module over 
itself; then $\hat{\mathcal V}^\B(X_0-1)$ is a topological submodule. 
According to \cite{EF1}, \S2.5, we denote the corresponding quotient 
topological $\hat{\mathcal V}^\B$-module by $\hat{\mathcal M}^\B$ and 
by $1_\B$ the class of $1$ in this module; according to \cite{EF1}, 
\S2.5, the restriction of $\hat{\mathcal M}^\B$ to 
$\hat{\mathcal W}^\B$ is free of rank one, generated by $1_\B$. 

\begin{lemdef}\label{lemma:2:5:12042020}
If $(\mu,g)\in\mathsf G^\B(\mathbf k)$, then $(\mathrm{aut}_{(\mu,g)}^{\mathcal V,(1),\B},
\mathrm{aut}_{(\mu,g)}^{\mathcal V,(10),\B})$ is in 
$$
\mathrm{Aut}(\hat{\mathcal V}^\B,\hat{\mathcal V}^\B|\hat{\mathcal W}^\B,\hat{\mathcal V}^\B(X_0-1)).
$$  
We denote by $(\mathrm{aut}_{(\mu,g)}^{\mathcal W,(1),\B},\mathrm{aut}_{(\mu,g)}^{\mathcal M,(10),\B})$ the corresponding 
element of $\mathrm{Aut}(\hat{\mathcal W}^\B,\hat{\mathcal M}^\B)$ (see Lemma \ref{lemma:2:4:12042020}). The map 
taking $(\mu,g)$ to this element  is a group morphism $(\mathsf G^\B(\mathbf k),\circledast)\to
\mathrm{Aut}(\hat{\mathcal W}^\B,\hat{\mathcal M}^\B)$. 
\end{lemdef}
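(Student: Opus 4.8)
The plan is to verify the two assertions in turn. For the first, I would check directly that the pair $(\mathrm{aut}_{(\mu,g)}^{\mathcal V,(1),\B},\mathrm{aut}_{(\mu,g)}^{\mathcal V,(10),\B})$ stabilizes the subalgebra $\hat{\mathcal W}^\B=\mathbf k1\oplus\hat{\mathcal V}^\B(X_1-1)$ and the submodule $\hat{\mathcal V}^\B(X_0-1)$. Since $\mathrm{aut}_{(\mu,g)}^{\mathcal V,(1),\B}$ is an algebra automorphism of $\hat{\mathcal V}^\B$, it suffices to see that it carries $X_1-1$ into $\hat{\mathcal W}^\B$, i.e.\ into $\hat{\mathcal V}^\B(X_1-1)$: this follows because $X_1^\mu-1=\exp(\mu\log X_1)-1$ is a series in $\log X_1$, hence lies in $\hat{\mathcal V}^\B(X_1-1)$, since $\log X_1$ itself is a series in $X_1-1$ and thus a left multiple of $X_1-1$; for surjectivity onto $\hat{\mathcal W}^\B$ one argues that the inverse automorphism (which is $\mathrm{aut}^{\mathcal V,(1),\B}$ for an inverse parameter, by the group morphism property) does the same thing. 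Likewise $\mathrm{aut}_{(\mu,g)}^{\mathcal V,(1),\B}$ sends $X_0-1$ to $gX_0^\mu g^{-1}-1=g(X_0^\mu-1)g^{-1}$, and the same computation shows $X_0^\mu-1\in\hat{\mathcal V}^\B(X_0-1)$, so this lies in $\hat{\mathcal V}^\B(X_0-1)$; stability under $\mathrm{aut}_{(\mu,g)}^{\mathcal V,(10),\B}$ of this submodule follows from \eqref{aut:V:10:1705} together with the fact that $\hat{\mathcal V}^\B(X_0-1)\cdot g=\hat{\mathcal V}^\B(X_0-1)$ (right multiplication by the unit $g$ being a module-preserving bijection). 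Again surjectivity is handled by passing to the inverse.

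Granting the first assertion, the pair $(\mathrm{aut}_{(\mu,g)}^{\mathcal W,(1),\B},\mathrm{aut}_{(\mu,g)}^{\mathcal M,(10),\B})$ is then defined as the image of $(\mathrm{aut}_{(\mu,g)}^{\mathcal V,(1),\B},\mathrm{aut}_{(\mu,g)}^{\mathcal V,(10),\B})$ under the natural map $\mathrm{Aut}(\hat{\mathcal V}^\B,\hat{\mathcal V}^\B\mid\hat{\mathcal W}^\B,\hat{\mathcal V}^\B(X_0-1))\to\mathrm{Aut}(\hat{\mathcal W}^\B,\hat{\mathcal M}^\B)$ supplied by Lemma \ref{lemma:2:4:12042020} (with $A=\hat{\mathcal V}^\B$, $M=\hat{\mathcal V}^\B$, $A_0=\hat{\mathcal W}^\B$, $M_0=\hat{\mathcal V}^\B(X_0-1)$, noting $M/M_0=\hat{\mathcal M}^\B$ by the definition of $\hat{\mathcal M}^\B$ recalled just above, and that the $\hat{\mathcal W}^\B$-module structure on $\hat{\mathcal M}^\B$ is the restriction of the $\hat{\mathcal V}^\B$-module structure). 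That this assignment is a group morphism $(\mathsf G^\B(\mathbf k),\circledast)\to\mathrm{Aut}(\hat{\mathcal W}^\B,\hat{\mathcal M}^\B)$ is then immediate: by the preceding lemma, $(\mu,g)\mapsto(\mathrm{aut}_{(\mu,g)}^{\mathcal V,(1),\B},\mathrm{aut}_{(\mu,g)}^{\mathcal V,(10),\B})$ is a group morphism $(\mathsf G^\B(\mathbf k),\circledast)\to\mathrm{Aut}(\hat{\mathcal V}^\B,\hat{\mathcal V}^\B)$, its image lies in the subgroup $\mathrm{Aut}(\hat{\mathcal V}^\B,\hat{\mathcal V}^\B\mid\hat{\mathcal W}^\B,\hat{\mathcal V}^\B(X_0-1))$ by the first assertion, and composition with the group morphism of Lemma \ref{lemma:2:4:12042020} gives the claim.

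\textbf{Main obstacle.} The only genuine content is the first assertion, and within it the subtle point is the surjectivity half of the stabilization claims (that $\mathrm{aut}_{(\mu,g)}^{\mathcal V,(1),\B}$ maps \emph{onto} $\hat{\mathcal W}^\B$ and $\mathrm{aut}_{(\mu,g)}^{\mathcal V,(10),\B}$ maps \emph{onto} $\hat{\mathcal V}^\B(X_0-1)$), since an injective continuous endomorphism of a complete filtered module need not be surjective. I expect this to be dispatched cleanly by the group-morphism property already established: the inverse of $(\mu,g)$ in $\mathsf G^\B(\mathbf k)$ produces an automorphism with the same containment properties, so the two automorphisms in play restrict to mutually inverse bijections of $\hat{\mathcal W}^\B$ (resp.\ of $\hat{\mathcal V}^\B(X_0-1)$), giving surjectivity for free. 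The remaining verifications — that $X_0^\mu-1$ and $X_1^\mu-1$ lie in the relevant one-sided ideals, and that right multiplication by $g$ preserves $\hat{\mathcal V}^\B(X_0-1)$ — are the routine computations with $\log$ and $\exp$ series alluded to in \S\ref{sect:212}, and can be left to the reader or stated in a line.
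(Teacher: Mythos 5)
Your overall architecture — verify that the pair stabilizes $(\hat{\mathcal W}^\B,\hat{\mathcal V}^\B(X_0-1))$, then apply Lemma \ref{lemma:2:4:12042020} and compose with the group morphism of the preceding lemma — is exactly the paper's, and your treatment of $\hat{\mathcal W}^\B$ (via $X_1^\mu-1\in\hat{\mathcal V}^\B(X_1-1)$) is correct. Your device of getting surjectivity of the restrictions from the inverse element of $(\mathsf G^\B(\mathbf k),\circledast)$ also works; the paper instead reads it off from the invertibility of $f_\mu(t):=((1+t)^\mu-1)/t$ in $\mathbf k[[t]]$.

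However, the step concerning $\hat{\mathcal V}^\B(X_0-1)$ rests on two false intermediate claims. Since $\hat{\mathcal V}^\B(X_0-1)$ is a \emph{left} ideal, neither ``$g(X_0^\mu-1)g^{-1}\in\hat{\mathcal V}^\B(X_0-1)$'' nor ``$\hat{\mathcal V}^\B(X_0-1)\cdot g=\hat{\mathcal V}^\B(X_0-1)$'' holds in general: right multiplication by $g^{\pm1}$ is indeed a left-module automorphism of $\hat{\mathcal V}^\B$, but it carries the submodule $\hat{\mathcal V}^\B(X_0-1)$ to the \emph{different} submodule $\hat{\mathcal V}^\B(X_0-1)g^{\pm1}$ (already $(X_0-1)X_1^{-1}\notin\mathcal V^\B(X_0-1)$, since the classes of $X_0X_1^{-1}$ and $X_1^{-1}$ in $\mathcal M^\B$ are indexed by the distinct cosets $X_0X_1^{-1}\langle X_0\rangle\neq X_1^{-1}\langle X_0\rangle$). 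In particular $\mathrm{aut}^{\mathcal V,(1),\B}_{(\mu,g)}$ does \emph{not} preserve $\hat{\mathcal V}^\B(X_0-1)$; only $\mathrm{aut}^{\mathcal V,(10),\B}_{(\mu,g)}$ does, and precisely because the two defects you introduce cancel each other. The correct (and shorter) computation, which is the paper's, is
$$
\mathrm{aut}^{\mathcal V,(10),\B}_{(\mu,g)}(v\cdot(X_0-1))
=\mathrm{aut}^{\mathcal V,(1),\B}_{(\mu,g)}(v)\cdot(gX_0^\mu g^{-1}-1)\cdot g
=\mathrm{aut}^{\mathcal V,(10),\B}_{(\mu,g)}(v)\cdot f_\mu(X_0-1)\cdot(X_0-1),
$$
where the $g^{-1}$ produced by the conjugation in \eqref{aut:V:1:1705} is absorbed by the right factor $g$ in \eqref{aut:V:10:1705}. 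So your conclusion is true, but the route you give to it is not: you should replace the two separate containment claims by this single identity (which moreover exhibits the image as $\hat{\mathcal V}^\B f_\mu(X_0-1)(X_0-1)=\hat{\mathcal V}^\B(X_0-1)$, giving stability and surjectivity at once).
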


\proof It follows from $\hat{\mathcal W}^\B=\mathbf k1\oplus\hat{\mathcal V}^\B(X_1-1)$, from 
$\mathrm{aut}_{(\mu,g)}^{\mathcal V,(1),\B}(X_1)=X_1^\mu$ for any $(\mu,g)\in\mathsf G^\B(\mathbf k)$, and from $X_1^\mu-1=f_\mu(X_1-1)\cdot 
(X_1-1)$, where $f_\mu(t):=((1+t)^\mu-1)/t\in\mathbf k[[t]]$ for any $\mu\in\mathbf k^\times$, that 
$\mathrm{aut}_{(\mu,g)}^{\mathcal V,(1),\B}$ restricts to an automorphism of $\hat{\mathcal W}^\B$ for 
any $(\mu,g)\in\mathsf G^\B(\mathbf k)$.  

If $(\mu,g)\in\mathsf G^\B(\mathbf k)$ and $v\in\hat{\mathcal V}^\B$, then $\mathrm{aut}_{(\mu,g)}^{\mathcal V,(10),\B}
(v\cdot (X_0-1))=\mathrm{aut}_{(\mu,g)}^{\mathcal V,(1),\B}(v\cdot (X_0-1))\cdot g=
\mathrm{aut}_{(\mu,g)}^{\mathcal V,(1),\B}(v)\cdot g\cdot (X_0^\mu-1)=
\mathrm{aut}_{(\mu,g)}^{\mathcal V,(10),\B}(v) f_\mu(X_0-1)\cdot (X_0-1)$; this implies that 
$\mathrm{aut}_{(\mu,g)}^{\mathcal V,(10),\B}$
restricts to an automorphism of $\hat{\mathcal V}^\B(X_0-1)$. 
\hfill\qed\medskip 

Recall that $\hat{\mathcal V}^\B$ is freely generated, as a topological algebra, by $\mathrm{log}X_0$ and $\mathrm{log}X_1$; 
for $g\in\hat{\mathcal V}^\B$, we denote by $\{\mathrm{log}X_0,\mathrm{log}X_1\}^*\to\mathbf k$ the map such that 
$g=\sum_{w\in\{\mathrm{log}X_0,\mathrm{log}X_1\}^*}(g|w)w$, where $\{\mathrm{log}X_0,\mathrm{log}X_1\}^*$
is the set of (possibly empty) words in $\mathrm{log}X_0,\mathrm{log}X_1$. 
  
\begin{defn}\label{def:Gamma:Betti} 
For $g\in\hat{\mathcal V}^\B$, $\Gamma_g\in\mathbf k[[t]]^\times$ is the series given by 
$$
\Gamma_g(t):=\mathrm{exp}(\sum_{n\geq 1}(-1)^{n+1}(g|(\mathrm{log}X_0)^{n-1}\mathrm{log}X_1)t^n/n).
$$ 
\end{defn}

\begin{defn} (see \cite{EF2}, Definition 1.1)
If $A$ is a $\mathbf k$-algebra and if $G\to\mathrm{Aut}_{\mathbf k\operatorname{-alg}}(A)$, $g\mapsto\alpha_g$ is a group morphism, 
a cocycle of $G$ in $A$ equipped with $g\mapsto\alpha_g$ is a map $G\to A^\times$, 
$g\mapsto c_g$ such that $c_{gg'}=c_g\cdot\alpha_g(c_{g'})$ for any $g,g'\in G$.   
\end{defn}

\begin{lem}\label{lemma:cocycle}
The map $\Gamma:\mathsf G^\B(\mathbf k)\to(\hat{\mathcal V}^\B)^\times$, $(\mu,g)\mapsto
\Gamma_g^{-1}(-\mathrm{log}X_1)$, is a cocycle of $(\mathsf G^\B(\mathbf k),\circledast)$
in $\hat{\mathcal V}^\B$ equipped with $(\mu,g)\mapsto\mathrm{aut}_{(\mu,g)}^{\mathcal V,(1),\B}$. 
This map corestricts to a map $\Gamma:\mathsf G^\B(\mathbf k)\to(\hat{\mathcal W}^\B)^\times$, which is 
a cocycle of the same group in $\hat{\mathcal W}^\B$ equipped with 
$(\mu,g)\mapsto\mathrm{aut}_{(\mu,g)}^{\mathcal W,(1),\B}$. 
\end{lem}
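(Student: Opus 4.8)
The plan is to verify directly that $\Gamma$ satisfies the cocycle identity $\Gamma_{(\mu,g)\circledast(\mu',g')}=\Gamma_{(\mu,g)}\cdot\mathrm{aut}_{(\mu,g)}^{\mathcal V,(1),\B}(\Gamma_{(\mu',g')})$, where I abbreviate $\Gamma_{(\mu,g)}:=\Gamma_g^{-1}(-\mathrm{log}X_1)$. First I would record the behaviour of the series $\Gamma_g$ under the operations defining $\circledast$. By Definition \ref{def:Gamma:Betti}, $\Gamma_g$ depends on $g$ only through the coefficients $(g|(\mathrm{log}X_0)^{n-1}\mathrm{log}X_1)$, i.e. through the image of $g$ under the projection onto the subspace spanned by words of the form $(\mathrm{log}X_0)^{n-1}\mathrm{log}X_1$. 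So the key computation is to express, for $(\mu,g)\in\mathsf G^\B(\mathbf k)$ and $h\in\mathcal G(\hat{\mathcal V}^\B)$, the series $\Gamma_{\mathrm{aut}_{(\mu,g)}^{\mathcal V,(10),\B}(h)}$ in terms of $\Gamma_g$, $\Gamma_h$ and $\mu$; since $(\mu,g)\circledast(\mu',g')=(\mu\mu',\mathrm{aut}_{(\mu,g)}^{\mathcal V,(10),\B}(g'))$, this is exactly what is needed.

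The main step is therefore a $\Gamma$-function-type functional equation. Using $\mathrm{aut}_{(\mu,g)}^{\mathcal V,(10),\B}(h)=\mathrm{aut}_{(\mu,g)}^{\mathcal V,(1),\B}(h)\cdot g$ together with \eqref{aut:V:1:1705}, which sends $\mathrm{log}X_0\mapsto g\cdot(\mu\,\mathrm{log}X_0)\cdot g^{-1}$ and $\mathrm{log}X_1\mapsto\mu\,\mathrm{log}X_1$, one extracts the relevant coefficients $(\mathrm{aut}_{(\mu,g)}^{\mathcal V,(10),\B}(h)\,|\,(\mathrm{log}X_0)^{n-1}\mathrm{log}X_1)$. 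I expect to obtain an identity of the shape
$$
\Gamma_{\mathrm{aut}_{(\mu,g)}^{\mathcal V,(10),\B}(h)}(t)=\Gamma_g(t)\cdot\Gamma_h(\mu t),
$$
which is the Betti analogue of the classical two-cocycle property of the $\Gamma$-function of an associator (cf. the relation between $\Gamma_\Phi$ and the KZ associator in the de Rham setting). Plugging $h=g'$ and evaluating at $t=-\mathrm{log}X_1$, and using that $\mathrm{aut}_{(\mu,g)}^{\mathcal V,(1),\B}(-\mathrm{log}X_1)=-\mu\,\mathrm{log}X_1$ so that $\mathrm{aut}_{(\mu,g)}^{\mathcal V,(1),\B}(\Gamma_{g'}^{-1}(-\mathrm{log}X_1))=\Gamma_{g'}^{-1}(-\mu\,\mathrm{log}X_1)$, the cocycle identity $\Gamma_{(\mu,g)\circledast(\mu',g')}=\Gamma_{(\mu,g)}\cdot\mathrm{aut}_{(\mu,g)}^{\mathcal V,(1),\B}(\Gamma_{(\mu',g')})$ follows by inversion. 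One also checks $\Gamma_{(1,1)}=1$, so the normalization is correct; invertibility in $(\hat{\mathcal V}^\B)^\times$ is clear since $\Gamma_g\in\mathbf k[[t]]^\times$ and $-\mathrm{log}X_1\in F^1\hat{\mathcal V}^\B$.

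For the corestriction statement, it suffices to note that $-\mathrm{log}X_1\in\hat{\mathcal V}^\B(X_1-1)\subset\hat{\mathcal W}^\B$: indeed $\mathrm{log}X_1$ is a series in $X_1-1$ with vanishing constant term, hence lies in $\hat{\mathcal V}^\B(X_1-1)$, and $\hat{\mathcal W}^\B=\mathbf k1\oplus\hat{\mathcal V}^\B(X_1-1)$ is a subalgebra, so $\Gamma_g^{-1}(-\mathrm{log}X_1)$, being $1$ plus a series in $\mathrm{log}X_1$, lies in $\hat{\mathcal W}^\B$. Since $\mathrm{aut}_{(\mu,g)}^{\mathcal W,(1),\B}$ is by definition (Lemma-Definition \ref{lemma:2:5:12042020}) the restriction of $\mathrm{aut}_{(\mu,g)}^{\mathcal V,(1),\B}$ to $\hat{\mathcal W}^\B$, the cocycle identity in $\hat{\mathcal W}^\B$ is the restriction of the one already proved in $\hat{\mathcal V}^\B$. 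The only genuinely nontrivial point — and the one I would expect to require the most care — is the derivation of the functional equation $\Gamma_{\mathrm{aut}_{(\mu,g)}^{\mathcal V,(10),\B}(h)}(t)=\Gamma_g(t)\cdot\Gamma_h(\mu t)$: this involves a careful bookkeeping of how the coefficients on the words $(\mathrm{log}X_0)^{n-1}\mathrm{log}X_1$ transform under conjugation by $g$ and rescaling by $\mu$, and matching the resulting generating series with the exponential defining $\Gamma$.
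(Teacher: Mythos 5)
Your proposal is correct and follows essentially the same route as the paper: the paper's proof simply refers to Lemmas 1.12 and 1.13 of \cite{EF2}, whose content is precisely the coefficient identity $(\mathrm{aut}^{\mathcal V,(10),\B}_{(\mu,g)}(h)\,|\,(\mathrm{log}X_0)^{n-1}\mathrm{log}X_1)=(g|(\mathrm{log}X_0)^{n-1}\mathrm{log}X_1)+\mu^n(h|(\mathrm{log}X_0)^{n-1}\mathrm{log}X_1)$, i.e.\ your functional equation $\Gamma_{\mathrm{aut}^{\mathcal V,(10),\B}_{(\mu,g)}(h)}(t)=\Gamma_g(t)\Gamma_h(\mu t)$ (compare the Remark after Theorem \ref{thm:4:5:1606}), from which the cocycle identity and the corestriction to $\hat{\mathcal W}^\B$ follow exactly as you describe.
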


\proof The proof is similar to that of Lemmas 1.12 and 1.13 in \cite{EF2}. \hfill\qed\medskip 

\begin{lemdef}\label{lem:2:9:0105}
For $(\mu,g)\in\mathsf G^\B(\mathbf k)$, set ${}^\Gamma\!\mathrm{aut}_{(\mu,g)}^{\mathcal W,(1),\B}:=
\mathrm{Ad}_{\Gamma_g^{-1}(-\mathrm{log}X_1)}\circ
\mathrm{aut}_{(\mu,g)}^{\mathcal W,(1),\B}\in\mathrm{Aut}_{\mathbf k\operatorname{-}\mathrm{alg}}(\hat{\mathcal W}^\B)$, 
${}^\Gamma\!\mathrm{aut}_{(\mu,g)}^{\mathcal M,(10),\B}:=\ell_{\Gamma_g^{-1}(-\mathrm{log}X_1)}\circ
\mathrm{aut}_{(\mu,g)}^{\mathcal M,(10),\B}\in\mathrm{Aut}_{\mathbf k\operatorname{-}\mathrm{mod}}(\hat{\mathcal M}^\B)$.
Then the pair $({}^\Gamma\!\mathrm{aut}_{(\mu,g)}^{\mathcal W,(1),\B},
{}^\Gamma\!\mathrm{aut}_{(\mu,g)}^{\mathcal M,(10),\B})$ belongs to 
$\mathrm{Aut}(\hat{\mathcal W}^\B,\hat{\mathcal M}^\B)$ and the map $(\mathsf G^\B(\mathbf k),\circledast)
\to\mathrm{Aut}(\hat{\mathcal W}^\B,\hat{\mathcal M}^\B)$ is a group morphism. 
\end{lemdef}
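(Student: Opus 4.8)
The statement asserts three things: (i) for each $(\mu,g)$, the pair $({}^\Gamma\!\mathrm{aut}_{(\mu,g)}^{\mathcal W,(1),\B}, {}^\Gamma\!\mathrm{aut}_{(\mu,g)}^{\mathcal M,(10),\B})$ lies in $\mathrm{Aut}(\hat{\mathcal W}^\B,\hat{\mathcal M}^\B)$, i.e.\ the two automorphisms are module-compatible over the algebra map; (ii) the assignment is a group morphism. The approach is to deduce everything from the already-established facts that $(\mathrm{aut}_{(\mu,g)}^{\mathcal W,(1),\B},\mathrm{aut}_{(\mu,g)}^{\mathcal M,(10),\B})$ is a module-compatible pair defining a group morphism $(\mathsf G^\B(\mathbf k),\circledast)\to\mathrm{Aut}(\hat{\mathcal W}^\B,\hat{\mathcal M}^\B)$ (Lemma-Definition \ref{lemma:2:5:12042020}), together with the cocycle property of $\Gamma$ from Lemma \ref{lemma:cocycle}.

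For (i), I would first check directly that $\mathrm{Ad}_{c}$ on $\hat{\mathcal W}^\B$ and $\ell_{c}$ on $\hat{\mathcal M}^\B$ form a module-compatible pair whenever $c=\Gamma_g^{-1}(-\mathrm{log}X_1)\in(\hat{\mathcal W}^\B)^\times$: for $w\in\hat{\mathcal W}^\B$ and $m\in\hat{\mathcal M}^\B$ one has $\ell_c(w\cdot m)=c w m=(c w c^{-1})(c m)=\mathrm{Ad}_c(w)\cdot\ell_c(m)$, using that $\hat{\mathcal M}^\B$ is a $\hat{\mathcal V}^\B$-module and $c$ is invertible. Since the composition of module-compatible pairs is module-compatible, and $(\mathrm{aut}_{(\mu,g)}^{\mathcal W,(1),\B},\mathrm{aut}_{(\mu,g)}^{\mathcal M,(10),\B})\in\mathrm{Aut}(\hat{\mathcal W}^\B,\hat{\mathcal M}^\B)$ by Lemma-Definition \ref{lemma:2:5:12042020}, the composite $(\mathrm{Ad}_c\circ\mathrm{aut}^{\mathcal W,(1),\B}_{(\mu,g)},\ell_c\circ\mathrm{aut}^{\mathcal M,(10),\B}_{(\mu,g)})$ lies in $\mathrm{Aut}(\hat{\mathcal W}^\B,\hat{\mathcal M}^\B)$. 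One should note in passing that $\ell_c$ and $\mathrm{Ad}_c$ are genuine automorphisms (bijectivity from invertibility of $c$), so these are automorphisms and not merely endomorphisms.

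For (ii), the group-morphism property is the standard \emph{cocycle twist} computation. Write $\phi_{(\mu,g)}:=\mathrm{aut}^{\mathcal W,(1),\B}_{(\mu,g)}$ for the algebra map and $c_{(\mu,g)}:=\Gamma_g^{-1}(-\mathrm{log}X_1)$; Lemma \ref{lemma:cocycle} gives $c_{(\mu,g)\circledast(\mu',g')}=c_{(\mu,g)}\cdot\phi_{(\mu,g)}(c_{(\mu',g')})$. Then on $\hat{\mathcal W}^\B$,
\[
\mathrm{Ad}_{c_{(\mu,g)\circledast(\mu',g')}}\circ(\phi_{(\mu,g)}\circ\phi_{(\mu',g')})
=\mathrm{Ad}_{c_{(\mu,g)}}\circ\mathrm{Ad}_{\phi_{(\mu,g)}(c_{(\mu',g')})}\circ\phi_{(\mu,g)}\circ\phi_{(\mu',g')}
=\mathrm{Ad}_{c_{(\mu,g)}}\circ\phi_{(\mu,g)}\circ\mathrm{Ad}_{c_{(\mu',g')}}\circ\phi_{(\mu',g')},
\]
using the intertwining identity $\mathrm{Ad}_{\phi(c)}\circ\phi=\phi\circ\mathrm{Ad}_c$ for an algebra automorphism $\phi$, together with the fact that $\phi_{(\mu,g)}\circ\phi_{(\mu',g')}=\phi_{(\mu,g)\circledast(\mu',g')}$ from Lemma-Definition \ref{lemma:2:5:12042020}. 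This is exactly the claim $^\Gamma\!\mathrm{aut}^{\mathcal W,(1),\B}_{(\mu,g)\circledast(\mu',g')}={}^\Gamma\!\mathrm{aut}^{\mathcal W,(1),\B}_{(\mu,g)}\circ{}^\Gamma\!\mathrm{aut}^{\mathcal W,(1),\B}_{(\mu',g')}$. The identical computation with $\ell$ in place of $\mathrm{Ad}$, using the intertwining $\ell_{\psi(c)}\circ\psi=\psi\circ\ell_c$ valid for any module automorphism $\psi$ covering an algebra automorphism sending $c\mapsto\psi(c)$ (here $\psi=\mathrm{aut}^{\mathcal M,(10),\B}_{(\mu,g)}$ covering $\phi_{(\mu,g)}$), gives the morphism property on $\hat{\mathcal M}^\B$. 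Since a morphism into $\mathrm{Aut}(\hat{\mathcal W}^\B,\hat{\mathcal M}^\B)$ is just a compatible pair of morphisms into $\mathrm{Aut}_{\mathbf k\text{-alg}}(\hat{\mathcal W}^\B)$ and $\mathrm{Aut}_{\mathbf k\text{-mod}}(\hat{\mathcal M}^\B)$, combining the two displays finishes (ii).

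The only mild subtlety — the step I would be most careful about — is the bookkeeping with $\ell$ versus $\mathrm{Ad}$: on $\hat{\mathcal W}^\B$ the twist is by conjugation $\mathrm{Ad}_c$ (so as to preserve the algebra structure), whereas on $\hat{\mathcal M}^\B$ it is by left multiplication $\ell_c$ (a module map, not an algebra map), and one must verify that the two twists are still compatible over each other and that the compatibility survives composition. This is precisely the computation in the previous paragraph, and it works because $\ell$ and $\mathrm{Ad}$ satisfy the \emph{same} cocycle-intertwining identity relative to the pair $(\phi_{(\mu,g)},\mathrm{aut}^{\mathcal M,(10),\B}_{(\mu,g)})$. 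Everything else is a formal consequence of Lemmas \ref{lemma:2:5:12042020} and \ref{lemma:cocycle}, so no new ideas are needed beyond assembling them; alternatively one may cite the analogous Lemma-Definition 1.14 of \cite{EF2}, of which this is the Betti mirror.
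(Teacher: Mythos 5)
Your proof is correct and follows the same route as the paper, which simply invokes the general cocycle-twisting criterion of \cite{EF2}, Lemma 1.4 and checks its hypotheses via Lemmas \ref{lemma:cocycle} and \ref{lemma:2:5:12042020}; you have merely unpacked that criterion's proof (module-compatibility of $(\mathrm{Ad}_c,\ell_c)$ plus the intertwining identities). The only blemish is notational: the identity you need on the module side is $\ell_{\phi_{(\mu,g)}(c)}\circ\psi=\psi\circ\ell_c$ with $\phi_{(\mu,g)}$ the underlying \emph{algebra} automorphism, not $\ell_{\psi(c)}$ as written, since $\psi$ is not defined on $\hat{\mathcal W}^\B$ — but your parenthetical makes clear this is what you meant.
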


\proof  Lemma 1.4 in \cite{EF2} gives conditions for a cocycle to be used for twisting a group 
morphism $G\to\mathrm{Aut}(A,M)$.  Lemmas  \ref{lemma:cocycle} and \ref{lemma:2:5:12042020}  
show that these conditions are fulfilled. \hfill\qed\medskip 

\subsection{A bitorsor structure on $\mathsf G^{\DR,\B}(\mathbf k)$}\label{sect:2:3:02:07}

The group $(\mathsf G^\DR(\mathbf k),\circledast)$ is defined in \cite{EF2}, \S1.6.1. Recall that 
$\mathsf G^\DR(\mathbf k)=\mathbf k^\times\times\mathcal G(\hat{\mathcal V}^\DR)$ (equality of sets), 
where $(\hat{\mathcal V}^\DR,\hat\Delta^{\mathcal V,\DR})$ is the topological Hopf algebras 
from \cite{EF1}, \S1.3, and the product $\circledast$ is defined in \cite{EF2}, \S1.6.1.
In \cite{EF1}, \S3.3, we defined an isomorphism $\mathrm{iso}^{\mathcal V}:
\hat{\mathcal V}^\B\to\hat{\mathcal V}^\DR$ of topological $\mathbf k$-algebras.  

\begin{lem}\label{lem:iso:1704}
$\mathrm{iso}^{\mathcal V}$ is an isomorphism of topological Hopf algebras. 
\end{lem}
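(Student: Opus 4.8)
The plan is to show that the algebra isomorphism $\mathrm{iso}^{\mathcal V}:\hat{\mathcal V}^\B\to\hat{\mathcal V}^\DR$ intertwines the coproducts $\hat\Delta^{\mathcal V,\B}$ and $\hat\Delta^{\mathcal V,\DR}$, i.e. that $(\mathrm{iso}^{\mathcal V}\otimes\mathrm{iso}^{\mathcal V})\circ\hat\Delta^{\mathcal V,\B}=\hat\Delta^{\mathcal V,\DR}\circ\mathrm{iso}^{\mathcal V}$, and likewise that it is compatible with the counits (and hence, being a Hopf algebra morphism between Hopf algebras, automatically with the antipodes). Since both sides of the coproduct identity are morphisms of topological $\mathbf k$-algebras from $\hat{\mathcal V}^\B$ to $(\hat{\mathcal V}^\DR)^{\hat\otimes 2}$ — the left side because $\mathrm{iso}^{\mathcal V}$ and $\hat\Delta^{\mathcal V,\B}$ are algebra maps and the tensor product of algebra maps is one, the right side similarly — it suffices to check the identity on a set of topological algebra generators of $\hat{\mathcal V}^\B$.

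First I would recall from \cite{EF1}, \S2.5 (resp. \S1.3) the explicit formulas for $\hat\Delta^{\mathcal V,\B}$ on the generators $X_0,X_1$ (these are group-like: $\hat\Delta^{\mathcal V,\B}(X_i)=X_i\otimes X_i$, reflecting $\mathcal G(\mathcal V^\B)=F_2$) and for $\hat\Delta^{\mathcal V,\DR}$ on the corresponding de Rham generators, together with the explicit definition of $\mathrm{iso}^{\mathcal V}$ from \cite{EF1}, \S3.3. Then I would evaluate $(\mathrm{iso}^{\mathcal V}\otimes\mathrm{iso}^{\mathcal V})\circ\hat\Delta^{\mathcal V,\B}$ and $\hat\Delta^{\mathcal V,\DR}\circ\mathrm{iso}^{\mathcal V}$ on $X_0$ and on $X_1$ and compare. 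The natural expectation is that $\mathrm{iso}^{\mathcal V}$ carries group-like elements to group-like elements — indeed this is essentially the content of \S3.3 of \cite{EF1}, where the isomorphism is set up precisely to match the prounipotent completion of $F_2$ on the Betti side with its de Rham avatar — so the verification on each generator reduces to the statement that the image under $\mathrm{iso}^{\mathcal V}$ of a group-like element is group-like, which then propagates to all of $\hat{\mathcal V}^\B$ by multiplicativity. Compatibility with the counit is immediate since $\mathrm{iso}^{\mathcal V}$ is unital and the counit is characterized as the algebra map killing the augmentation ideal, which $\mathrm{iso}^{\mathcal V}$ preserves.

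The one genuinely substantive point — and the step I expect to be the main obstacle — is bookkeeping the precise normalization of $\mathrm{iso}^{\mathcal V}$ from \cite{EF1}, \S3.3: this map is not simply $X_i\mapsto e^{x_i}$ but involves a correction (typically on the $X_1$-generator, via a $\Gamma$-function type factor) designed to make the de Rham and Betti pictures compatible, and one must check that this correction factor is itself group-like for $\hat\Delta^{\mathcal V,\DR}$ so that the group-like property is preserved on the nose. Once that normalization is pinned down, the remaining computation is a short direct check on the two generators, after which continuity and density of the subalgebra generated by $X_0,X_1$ in $\hat{\mathcal V}^\B$ extend the identity to the completion, proving that $\mathrm{iso}^{\mathcal V}$ is an isomorphism of topological Hopf algebras.
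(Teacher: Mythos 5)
Your strategy is exactly the paper's: both sides of the coproduct identity are continuous algebra morphisms, so it suffices to check them on the topological generators $X_0,X_1$. The only correction is that your anticipated ``main obstacle'' does not exist: $\mathrm{iso}^{\mathcal V}$ is literally $X_i\mapsto\exp(e_i)$ with no $\Gamma$-type correction factor (those appear later, in the twisted automorphisms ${}^\Gamma\!\mathrm{aut}$ and comparison maps ${}^\Gamma\!\mathrm{comp}$, not in $\mathrm{iso}^{\mathcal V}$ itself), and the generator check is immediate because $e_i$ is primitive for $\hat\Delta^{\mathcal V,\DR}$, hence $\exp(e_i)$ is group-like, matching the group-likeness of $X_i$ for $\hat\Delta^{\mathcal V,\B}$.
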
 

\proof This follows from $\mathrm{iso}^{\mathcal V}:X_i\mapsto\mathrm{exp}(e_i)$ for $i=0,1$, 
and from the fact the $X_i$ (resp. $e_i$) is group-like (resp. primitive) for $\hat\Delta^{\mathcal V,\B}$
(resp. $\hat\Delta^{\mathcal V,\DR}$). \hfill\qed\medskip 

\begin{lem}\label{lem:gp:iso:1704}
The map $\mathrm{iso}^{\mathcal V}$ restricts to group isomorphisms 
$\mathrm{iso}^{\mathcal G}:(\mathcal G(\hat{\mathcal V}^\B),\circledast)\to
(\mathcal G(\hat{\mathcal V}^\DR),\circledast)$ and $\mathrm{iso}^{\mathsf G}:
(\mathsf G^\B(\mathbf k),\circledast)\to(\mathsf G^\DR(\mathbf k),\circledast)$, given by the map 
$\mathsf G^\B(\mathbf k)=\mathbf k^\times\times\mathcal G(\hat{\mathcal V}^\DR)
\stackrel{\mathrm{id}\times \mathrm{iso}^{\mathcal G}}\to
\mathbf k^\times\times\mathcal G(\hat{\mathcal V}^\DR)=\mathsf G^\DR(\mathbf k)$. 
\end{lem}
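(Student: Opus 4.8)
The plan is to verify directly that $\mathrm{iso}^{\mathcal V}$ intertwines the two group laws $\circledast$, using that it is already known to be an algebra (and, by Lemma \ref{lem:iso:1704}, Hopf algebra) isomorphism. First I would observe that, since $\mathrm{iso}^{\mathcal V}$ is a Hopf algebra isomorphism, it carries group-like elements to group-like elements, so it restricts to a bijection $\mathcal G(\hat{\mathcal V}^\B)\to\mathcal G(\hat{\mathcal V}^\DR)$, and hence the map on $\mathsf G^\B(\mathbf k)=\mathbf k^\times\times\mathcal G(\hat{\mathcal V}^\B)$ defined by $\mathrm{id}\times\mathrm{iso}^{\mathcal G}$ is a well-defined bijection onto $\mathsf G^\DR(\mathbf k)$. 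It remains to check it is a group homomorphism for $\circledast$.

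The key step is a compatibility of the automorphisms $\mathrm{aut}^{\mathcal V,(1),\B}_{(\mu,g)}$ and $\mathrm{aut}^{\mathcal V,(10),\B}_{(\mu,g)}$ with their de Rham analogues under conjugation by $\mathrm{iso}^{\mathcal V}$; namely I would prove the identities
\begin{equation*}
\mathrm{iso}^{\mathcal V}\circ\mathrm{aut}^{\mathcal V,(1),\B}_{(\mu,g)}=\mathrm{aut}^{\mathcal V,(1),\DR}_{(\mu,\mathrm{iso}^{\mathcal V}(g))}\circ\mathrm{iso}^{\mathcal V},\qquad
\mathrm{iso}^{\mathcal V}\circ\mathrm{aut}^{\mathcal V,(10),\B}_{(\mu,g)}=\mathrm{aut}^{\mathcal V,(10),\DR}_{(\mu,\mathrm{iso}^{\mathcal V}(g))}\circ\mathrm{iso}^{\mathcal V}.
\end{equation*}
Since both sides of the first identity are algebra automorphisms of $\hat{\mathcal V}^\DR$, it suffices to check equality on the topological generators $X_0=\mathrm{iso}^{\mathcal V}(X_0)$-preimages, i.e. on $X_0,X_1$; here I would use that $\mathrm{iso}^{\mathcal V}(X_i)=\exp(e_i)$ together with the fact that the power operation $a\mapsto a^\mu$ is intertwined by $\mathrm{iso}^{\mathcal V}$ (which follows because $\mathrm{iso}^{\mathcal V}$ is an algebra map commuting with $\log$ and $\exp$), so that $\mathrm{iso}^{\mathcal V}(X_i^\mu)=\exp(e_i)^\mu=\exp(\mu e_i)$, matching the de Rham formula for $\mathrm{aut}^{\mathcal V,(1),\DR}$. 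The second identity then follows by composing with right multiplication, using \eqref{aut:V:10:1705} and its de Rham counterpart together with the first identity. Granting these, applying $\mathrm{iso}^{\mathcal V}$ to $(\mu,g)\circledast(\mu',g')=(\mu\mu',\mathrm{aut}^{\mathcal V,(10),\B}_{(\mu,g)}(g'))$ yields $(\mu\mu',\mathrm{iso}^{\mathcal V}(\mathrm{aut}^{\mathcal V,(10),\B}_{(\mu,g)}(g')))=(\mu\mu',\mathrm{aut}^{\mathcal V,(10),\DR}_{(\mu,\mathrm{iso}^{\mathcal V}(g))}(\mathrm{iso}^{\mathcal V}(g')))$, which is exactly $\mathrm{iso}^{\mathsf G}(\mu,g)\circledast\mathrm{iso}^{\mathsf G}(\mu',g')$; this also shows the restriction to $\mathcal G(\hat{\mathcal V}^\B)$ (the $\mu=\mu'=1$ case, since $\mathcal G(\hat{\mathcal V}^\B)\hookrightarrow\mathsf G^\B(\mathbf k)$ via $g\mapsto(1,g)$) is a group isomorphism.

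The main obstacle I anticipate is purely bookkeeping: the de Rham definitions of $\mathrm{aut}^{\mathcal V,(1),\DR}_{(\mu,h)}$ and $\mathrm{aut}^{\mathcal V,(10),\DR}_{(\mu,h)}$ live in \cite{EF2}, \S1.6.1, and one must match conventions carefully — in particular confirming that the de Rham power operation and the $\mu\bullet e_i=\mu e_i$ action correspond under $\mathrm{iso}^{\mathcal V}$ to the Betti operations $a\mapsto a^\mu$ and $\mu\bullet X_i=X_i^\mu$ of \S\ref{sect:212}. Once the intertwining of the basic automorphisms is in place, everything else is a formal transport-of-structure argument, and no genuinely new computation is required beyond what already appears in Lemmas \ref{lem:iso:1704} and \ref{lem:2:1:2606}.
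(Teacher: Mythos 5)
Your proposal is correct and is essentially the paper's argument: the paper also derives bijectivity from Lemma \ref{lem:iso:1704} and disposes of the homomorphism property by ``comparison of the constructions'' of the two products, which is precisely the intertwining identities $\mathrm{iso}^{\mathcal V}\circ\mathrm{aut}^{\mathcal V,(\alpha),\B}_{(\mu,g)}=\mathrm{aut}^{\mathcal V,(\alpha),\DR}_{\mathrm{iso}^{\mathsf G}(\mu,g)}\circ\mathrm{iso}^{\mathcal V}$ ($\alpha\in\{1,10\}$) that you spell out (and which the paper itself records later, in the proof of Lemma \ref{lem:2104}). Your version simply makes explicit the generator check on $X_0,X_1$ that the paper leaves implicit.
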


\proof The fact that $\mathrm{iso}^{\mathcal G}$ and $\mathrm{iso}^{\mathsf G}$ are bijections follows from Lemma 
\ref{lem:iso:1704}. The compatibility of these maps with the products $\circledast$ on both sides follows from comparison of 
their constructions, given respectively in \S\ref{sect:constr:pdt:1704} and \cite{EF2}, \S1.6.1. \hfill\qed\medskip 

\begin{defn}\label{def:2:12:1244}
The bitorsor $\mathsf G^{\DR,\B}(\mathbf k)$ is the structure obtained by applying Lemma \ref{construction(*)} to the 
group isomorphism from Lemma \ref{lem:gp:iso:1704}. 
\end{defn}

The underlying set of this bitorsor is $\mathsf G^\DR(\mathbf k)$, the left action is that of $\mathsf G^\DR(\mathbf k)$ on itself, 
and the right action is that of $\mathsf G^\B(\mathbf k)$ on $\mathsf G^\DR(\mathbf k)$ given by $(\mu,g)\cdot(\mu',g'):=
(\mu,g)\circledast \mathrm{iso}^{\mathsf G}(\mu',g')$. The corresponding torsor coincides therefore with the torsor 
$\mathsf G^{\DR,\B}(\mathbf k)$ from \cite{EF2}, Definition 2.8. 

\subsection{Actions of the bitorsor $\mathsf G^{\DR,\B}(\mathbf k)$}\label{sect:2:4:2804}

\begin{defn} We define $\mathbf{k}\operatorname{-alg-mod}$ to be the category whose 
objects are the pairs $(A,M)$, where $A$ is a $\mathbf k$-algebra and 
$M$ is $A$-module, and such that a morphism $(A,M)\to (A',M')$ is a pair of 
an algebra morphism $A\to A'$ and a compatible $\mathbf k$-module 
morphism $M\to M'$. 
\end{defn}

In \cite{EF1}, \S1.3, we defined the topological $\mathbf k$-subalgebra of $\hat{\mathcal V}^\DR$ by 
$\hat{\mathcal W}^\DR:=\mathbf k1\oplus\hat{\mathcal V}^\DR e_1$, the $\hat{\mathcal V}^\DR$-module 
$\hat{\mathcal M}^\DR:=\hat{\mathcal V}^\DR/\hat{\mathcal V}^\DR e_0$, and its element $1_\DR$ defined 
as the class of $1$, and showed that $\hat{\mathcal M}^\DR$ 
is freely generated by $1_\DR$ as a $\hat{\mathcal W}^\DR$-module.  Recall from \cite{EF1}, \S3.3
that the isomorphism $\mathrm{iso}^{\mathcal V}$ induces compatible isomorphisms 
$\mathrm{iso}^{\mathcal W}:\hat{\mathcal W}^\B\to\hat{\mathcal W}^\DR$ 
of topological $\mathbf k$-algebras
and $\mathrm{iso}^{\mathcal M}:\hat{\mathcal M}^\B\to\hat{\mathcal M}^\DR$ of
topological $\mathbf k$-modules. 

\begin{lem} 
For $\omega\in\{\B,\DR\}$, the pair $(\hat{\mathcal W}^\omega,\hat{\mathcal M}^\omega)$, in which the second component is viewed as a left
module over the first, is an object of $\mathbf{k}\operatorname{-}\mathrm{alg}\operatorname{-}\mathrm{mod}$, which will be denoted 
$(\hat{\mathcal W},\hat{\mathcal M})^\omega$. An isomorphism $\mathrm{iso}^{\mathcal W,\mathcal M}:
(\hat{\mathcal W},\hat{\mathcal M})^\B\to(\hat{\mathcal W},\hat{\mathcal M})^\DR$ is given by the pair $(\mathrm{iso}^{\mathcal W},\mathrm{iso}^{\mathcal M})$. 
\end{lem}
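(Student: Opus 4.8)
The plan is to verify the two assertions of the lemma directly from the definitions recalled in the preceding paragraphs, since everything needed has already been set up. First I would check that $(\hat{\mathcal W}^?,\hat{\mathcal M}^?)$ is indeed an object of $\mathbf k\text{-alg-mod}$: by the citations to \cite{EF1}, \S2.5 (for $?=\B$) and \S1.3 (for $?=\DR$), $\hat{\mathcal W}^?$ is a topological $\mathbf k$-algebra and $\hat{\mathcal M}^?$ carries a module structure over $\hat{\mathcal V}^?$, hence by restriction along the inclusion $\hat{\mathcal W}^?\subset\hat{\mathcal V}^?$ a module structure over $\hat{\mathcal W}^?$. This is exactly the data required by the definition of $\mathbf k\text{-alg-mod}$, so $O^?_{\mathcal W,\mathcal M}$ is well-defined.

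Next I would show that $(\mathrm{iso}^{\mathcal W},\mathrm{iso}^{\mathcal M})$ is a morphism in $\mathbf k\text{-alg-mod}$. Here $\mathrm{iso}^{\mathcal W}$ is an isomorphism of topological $\mathbf k$-algebras and $\mathrm{iso}^{\mathcal M}$ an isomorphism of topological $\mathbf k$-modules, as recalled from \cite{EF1}, \S3.3; both are restrictions/quotients of the single algebra isomorphism $\mathrm{iso}^{\mathcal V}:\hat{\mathcal V}^\B\to\hat{\mathcal V}^\DR$. The compatibility condition — that $\mathrm{iso}^{\mathcal M}(w\cdot m)=\mathrm{iso}^{\mathcal W}(w)\cdot\mathrm{iso}^{\mathcal M}(m)$ for $w\in\hat{\mathcal W}^\B$, $m\in\hat{\mathcal M}^\B$ — follows because $\mathrm{iso}^{\mathcal V}$ is an algebra homomorphism and the module structures on $\hat{\mathcal M}^\B$ and $\hat{\mathcal M}^\DR$ are the quotient structures induced from left multiplication on $\hat{\mathcal V}^\B$ and $\hat{\mathcal V}^\DR$ respectively; one simply lifts $m$ to $\hat{\mathcal V}^\B$, uses multiplicativity of $\mathrm{iso}^{\mathcal V}$, and passes to the quotient, noting that $\mathrm{iso}^{\mathcal V}(\hat{\mathcal V}^\B(X_0-1))=\hat{\mathcal V}^\DR e_0$ so the construction descends. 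Since both components are bijective, the pair is an isomorphism in $\mathbf k\text{-alg-mod}$.

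There is really no substantive obstacle here: the statement is a bookkeeping consequence of facts already established in \cite{EF1} and recalled above, and the authors will almost certainly dispatch it with "Obvious" or a one-line reference. The only point requiring a moment's care is the compatibility of $\mathrm{iso}^{\mathcal M}$ with the two restricted module structures over $\hat{\mathcal W}^\B$ and $\hat{\mathcal W}^\DR$ — i.e. checking that the square formed by $\mathrm{iso}^{\mathcal V}$, $\mathrm{iso}^{\mathcal W}$, $\mathrm{iso}^{\mathcal M}$ and the action maps commutes — but this is immediate from the construction of $\mathrm{iso}^{\mathcal W}$ and $\mathrm{iso}^{\mathcal M}$ as the algebra- and module-level incarnations of the same map $\mathrm{iso}^{\mathcal V}$. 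Accordingly I would present the proof as: the first claim is the definition of $\mathbf k\text{-alg-mod}$ together with \cite{EF1}, \S\S1.3, 2.5; the second follows from \cite{EF1}, \S3.3 and the multiplicativity of $\mathrm{iso}^{\mathcal V}$.

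\proof The fact that $O^?_{\mathcal W,\mathcal M}$ is an object of $\mathbf k\text{-alg-mod}$ follows from \cite{EF1}, \S1.3 (for $?=\DR$) and \S2.5 (for $?=\B$), the module structure of $\hat{\mathcal M}^?$ over $\hat{\mathcal W}^?$ being obtained by restriction of its module structure over $\hat{\mathcal V}^?$ along $\hat{\mathcal W}^?\subset\hat{\mathcal V}^?$. By \cite{EF1}, \S3.3, the map $\mathrm{iso}^{\mathcal V}$ induces an isomorphism $\mathrm{iso}^{\mathcal W}$ of topological $\mathbf k$-algebras and an isomorphism $\mathrm{iso}^{\mathcal M}$ of topological $\mathbf k$-modules; their compatibility with the module structures follows from the multiplicativity of $\mathrm{iso}^{\mathcal V}$ and the fact that the actions of $\hat{\mathcal W}^\B$ and $\hat{\mathcal W}^\DR$ on $\hat{\mathcal M}^\B$ and $\hat{\mathcal M}^\DR$ are induced from left multiplication in $\hat{\mathcal V}^\B$ and $\hat{\mathcal V}^\DR$. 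Hence $(\mathrm{iso}^{\mathcal W},\mathrm{iso}^{\mathcal M})$ is an isomorphism in $\mathbf k\text{-alg-mod}$. \hfill\qed\medskip
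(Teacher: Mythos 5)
Your proposal is correct and matches the paper's own argument, which simply cites \cite{EF1}, \S\S1.3 and 2.5 for the first statement and \S3.3 for the second; your additional remarks on the compatibility of $\mathrm{iso}^{\mathcal M}$ with the module structures just make explicit what the paper leaves implicit. No discrepancy to report.
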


\proof The first statement follows from \cite{EF1}, \S\S1.3 and 2.5. The second statement follows from 
\cite{EF1}, \S3.3. \hfill\qed\medskip 

In \cite{EF2}, Lemma 1.14, we defined a group morphism
$$
(\mathsf G^\DR(\mathbf k),\circledast)\to\mathrm{Aut}_{\mathbf k\operatorname{-alg-mod}}((\hat{\mathcal W},\hat{\mathcal M})^\DR),\quad  
(\mu,g)\mapsto(^\Gamma\!\mathrm{aut}^{\mathcal W,(1),\DR}_{(\mu,g)}, 
\ ^\Gamma\!\mathrm{aut}^{\mathcal M,(10),\DR}_{(\mu,g)}).
$$ 

\begin{lem}\label{lem:2104}
The conditions of Lemma \ref{construction(**)} are satisfied in case of the group isomorphism 
$\mathrm{iso}^{\mathsf G}:\mathsf G^\B(\mathbf k)\to\mathsf G^\DR(\mathbf k)$, of the isomorphism 
$\mathrm{iso}^{\mathcal W,\mathcal M}:
(\hat{\mathcal W},\hat{\mathcal M})^\B\to
(\hat{\mathcal W},\hat{\mathcal M})^\DR$ 
in $\mathbf{k}\operatorname{-}\mathrm{alg}\operatorname{-}\mathrm{mod}$, and of the group morphisms $\mathsf G^\omega(\mathbf k)\ni(\mu,g)\mapsto (^\Gamma\!\mathrm{aut}^{\mathcal W,(1),\omega}_{(\mu,g)},\ ^\Gamma\!\mathrm{aut}^{\mathcal M,(10),\omega}_{(\mu,g)})
\in\mathrm{Aut}_{\mathbf{k}\operatorname{-}\mathrm{alg}\operatorname{-}\mathrm{mod}}((\hat{\mathcal W},\hat{\mathcal M})^\omega)$ for $\omega\in\{\B,\DR\}$.
\end{lem}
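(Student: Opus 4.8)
The plan is to verify, one by one, the four hypotheses of Lemma \ref{construction(**)} for the specified data, taking $G'=\mathsf G^\B(\mathbf k)$, $G=\mathsf G^\DR(\mathbf k)$, $i=\mathrm{iso}^{\mathsf G}$, the category $\mathcal C=\mathbf k\text{-alg-mod}$, the objects $O=O^\DR_{\mathcal W,\mathcal M}$ and $O'=O^\B_{\mathcal W,\mathcal M}$, and the distinguished isomorphism $i_{O',O}=\mathrm{iso}^{\mathcal W,\mathcal M}$. The group morphisms $G\to\mathrm{Aut}_{\mathcal C}(O)$ and $G'\to\mathrm{Aut}_{\mathcal C}(O')$ are respectively the one recalled just above from \cite{EF2}, Lemma 1.14, and its Betti analogue, which is exactly the group morphism $(\mathsf G^\B(\mathbf k),\circledast)\to\mathrm{Aut}(\hat{\mathcal W}^\B,\hat{\mathcal M}^\B)$ constructed in Lemma-Definition \ref{lem:2:9:0105}. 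The first three items (that $i$ is a group isomorphism, that $i_{O',O}$ is an isomorphism in $\mathcal C$, and that the two maps to automorphism groups are group morphisms) are already established: the first by Lemma \ref{lem:gp:iso:1704}, the second by the lemma preceding this one, and the third by Lemma-Definition \ref{lem:2:9:0105} together with \cite{EF2}, Lemma 1.14.

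Thus the only substantive point is the compatibility relation: for every $(\mu,g)\in\mathsf G^\B(\mathbf k)$ one must check
\begin{equation*}
\bigl({}^\Gamma\!\mathrm{aut}^{\mathcal W,(1),\DR}_{\mathrm{iso}^{\mathsf G}(\mu,g)},\ {}^\Gamma\!\mathrm{aut}^{\mathcal M,(10),\DR}_{\mathrm{iso}^{\mathsf G}(\mu,g)}\bigr)
=\mathrm{iso}^{\mathcal W,\mathcal M}\circ\bigl({}^\Gamma\!\mathrm{aut}^{\mathcal W,(1),\B}_{(\mu,g)},\ {}^\Gamma\!\mathrm{aut}^{\mathcal M,(10),\B}_{(\mu,g)}\bigr)\circ(\mathrm{iso}^{\mathcal W,\mathcal M})^{-1}
\end{equation*}
as an identity in $\mathrm{Aut}_{\mathcal C}(O^\DR_{\mathcal W,\mathcal M})$, i.e. componentwise in $\mathrm{Aut}_{\mathbf k\text{-alg}}(\hat{\mathcal W}^\DR)$ and in $\mathrm{Aut}_{\mathbf k\text{-mod}}(\hat{\mathcal M}^\DR)$. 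I would first prove the analogous statement one level up, at the level of $\hat{\mathcal V}$: namely that $\mathrm{aut}^{\mathcal V,(1),\DR}_{\mathrm{iso}^{\mathsf G}(\mu,g)}=\mathrm{iso}^{\mathcal V}\circ\mathrm{aut}^{\mathcal V,(1),\B}_{(\mu,g)}\circ(\mathrm{iso}^{\mathcal V})^{-1}$ and likewise for the $(10)$-variants. Since both sides are algebra (resp. module) automorphisms of $\hat{\mathcal V}^\DR$, it suffices to compare them on the topological generators $\mathrm{exp}(e_0),\mathrm{exp}(e_1)$; using $\mathrm{iso}^{\mathcal V}:X_i\mapsto\mathrm{exp}(e_i)$ and the fact that $\mathrm{iso}^{\mathcal V}$ intertwines the self-maps $a\mapsto a^\mu$ on the two sides (it carries $X_i^\mu=\mathrm{exp}(\mu\log X_i)$ to $\mathrm{exp}(\mu e_i)$, which is the corresponding DR power), this reduces to the defining formula \eqref{aut:V:1:1705} and its DR counterpart from \cite{EF2}, \S1.6, combined with $\mathrm{iso}^{\mathsf G}=\mathrm{id}\times\mathrm{iso}^{\mathcal G}$.

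Next I would descend to $\hat{\mathcal W}$ and $\hat{\mathcal M}$: by Lemma-Definition \ref{lemma:2:5:12042020} (and its DR analogue) the $\hat{\mathcal V}$-automorphisms restrict to, respectively, an automorphism of $\hat{\mathcal W}$ and an automorphism of the submodule $\hat{\mathcal V}(X_0-1)$ (resp.\ $\hat{\mathcal V}^\DR e_0$), hence induce automorphisms of $O_{\mathcal W,\mathcal M}$; since $\mathrm{iso}^{\mathcal V}$ carries $\hat{\mathcal W}^\B$ to $\hat{\mathcal W}^\DR$ and $\hat{\mathcal V}^\B(X_0-1)$ to $\hat{\mathcal V}^\DR e_0$ by \cite{EF1}, \S3.3, the $\hat{\mathcal V}$-level intertwining identity descends verbatim to the pair $(\mathrm{aut}^{\mathcal W,(1)},\mathrm{aut}^{\mathcal M,(10)})$. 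Finally I would incorporate the $\Gamma$-twist: the twisted maps differ from the untwisted ones by $\mathrm{Ad}_{\Gamma_g^{-1}(-\log X_1)}$ on the algebra side and by $\ell_{\Gamma_g^{-1}(-\log X_1)}$ on the module side, and one checks that $\mathrm{iso}^{\mathcal V}$ sends $\Gamma_g^{-1}(-\log X_1)$ to $\Gamma_{\mathrm{iso}^{\mathcal G}(g)}^{-1}(-e_1)$ — this is immediate from Definition \ref{def:Gamma:Betti} and the corresponding DR definition once one notes $\mathrm{iso}^{\mathcal V}(\log X_i)=e_i$ and that the coefficients $(g|(\log X_0)^{n-1}\log X_1)$ equal $(\mathrm{iso}^{\mathcal G}(g)|e_0^{n-1}e_1)$ since $\mathrm{iso}^{\mathcal V}$ is an algebra isomorphism matching the two alphabets. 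Conjugating by $\mathrm{iso}^{\mathcal V}$ then turns $\mathrm{Ad}_{\Gamma_g^{-1}(-\log X_1)}$ into $\mathrm{Ad}_{\Gamma_{\mathrm{iso}^{\mathcal G}(g)}^{-1}(-e_1)}$ and similarly for the left-multiplication operator, so the twisted intertwining identity follows, completing the verification. The main obstacle is essentially bookkeeping: keeping the powers $a\mapsto a^\mu$, the cocycle $\Gamma$, and the restriction-to-$\mathcal W,\mathcal M$ operations consistently matched under $\mathrm{iso}^{\mathcal V}$; no genuinely new computation is needed beyond the generator-level check, which is forced by \eqref{aut:V:1:1705} and the constructions recalled above.
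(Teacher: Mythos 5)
Your proposal is correct and follows essentially the same route as the paper's proof: establish the intertwining identities $\mathrm{aut}^{\mathcal V,(\alpha),\DR}_{\mathrm{iso}^{\mathsf G}(\mu,g)}=\mathrm{iso}^{\mathcal V}\circ\mathrm{aut}^{\mathcal V,(\alpha),\B}_{(\mu,g)}\circ(\mathrm{iso}^{\mathcal V})^{-1}$ for $\alpha\in\{1,10\}$ by comparing the defining formulas on generators, descend to $(\hat{\mathcal W},\hat{\mathcal M})$, and absorb the twist via $\Gamma_{\mathrm{iso}^{\mathcal G}(g)}(t)=\Gamma_g(t)$. You merely spell out in more detail the steps the paper compresses into one sentence.
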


\proof This follows from the identities $\mathrm{aut}^{\mathcal V,(\alpha),\DR}_{\mathrm{iso}^{\mathsf G}(\mu,g)}
=\mathrm{iso}^{\mathcal V}\circ\mathrm{aut}^{\mathcal V,(\alpha),\B}_{(\mu,g)}\circ(\mathrm{iso}^{\mathcal V})^{-1}$
for any $(\mu,g)\in\mathsf G^\B(\mathbf k)$, where $\alpha\in\{1,10\}$, which follow from comparison of the definitions of 
$\mathrm{aut}^{\mathcal V,(\alpha),\B}_{(\mu,g)}$ and $\mathrm{aut}^{\mathcal V,(\alpha),\DR}_{(\mu,g)}$ in \S\ref{sect:212} 
and in \cite{EF2}, \S1.6.2, and from $\Gamma_{\mathrm{iso}^{\mathcal G}(g)}(t)=\Gamma_{g}(t)$
for $g\in\mathsf G^\B(\mathbf k)$. \hfill\qed\medskip 

In \cite{EF2}, Definition 1.15, we defined, for any $(\mu,g)\in\mathsf G^\DR(\mathbf k)$, isomorphisms 
$^\Gamma\!\mathrm{comp}_{(\mu,g)}^{\mathcal W,(1)}=\ ^\Gamma\!\mathrm{aut}_{(\mu,g)}^{\mathcal W,(1),\DR}\circ 
\mathrm{iso}^{\mathcal W}\in\mathrm{Iso}_{\mathbf k\operatorname{-alg}}(\hat{\mathcal W}^\B,\hat{\mathcal W}^\DR)$ and 
$^\Gamma\!\mathrm{comp}_{(\mu,g)}^{\mathcal M,(10)}= ^\Gamma\!\mathrm{aut}_{(\mu,g)}^{\mathcal M,(10),\DR}\circ 
\mathrm{iso}^{\mathcal M}\in\mathrm{Iso}_{\mathbf k\operatorname{-mod}}(\hat{\mathcal M}^\B,\hat{\mathcal M}^\DR)$. 

\begin{lem}
The map $\mathsf G^\DR(\mathbf k)\to\mathrm{Iso}_{\mathbf{k}\operatorname{-}\mathrm{alg}\operatorname{-}\mathrm{mod}}
((\hat{\mathcal W},\hat{\mathcal M})^\B,(\hat{\mathcal W},\hat{\mathcal M})^\DR)$ underlying the bitorsor morphism 
$\mathsf G^{\DR,\B}(\mathbf k)\to\mathrm{Bitor}((\hat{\mathcal W},\hat{\mathcal M})^\DR,(\hat{\mathcal W},\hat{\mathcal M})^\B)$ arising 
from Lemma \ref{lem:2104} is 
$$
(\mu,g)\mapsto(^\Gamma\!\mathrm{comp}_{(\mu,g)}^{\mathcal W,(1)},
\ ^\Gamma\!\mathrm{comp}_{(\mu,g)}^{\mathcal M,(10)}).
$$ 
\end{lem}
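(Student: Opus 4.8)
The plan is to unwind the construction of the bitorsor morphism produced by Lemma \ref{lem:2104} via Lemma \ref{construction(**)}, and to check that the map on the middle component is exactly the stated one. Recall that Lemma \ref{construction(**)} takes as input a group isomorphism $i:G'\to G$ together with group morphisms $G\to\mathrm{Aut}_{\mathcal C}(O)$ and $G'\to\mathrm{Aut}_{\mathcal C}(O')$ and a distinguished isomorphism $i_{O',O}\in\mathrm{Iso}_{\mathcal C}(O',O)$ intertwining them; the map it produces on the underlying set is $g\mapsto g_O\circ i_{O'O}$. In the case at hand, $\mathcal C=\mathbf{k}\text{-alg-mod}$, $O=O^\DR_{\mathcal W,\mathcal M}$, $O'=O^\B_{\mathcal W,\mathcal M}$, the group isomorphism is $\mathrm{iso}^{\mathsf G}:\mathsf G^\B(\mathbf k)\to\mathsf G^\DR(\mathbf k)$ (so $G=\mathsf G^\DR(\mathbf k)$, $G'=\mathsf G^\B(\mathbf k)$), the two group morphisms are the ones $(\mu,g)\mapsto({}^\Gamma\!\mathrm{aut}^{\mathcal W,(1),?}_{(\mu,g)},{}^\Gamma\!\mathrm{aut}^{\mathcal M,(10),?}_{(\mu,g)})$ for $?\in\{\DR,\B\}$, and the distinguished isomorphism $i_{O',O}$ is $\mathrm{iso}^{\mathcal W,\mathcal M}=(\mathrm{iso}^{\mathcal W},\mathrm{iso}^{\mathcal M})$.

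Given this, the first step is simply to substitute into the formula $g\mapsto g_O\circ i_{O'O}$: an element $(\mu,g)\in\mathsf G^\DR(\mathbf k)$ (the underlying set of the bitorsor $\mathsf G^{\DR,\B}(\mathbf k)$ by Definition \ref{def:2:12:1244}) is sent to the composite of its image $({}^\Gamma\!\mathrm{aut}^{\mathcal W,(1),\DR}_{(\mu,g)},{}^\Gamma\!\mathrm{aut}^{\mathcal M,(10),\DR}_{(\mu,g)})\in\mathrm{Aut}_{\mathbf k\text{-alg-mod}}(O^\DR_{\mathcal W,\mathcal M})$ with $\mathrm{iso}^{\mathcal W,\mathcal M}$. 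Componentwise this composite is precisely the pair $({}^\Gamma\!\mathrm{aut}_{(\mu,g)}^{\mathcal W,(1),\DR}\circ\mathrm{iso}^{\mathcal W},\ {}^\Gamma\!\mathrm{aut}_{(\mu,g)}^{\mathcal M,(10),\DR}\circ\mathrm{iso}^{\mathcal M})$. The second step is to recognize, by the definition recalled just before the statement (from \cite{EF2}, Definition 1.15), that these two components are by definition ${}^\Gamma\!\mathrm{comp}_{(\mu,g)}^{\mathcal W,(1)}$ and ${}^\Gamma\!\mathrm{comp}_{(\mu,g)}^{\mathcal M,(10)}$ respectively, which yields the asserted formula $(\mu,g)\mapsto({}^\Gamma\!\mathrm{comp}_{(\mu,g)}^{\mathcal W,(1)},\ {}^\Gamma\!\mathrm{comp}_{(\mu,g)}^{\mathcal M,(10)})$.

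The one point requiring a little care — and the only thing I would call a genuine (if minor) obstacle — is making sure the identification of input data is the correct one: namely that in applying Lemma \ref{construction(**)} one takes $O$ to be the \emph{de Rham} object and $O'$ the \emph{Betti} object, so that $\mathrm{Iso}_{\mathcal C}(O',O)=\mathrm{Iso}_{\mathbf k\text{-alg-mod}}(O^\B_{\mathcal W,\mathcal M},O^\DR_{\mathcal W,\mathcal M})$ and the map lands in $\mathrm{Iso}(( \hat{\mathcal W}^\B,\hat{\mathcal M}^\B),(\hat{\mathcal W}^\DR,\hat{\mathcal M}^\DR))$ as stated; this is consistent with Definition \ref{def:2:12:1244}, where the underlying set of $\mathsf G^{\DR,\B}(\mathbf k)$ is $\mathsf G^\DR(\mathbf k)$ with left $\mathsf G^\DR$-action and right $\mathsf G^\B$-action, and with the intertwining property $\mathrm{iso}^{\mathsf G}(\mu,g)_O=\mathrm{iso}^{\mathcal W,\mathcal M}\circ(\mu,g)_{O'}\circ(\mathrm{iso}^{\mathcal W,\mathcal M})^{-1}$ verified in Lemma \ref{lem:2104}. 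Once this bookkeeping is pinned down, the proof is just the substitution and the recollection of the definition of ${}^\Gamma\!\mathrm{comp}$, so the argument is essentially immediate.

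\proof This follows from the description of the bitorsor morphism in Lemma \ref{construction(**)}, which on underlying sets is $g\mapsto g_O\circ i_{O'O}$, applied to the data of Lemma \ref{lem:2104}: here $O=O^\DR_{\mathcal W,\mathcal M}$, $O'=O^\B_{\mathcal W,\mathcal M}$, $i_{O'O}=\mathrm{iso}^{\mathcal W,\mathcal M}=(\mathrm{iso}^{\mathcal W},\mathrm{iso}^{\mathcal M})$, and for $(\mu,g)\in\mathsf G^\DR(\mathbf k)$ the image $(\mu,g)_O$ is $({}^\Gamma\!\mathrm{aut}^{\mathcal W,(1),\DR}_{(\mu,g)},{}^\Gamma\!\mathrm{aut}^{\mathcal M,(10),\DR}_{(\mu,g)})$. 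Hence $(\mu,g)$ is sent to the pair $({}^\Gamma\!\mathrm{aut}_{(\mu,g)}^{\mathcal W,(1),\DR}\circ\mathrm{iso}^{\mathcal W},\ {}^\Gamma\!\mathrm{aut}_{(\mu,g)}^{\mathcal M,(10),\DR}\circ\mathrm{iso}^{\mathcal M})$, which by the definitions of ${}^\Gamma\!\mathrm{comp}_{(\mu,g)}^{\mathcal W,(1)}$ and ${}^\Gamma\!\mathrm{comp}_{(\mu,g)}^{\mathcal M,(10)}$ recalled above (see \cite{EF2}, Definition 1.15) equals $({}^\Gamma\!\mathrm{comp}_{(\mu,g)}^{\mathcal W,(1)},\ {}^\Gamma\!\mathrm{comp}_{(\mu,g)}^{\mathcal M,(10)})$. \hfill\qed\medskip
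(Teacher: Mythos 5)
Your proof is correct and is exactly the unwinding of definitions that the paper's proof (recorded simply as ``Immediate'') has in mind: apply the formula $g\mapsto g_O\circ i_{O'O}$ from Lemma \ref{construction(**)} to the data of Lemma \ref{lem:2104} and recognize the result as the definition of ${}^\Gamma\!\mathrm{comp}$. No further comment is needed.
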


\proof Immediate. \hfill\qed\medskip 

\section{Subbitorsors of $\mathsf G^{\DR,\B}(\mathbf k)$}\label{sog:0707}

In this section, we explicitly construct the bitorsors attached to the torsors of \cite{EF2}, 
by constructing Betti counterparts of their underlying groups;  
this is done in \S\ref{sect:3:1} (resp. \S\ref{sect:3:2}, \S\ref{sect:3:3}, \S\ref{sect:3:4}, \S\ref{sect:3:5})
for the 'linear and quadratic conditions' torsor $\mathsf G^{\DR,\B}_{\mathrm{quad}}(\mathbf k)$ 
(resp. the stabilizer torsors $\mathsf{Stab}(\hat{\Delta}^{\mathcal W,\DR/\B})(\mathbf k)$, 
$\mathsf{Stab}(\hat{\Delta}^{\mathcal M,\DR/\B})(\mathbf k)$, the double shuffle torsors 
$\mathsf{DMR}^{\DR,\B}(\mathbf k)$ and $\mathsf{DMR}_\mu(\mathbf k)$), and this construction is 
recalled from \cite{Dr} in \S\ref{sect:3:6} for the case of the torsor of associators $\mathsf M(\mathbf k)$. 
In \S\ref{sect:3:7}, we derive from the relations of the torsors of \cite{EF2} certain
relations between the Betti counterparts of the underlying groups. We make explicit the
Lie algebras of these groups and their interrelations in \S\ref{sect:3:8}. Finally, we make explicit the 
relations of the stabilizer bitorsors $\mathsf{Stab}(\hat{\Delta}^{\mathcal W,\DR/\B})(\mathbf k)$, 
$\mathsf{Stab}(\hat{\Delta}^{\mathcal M,\DR/\B})(\mathbf k)$ with coalgebra and Hopf algebra bitorsors (\S\ref{rwhacaib}).

\subsection{$\mathsf G^{\DR,\B}_{\mathrm{quad}}(\mathbf k)$ as a subbitorsor of 
$\mathsf G^{\DR,\B}(\mathbf k)$}\label{sect:3:1}

\subsubsection{The group $(\mathsf G^\B_{\mathrm{quad}}(\mathbf k),\circledast)$}

\begin{defn}\label{def:G:B:quad:1321}
We set 
$$
\mathsf G^\B_{\mathrm{quad}}(\mathbf k):=\{(\mu,g)\in G^\B(\mathbf k)\ |\ 
(g|\mathrm{log}X_0)=(g|\mathrm{log}X_1)=0, \quad 
\mu^2=1+24(g|\mathrm{log}X_0\mathrm{log}X_1)\}. 
$$
\end{defn}

\begin{lem}\label{lem:3:2:2204}
$\mathsf G^\B_{\mathrm{quad}}(\mathbf k)$ is a subgroup of 
$(\mathsf G^\B(\mathbf k),\circledast)$. 
\end{lem}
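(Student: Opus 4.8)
The plan is to show that the defining conditions of $\mathsf G^\B_{\mathrm{quad}}(\mathbf k)$ are preserved under the product $\circledast$ and under inversion, using the known structure of $\circledast$ as a semidirect product (Lemma \ref{lem:2:1:2606}) together with the explicit action of $\mathrm{aut}^{\mathcal V,(10),\B}_{(\mu,g)}$ on the low-degree part of $\hat{\mathcal V}^\B$. First I would pass to the associated graded: the conditions $(g|\mathrm{log}X_0)=(g|\mathrm{log}X_1)=0$ say precisely that, writing $g=1+(\text{deg}\geq 1)$, the degree-$1$ component of $\mathrm{log}\,g$ vanishes, i.e. $g\in 1+F^2\hat{\mathcal V}^\B$ up to the quadratic term; and the remaining condition relates $\mu^2$ to the coefficient of $\mathrm{log}X_0\mathrm{log}X_1$ in $g$. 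So the computation is entirely a computation modulo $F^3\hat{\mathcal V}^\B$, which reduces everything to a finite, degree-$\leq 2$ calculation in the free associative algebra on $\mathrm{log}X_0,\mathrm{log}X_1$.

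Next I would compute, for $(\mu,g)$ and $(\mu',g')$ both in $\mathsf G^\B_{\mathrm{quad}}(\mathbf k)$, the element $g'':=\mathrm{aut}^{\mathcal V,(10),\B}_{(\mu,g)}(g')=\mathrm{aut}^{\mathcal V,(1),\B}_{(\mu,g)}(g')\cdot g$ modulo degree $3$. Here one uses \eqref{aut:V:1:1705}: the automorphism $\mathrm{aut}^{\mathcal V,(1),\B}_{(\mu,g)}$ sends $\mathrm{log}X_1\mapsto\mu\,\mathrm{log}X_1$ and $\mathrm{log}X_0\mapsto\mu\,\mathrm{Ad}_g(\mathrm{log}X_0)=\mu\,\mathrm{log}X_0+\mu[\mathrm{log}\,g,\mathrm{log}X_0]+\cdots$; since $\mathrm{log}\,g$ has no degree-$1$ part, the correction $[\mathrm{log}\,g,\mathrm{log}X_0]$ starts in degree $2$, hence modulo degree $3$ one has $\mathrm{aut}^{\mathcal V,(1),\B}_{(\mu,g)}(\mathrm{log}X_0)\equiv\mu\,\mathrm{log}X_0$ as well. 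Therefore $\mathrm{aut}^{\mathcal V,(1),\B}_{(\mu,g)}(g')\equiv g'(\mu\,\mathrm{log}X_0,\mu\,\mathrm{log}X_1)$ modulo degree $3$, and so $(g''|\mathrm{log}X_i)\equiv \mu(g'|\mathrm{log}X_i)+(g|\mathrm{log}X_i)=0$ while $(g''|\mathrm{log}X_0\mathrm{log}X_1)\equiv \mu^2(g'|\mathrm{log}X_0\mathrm{log}X_1)+(g|\mathrm{log}X_0\mathrm{log}X_1)+(\text{cross terms from the product } \cdot g)$; the cross terms are products of degree-$1$ parts, which vanish since both $g$ and $\mathrm{aut}^{\mathcal V,(1),\B}_{(\mu,g)}(g')$ have trivial degree-$1$ part. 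Plugging in $\mu^2=1+24(g|\mathrm{log}X_0\mathrm{log}X_1)$ and $\mu'^2=1+24(g'|\mathrm{log}X_0\mathrm{log}X_1)$ one gets $(\mu\mu')^2=\mu^2\mu'^2=1+24\big(\mu^2(g'|\mathrm{log}X_0\mathrm{log}X_1)+(g|\mathrm{log}X_0\mathrm{log}X_1)\big)+24^2(\cdots)(\cdots)$, and one checks the last term is absorbed correctly — more cleanly, observe that $(g''|\mathrm{log}X_0\mathrm{log}X_1)=\mu^2(g'|\mathrm{log}X_0\mathrm{log}X_1)+(g|\mathrm{log}X_0\mathrm{log}X_1)$ exactly gives $1+24(g''|\cdots)=\mu^2(1+24(g'|\cdots))+ (1+24(g|\cdots))-1=\mu^2\mu'^2=(\mu\mu')^2$, as desired. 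This shows closure under $\circledast$.

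For inversion, I would either run the same degree-$\leq 2$ computation for the inverse (using that the inverse of $(\mu,g)$ in the semidirect product has first component $\mu^{-1}$ and second component $\mathrm{aut}^{\mathcal V,(10),\B}_{(\mu^{-1},?)}$ applied appropriately), or — more economically — note that $\mathsf G^\B_{\mathrm{quad}}(\mathbf k)$ is stable under $\circledast$ and contains the identity $(1,1)$, and argue that the conditions cut out a subgroup because they are \emph{algebraic} and the set is closed under the group law together with a dimension/pro-unipotence argument; but the direct check is cleanest: from $(\mu,g)\circledast(\mu,g)^{-1}=(1,1)$ and the formula just derived for the product's relevant coefficients, one solves for the coefficients of the inverse and verifies they satisfy the two conditions. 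The only mildly delicate point, and the one I would be most careful about, is the bookkeeping of the quadratic coefficient under the noncommutative product $\cdot g$ in the definition of $\mathrm{aut}^{\mathcal V,(10),\B}$: one must confirm that $(uv|\mathrm{log}X_0\mathrm{log}X_1)=(u|\mathrm{log}X_0\mathrm{log}X_1)(v|1)+(u|\mathrm{log}X_0)(v|\mathrm{log}X_1)+(u|1)(v|\mathrm{log}X_0\mathrm{log}X_1)$ and that the middle cross term genuinely vanishes here because the degree-$1$ parts are zero on both factors. Granting that, the verification is routine and the lemma follows; the analogy with \cite{Rac}, Proposition 3.1.6 (already invoked in the proof of Lemma \ref{lem:2:1:2606}) covers any remaining formal points.
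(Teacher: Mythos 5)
Your proof is correct, but it takes a more computational route than the paper's. The paper does not verify closure and inversion by hand: it first observes that the linear conditions cut out the subgroup $\mathsf G^\B_{\mathrm{lin}}(\mathbf k)=(\mathrm{iso}^{\mathsf G})^{-1}(\mathsf G^\DR_{\mathrm{lin}}(\mathbf k))$ (importing the de Rham result from \cite{EF2}), then uses the group morphism $\mathsf G^\B_{\mathrm{lin}}(\mathbf k)\to(\mathbf k^\times\times\mathbf k,\cdot)$, $(\mu,g)\mapsto(\mu,(g|\mathrm{log}X_0\mathrm{log}X_1))$, with $(\mu,c)\cdot(\mu',c')=(\mu\mu',c+\mu^2c')$, and identifies $\mathsf G^\B_{\mathrm{quad}}(\mathbf k)$ as the preimage of the subgroup $\mathrm{Ad}_{(1,-1/24)}(\mathbf k^\times)=\{(\mu,c)\mid c=(\mu^2-1)/24\}$; being a preimage of a subgroup under a morphism, it is a subgroup, and inverses come for free. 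Your degree-$\leq 2$ computation is exactly the verification that this map is a semigroup morphism (your key identity $(g''|\mathrm{log}X_0\mathrm{log}X_1)=\mu^2(g'|\mathrm{log}X_0\mathrm{log}X_1)+(g|\mathrm{log}X_0\mathrm{log}X_1)$ is precisely the cocycle $c+\mu^2c'$), so your argument is self-contained where the paper's leans on \cite{EF2} and on the conjugation trick to package the quadratic constraint. Two small slips to fix in your write-up, neither fatal: (i) when $\mathrm{log}\,g$ has vanishing degree-$1$ part it lies in $F^2$, so $[\mathrm{log}\,g,\mathrm{log}X_0]$ starts in degree $3$, not $2$ (degree $2$ alone would not justify dropping it modulo degree $3$, though here the vanishing of the degree-$1$ coefficients of $g'$ would save you anyway); and (ii) your displayed chain $1+24(g''|\cdots)=\mu^2(1+24(g'|\cdots))+(1+24(g|\cdots))-1$ is off by $\mu^2-1$; the correct manipulation is $1+24(g''|\cdots)=24\mu^2(g'|\cdots)+(1+24(g|\cdots))=\mu^2\bigl(1+24(g'|\cdots)\bigr)=\mu^2\mu'^2$. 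With those corrections, and with the inversion check carried out as you sketch (solving $(\mu,g)\circledast(\mu^{-1},h)=(1,1)$ for the coefficients of $h$), the lemma is fully proved.
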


\proof  Define $\mathsf G^\B_{\mathrm{lin}}(\mathbf k)$ to be the subset of all pairs $(\mu,g)$ of $\mathsf G^\B(\mathbf k)$ 
such that $(g|\mathrm{log}X_0)= (g|\mathrm{log}X_0)=0$. Then $\mathsf G^\B_{\mathrm{lin}}(\mathbf k)=
(\mathrm{iso}^{\mathsf G})^{-1}(\mathsf G^\DR_{\mathrm{lin}}(\mathbf k))$, where $\mathsf G^\DR_{\mathrm{lin}}(\mathbf k)$ 
is the subgroup of $(\mathsf G^\DR(\mathbf k),\circledast)$ defined in \cite{EF2}, proof of Lemma 2.10, therefore 
$(\mathsf G^\B_{\mathrm{lin}}(\mathbf k),\circledast)$ is a subgroup of $(\mathsf G^\B(\mathbf k),\circledast)$. 

Recall from \cite{EF2}, proof of Lemma 2.10, the group $(\mathbf k^\times\times\mathbf k,\cdot)$, where 
$(\mu,c)\cdot(\mu',c'):=(\mu\mu',c+\mu^2c')$; it is the semidirect product corresponding to the action of 
$\mathbf k^\times$ on $(\mathbf k,+)$ by $\lambda\bullet a:=\lambda^2a$. The composed morphism 
$\varphi_{\mathrm{quad}}\circ\mathrm{iso}^{\mathsf G} : \mathsf G^\B_{\mathrm{lin}}(\mathbf k)\to
(\mathbf k^\times\times\mathbf k,\cdot)$
is given by $(\mu,g)\mapsto (\mu,(g|\mathrm{log}X_0\mathrm{log}X_1))$, where 
$\varphi_{\mathrm{quad}} : \mathsf G^\DR_{\mathrm{lin}}(\mathbf k)\to(\mathbf k^\times\times\mathbf k,\cdot)$ is as in 
\cite{EF2}, proof of Lemma 2.10. 

On the other hand, $\{(\mu,c)|c=(\mu^2-1)/24,\mu\in\mathbf k^\times\}$ coincides with the subgroup 
$\mathrm{Ad}_{(1,-1/24)}(\mathbf k^\times)$ of $(\mathbf k^\times\times\mathbf k,\cdot)$, where $\mathbf k^\times$
is the subgroup $\{(\mu,0)|\mu\in\mathbf k^\times\}$ of $(\mathbf k^\times\times\mathbf k,\cdot)$. Its preimage 
by the above group morphism is therefore a subgroup of $(\mathsf G^\B_{\mathrm{lin}}(\mathbf k),\circledast)$. 
The result now follows from the fact that this preimage coincides with $\mathsf G^\B_{\mathrm{quad}}(\mathbf k)$.
\hfill\qed\medskip 

\subsubsection{Bitorsor structure on $\mathsf G^{\DR,\B}_{\mathrm{quad}}(\mathbf k)$} 

In \cite{EF2}, Definition 2.9, we introduced the subsets $\mathsf G^{\DR,\B}_{\mathrm{quad}}(\mathbf k)$ and 
$\mathsf G^\DR_{\mathrm{quad}}(\mathbf k)$ of $\mathsf G^\DR(\mathbf k)$, and in Lemma 2.10, proved that  
$\mathsf G^\DR_{\mathrm{quad}}(\mathbf k)$ is a subgroup of $\mathsf G^\DR(\mathbf k)$, and that 
$\mathsf G^{\DR,\B}_{\mathrm{quad}}(\mathbf k)$, equipped with the left action of this group, is a 
subtorsor of $\mathsf G^{\DR,\B}(\mathbf k)$. 

\begin{lem}\label{lemma:bitorsor:1}
The right action of $\mathsf G^\B(\mathbf k)$ on $\mathsf G^\DR(\mathbf k)$ restricts to a free and transitive action of 
$\mathsf G^\B_{\mathrm{quad}}(\mathbf k)$ on $\mathsf G^{\DR,\B}_{\mathrm{quad}}(\mathbf k)$, 
so that this right action equips $\mathsf G^{\DR,\B}_{\mathrm{quad}}(\mathbf k)$ with the structure of a 
subbitorsor of the bitorsor $\mathsf G^{\DR,\B}(\mathbf k)$.  
\end{lem}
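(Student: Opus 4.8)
The plan is to deduce this from the already-established bitorsor structure on $\mathsf G^{\DR,\B}(\mathbf k)$ together with the subgroup property from Lemma \ref{lem:3:2:2204}, by identifying $\mathsf G^{\DR,\B}_{\mathrm{quad}}(\mathbf k)$ with a preimage of a subbitorsor under a bitorsor morphism and invoking Lemma \ref{lem:1:2}. First I would recall from \cite{EF2}, Definition 2.9 and Lemma 2.10, that $\mathsf G^{\DR,\B}_{\mathrm{quad}}(\mathbf k)\subset\mathsf G^\DR(\mathbf k)$ is a subtorsor for the left action of the subgroup $\mathsf G^\DR_{\mathrm{quad}}(\mathbf k)$; so it remains only to produce the compatible right action of $\mathsf G^\B_{\mathrm{quad}}(\mathbf k)$ and to check freeness and transitivity.

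The key observation is that the bitorsor $\mathsf G^{\DR,\B}(\mathbf k)$ is, by Definition \ref{def:2:12:1244} and Lemma \ref{lem:gp:iso:1704}, the trivial-type bitorsor $_{\mathsf G^\DR(\mathbf k)}\mathsf G^\DR(\mathbf k)_{\mathsf G^\B(\mathbf k)}$ attached to the group isomorphism $\mathrm{iso}^{\mathsf G}:\mathsf G^\B(\mathbf k)\to\mathsf G^\DR(\mathbf k)$. In such a bitorsor (cf. Lemma \ref{lem:1:4}), a subset $X\subset\mathsf G^\DR(\mathbf k)$ stable under left multiplication by a subgroup $G\subset\mathsf G^\DR(\mathbf k)$ and right translation by a subgroup $H\subset\mathsf G^\B(\mathbf k)$ forms a subbitorsor as soon as $\mathrm{iso}^{\mathsf G}(H)=G$ and $X$ is a single $(G,H)$-coset, which amounts to a torsor injection $\mathrm{inj}_a$ in the sense of Lemma \ref{lem:1:5}. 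So I would: (i) set $G:=\mathsf G^\DR_{\mathrm{quad}}(\mathbf k)$, $H:=\mathsf G^\B_{\mathrm{quad}}(\mathbf k)$, $X:=\mathsf G^{\DR,\B}_{\mathrm{quad}}(\mathbf k)$; (ii) verify $\mathrm{iso}^{\mathsf G}(H)=G$, which follows because the defining conditions of Definition \ref{def:G:B:quad:1321} are exactly the pullbacks along $\mathrm{iso}^{\mathsf G}$ of the defining conditions of $\mathsf G^\DR_{\mathrm{quad}}(\mathbf k)$ (using $\mathrm{iso}^{\mathcal V}:X_i\mapsto\exp(e_i)$, so that $(g|\log X_i)$ corresponds to the linear coefficient of $\mathrm{iso}^{\mathcal G}(g)$ in $e_i$, and the quadratic coefficient matches the relevant coefficient of $\mathrm{iso}^{\mathcal G}(g)$ modulo lower-degree corrections, precisely as in the proof of Lemma \ref{lem:3:2:2204}); (iii) observe that $\mathsf G^{\DR,\B}_{\mathrm{quad}}(\mathbf k)$ is the preimage under the identity map $\mathsf G^\DR(\mathbf k)\to\mathsf G^\DR(\mathbf k)$ — viewed as the underlying map of the bitorsor endomorphism — of the relevant coset, hence is stable under the right $\mathsf G^\B_{\mathrm{quad}}(\mathbf k)$-action: concretely, if $\gamma\in\mathsf G^{\DR,\B}_{\mathrm{quad}}(\mathbf k)$ and $(\mu,g)\in\mathsf G^\B_{\mathrm{quad}}(\mathbf k)$, then $\gamma\cdot(\mu,g)=\gamma\circledast\mathrm{iso}^{\mathsf G}(\mu,g)$ lies in $\mathsf G^{\DR,\B}_{\mathrm{quad}}(\mathbf k)$ because $\mathrm{iso}^{\mathsf G}(\mu,g)\in\mathsf G^\DR_{\mathrm{quad}}(\mathbf k)$ and $\mathsf G^{\DR,\B}_{\mathrm{quad}}(\mathbf k)$ is a right $\mathsf G^\DR_{\mathrm{quad}}(\mathbf k)$-set (the subtorsor being simultaneously left and right under the same subgroup, since the quadratic conditions are bi-invariant in the same way the associator torsor is).

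Freeness of the restricted right action is automatic since it is the restriction of the free action on $\mathsf G^\DR(\mathbf k)$. Transitivity follows by a counting/coset argument: the left $\mathsf G^\DR_{\mathrm{quad}}(\mathbf k)$-action on $\mathsf G^{\DR,\B}_{\mathrm{quad}}(\mathbf k)$ is already known to be transitive, and for a fixed base point $\gamma_0\in\mathsf G^{\DR,\B}_{\mathrm{quad}}(\mathbf k)$ the map $h\mapsto\gamma_0\cdot h$ and the map $g\mapsto g\cdot\gamma_0$ have the same image, namely the single $\mathsf G^\DR_{\mathrm{quad}}(\mathbf k)$-orbit of $\gamma_0$, because $\mathrm{Ad}_{\gamma_0^{-1}}\circ\mathrm{iso}^{\mathsf G}$ carries $\mathsf G^\B_{\mathrm{quad}}(\mathbf k)$ isomorphically onto $\mathsf G^\DR_{\mathrm{quad}}(\mathbf k)$ — here one uses that conjugation by any element of $\mathsf G^\DR(\mathbf k)$ fixed enough of $\mathsf G^{\DR,\B}_{\mathrm{quad}}(\mathbf k)$ preserves the quadratic subgroup; alternatively, and perhaps more cleanly, I would invoke Lemma \ref{lem:1:2} directly: express $\mathsf G^{\DR,\B}_{\mathrm{quad}}(\mathbf k)$ as the preimage of the subbitorsor $\mathrm{inj}_{a}(_{\mathsf G^\DR_{\mathrm{quad}}}\mathsf G^\DR_{\mathrm{quad}})$ of $_{\mathsf G^\DR}\mathsf G^\DR{}_{\mathsf G^\DR}$ under a bitorsor morphism built from $\mathrm{iso}^{\mathsf G}$, so that the conclusion is immediate.

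The main obstacle I anticipate is step (ii) — checking that $\mathrm{iso}^{\mathsf G}$ maps $\mathsf G^\B_{\mathrm{quad}}(\mathbf k)$ onto $\mathsf G^\DR_{\mathrm{quad}}(\mathbf k)$ — together with the compatibility of the two one-sided translation actions of the quadratic subgroups with the bitorsor structure; both require unwinding the definitions of the quadratic conditions on the Betti and de Rham sides and the explicit form of $\mathrm{iso}^{\mathcal V}$ on low-degree coefficients, exactly the computation already carried out in the proof of Lemma \ref{lem:3:2:2204}. Once that translation is in hand, everything else is a formal consequence of the bitorsor formalism of \S\ref{sect:tab:0707}, so I would keep the written proof short, citing Lemmas \ref{lem:1:2}, \ref{lem:1:4}, \ref{lem:1:5} and the proof of Lemma \ref{lem:3:2:2204}, and referring to \cite{EF2}, Lemma 2.10 for the left-torsor part.
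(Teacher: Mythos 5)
There is a genuine gap, and it sits exactly where you flagged the ``main obstacle'': step (ii) of your plan is false. The map $\mathrm{iso}^{\mathsf G}$ does \emph{not} carry $\mathsf G^\B_{\mathrm{quad}}(\mathbf k)$ onto $\mathsf G^\DR_{\mathrm{quad}}(\mathbf k)$. Pushing everything down to the quotient group $K=(\mathbf k^\times\times\mathbf k,\cdot)$ via $\varphi_{\mathrm{quad}}$, the three sets in play correspond to three \emph{different} loci: $\mathsf G^\DR_{\mathrm{quad}}(\mathbf k)$ to $\{c=0\}$, $\mathsf G^{\DR,\B}_{\mathrm{quad}}(\mathbf k)$ to $\{c=\mu^2/24\}$, and $\mathrm{iso}^{\mathsf G}(\mathsf G^\B_{\mathrm{quad}}(\mathbf k))$ to $\{c=(\mu^2-1)/24\}$ (this last is precisely what the proof of Lemma \ref{lem:3:2:2204} establishes: the Betti quadratic group is the preimage of the \emph{conjugate} subgroup $\mathrm{Ad}_{(1,-1/24)}(\mathbf k^\times)$, not of $\mathbf k^\times$ itself). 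So asserting $\mathrm{iso}^{\mathsf G}(H)=G$ contradicts the very lemma you cite, and the parenthetical claim in step (iii) that the subtorsor is ``simultaneously left and right under the same subgroup'' fails for the same reason: the right stabilizer of $\{c=\mu^2/24\}$ under right multiplication is $\{c=(\nu^2-1)/24\}$, not $\{c=0\}$. The mismatch between these three conditions is the entire content of the lemma; a proof that identifies them has assumed the conclusion in a form that is not even true. What \emph{is} true is the statement you write later almost as an afterthought: for any $\gamma_0\in\mathsf G^{\DR,\B}_{\mathrm{quad}}(\mathbf k)$, the composite $(\mathrm{iso}^{\mathsf G})^{-1}\circ\mathrm{Ad}_{\gamma_0^{-1}}$ carries $\mathsf G^\DR_{\mathrm{quad}}(\mathbf k)$ onto $\mathsf G^\B_{\mathrm{quad}}(\mathbf k)$ --- but using this requires first knowing the right-action statement you are trying to prove (or at least nonemptiness plus the conjugation computation), so it cannot be the starting point.

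Your closing alternative --- realize $\mathsf G^{\DR,\B}_{\mathrm{quad}}(\mathbf k)$ as the preimage of a subbitorsor of the form $\mathrm{inj}_a(\cdot)$ under a bitorsor morphism and invoke Lemma \ref{lem:1:2} --- is indeed the paper's strategy, but the target of that morphism is not $_{\mathsf G^\DR}\mathsf G^\DR{}_{\mathsf G^\DR}$. The paper restricts to the linear subbitorsor $_{\mathsf G^\DR_{\mathrm{lin}}}\mathsf G^\DR_{\mathrm{lin}}{}_{\mathsf G^\B_{\mathrm{lin}}}$, maps it to the trivial bitorsor $_KK_K$ on $K=(\mathbf k^\times\times\mathbf k,\cdot)$ by the bitorsor morphism $({\varphi_{\mathrm{quad}}},{\varphi_{\mathrm{quad}}},{\varphi_{\mathrm{quad}}\circ\mathrm{iso}^{\mathsf G}})$, and takes the preimage of $\mathrm{inj}_{(1,-1/24)}\bigl({}_{\mathbf k^\times}\mathbf k^\times{}_{\mathbf k^\times}\bigr)$; Lemma \ref{lem:1:5} then forces the three components of the preimage to be exactly $\varphi_{\mathrm{quad}}^{-1}(\mathbf k^\times)$, $\varphi_{\mathrm{quad}}^{-1}(\mathbf k^\times\cdot(1,1/24))$ and $(\mathrm{iso}^{\mathsf G})^{-1}\varphi_{\mathrm{quad}}^{-1}(\mathrm{Ad}_{(1,-1/24)}(\mathbf k^\times))$, i.e.\ the three different quadratic conditions above. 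To repair your write-up, replace step (ii) by this computation in $K$ (or, equivalently, prove directly that right translation by $\mathrm{iso}^{\mathsf G}(\mu,g)$ with $(\mu,g)\in\mathsf G^\B_{\mathrm{quad}}(\mathbf k)$ preserves the condition $(\varphi|e_0e_1)=\lambda^2/24$, which is a short explicit check using the product formula for $\circledast$ modulo degree $3$); freeness and transitivity then come for free from Lemma \ref{lem:1:2} rather than from the coset argument in your third paragraph.
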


\proof Let $i:G'\to G$ be a group isomorphism and let $H\subset G$ be a subgroup, 
and $H':=i^{-1}(H)\subset G'$. The $i$ restricts to an isomorphism 
$H'\to H$. This gives rise to bitorsors $_HH_{H'}$
and $_GG_{G'}$ (see Lemma \ref{construction(*)}); the natural inclusions give rise to a bitorsor 
inclusion $_HH_{H'}\subset\ _GG_{G'}$. A group morphism $\varphi:H\to K$
gives rise to group morphism $\varphi\circ i:H'\to K$ and to a bitorsor morphism 
$_{\varphi}\varphi_{\varphi\circ i} :\ _HH_{H'}\to\ _KK_K$. Then a subgroup 
$L\subset K$ and an element $k\in K$ give rise to a bitorsor inclusion 
$\mathrm{inj}_k:\ _LL_L\hookrightarrow\ _KK_K$ (see Lemma \ref{lem:1:5}). Then 
$(_{\varphi}\varphi_{\varphi\circ i})^{-1}(\mathrm{inj}_k(_LL_L))$ is a subtorsor of $_HH_{H'}$, hence of 
$_GG_{G'}$. One has $(_{\varphi}\varphi_{\varphi\circ i})^{-1}(\mathrm{inj}_k(_LL_L))=\ _AB_C$, where 
$A:=\varphi^{-1}(L)\subset G$, $C:=i^{-1}\circ\varphi^{-1}(\mathrm{Ad}_k(L))
\subset G'$, $B:=\varphi^{-1}(L\cdot k^{-1})\subset G$. 
This is summarized in the diagram of bitorsors 
$$
\xymatrix{
& (_{\varphi}\varphi_{\varphi\circ i})^{-1}(\mathrm{inj}_k(_LL_L))\ar@{^{(}->}[r]\ar[d]& 
_HH_{H'}\ar@{^{(}->}[r]\ar^{_{\varphi}\varphi_{\varphi\circ i}}[d]& _GG_{G'}
\\ 
_LL_L \ar@{->>}_{\mathrm{inj}_k}[r]& \mathrm{inj}_k(_LL_L)\ar@{^{(}->}[r]& _KK_K& 
}
$$
When $i:=\mathrm{iso}^{\mathsf G}:\mathsf G^\B(\mathbf k)\to\mathsf G^\DR(\mathbf k)$, 
$H:=\mathsf G^\DR_{\mathrm{lin}}(\mathbf k)$, $H':=\mathsf G^\B_{\mathrm{lin}}(\mathbf k)$,
$K:=(\mathbf k^\times\times\mathbf k,\cdot)$, $\varphi
:=\varphi_{\mathrm{quad}}$ (see \cite{EF2}, proof of Lemma 2.3), 
$L:=\mathbf k^\times=\{(\mu,0)|\mu\in\mathbf k^\times\}$, 
$k:=(1,-1/24)$, one has $A=\varphi_{\mathrm{quad}}^{-1}(\mathbf k^\times)
=\mathsf G_{\mathrm{quad}}^\DR(\mathbf k)$ by \cite{EF2}, Lemma 2.3, 
$C=(\mathrm{iso}^{\mathcal V})^{-1}\circ\varphi_{\mathrm{quad}}^{-1}
(\mathrm{Ad}_{(1,-1/24)}(\mathbf k^\times))=\mathsf G_{\mathrm{quad}}^\B(\mathbf k)$
by Lemma \ref{lem:3:2:2204}. One checks that $\mathbf k^\times\cdot k^{-1}=\{(\mu,c)|
c=\mu^2/24\}$, which shows that $B=\varphi_{\mathrm{quad}}^{-1}(\mathbf k^\times\cdot k^{-1})
=\mathsf G^{\DR,\B}(\mathbf k)$. \hfill\qed\medskip 

\subsection{$\mathsf{Stab}(\hat\Delta^{\mathcal W,\DR/\B})(\mathbf k)$ as a subbitorsor of 
$\mathsf G^{\DR,\B}(\mathbf k)$}\label{sect:3:2}

\begin{lem}
The map $\mathsf G^\B(\mathbf k)\to\mathrm{Aut}_{\mathbf k\operatorname{-mod}}(\mathrm{Hom}_{\mathbf k\operatorname{-mod}_{\mathrm{top}}}
(\hat{\mathcal W}^\B,(\mathcal W^\B)^{\otimes2\wedge}))$ taking  $(\mu,g)$ to $F\mapsto
(^\Gamma\!\mathrm{aut}_{(\mu,g)}^{\mathcal W,(1),\B})^{\otimes2}
\circ F \circ (^\Gamma\!\mathrm{aut}_{(\mu,g)}^{\mathcal W,(1),\B})^{-1}$
defines an action of the group $\mathsf G^\B(\mathbf k)$ on the 
$\mathbf k$-module $\mathrm{Hom}_{\mathbf k\operatorname{-mod}_{\mathrm{top}}}
(\hat{\mathcal W}^\B,(\mathcal W^\B)^{\otimes2\wedge})$. 
\end{lem}

\proof Follows from Lemma-Definition \ref{lem:2:9:0105}. \hfill\qed\medskip 

\begin{defn}
Set $\mathsf{Stab}(\hat\Delta^{\mathcal W,\B})(\mathbf k):=\mathrm{Stab}_{\mathsf G^\B(\mathbf k)}
(\hat\Delta^{\mathcal W,\B})$. This is a subgroup of $(\mathsf G^\B(\mathbf k),\circledast)$, equal to the set of $(\mu,g)\in\mathsf G^B(\mathbf k)$, such that 
$(^\Gamma\!\mathrm{aut}_{(\mu,g)}^{\mathcal W,(1),\B})^{\otimes2}
\circ \hat\Delta^{\mathcal W,\B}=\hat\Delta^{\mathcal W,\B}\circ 
\ ^\Gamma\!\mathrm{aut}_{(\mu,g)}^{\mathcal W,(1),\B}$. 
\end{defn}

In \cite{EF2}, Definitions 2.14 and 2.15, we introduced the subsets $\mathsf{Stab}(\hat\Delta^{\mathcal W,\DR/\B})(\mathbf k)$ and 
$\mathsf{Stab}(\hat\Delta^{\mathcal W,\DR})(\mathbf k)$ of $\mathsf G^\DR(\mathbf k)$, and in Lemma 2.16 and Theorem 3.1, 
proved that $\mathsf{Stab}(\hat\Delta^{\mathcal W,\DR})(\mathbf k)$ is a subgroup of $\mathsf G^\DR(\mathbf k)$, and that 
$\mathsf{Stab}(\hat\Delta^{\mathcal W,\DR/\B})(\mathbf k)$, equipped with the left action of this group, is a subtorsor of 
$\mathsf G^{\DR,\B}(\mathbf k)$. 

\begin{lem}\label{subbitorsor:0105}
The right action of $\mathsf G^\B(\mathbf k)$ on $\mathsf G^\DR(\mathbf k)$ restricts to a free and transitive action of 
$\mathsf{Stab}(\hat\Delta^{\mathcal W,\B})(\mathbf k)$ on $\mathsf{Stab}(\hat\Delta^{\mathcal W,\DR/\B})(\mathbf k)$, 
so that this right action equips $\mathsf{Stab}(\hat\Delta^{\mathcal W,\DR/\B})(\mathbf k)$ with the structure of a 
subbitorsor of the bitorsor $\mathsf G^{\DR,\B}(\mathbf k)$.  
\end{lem}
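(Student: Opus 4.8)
The plan is to realize $\mathsf{Stab}(\hat\Delta^{\mathcal W,\DR/\B})(\mathbf k)$ as the stabilizer bitorsor, in the sense of Lemma~\ref{lem:1:6}, of the pair $(\hat\Delta^{\mathcal W,\DR},\hat\Delta^{\mathcal W,\B})$ under an action of the bitorsor $\mathsf G^{\DR,\B}(\mathbf k)$ on a suitable pair of $\mathbf k$-modules. For $?\in\{\B,\DR\}$, I would set $V^?:=\mathrm{Hom}_{\mathbf k\text{-mod}_{\mathrm{top}}}(\hat{\mathcal W}^?,(\mathcal W^?)^{\otimes2\wedge})$; the isomorphism $\mathrm{iso}^{\mathcal W}$ and its completed tensor square make $(V^\DR,V^\B)$ into a pair of isomorphic $\mathbf k$-modules, an isomorphism $V^\B\to V^\DR$ being $\phi\mapsto(\mathrm{iso}^{\mathcal W})^{\otimes2}\circ\phi\circ(\mathrm{iso}^{\mathcal W})^{-1}$.

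The next step is to equip this pair with an action of the bitorsor $\mathsf G^{\DR,\B}(\mathbf k)$. For the left $\mathsf G^\DR(\mathbf k)$-action on $V^\DR$ one uses the de Rham analogue, established in \cite{EF2}, of the Lemma above that constructs the action of $\mathsf G^\B(\mathbf k)$ on $\mathrm{Hom}_{\mathbf k\text{-mod}_{\mathrm{top}}}(\hat{\mathcal W}^\B,(\mathcal W^\B)^{\otimes2\wedge})$, namely conjugation of $\phi$ by $(^\Gamma\!\mathrm{aut}_{(\mu,g)}^{\mathcal W,(1),\DR})^{\otimes2}$; the right $\mathsf G^\B(\mathbf k)$-action on $V^\B$ is that same Lemma; and the map $\mathsf G^\DR(\mathbf k)\to\mathrm{Iso}_{\mathbf k\text{-mod}}(V^\B,V^\DR)$ is $(\mu,g)\mapsto\big(\phi\mapsto(^\Gamma\!\mathrm{comp}_{(\mu,g)}^{\mathcal W,(1)})^{\otimes2}\circ\phi\circ(^\Gamma\!\mathrm{comp}_{(\mu,g)}^{\mathcal W,(1)})^{-1}\big)$. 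That these data assemble into a bitorsor morphism $\mathsf G^{\DR,\B}(\mathbf k)\to\mathrm{Bitor}(V^\DR,V^\B)$, hence into an action in the sense of Lemma~\ref{lem:1:6}, would follow from Lemma~\ref{construction(**)}, the compatibility hypothesis being a consequence of the defining identity $^\Gamma\!\mathrm{comp}_{(\mu,g)}^{\mathcal W,(1)}={}^\Gamma\!\mathrm{aut}_{(\mu,g)}^{\mathcal W,(1),\DR}\circ\mathrm{iso}^{\mathcal W}$ of \cite{EF2}, Definition~1.15, together with the functoriality of the assignment sending a topological algebra $A$ to the $\mathbf k$-module $\mathrm{Hom}(A,A^{\otimes2\wedge})$ under algebra automorphisms.

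Finally, $\hat\Delta^{\mathcal W,?}\in V^?$, and by \cite{EF2}, Definition~2.14, the subset $\mathsf{Stab}(\hat\Delta^{\mathcal W,\DR/\B})(\mathbf k)$ is exactly $\{x\in\mathsf G^\DR(\mathbf k)\mid x\cdot\hat\Delta^{\mathcal W,\B}=\hat\Delta^{\mathcal W,\DR}\}$, i.e. $\mathrm{Stab}_X(\hat\Delta^{\mathcal W,\DR},\hat\Delta^{\mathcal W,\B})$ in the notation of Lemma~\ref{lem:1:6}; it is nonempty, being a subtorsor of $\mathsf G^{\DR,\B}(\mathbf k)$ by \cite{EF2}, Theorem~3.1. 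Lemma~\ref{lem:1:6} then shows that
$$
{}_{\mathrm{Stab}_{\mathsf G^\DR(\mathbf k)}(\hat\Delta^{\mathcal W,\DR})}\mathsf{Stab}(\hat\Delta^{\mathcal W,\DR/\B})(\mathbf k)_{\mathrm{Stab}_{\mathsf G^\B(\mathbf k)}(\hat\Delta^{\mathcal W,\B})}
$$
is a subbitorsor of $\mathsf G^{\DR,\B}(\mathbf k)$; identifying $\mathrm{Stab}_{\mathsf G^\DR(\mathbf k)}(\hat\Delta^{\mathcal W,\DR})$ with $\mathsf{Stab}(\hat\Delta^{\mathcal W,\DR})(\mathbf k)$ (\cite{EF2}, Definition~2.15) and $\mathrm{Stab}_{\mathsf G^\B(\mathbf k)}(\hat\Delta^{\mathcal W,\B})$ with $\mathsf{Stab}(\hat\Delta^{\mathcal W,\B})(\mathbf k)$ (the Definition above, which is literally this stabilizer) then gives the statement, the assertion about the restriction of the right $\mathsf G^\B(\mathbf k)$-action being part of what it means to be a subbitorsor. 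I expect the main obstacle to be this second step: checking that the left $\mathsf G^\DR(\mathbf k)$-action, the right $\mathsf G^\B(\mathbf k)$-action and the $^\Gamma\!\mathrm{comp}$-conjugation map genuinely glue into a single bitorsor action on $(V^\DR,V^\B)$, which requires some care with the completed tensor products $(\mathcal W^?)^{\otimes2\wedge}$ and with the hypotheses of Lemma~\ref{construction(**)}, even though it ultimately reduces to the already-recorded relations among $^\Gamma\!\mathrm{comp}$, $^\Gamma\!\mathrm{aut}$ and $\mathrm{iso}^{\mathcal W}$.
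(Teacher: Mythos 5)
Your proposal is correct and follows essentially the same route as the paper: realize $\mathsf{Stab}(\hat\Delta^{\mathcal W,\DR/\B})(\mathbf k)$ as the stabilizer, in the sense of Lemma \ref{lem:1:6}, of the pair $(\hat\Delta^{\mathcal W,\DR},\hat\Delta^{\mathcal W,\B})$ inside the pair of Hom-modules acted on by the bitorsor $\mathsf G^{\DR,\B}(\mathbf k)$. The only cosmetic difference is that the paper obtains the action on $(\mathrm{Hom}(\hat{\mathcal W}^\DR,(\mathcal W^\DR)^{\otimes2\wedge}),\mathrm{Hom}(\hat{\mathcal W}^\B,(\mathcal W^\B)^{\otimes2\wedge}))$ by quoting the general Lemma \ref{*:2804} applied to the action of \S\ref{sect:2:4:2804} on the pair of algebras, whereas you write the conjugation formulas explicitly and verify the compatibility via Lemma \ref{construction(**)} — the resulting action is the same.
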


\proof One derives from \S\ref{sect:2:4:2804} an action of the bitorsor 
$\mathsf G^{\DR,\B}(\mathbf k)$
on the pair of isomorphic topological $\mathbf k$-algebras $(\hat{\mathcal W}^\DR,\hat{\mathcal W}^\B)$. 
Viewing this as a pair of objects of the tensor category $\mathbf k$-mod$_{\mathrm{top}}$, 
and using Lemma \ref{*:2804}, one obtains an action of the bitorsor $\mathsf G^{\DR,\B}(\mathbf k)$
on the pair of isomorphic $\mathbf k$-modules $(\mathrm{Hom}_{\mathbf k\operatorname{-mod}_{\mathrm{top}}}
(\hat{\mathcal W}^\DR,(\mathcal W^\DR)^{\otimes2\wedge}), \mathrm{Hom}_{\mathbf k\operatorname{-mod}_{\mathrm{top}}}
(\hat{\mathcal W}^\B,(\mathcal W^\B)^{\otimes2\wedge}))$. Recall that 
$(\hat{\Delta}^{\mathcal W,\DR},\hat{\Delta}^{\mathcal W,\B})$ is a pair of elements of this pair of modules. 
By Lemma \ref{lem:1:6}, this gives rise to the stabilizer subbitorsor 
$_{\mathrm{Stab}_{\mathsf G^\DR(\mathbf k)}(\hat{\Delta}^{\mathcal W,\DR})}\mathrm{Stab}_{\mathsf G^\DR(\mathbf k)}(\hat{\Delta}^{\mathcal W,\DR},\hat{\Delta}^{\mathcal W,\B})_{\mathrm{Stab}_{\mathsf G^\B(\mathbf k)}(\hat{\Delta}^{\mathcal W,\B})}$ of $\mathsf G^{\DR,\B}(\mathbf k)$. 
\hfill\qed\medskip 

\subsection{$\mathsf{Stab}(\hat\Delta^{\mathcal M,\DR/\B})(\mathbf k)$ as a subbitorsor of 
$\mathsf G^{\DR,\B}(\mathbf k)$}\label{sect:3:3}

\begin{lem}
The map $\mathsf G^\B(\mathbf k)\to\mathrm{Aut}_{\mathbf k\operatorname{-mod}}(\mathrm{Hom}_{\mathbf k\operatorname{-mod}_{\mathrm{top}}}
(\hat{\mathcal M}^\B,(\mathcal M^\B)^{\otimes2\wedge}))$ taking  $(\mu,g)$ to $F\mapsto
(^\Gamma\!\mathrm{aut}_{(\mu,g)}^{\mathcal M,(10),\B})^{\otimes2}
\circ F \circ (^\Gamma\!\mathrm{aut}_{(\mu,g)}^{\mathcal M,(10),\B})^{-1}$
defines an action of the group $\mathsf G^\B(\mathbf k)$ on the 
$\mathbf k$-module $\mathrm{Hom}_{\mathbf k\operatorname{-mod}_{\mathrm{top}}}
(\hat{\mathcal W}^\B,(\mathcal W^\B)^{\otimes2\wedge})$. 
\end{lem}

\proof Follows from Lemma-Definition \ref{lem:2:9:0105}. \hfill\qed\medskip 

\begin{defn}
Set $\mathsf{Stab}(\hat\Delta^{\mathcal M,\B})(\mathbf k):=\mathrm{Stab}_{\mathsf G^\B(\mathbf k)}
(\hat\Delta^{\mathcal M,\B})$. This is a subgroup of $(\mathsf G^\B(\mathbf k),\circledast)$, equal to the set of $(\mu,g)\in\mathsf G^B(\mathbf k)$, such that 
$(^\Gamma\!\mathrm{aut}_{(\mu,g)}^{\mathcal M,(10),\B})^{\otimes2}
\circ \hat\Delta^{\mathcal M,\B}=\hat\Delta^{\mathcal M,\B}\circ 
\ ^\Gamma\!\mathrm{aut}_{(\mu,g)}^{\mathcal M,(10),\B}$. 
\end{defn}

In \cite{EF2}, Definitions 2.17 and 2.18, we introduced the subsets $\mathsf{Stab}(\hat\Delta^{\mathcal M,\DR/\B})(\mathbf k)$ and 
$\mathsf{Stab}(\hat\Delta^{\mathcal M,\DR})(\mathbf k)$ of $\mathsf G^\DR(\mathbf k)$, and in Lemma 2.19 and Theorem 3.1, 
proved that $\mathsf{Stab}(\hat\Delta^{\mathcal M,\DR})(\mathbf k)$ is a subgroup of $\mathsf G^\DR(\mathbf k)$, and that 
$\mathsf{Stab}(\hat\Delta^{\mathcal M,\DR/\B})(\mathbf k)$, equipped with the left action of this group, is a subtorsor of 
$\mathsf G^{\DR,\B}(\mathbf k)$. 

\begin{lem}\label{subbitorsor:0105:bis}
The right action of $\mathsf G^\B(\mathbf k)$ on $\mathsf G^\DR(\mathbf k)$ restricts to a free and transitive action of 
$\mathsf{Stab}(\hat\Delta^{\mathcal M,\B})(\mathbf k)$ on $\mathsf{Stab}(\hat\Delta^{\mathcal M,\DR/\B})(\mathbf k)$, 
so that this right action equips $\mathsf{Stab}(\hat\Delta^{\mathcal W,\DR/\B})(\mathbf k)$ with the structure of a 
subbitorsor of the bitorsor $\mathsf G^{\DR,\B}(\mathbf k)$.  
\end{lem}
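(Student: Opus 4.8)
The plan is to argue exactly as in the proof of Lemma \ref{subbitorsor:0105}, with the topological $\mathbf k$-algebra $\hat{\mathcal W}^?$ replaced by the topological $\mathbf k$-module $\hat{\mathcal M}^?$ and the group morphism $(\mu,g)\mapsto{}^\Gamma\!\mathrm{aut}^{\mathcal W,(1),?}_{(\mu,g)}$ replaced by $(\mu,g)\mapsto{}^\Gamma\!\mathrm{aut}^{\mathcal M,(10),?}_{(\mu,g)}$, $?\in\{\B,\DR\}$. First I would extract from \S\ref{sect:2:4:2804} (namely from Lemma \ref{lem:2104} and the bitorsor morphism $\mathsf G^{\DR,\B}(\mathbf k)\to\mathrm{Bitor}(O^\DR_{\mathcal W,\mathcal M},O^\B_{\mathcal W,\mathcal M})$ it produces, post-composed with the forgetful functor $\mathbf k\text{-alg-mod}\to\mathbf k\text{-mod}_{\topo}$, $(A,M)\mapsto M$) an action of the bitorsor $\mathsf G^{\DR,\B}(\mathbf k)$ on the pair of isomorphic topological $\mathbf k$-modules $(\hat{\mathcal M}^\DR,\hat{\mathcal M}^\B)$: here $\mathsf G^\DR(\mathbf k)$ acts through $(\mu,g)\mapsto{}^\Gamma\!\mathrm{aut}^{\mathcal M,(10),\DR}_{(\mu,g)}$, $\mathsf G^\B(\mathbf k)$ acts through $(\mu,g)\mapsto{}^\Gamma\!\mathrm{aut}^{\mathcal M,(10),\B}_{(\mu,g)}$ (Lemma-Definition \ref{lem:2:9:0105}), and the torsor component is $(\mu,g)\mapsto{}^\Gamma\!\mathrm{comp}^{\mathcal M,(10)}_{(\mu,g)}$.

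Next I would regard $(\hat{\mathcal M}^\DR,\hat{\mathcal M}^\B)$ as a pair of objects of the $\mathbf k$-linear tensor category $\mathbf k\text{-mod}_{\topo}$ and apply Lemma \ref{*:2804} with $n=1$, $m=2$; this yields an action of $\mathsf G^{\DR,\B}(\mathbf k)$ on the pair of isomorphic $\mathbf k$-modules $(\mathrm{Hom}_{\mathbf k\text{-mod}_{\topo}}(\hat{\mathcal M}^\DR,(\mathcal M^\DR)^{\otimes2\wedge}),\mathrm{Hom}_{\mathbf k\text{-mod}_{\topo}}(\hat{\mathcal M}^\B,(\mathcal M^\B)^{\otimes2\wedge}))$, in which $\mathsf G^\B(\mathbf k)$ acts by the formula of the Lemma preceding the definition of $\mathsf{Stab}(\hat\Delta^{\mathcal M,\B})(\mathbf k)$. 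The pair $(\hat\Delta^{\mathcal M,\DR},\hat\Delta^{\mathcal M,\B})$ is an element of this pair of modules, so Lemma \ref{lem:1:6} applies and, since $\mathsf{Stab}(\hat\Delta^{\mathcal M,\DR/\B})(\mathbf k)$ is already known to be a (nonempty) subtorsor of $\mathsf G^{\DR,\B}(\mathbf k)$ by \cite{EF2}, Lemma 2.19 and Theorem 3.1, it produces the subbitorsor $_{\mathrm{Stab}_{\mathsf G^\DR(\mathbf k)}(\hat\Delta^{\mathcal M,\DR})}\mathrm{Stab}_{\mathsf G^\DR(\mathbf k)}(\hat\Delta^{\mathcal M,\DR},\hat\Delta^{\mathcal M,\B})_{\mathrm{Stab}_{\mathsf G^\B(\mathbf k)}(\hat\Delta^{\mathcal M,\B})}$ of $\mathsf G^{\DR,\B}(\mathbf k)$. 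It then remains to identify its three components with, respectively, $\mathsf{Stab}(\hat\Delta^{\mathcal M,\DR})(\mathbf k)$ of \cite{EF2}, Definition 2.18, the set $\mathsf{Stab}(\hat\Delta^{\mathcal M,\DR/\B})(\mathbf k)$ of \cite{EF2}, Definition 2.17, and the group $\mathsf{Stab}(\hat\Delta^{\mathcal M,\B})(\mathbf k)$ defined above.

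The only step that is not a verbatim transcription of the $\mathcal W$-case is this last identification, and it is where I expect the (mild) difficulty to lie: one must check that the condition ``$x\cdot w=v$'' of Lemma \ref{lem:1:6}, for $x=(\mu,g)\in\mathsf G^\DR(\mathbf k)$, $v=\hat\Delta^{\mathcal M,\DR}$, $w=\hat\Delta^{\mathcal M,\B}$, unwinds --- via the equality of the torsor component of the action with $(\mu,g)\mapsto{}^\Gamma\!\mathrm{comp}^{\mathcal M,(10)}_{(\mu,g)}$ and the explicit formula of Lemma \ref{*:2804} --- into precisely the intertwining relation $({}^\Gamma\!\mathrm{comp}^{\mathcal M,(10)}_{(\mu,g)})^{\otimes2}\circ\hat\Delta^{\mathcal M,\B}=\hat\Delta^{\mathcal M,\DR}\circ{}^\Gamma\!\mathrm{comp}^{\mathcal M,(10)}_{(\mu,g)}$ used to define $\mathsf{Stab}(\hat\Delta^{\mathcal M,\DR/\B})(\mathbf k)$ in \cite{EF2}. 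The fact that $\hat{\mathcal M}^?$ is a module rather than an algebra is harmless here, since Lemmas \ref{*:2804} and \ref{lem:1:6} only use its underlying $\mathbf k\text{-mod}_{\topo}$-structure; likewise the identification of the two extreme groups is formal once the middle one is settled.
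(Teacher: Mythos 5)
Your proof is correct and takes essentially the same route as the paper's: derive an action of the bitorsor $\mathsf G^{\DR,\B}(\mathbf k)$ on the pair of $\mathbf k$-modules of $\mathrm{Hom}$'s via Lemma \ref{*:2804}, then apply the stabilizer construction of Lemma \ref{lem:1:6} to the pair of coproducts. In fact the paper's printed proof of this lemma is a verbatim repetition of the $\mathcal W$-case argument (with $\mathcal W$ left unsubstituted throughout), so your version --- which carries out the replacements $\hat{\mathcal W}^?\rightsquigarrow\hat{\mathcal M}^?$ and ${}^\Gamma\!\mathrm{aut}^{\mathcal W,(1),?}_{(\mu,g)}\rightsquigarrow{}^\Gamma\!\mathrm{aut}^{\mathcal M,(10),?}_{(\mu,g)}$ explicitly and spells out the final identification with the objects of \cite{EF2} --- is exactly the intended argument.
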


\proof One derives from \S\ref{sect:2:4:2804} an action of the bitorsor 
$\mathsf G^{\DR,\B}(\mathbf k)$
on the pair of isomorphic topological $\mathbf k$-algebras $(\hat{\mathcal W}^\DR,\hat{\mathcal W}^\B)$. 
Viewing this as a pair of objects of the tensor category $\mathbf k$-mod$_{\mathrm{top}}$, 
and using Lemma \ref{*:2804}, one obtains an action of the bitorsor $\mathsf G^{\DR,\B}(\mathbf k)$
on the pair of isomorphic $\mathbf k$-modules $(\mathrm{Hom}_{\mathbf k\operatorname{-mod}_{\mathrm{top}}}
(\hat{\mathcal W}^\DR,(\mathcal W^\DR)^{\otimes2\wedge}), \mathrm{Hom}_{\mathbf k\operatorname{-mod}_{\mathrm{top}}}
(\hat{\mathcal W}^\B,(\mathcal W^\B)^{\otimes2\wedge}))$. Recall that 
$(\hat{\Delta}^{\mathcal W,\DR},\hat{\Delta}^{\mathcal W,\B})$ is a pair of elements of this pair of modules. 
By Lemma \ref{lem:1:6}, this gives rise to the stabilizer subbitorsor 
$_{\mathrm{Stab}_{\mathsf G^\DR(\mathbf k)}(\hat{\Delta}^{\mathcal W,\DR})}\mathrm{Stab}_{\mathsf G^\DR(\mathbf k)}(\hat{\Delta}^{\mathcal W,\DR},\hat{\Delta}^{\mathcal W,\B})_{\mathrm{Stab}_{\mathsf G^\B(\mathbf k)}(\hat{\Delta}^{\mathcal W,\B})}$ of $\mathsf G^{\DR,\B}(\mathbf k)$. \hfill\qed\medskip 

\subsection{$\mathsf{DMR}^{\DR,\B}(\mathbf k)$ as a subbitorsor of 
$\mathsf G^{\DR,\B}(\mathbf k)$}\label{sect:3:4}

\begin{lemdef}\label{lem:3:9:1705}
Set $\mathsf{DMR}^\B(\mathbf k):=\mathsf G^\B_{\mathrm{quad}}(\mathbf k)\cap
\mathsf{Stab}(\hat\Delta^{\mathcal M,\B})(\mathbf k)$. 
Then $\mathsf{DMR}^\B(\mathbf k)$ is a subgroup of $(\mathsf G^\B(\mathbf k),\circledast)$. 
\end{lemdef}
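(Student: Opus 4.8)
The plan is to show that $\mathsf{DMR}^\B(\mathbf k)$, being defined as an intersection of two subsets of $\mathsf G^\B(\mathbf k)$, is a subgroup by invoking the two preceding results that each of these subsets is a subgroup. Concretely, Lemma \ref{lem:3:2:2204} asserts that $\mathsf G^\B_{\mathrm{quad}}(\mathbf k)$ is a subgroup of $(\mathsf G^\B(\mathbf k),\circledast)$, and the Definition immediately preceding this Lemma-Definition records that $\mathsf{Stab}(\hat\Delta^{\mathcal M,\B})(\mathbf k)=\mathrm{Stab}_{\mathsf G^\B(\mathbf k)}(\hat\Delta^{\mathcal M,\B})$ is a subgroup of $(\mathsf G^\B(\mathbf k),\circledast)$, being the stabilizer of an element under a group action (via the action constructed just above that Definition). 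Since the intersection of any two subgroups of a group is again a subgroup, the statement follows at once.

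So the proof is essentially one line: $\mathsf{DMR}^\B(\mathbf k)$ is the intersection of the two subgroups $\mathsf G^\B_{\mathrm{quad}}(\mathbf k)$ and $\mathsf{Stab}(\hat\Delta^{\mathcal M,\B})(\mathbf k)$ of $(\mathsf G^\B(\mathbf k),\circledast)$, hence a subgroup. There is no real obstacle here; the content has already been front-loaded into Lemma \ref{lem:3:2:2204} (the nontrivial verification that the quadratic relations are preserved by $\circledast$, handled there via the morphism $\varphi_{\mathrm{quad}}\circ\mathrm{iso}^{\mathsf G}$ and a conjugation argument) and into the action underlying the definition of $\mathsf{Stab}(\hat\Delta^{\mathcal M,\B})(\mathbf k)$. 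I would simply cite both.

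\proof This is immediate: by Lemma \ref{lem:3:2:2204}, $\mathsf G^\B_{\mathrm{quad}}(\mathbf k)$ is a subgroup of $(\mathsf G^\B(\mathbf k),\circledast)$, and $\mathsf{Stab}(\hat\Delta^{\mathcal M,\B})(\mathbf k)=\mathrm{Stab}_{\mathsf G^\B(\mathbf k)}(\hat\Delta^{\mathcal M,\B})$ is a subgroup of $(\mathsf G^\B(\mathbf k),\circledast)$, being the stabilizer of the element $\hat\Delta^{\mathcal M,\B}$ under the group action constructed above. The intersection of two subgroups of a group is a subgroup, so $\mathsf{DMR}^\B(\mathbf k)=\mathsf G^\B_{\mathrm{quad}}(\mathbf k)\cap\mathsf{Stab}(\hat\Delta^{\mathcal M,\B})(\mathbf k)$ is a subgroup of $(\mathsf G^\B(\mathbf k),\circledast)$. \hfill\qed\medskip
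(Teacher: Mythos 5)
Your proof is correct and matches the paper's own argument exactly: the paper also cites Lemma \ref{lem:3:2:2204} for $\mathsf G^\B_{\mathrm{quad}}(\mathbf k)$, the stabilizer-subgroup fact for $\mathsf{Stab}(\hat\Delta^{\mathcal M,\B})(\mathbf k)$, and concludes by the intersection of two subgroups being a subgroup.
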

 
\proof This follows from Lemma \ref{lem:3:2:2204} and from the fact that 
$\mathsf{Stab}(\hat\Delta^{\mathcal M,\B})(\mathbf k)$ is a subgroup of $(\mathsf G^\B(\mathbf k),\circledast)$.
\hfill\qed\medskip 

In \cite{EF2}, Definition 2.12, we introduced the subsets 
$\mathsf{DMR}^\DR(\mathbf k)$ and $\mathsf{DMR}^{\DR,\B}(\mathbf k)$
of $\mathsf G^\DR(\mathbf k)$, and in Lemma 2.13, proved that 
$\mathsf{DMR}^\DR(\mathbf k)$ is a subgroup of  $\mathsf G^\DR(\mathbf k)$, and that 
$\mathsf{DMR}^{\DR,\B}(\mathbf k)$, equipped with the left action of this group, is a subtorsor
of $\mathsf G^{\DR,\B}(\mathbf k)$.

\begin{lem}\label{lem:3:10:2105}
The right action of $\mathsf G^\B(\mathbf k)$ on $\mathsf G^\DR(\mathbf k)$ restricts to a free and transitive action of 
$\mathsf{DMR}^\B(\mathbf k)$ on $\mathsf{DMR}^{\DR,\B}(\mathbf k)$, 
so that this right action equips $\mathsf{DMR}^{\DR,\B}(\mathbf k)$ with the structure of a 
subbitorsor of the bitorsor $\mathsf G^{\DR,\B}(\mathbf k)$.  
\end{lem}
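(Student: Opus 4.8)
The plan is to proceed exactly as in the proofs of Lemmas \ref{subbitorsor:0105} and \ref{subbitorsor:0105:bis}, combining these two results with the bitorsor intersection formalism of Lemma \ref{lem:1:3}. Recall from \cite{EF2}, Theorem 3.1, relation (2), that $\mathsf{DMR}^{\DR,\B}(\mathbf k)=\mathsf{Stab}(\hat\Delta^{\mathcal M,\DR/\B})(\mathbf k)\cap\mathsf G^{\DR,\B}_{\mathrm{quad}}(\mathbf k)$ as subtorsors of $\mathsf G^{\DR,\B}(\mathbf k)$. By Lemma \ref{lemma:bitorsor:1}, $\mathsf G^{\DR,\B}_{\mathrm{quad}}(\mathbf k)$ carries the structure of a subbitorsor of $\mathsf G^{\DR,\B}(\mathbf k)$, with right group $\mathsf G^\B_{\mathrm{quad}}(\mathbf k)$; by Lemma \ref{subbitorsor:0105:bis}, $\mathsf{Stab}(\hat\Delta^{\mathcal M,\DR/\B})(\mathbf k)$ is a subbitorsor, with right group $\mathsf{Stab}(\hat\Delta^{\mathcal M,\B})(\mathbf k)$.

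First I would invoke Lemma \ref{lem:1:3} applied to these two subbitorsors of $\mathsf G^{\DR,\B}(\mathbf k)$. Since their underlying torsors intersect in $\mathsf{DMR}^{\DR,\B}(\mathbf k)$, which is nonempty (it contains $\mathsf M(\mathbf k)$, e.g. the image of the Drinfeld associator $\Phi_{\mathrm{KZ}}$, by relation (1) of \cite{EF2}, Theorem 3.1), the intersection is a subbitorsor $_{G\cap G'}(\mathsf{DMR}^{\DR,\B}(\mathbf k))_{H\cap H'}$, where $H=\mathsf G^\B_{\mathrm{quad}}(\mathbf k)$ and $H'=\mathsf{Stab}(\hat\Delta^{\mathcal M,\B})(\mathbf k)$. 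This shows that the right action of $\mathsf G^\B(\mathbf k)$ on $\mathsf G^\DR(\mathbf k)$ restricts to a free and transitive action of $\mathsf G^\B_{\mathrm{quad}}(\mathbf k)\cap\mathsf{Stab}(\hat\Delta^{\mathcal M,\B})(\mathbf k)$ on $\mathsf{DMR}^{\DR,\B}(\mathbf k)$. By Lemma-Definition \ref{lem:3:9:1705}, this intersection is precisely $\mathsf{DMR}^\B(\mathbf k)$, which gives the claim.

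The only point requiring care is to check that the two subbitorsor structures being intersected really are subbitorsors of one and the same ambient bitorsor $\mathsf G^{\DR,\B}(\mathbf k)$ with its fixed right action of $\mathsf G^\B(\mathbf k)$ from Definition \ref{def:2:12:1244} — i.e. that the right actions in Lemmas \ref{lemma:bitorsor:1} and \ref{subbitorsor:0105:bis} are both obtained by restricting that fixed right action. This is built into the statements of those lemmas ("the right action of $\mathsf G^\B(\mathbf k)$ on $\mathsf G^\DR(\mathbf k)$ restricts to..."), so no genuine obstacle arises; alternatively, and perhaps more transparently, one may argue via Lemma \ref{lem:1:10:1305}: the right group of the subbitorsor $\mathsf{DMR}^{\DR,\B}(\mathbf k)$ is the image of its left group $\mathsf{DMR}^\DR(\mathbf k)$ under $\mathrm{Ad}_{x^{-1}}$ for any $x\in\mathsf{DMR}^{\DR,\B}(\mathbf k)$, and one identifies this image inside $\mathsf G^\B(\mathbf k)$ with $\mathsf{DMR}^\B(\mathbf k)$ using that $\mathrm{Ad}_{x^{-1}}$ carries $\mathsf G^\DR_{\mathrm{quad}}(\mathbf k)$ to $\mathsf G^\B_{\mathrm{quad}}(\mathbf k)$ and $\mathsf{Stab}(\hat\Delta^{\mathcal M,\DR})(\mathbf k)$ to $\mathsf{Stab}(\hat\Delta^{\mathcal M,\B})(\mathbf k)$, which is exactly the content of Lemmas \ref{lemma:bitorsor:1} and \ref{subbitorsor:0105:bis}.
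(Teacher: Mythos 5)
Your proof is correct and follows essentially the same route as the paper: both invoke Lemma \ref{lem:1:3} together with Lemmas \ref{lemma:bitorsor:1} and \ref{subbitorsor:0105:bis} to form the intersection subbitorsor, identify its underlying set with $\mathsf{DMR}^{\DR,\B}(\mathbf k)$ via \cite{EF2}, Theorem 3.1, and identify its right group with $\mathsf G^\B_{\mathrm{quad}}(\mathbf k)\cap\mathsf{Stab}(\hat\Delta^{\mathcal M,\B})(\mathbf k)=\mathsf{DMR}^\B(\mathbf k)$ by Lemma-Definition \ref{lem:3:9:1705}. Your explicit nonemptiness check is a welcome detail the paper leaves implicit, though for a general $\mathbb Q$-algebra $\mathbf k$ one should cite a rational associator (or the nonemptiness of $\mathsf M(\mathbf k)$ from \cite{Dr}) rather than $\Phi_{\mathrm{KZ}}$, whose coefficients are only real.
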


\proof By Lemmas \ref{lem:1:3}, \ref{lemma:bitorsor:1} and \ref{subbitorsor:0105:bis}, the intersection of the bitorsors 
$\mathsf G^{\DR,\B}_{\mathrm{quad}}(\mathbf k)$ and $\mathsf{Stab}(\hat\Delta^{\mathcal M,\DR/\B})(\mathbf k)$ 
is a subbitorsor of $\mathsf G^{\DR,\B}(\mathbf k)$. The result then follows from 
$\mathsf{DMR}^\DR(\mathbf k)=\mathsf G^\DR_{\mathrm{quad}}(\mathbf k)\cap
\mathsf{Stab}(\hat\Delta^{\mathcal M,\DR})(\mathbf k)\subset\mathsf G^\DR(\mathbf k)$ and  
$\mathsf{DMR}^{\DR,\B}(\mathbf k)=\mathsf G^{\DR,\B}_{\mathrm{quad}}(\mathbf k)
\cap\mathsf{Stab}(\hat\Delta^{\mathcal M,\DR/\B})(\mathbf k)\subset\mathsf G^\DR(\mathbf k)$, which follows
from \cite{EF2}, Theorem 3.1, (a).   
\hfill\qed\medskip 

\subsection{$\mathsf{DMR}_\mu(\mathbf k)$ as a subbitorsor of 
$\mathsf G^{\DR,\B}(\mathbf k)$}\label{sect:3:5:3006}\label{sect:3:5}

\begin{lemdef}\label{lem:3:11:1705}
Set $\mathsf{DMR}^\B_0(\mathbf k):=\{g\in\mathcal G(\hat{\mathcal V}^\B)\ |\ 
(g|\mathrm{log}X_0)
=(g|\mathrm{log}X_1)=(g|\mathrm{log}X_0\mathrm{log}X_1)=0$ and $ 
(^\Gamma\!\mathrm{aut}_{(\mu,g)}^{\mathcal M,(10),\B})^{\otimes2}
\circ \hat\Delta^{\mathcal M,\B}=\hat\Delta^{\mathcal M,\B}\circ 
\ ^\Gamma\!\mathrm{aut}_{(\mu,g)}^{\mathcal M,(10),\B}
\}$. Then $\mathsf{DMR}^\B_0(\mathbf k)$ is a subgroup of 
$(\mathcal G(\hat{\mathcal V}^\B),\circledast)$. 
\end{lemdef}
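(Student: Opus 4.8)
The plan is to identify $\mathsf{DMR}^\B_0(\mathbf k)$ as the intersection of two subgroups of $(\mathcal G(\hat{\mathcal V}^\B),\circledast)$, both of which are tractable. First I would observe that the defining conditions split into the ``linear/quadratic at $\mu=1$'' conditions, namely $(g|\mathrm{log}X_0)=(g|\mathrm{log}X_1)=(g|\mathrm{log}X_0\mathrm{log}X_1)=0$, and the stabilizer condition involving $\hat\Delta^{\mathcal M,\B}$. For a group-like $g$, the element $(1,g)\in\mathsf G^\B(\mathbf k)$ has $\mu=1$, so $\mu^2=1+24(g|\mathrm{log}X_0\mathrm{log}X_1)$ holds precisely when $(g|\mathrm{log}X_0\mathrm{log}X_1)=0$. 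Hence the triple vanishing conditions are exactly equivalent to $(1,g)\in\mathsf G^\B_{\mathrm{quad}}(\mathbf k)$, and the stabilizer condition is exactly $(1,g)\in\mathsf{Stab}(\hat\Delta^{\mathcal M,\B})(\mathbf k)$.

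Next I would combine this with Lemma-Definition \ref{lem:3:9:1705}: it follows that under the injection $\mathcal G(\hat{\mathcal V}^\B)\hookrightarrow\mathsf G^\B(\mathbf k)$, $g\mapsto(1,g)$, the subset $\mathsf{DMR}^\B_0(\mathbf k)$ is the preimage of the subgroup $\mathsf{DMR}^\B(\mathbf k)=\mathsf G^\B_{\mathrm{quad}}(\mathbf k)\cap\mathsf{Stab}(\hat\Delta^{\mathcal M,\B})(\mathbf k)$. Since this injection is a group morphism by Lemma \ref{lem:2:1:2606}, the preimage of a subgroup is a subgroup, so $\mathsf{DMR}^\B_0(\mathbf k)$ is a subgroup of $(\mathcal G(\hat{\mathcal V}^\B),\circledast)$. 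Equivalently, one can phrase this as $\mathsf{DMR}^\B_0(\mathbf k)=\mathsf{DMR}^\B(\mathbf k)\cap\mathcal G(\hat{\mathcal V}^\B)$, the intersection of a subgroup of $\mathsf G^\B(\mathbf k)$ with the subgroup $\{1\}\times\mathcal G(\hat{\mathcal V}^\B)$.

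The one point that requires a small verification, and which I expect to be the only genuine (if minor) obstacle, is the book-keeping showing that $\mathsf{DMR}^\B_0(\mathbf k)$ as literally written — a subset of $\mathcal G(\hat{\mathcal V}^\B)$ defined by conditions mentioning $(\mu,g)$ without $\mu$ being quantified — is to be read with $\mu=1$ (equivalently, that the ${}^\Gamma\!\mathrm{aut}^{\mathcal M,(10),\B}_{(\mu,g)}$ appearing there is $\mathrm{aut}$ at $\mu=1$), so that the matching with the $\mu=1$ slice of $\mathsf{DMR}^\B(\mathbf k)$ is exact. Once that reading is fixed, the argument is the formal one above; alternatively, the statement can be proved directly by the same scheme of reasoning used in the proof of \cite{EF2}, Lemma 2.13, transported to the Betti side, i.e. by checking stability of the defining conditions under $\circledast$ and inversion using Lemma-Definition \ref{lem:2:9:0105} and the cocycle property of $\Gamma$ from Lemma \ref{lemma:cocycle}. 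I would present the short preimage-of-a-subgroup argument as the proof.
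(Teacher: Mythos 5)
Your proposal is correct and coincides with the paper's own argument: the paper likewise observes that $(\mu,g)\mapsto\mu$ is a group morphism $\mathsf G^\B(\mathbf k)\to\mathbf k^\times$ with kernel $(\mathcal G(\hat{\mathcal V}^\B),\circledast)$ and that $\mathsf{DMR}^\B_0(\mathbf k)$ is the intersection of this kernel with the subgroup $\mathsf{DMR}^\B(\mathbf k)$. Your explicit check that the three vanishing conditions are exactly the $\mu=1$ specialization of the $\mathsf G^\B_{\mathrm{quad}}$ conditions (and your remark about the unquantified $\mu$ in the statement) is a useful bit of bookkeeping that the paper leaves implicit.
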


\proof The map $(\mu,g)\mapsto\mu$ is a group morphism $\mathsf G^\B(\mathbf k)\to\mathbf k^\times$, whose kernel 
is equal to $(\mathcal G(\hat{\mathcal V}^\B),\circledast)$. Then $\mathsf{DMR}^\B_0(\mathbf k)$ is equal to the intersection of 
this kernel with $\mathsf{DMR}^\B(\mathbf k)$, and is therefore the intersection of two subgroups of $\mathsf G^\B(\mathbf k)$. 
\hfill\qed\medskip 

In \cite{Rac}, Racinet introduced the subsets $\mathsf{DMR}_0(\mathbf k)$ and $\mathsf{DMR}_\mu(\mathbf k)$
of $\mathcal G(\hat{\mathcal V}^\DR)$ for any $\mu\in\mathbf k$, and proved that $\mathsf{DMR}_0(\mathbf k)$ is a 
subgroup of  $(\mathcal G(\hat{\mathcal V}^\DR),\circledast)$, and that $\mathsf{DMR}_\mu(\mathbf k)$, equipped with the 
left action of this group, is a subtorsor of $\mathcal G(\hat{\mathcal V}^\DR)$, viewed as a trivial torsor over itself.  In 
\cite{EF2}, Lemma 2.14, we identified the torsor $\mathsf{DMR}_\mu(\mathbf k)$ with a subtorsor of 
$\mathsf {DMR}^{\DR,\B}(\mathbf k)$, viewed as a torsor under the left action of $\mathsf {DMR}^\DR(\mathbf k)$. 

\begin{lem}
The right action of $\mathsf{DMR}^\B(\mathbf k)$ on $\mathsf {DMR}^\DR(\mathbf k)$ restricts to a free and 
transitive action of $\mathsf{DMR}^\B_0(\mathbf k)$ on $\mathsf{DMR}_\mu(\mathbf k)$, 
so that this right action equips $\mathsf{DMR}_\mu(\mathbf k)$ with the structure of a 
subbitorsor of the bitorsor $\mathsf {DMR}^{\DR,\B}(\mathbf k)$. 
\end{lem}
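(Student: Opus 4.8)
The plan is to mimic the structure of the proofs of Lemmas \ref{lem:3:10:2105}, \ref{subbitorsor:0105} and \ref{subbitorsor:0105:bis}, realizing $\mathsf{DMR}_\mu(\mathbf k)$ as an intersection of subbitorsors of the ambient bitorsor $\mathsf G^{\DR,\B}(\mathbf k)$, and then transporting the result along the torsor identification of \cite{EF2}, Lemma 2.14. First I would note that $\mathsf{DMR}^{\DR,\B}(\mathbf k)$, as a subbitorsor of $\mathsf G^{\DR,\B}(\mathbf k)$ (Lemma \ref{lem:3:10:2105}), has underlying left group $\mathsf{DMR}^\DR(\mathbf k)$ and right group $\mathsf{DMR}^\B(\mathbf k)$. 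Next I would exhibit $\mathsf{DMR}_\mu(\mathbf k)$ inside $\mathsf{DMR}^{\DR,\B}(\mathbf k)$: by \cite{EF2}, Lemma 2.14, it is the subtorsor of $\mathsf{DMR}^{\DR,\B}(\mathbf k)$ (under the left $\mathsf{DMR}^\DR(\mathbf k)$-action) consisting of those elements $g$ with $\mu$-component fixed, equivalently the preimage of a point-subbitorsor under the group morphism $\mathsf G^{\DR,\B}(\mathbf k)\to(\mathbf k^\times)$ bitorsor induced by $(\mu,g)\mapsto\mu$ on both sides; alternatively it is cut out by the additional condition $(g|\mathrm{log}X_0\mathrm{log}X_1)=0$ together with $\mu=1$ on the $\B$-side. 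Then I would invoke Lemma \ref{lem:1:2} (preimage of a subbitorsor under a bitorsor morphism) or Lemma \ref{lem:1:3} (intersection of subbitorsors) to conclude that $\mathsf{DMR}_\mu(\mathbf k)$, equipped with the restricted left action of $\mathsf{DMR}_0(\mathbf k)$ and a suitable right action, is a subbitorsor of $\mathsf{DMR}^{\DR,\B}(\mathbf k)$.

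Concretely, I would argue as follows. The assignment $(\mu,g)\mapsto\mu$ defines compatible group morphisms $\mathsf{DMR}^\DR(\mathbf k)\to\mathbf k^\times$ and $\mathsf{DMR}^\B(\mathbf k)\to\mathbf k^\times$ and a map $\mathsf{DMR}^{\DR,\B}(\mathbf k)\to\mathbf k^\times$, hence a bitorsor morphism $\mathsf{DMR}^{\DR,\B}(\mathbf k)\to\ _{\mathbf k^\times}\mathbf k^\times_{\mathbf k^\times}$ (the trivial bitorsor attached to $\mathbf k^\times$ via Lemma \ref{lem:1:4}, using that $\mathrm{iso}^{\mathsf G}$ is the identity on the $\mathbf k^\times$-factor by Lemma \ref{lem:gp:iso:1704}). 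The singleton $\{\mu\}$, with the groups $\{1\}\subset\mathbf k^\times$ on both sides, is the subbitorsor $\mathrm{inj}_\mu(_{\{1\}}\{1\}_{\{1\}})$ of $_{\mathbf k^\times}\mathbf k^\times_{\mathbf k^\times}$ (Lemma \ref{lem:1:5}). By Lemma \ref{lem:1:2}, its preimage is a subbitorsor of $\mathsf{DMR}^{\DR,\B}(\mathbf k)$ (nonempty since, for instance, the image of the relevant associator or of any point of $\mathsf{DMR}_\mu$ lies in it, using \cite{EF2}, Lemma 2.14). Its left group is $\ker(\mathsf{DMR}^\DR(\mathbf k)\to\mathbf k^\times)=\mathsf{DMR}_0(\mathbf k)$, its right group is $\ker(\mathsf{DMR}^\B(\mathbf k)\to\mathbf k^\times)=\mathsf{DMR}^\B_0(\mathbf k)$ (Lemma-Definition \ref{lem:3:11:1705}), and its underlying set is exactly the $\mu$-graded piece, which by \cite{EF2}, Lemma 2.14, is $\mathsf{DMR}_\mu(\mathbf k)$. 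This yields the asserted subbitorsor structure, with the right action being the restriction of the right action of $\mathsf{DMR}^\B(\mathbf k)$ on $\mathsf{DMR}^\DR(\mathbf k)$.

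The only point requiring a little care is the identification of the underlying \emph{set} of the preimage subbitorsor with $\mathsf{DMR}_\mu(\mathbf k)$ as embedded in $\mathsf{DMR}^{\DR,\B}(\mathbf k)$ via \cite{EF2}, Lemma 2.14; this is where one must check that the embedding of \cite{EF2}, Lemma 2.14, is precisely the $\mu$-fiber of the map $(\mu,g)\mapsto\mu$, rather than some twist of it. I expect this to be immediate from the construction of that embedding (it is defined exactly by sending $g\in\mathsf{DMR}_\mu(\mathbf k)\subset\mathcal G(\hat{\mathcal V}^\DR)$ to $(\mu,g)\in\mathsf G^\DR(\mathbf k)$), so the only genuine work is bookkeeping. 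The main potential obstacle, if one instead wanted a self-contained proof avoiding \cite{EF2}, Lemma 2.14, would be re-deriving the freeness and transitivity of the $\mathsf{DMR}^\B_0(\mathbf k)$-action on $\mathsf{DMR}_\mu(\mathbf k)$ directly; but this is automatic once $\mathsf{DMR}_\mu(\mathbf k)$ is recognized as a fiber of an equivariant map between bitorsors, since fibers of bitorsor morphisms over point-subbitorsors are themselves bitorsors (Lemmas \ref{lem:1:2} and \ref{lem:1:6}).
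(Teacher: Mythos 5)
Your proposal is correct and follows essentially the same route as the paper: the paper's proof also uses the bitorsor morphism $(\mu,g)\mapsto\mu$ from $_{\mathsf{DMR}^\DR(\mathbf k)}\mathsf{DMR}^{\DR,\B}(\mathbf k)_{\mathsf{DMR}^\B(\mathbf k)}$ to the trivial bitorsor $_{\mathbf k^\times}\mathbf k^\times{}_{\mathbf k^\times}$, takes the preimage of the point subbitorsor $_{\{1\}}\{\mu\}_{\{1\}}$, and identifies the three components with $\mathsf{DMR}_0(\mathbf k)$, $\mathsf{DMR}_\mu(\mathbf k)$ and $\mathsf{DMR}^\B_0(\mathbf k)$ via the kernels of the $\mu$-component maps and the proof of Lemma 2.14 of \cite{EF2}. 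Your closing remark about checking that the embedding of $\mathsf{DMR}_\mu(\mathbf k)$ is exactly the $\mu$-fiber is precisely the bookkeeping the paper delegates to that same reference.
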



\proof The maps $(\mu,g)\mapsto\mu$ set up a bitorsor morphism from
 $_{\mathsf{DMR}^\DR(\mathbf k)}\mathsf{DMR}^{\DR,\B}(\mathbf k)_{\mathsf{DMR}^B(\mathbf k)}$ to 
$_{\mathbf k^\times}\mathbf k^\times{}_{\mathbf k^\times}$. Then $_{\{1\}}\{\mu\}_{\{1\}}$ is a subbitorsor of the 
latter torsor. Its preimage is therefore a subbitorsor of the source bitorsor. The result follows from the identification of 
the preimage of kernel of $\mathsf{DMR}^\DR(\mathbf k)\to\mathbf k^\times$ (resp. 
$\mathsf{DMR}^\B(\mathbf k)\to\mathbf k^\times$) with $\mathsf{DMR}_0(\mathbf k)$ (resp. with 
$\mathsf{DMR}^\B_0(\mathbf k)$) and of the preimage of $\mu$ under the map 
$\mathsf{DMR}^{\DR,\B}(\mathbf k)\to\mathbf k^\times$ with 
$\mathsf{DMR}_\mu(\mathbf k)$, obtained in the proof of Lemma 2.14 in \cite{EF2}. \hfill\qed\medskip

\subsection{$\mathsf M(\mathbf k)$ as a subbitorsor of $\mathsf G^{\DR,\B}(\mathbf k)$}\label{sect:3:6}

It follows from \cite{Dr} that the subgroup $\mathsf{GT}^{\mathrm{op}}(\mathbf k)\subset\mathsf G^\B(\mathbf k)$
is the group attached to $\mathsf M(\mathbf k)$, viewed as a subtorsor of 
$\mathsf G^{\DR,\B}(\mathbf k)$ when equipped with the left action of $\mathsf{GRT}^{\mathrm{op}}(\mathbf k)$. 
It follows that $\mathsf M(\mathbf k)$, equipped with the commuting left and right actions of 
$\mathsf{GRT}(\mathbf k)^{\mathrm{op}}$ and $\mathsf{GRT}(\mathbf k)^{\mathrm{op}}$, is a subbitorsor of 
$\mathsf G^{\DR,\B}(\mathbf k)$. 

\subsection{Groups corresponding to some torsors and their interrelations}\label{sect:3:7}

\begin{thm}\label{thm:3:13:1705} 
(a) The group attached, in the sense of Lemma \ref{lem:gattt}, (a), to the subtorsor 
$
\mathsf{Stab}(\hat\Delta^{\mathcal W,\DR/\B})(\mathbf k)$ 
(resp., $
\mathsf{Stab}(\hat\Delta^{\mathcal M,\DR/\B})(\mathbf k)$, 
$
\mathsf{DMR}^{\DR,\B}(\mathbf k)$, 
$
\mathsf{DMR}_\mu(\mathbf k)$) of 
the torsor $\mathsf G^{\DR,\B}(\mathbf k)$ is the subgroup 
$\mathsf{Stab}(\hat\Delta^{\mathcal W,\B})(\mathbf k)$ 
(resp., $\mathsf{Stab}(\hat\Delta^{\mathcal M,\B})(\mathbf k)$, 
$\mathsf{DMR}^\B(\mathbf k)$, $\mathsf{DMR}^\B_0(\mathbf k)$) of $\mathsf G^\B(\mathbf k)$. 

(b) The following inclusions hold between subgroups of $\mathsf G^\B(\mathbf k)$: 
$$
\mathsf{GT}^{\mathrm{op}}(\mathbf k)\subset\mathsf{DMR}^\B(\mathbf k)
\subset
\mathsf{Stab}(\hat\Delta^{\mathcal M,\B})(\mathbf k)
\subset  
\mathsf{Stab}(\hat\Delta^{\mathcal W,\B})(\mathbf k). 
$$
\end{thm}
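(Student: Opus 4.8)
The plan is to deduce Theorem \ref{thm:3:13:1705} essentially formally from the subbitorsor statements already established in \S\S\ref{sect:3:1}--\ref{sect:3:6} together with the general torsor/bitorsor formalism of \S\ref{sect:tab:0707}. For part (a), recall that by Lemma \ref{lem:gattt}, (b), the functor $_GX_H\mapsto\ _GX$ from $\mathbf{Bitor}$ to $\mathbf{Tor}$ is a quasi-inverse of $_G\!X\mapsto\ _G\!X_{\mathrm{Aut}_G(X)}$, so that for a bitorsor $_GX_H$ the group $H$ is canonically identified with the group $\mathrm{Aut}_G(X)$ attached to the underlying torsor $_G\!X$. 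Hence for each of the four torsors in question it suffices to exhibit a bitorsor structure on it whose underlying left torsor is the one appearing in \cite{EF2}; the right-acting group is then automatically the group attached to that torsor. This is exactly what Lemmas \ref{subbitorsor:0105}, \ref{subbitorsor:0105:bis}, \ref{lem:3:10:2105} and the last lemma of \S\ref{sect:3:5} provide: in each case the right action of a subgroup of $\mathsf G^\B(\mathbf k)$ (namely $\mathsf{Stab}(\hat\Delta^{\mathcal W,\B})(\mathbf k)$, $\mathsf{Stab}(\hat\Delta^{\mathcal M,\B})(\mathbf k)$, $\mathsf{DMR}^\B(\mathbf k)$, $\mathsf{DMR}^\B_0(\mathbf k)$ respectively) is shown to be free and transitive and to upgrade the given torsor to a subbitorsor of $\mathsf G^{\DR,\B}(\mathbf k)$. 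So part (a) reduces to invoking these four lemmas together with the uniqueness built into Lemma \ref{lem:gattt}, (b).

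For part (b) I would argue entirely on the Betti side, transporting the torsor inclusions (1)--(3) of \cite{EF2}, Theorem 3.1, recalled in \S\ref{sect:1:3:0207}. The key observation is Lemma \ref{lem:1:10:1305}: if $_{G'}X'_{H'}$ and $_{G''}X''_{H''}$ are subbitorsors of a common bitorsor $_GX_H$ and the left subtorsor $_{G'}X'$ is contained in $_{G''}X''$, then $H'\subset H''$ as subgroups of $H$. Applying this with $_GX_H=\mathsf G^{\DR,\B}(\mathbf k)$ (whose right-acting group is $\mathsf G^\B(\mathbf k)$): the inclusion of left subtorsors $\mathsf M(\mathbf k)\subset\mathsf{DMR}^{\DR,\B}(\mathbf k)$ from (1), combined with the subbitorsor structures of \S\ref{sect:3:6} and Lemma \ref{lem:3:10:2105}, yields $\mathsf{GT}^{\mathrm{op}}(\mathbf k)\subset\mathsf{DMR}^\B(\mathbf k)$; the inclusion $\mathsf{Stab}(\hat\Delta^{\mathcal M,\DR/\B})(\mathbf k)\subset\mathsf{Stab}(\hat\Delta^{\mathcal W,\DR/\B})(\mathbf k)$ from (3), with Lemmas \ref{subbitorsor:0105:bis} and \ref{subbitorsor:0105}, yields $\mathsf{Stab}(\hat\Delta^{\mathcal M,\B})(\mathbf k)\subset\mathsf{Stab}(\hat\Delta^{\mathcal W,\B})(\mathbf k)$; and the middle inclusion $\mathsf{DMR}^\B(\mathbf k)\subset\mathsf{Stab}(\hat\Delta^{\mathcal M,\B})(\mathbf k)$ follows either from the equality (2) $\mathsf{DMR}^{\DR,\B}(\mathbf k)=\mathsf{Stab}(\hat\Delta^{\mathcal M,\DR/\B})(\mathbf k)\cap\mathsf G^{\DR,\B}_{\mathrm{quad}}(\mathbf k)$ via Lemma \ref{lem:1:10:1305} again, or directly from Lemma-Definition \ref{lem:3:9:1705}, where $\mathsf{DMR}^\B(\mathbf k)$ is by definition $\mathsf G^\B_{\mathrm{quad}}(\mathbf k)\cap\mathsf{Stab}(\hat\Delta^{\mathcal M,\B})(\mathbf k)$, hence a fortiori contained in $\mathsf{Stab}(\hat\Delta^{\mathcal M,\B})(\mathbf k)$.

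There is no serious obstacle here: every ingredient has been put in place by the preceding lemmas, and the proof is a matter of correctly matching each torsor inclusion of \cite{EF2} with the corresponding pair of subbitorsors and invoking Lemma \ref{lem:1:10:1305}. The one point requiring a little care is bookkeeping with the $\mathrm{op}$'s: the groups $\mathsf{GT}(\mathbf k)$ and $\mathsf{GRT}(\mathbf k)$ enter through their opposite groups as right-acting groups on $\mathsf M(\mathbf k)$ (cf. \S\ref{sect:3:6}), so one must check that the inclusion $\mathsf M(\mathbf k)\hookrightarrow\mathsf{DMR}^{\DR,\B}(\mathbf k)$ is compatible with these right actions, i.e. that it is a morphism of right torsors in the sense making Lemma \ref{lem:1:10:1305} applicable; this is immediate since both right actions are restrictions of the single right action of $\mathsf G^\B(\mathbf k)$ on $\mathsf G^{\DR,\B}(\mathbf k)$, and $\mathsf{GT}^{\mathrm{op}}(\mathbf k)$ is literally realized as a subgroup of $\mathsf G^\B(\mathbf k)$ in \S\ref{sect:3:6}. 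Assembling these remarks gives both parts of the theorem.
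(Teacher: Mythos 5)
Your proposal is correct and follows essentially the same route as the paper: part (a) via the canonical identification $H\simeq\mathrm{Aut}_G(X)$ of Lemma \ref{lem:gattt}, (b) applied to the subbitorsor structures of Lemmas \ref{subbitorsor:0105}, \ref{subbitorsor:0105:bis}, \ref{lem:3:10:2105} and \S\ref{sect:3:5}, and part (b) via Lemma \ref{lem:1:10:1305}, Theorem 3.1 of \cite{EF2}, and Lemma-Definition \ref{lem:3:9:1705}. Your additional remarks on the $\mathrm{op}$ bookkeeping and the two alternative derivations of the middle inclusion are consistent with the paper's (more terse) argument.
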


\proof (a) follows from the fact that for any bitorsor $_GX_H$, there is a unique group isomorphism 
$H\to\mathrm{Aut}_G(X)$ (see \S\ref{sect:1:2:1305}). Then (b) follows from Lemma \ref{lem:1:10:1305} from the present paper, 
Theorem 3.1 in \cite{EF2} and Lemma-Definition \ref{lem:3:9:1705} from the present paper. \hfill\qed\medskip 
 
\begin{rem}
\cite{DG} contains the construction of a $\mathbb Q$-linear neutral Tannakian category 
$\mathcal T:=\mathrm{MT}(\mathbb Z)_{\mathbb Q}$ of mixed Tate motives over $\mathbb Z$, 
equipped with `Betti' and `de Rham' fiber functors as in Lemma \ref{construction(**)}. 
It leads to the construction  $\mathsf{Iso}^{\otimes}(\omega_\B,
\omega_\DR)(\mathbf k)$, which is a bitorsor under the left (resp. right) actions of  
$\mathsf{Aut}^{\otimes}(\omega_\B)(\mathbf k)$ (resp. $\mathsf{Aut}^{\otimes}(\omega_\DR)(\mathbf k)$). 
There is a morphism from this bitorsor to the associator bitorsor $\mathsf M(\mathbf k)$
(see \cite{An}), which in its turn is equipped with morphisms to the bitorsors of Lemmas \ref{lemma:bitorsor:1}, 
\ref{subbitorsor:0105}, \ref{subbitorsor:0105:bis}, \ref{lem:3:10:2105}. This explains the namings `De Rham' and `Betti' and 
for the left and right sides of these bitorsors.  
\end{rem}

\subsection{Scheme-theoretic and Lie algebraic aspects}\label{sect:3:8}

The assignments 
\begin{equation}\label{list:gp:schemes}
\{\mathbb Q\operatorname{-algebras}\}\ni \mathbf k\mapsto 
\mathsf G^{\B}_{?}(\mathbf k), (\mathcal G(\hat{\mathcal V}^\B),\circledast), 
\mathsf{GT}(\mathbf k)^{\mathrm{op}},\mathsf{DMR}^\B_{??}(\mathbf k),
\mathsf{Stab}(\hat\Delta^{???,\DR})(\mathbf k),
\end{equation}
where ? stands for no index or quad, ?? stands for no index or 0, ??? stands for $\mathcal M$ or $\mathcal W$, 
are $\mathbb Q$-group schemes. Recall that the Lie algebra of a $\mathbb Q$-group scheme $\mathbf k\mapsto\mathsf G(\mathbf k)$ 
is defined as $\mathrm{Ker}(\mathsf G(\mathbb Q[\epsilon]/(\epsilon^2))\to \mathsf G(\mathbb Q))$; it is a $\mathbb Q$-Lie 
algebra. Let 
$$
\mathfrak g^\B_?, \quad
\mathfrak g^\B_0, \quad
\mathfrak{gt}^{\mathrm{op}}, \quad
\mathfrak{dmr}^\B_{??}, \quad
\mathfrak{stab}(\hat\Delta^{???,\B}),
$$
be the Lie algebras of the $\mathbb Q$-group schemes \eqref{list:gp:schemes}. 

Denote with an index $\mathbb Q$ the objects and morphisms of \S\ref{sect:2:1:1505} corresponding to 
$\mathbf k=\mathbb Q$. The primitive part of $\hat{\mathcal V}^\B_{\mathbb Q}
=(\mathbb QF_2)^\wedge$ with respect to $\hat\Delta^{\mathcal V,\B}_{\mathbb Q}$ is 
the complete Lie algebra $\mathrm{Lie} F_2(\mathbb Q)$, topologically generated by the free family 
$\mathrm{log}X_0$, $\mathrm{log}X_1$, where $\mathrm{log}$ is the logarithm map 
$1+(\hat{\mathcal V}^\B_{\mathbb Q})_+\to (\hat{\mathcal V}^\B_{\mathbb Q})_+$ . 

Let $D$ be the derivation of $\mathrm{Lie}F_2(\mathbb Q)$ defined by 
$\mathrm{log}X_i\mapsto \mathrm{log}X_i$, for $i=0,1$. For $x\in\mathrm{Lie}F_2(\mathbb Q)$, 
let $D_x$ be the derivation of $\mathrm{Lie}F_2(\mathbb Q)$  defined by 
$\mathrm{log}X_0\mapsto[x,\mathrm{log}X_0]$, $\mathrm{log}X_1\mapsto0$. 

\begin{lem}[see \cite{Dr}] (a) $\mathfrak g^\B$ is a complete Lie algebra, of which 
$\mathfrak{gt}^{\mathrm{op}}$ and $\mathfrak g^\B_0$ are complete Lie subalgebras. 

(b) $\mathfrak g^\B=\mathbb Q\oplus\mathrm{Lie}F_2(\mathbb Q)$, with Lie bracket 
$\langle(\nu,x),(\nu',x')\rangle=\nu D(x')-\nu' D(x)+D_x(x')-D_{x'}(x)-[x,x']$; $\mathfrak g^\B$ is the 
Lie subalgebra $\mathrm{Lie}F_2(\mathbb Q)$.  

(c) $\mathfrak{gt}^{\mathrm{op}}$ is the subspace of $\mathfrak g^\B$ defined 
by relations (5.5) to (5.7) in \cite{Dr}. 
\end{lem}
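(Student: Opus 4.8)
The plan is to recall the relevant constructions from \cite{Dr} and transport them through the identifications already established in this paper, since the statement is explicitly flagged ``see \cite{Dr}''. The object $\mathfrak g^\B$ is the Lie algebra of the $\mathbb Q$-group scheme $\mathbf k\mapsto\mathsf G^\B(\mathbf k)$, i.e.\ $\mathrm{Ker}(\mathsf G^\B(\mathbb Q[\epsilon]/(\epsilon^2))\to\mathsf G^\B(\mathbb Q))$. Using the description $\mathsf G^\B(\mathbf k)=\mathbf k^\times\times\mathcal G(\hat{\mathcal V}^\B)$ from \S\ref{sect:constr:pdt:1704}, an element of this kernel is a pair $(1+\nu\epsilon,\ \mathrm{exp}(x\epsilon))$ with $\nu\in\mathbb Q$ and $x$ in the primitive part of $\hat{\mathcal V}^\B_{\mathbb Q}$, which by the paragraph preceding the lemma is $\mathrm{Lie}F_2(\mathbb Q)$. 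This identifies the underlying $\mathbb Q$-vector space as $\mathbb Q\oplus\mathrm{Lie}F_2(\mathbb Q)$, giving the first half of (b).

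For the Lie bracket, I would differentiate the group law $\circledast$ of Lemma \ref{lem:2:1:2606}. Write two elements $(1+\nu\epsilon,\mathrm{exp}(x\epsilon))$ and $(1+\nu'\epsilon,\mathrm{exp}(x'\epsilon))$ of $\mathsf G^\B(\mathbb Q[\epsilon]/(\epsilon^2))$ and compute $\mathrm{aut}^{\mathcal V,(10),\B}_{(1+\nu\epsilon,\mathrm{exp}(x\epsilon))}(\mathrm{exp}(x'\epsilon))$ to first order in each variable; the commutator $(\mu,g)\circledast(\mu',g')\circledast(\mu,g)^{-1}\circledast(\mu',g')^{-1}$ then yields the bracket as the coefficient of $\epsilon\epsilon'$. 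The map $a\mapsto a^\mu$ from \S\ref{sect:212} linearizes, on $\mathrm{log}X_i$, to the scaling derivation $D$, which accounts for the terms $\nu D(x')-\nu'D(x)$; the conjugation $g\cdot X_0^\mu\cdot g^{-1}$ in \eqref{aut:V:1:1705} linearizes to the derivation $x\mapsto D_x$ (acting nontrivially only on $\mathrm{log}X_0$), accounting for $D_x(x')-D_{x'}(x)$; and the extra right multiplication by $g$ in \eqref{aut:V:10:1705} that distinguishes $\mathrm{aut}^{(10)}$ from $\mathrm{aut}^{(1)}$ produces the remaining $-[x,x']$ (its sign coming from the order in which the semidirect product places the two factors). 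Part (a), that $\mathfrak g^\B$ is complete and that $\mathfrak{gt}^{\mathrm{op}}$ and $\mathfrak g^\B_0$ are complete Lie subalgebras, follows because these are Lie algebras of pro-unipotent-by-$\mathbb G_m$ group schemes (respectively, of closed subgroup schemes cut out by the polynomial conditions of Definitions \ref{lem:3:11:1705} and \ref{def:G:B:quad:1321} together with \S\ref{sect:3:6}), and the completeness of the filtration on $\mathrm{Lie}F_2(\mathbb Q)$ is inherited. Part (c) is obtained by linearizing the defining equations of $\mathsf{GT}(\mathbf k)^{\mathrm{op}}$ from \cite{Dr}: the pentagon and hexagon relations on a group-like $g$ become, at the infinitesimal level, exactly the relations (5.5)--(5.7) of \cite{Dr} on $x\in\mathrm{Lie}F_2(\mathbb Q)$, and one checks the opposite-group convention does not alter these linear conditions.

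The main obstacle I expect is bookkeeping the signs and the order of composition in the bracket computation: the semidirect-product structure of Lemma \ref{lem:2:1:2606}, the ``$(10)$ versus $(1)$'' twist of \eqref{aut:V:10:1705}, and the passage to the opposite group for $\mathfrak{gt}^{\mathrm{op}}$ each contribute a sign, and getting the coefficient of the bracket $-[x,x']$ (rather than $+[x,x']$) right requires care — but this is precisely the kind of calculation \cite{Dr} carries out, so I would largely cite it and verify that the identification $\mathsf G^\B(\mathbf k)\cong\mathsf G^\DR(\mathbf k)$ of Lemma \ref{lem:gp:iso:1704} transports Drinfeld's $\mathfrak{grt}$-style computation to the Betti side, with $\Gamma_{\mathrm{iso}^{\mathcal G}(g)}=\Gamma_g$ (used in Lemma \ref{lem:2104}) ensuring the cocycle-twisted actions match up.
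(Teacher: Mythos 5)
Your proposal is correct and matches the paper's approach: the paper gives no argument at all for this lemma, simply attributing it to \cite{Dr}, and your plan — identify $\mathfrak g^\B$ with $\mathrm{Ker}(\mathsf G^\B(\mathbb Q[\epsilon]/(\epsilon^2))\to\mathsf G^\B(\mathbb Q))=\mathbb Q\oplus\mathrm{Lie}F_2(\mathbb Q)$, differentiate the product $\circledast$ of Lemma \ref{lem:2:1:2606}, and linearize Drinfeld's defining relations for (c) — is exactly the computation being implicitly invoked; your bracket calculation checks out against the formulas \eqref{def:der:V:1705} that the paper introduces immediately afterwards (the linearizations of $\mathrm{aut}^{\mathcal V,(1),\B}$ and $\mathrm{aut}^{\mathcal V,(10),\B}$ are $\nu D+D_x$ and $\nu D+D_x+r_x$, and the right-multiplication term $r_x$ indeed yields $x'x-xx'=-[x,x']$). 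The only quibbles are cosmetic: $\mathfrak g^\B_0$ is cut out by $\nu=0$ (it is the Lie algebra of $\mathcal G(\hat{\mathcal V}^\B)$, not of the subgroups of Definition \ref{def:G:B:quad:1321} or Lemma-Definition \ref{lem:3:11:1705}), and the last clause of (b) should read $\mathfrak g^\B_0=\mathrm{Lie}F_2(\mathbb Q)$.
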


For $(\nu,x)\in\mathfrak g^\B$, set 
\begin{equation}\label{def:der:V:1705}
\mathrm{der}_{(\nu,x)}^{\mathcal V,(1),\B}:=\nu D+D_x\in\mathrm{End}_{\mathbb Q}
(\hat{\mathcal V}_{\mathbb Q}^\B), \quad 
\mathrm{der}_{(\nu,x)}^{\mathcal V,(10),\B}:=\nu D+D_x+r_x\in\mathrm{End}_{\mathbb Q}
(\hat{\mathcal V}_{\mathbb Q}^\B).  
\end{equation}
Then $\mathrm{der}_{(\nu,x)}^{\mathcal V,(1),\B}$ restricts to an endomorphism 
$\mathrm{der}_{(\nu,x)}^{\mathcal W,(1),\B}\in\mathrm{End}_{\mathbb Q}
(\hat{\mathcal W}_{\mathbb Q}^\B)$, and $\mathrm{der}_{(\nu,x)}^{\mathcal V,(10),\B}$ 
induces an endomorphism $\mathrm{der}_{(\nu,x)}^{\mathcal M,(10),\B}\in
\mathrm{End}_{\mathbb Q}(\hat{\mathcal M}_{\mathbb Q}^\B)$. 

For $x\in\mathrm{Lie}F_2(\mathbb Q)$, set 
$$
\gamma_x(t):=\sum_{n\geq1}(-1)^{n+1}(x|(\mathrm{log}X_0)^{n-1}\mathrm{log}X_1)t^n/n\in\mathbb Q[[t]] 
$$
and for $(\nu,x)\in\mathfrak g^\B$, set 
\begin{equation}\label{def:der:M:1705}
{}^\Gamma\!\!\mathrm{der}_{(\nu,x)}^{\mathcal W,(1),\B}:=
\mathrm{der}_{(\nu,x)}^{\mathcal W,(1),\B}
-\mathrm{ad}_{\gamma_x(-\mathrm{log}X_1)}\in\mathrm{End}_{\mathbb Q}
(\hat{\mathcal W}_{\mathbb Q}^\B), 
\end{equation}
\begin{equation}\label{def:der:W:11032020}
{}^\Gamma\!\!\mathrm{der}_{(\nu,x)}^{\mathcal M,(10),\B}:=\mathrm{der}_{(\nu,x)}^{\mathcal M,(10),\B}
-\ell_{\gamma_x(-\mathrm{log}X_1)}\in\mathrm{End}_{\mathbb Q}(\hat{\mathcal M}_{\mathbb Q}^\B).  
\end{equation}

\begin{lem} (a) $\mathfrak g^\B_{\mathrm{quad}}$, $\mathfrak{dmr}^\B_{??}$ and  
$\mathfrak{stab}(\hat\Delta^{???,\B})$ are complete Lie subalgebras of $\mathfrak g^\B$. 

(b) $\mathfrak g^\B_{\mathrm{quad}}=\{(\nu,x)\in\mathfrak g^\B|(x|\mathrm{log}X_0)=(x|\mathrm{log}X_1)=0, 
\nu=12(x|\mathrm{log}X_0\mathrm{log}X_1)\}$.

(c) $\mathfrak{stab}(\hat\Delta^{\mathcal W,\B})$ is the set of elements $(\nu,x)\in\mathfrak g^\B$ 
such that $({}^\Gamma\!\!\mathrm{der}^{\mathcal W,(1),\B}_{(\nu,x)}\otimes \mathrm{id}+\mathrm{id}\otimes 
{}^\Gamma\!\!\mathrm{der}^{\mathcal W,(1),\B}_{(\nu,x)})\circ\hat\Delta^{\mathcal W,\B}_{\mathbb Q}
=\hat\Delta^{\mathcal W,\B}_{\mathbb Q}\circ\ {}^\Gamma\!\!\mathrm{der}^{\mathcal W,(1),\B}_{(\nu,x)}$ (equality in 
$\mathrm{Hom}_{\mathbb Q}(\hat{\mathcal W}_{\mathbb Q}^\B,({\mathcal W}_{\mathbb Q}^\B)^{\otimes2,\wedge})$). 

(d) $\mathfrak{stab}(\hat\Delta^{\mathcal M,\B})$ is the set of elements $(\nu,x)\in\mathfrak g^\B$ such that 
$({}^\Gamma\!\!\mathrm{der}^{\mathcal M,(10),\B}_{(\nu,x)}\otimes \mathrm{id}+\mathrm{id}\otimes 
{}^\Gamma\!\!\mathrm{der}^{\mathcal M,(10),\B}_{(\nu,x)})\circ\hat\Delta^{\mathcal M,\B}_{\mathbb Q}
=\hat\Delta^{\mathcal M,\B}_{\mathbb Q}\circ\ {}^\Gamma\!\!\mathrm{der}^{\mathcal M,(10),\B}_{(\nu,x)}$ (equality in 
$\mathrm{Hom}_{\mathbb Q}(\hat{\mathcal M}_{\mathbb Q}^\B,({\mathcal M}_{\mathbb Q}^\B)^{\otimes2,\wedge})$). 

(e) $\mathfrak{dmr}^\B=\mathfrak g^\B_{\mathrm{quad}}\cap 
\mathfrak{stab}(\hat\Delta^{\mathcal M,\B})$ and $\mathfrak{dmr}^\B_0=\mathfrak g^\B_0
\cap \mathfrak g^\B_{\mathrm{quad}}\cap \mathfrak{stab}(\hat\Delta^{\mathcal M,\B})$.
\end{lem}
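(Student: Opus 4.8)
The plan is to compute all the Lie algebras in the statement by the dual number method, i.e.\ by describing $\mathrm{Ker}(\mathsf G_?^\B(\mathbb Q[\epsilon]/(\epsilon^2))\to\mathsf G_?^\B(\mathbb Q))$ as a subspace of $\mathfrak g^\B=\mathrm{Ker}(\mathsf G^\B(\mathbb Q[\epsilon]/(\epsilon^2))\to\mathsf G^\B(\mathbb Q))=\mathbb Q\oplus\mathrm{Lie}F_2(\mathbb Q)$, obtained by linearising the equations defining each subgroup scheme. Here one uses that an element of $\mathcal G(\hat{\mathcal V}^\B_{\mathbb Q[\epsilon]/(\epsilon^2)})$ reducing to $1$ is of the form $\exp(\epsilon x)=1+\epsilon x$ with $x\in\mathrm{Lie}F_2(\mathbb Q)$, so that a generic element of $\mathfrak g^\B$ is $(1+\nu\epsilon,1+\epsilon x)$ with $(\nu,x)\in\mathbb Q\oplus\mathrm{Lie}F_2(\mathbb Q)$.

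The technical core will be the following infinitesimal counterpart of the constructions of \S\ref{sect:2:1:1505}: for $(\nu,x)\in\mathfrak g^\B$ and $\alpha\in\{1,10\}$,
\[
\mathrm{aut}^{\mathcal V,(\alpha),\B}_{(1+\nu\epsilon,1+\epsilon x)}=\mathrm{id}+\epsilon\,\mathrm{der}^{\mathcal V,(\alpha),\B}_{(\nu,x)}\pmod{\epsilon^2},
\]
and, after restriction to $\hat{\mathcal W}^\B$, passage to the quotient $\hat{\mathcal M}^\B$, and $\Gamma$-twisting,
\[
{}^\Gamma\!\mathrm{aut}^{\mathcal W,(1),\B}_{(1+\nu\epsilon,1+\epsilon x)}=\mathrm{id}+\epsilon\,{}^\Gamma\!\!\mathrm{der}^{\mathcal W,(1),\B}_{(\nu,x)},\qquad{}^\Gamma\!\mathrm{aut}^{\mathcal M,(10),\B}_{(1+\nu\epsilon,1+\epsilon x)}=\mathrm{id}+\epsilon\,{}^\Gamma\!\!\mathrm{der}^{\mathcal M,(10),\B}_{(\nu,x)}.
\]
I would check these by direct computation on the topological generators $\log X_0,\log X_1$: writing $X_i^{1+\nu\epsilon}=\exp((1+\nu\epsilon)\log X_i)$, formula \eqref{aut:V:1:1705} gives $\mathrm{aut}^{\mathcal V,(1),\B}_{(1+\nu\epsilon,1+\epsilon x)}(\log X_0)=(1+\nu\epsilon)(1+\epsilon x)(\log X_0)(1-\epsilon x)=\log X_0+\epsilon(\nu D+D_x)(\log X_0)$ and likewise $\mathrm{aut}^{\mathcal V,(1),\B}_{(1+\nu\epsilon,1+\epsilon x)}(\log X_1)=\log X_1+\epsilon\nu\log X_1$, which matches \eqref{def:der:V:1705} for $\alpha=1$; the case $\alpha=10$ then follows from \eqref{aut:V:10:1705}, right multiplication by $1+\epsilon x$ producing the extra term $r_x$. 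For the twisted versions, Definition \ref{def:Gamma:Betti} yields $\Gamma_{1+\epsilon x}(t)=\exp(\epsilon\,\gamma_x(t))$, so $\Gamma_{1+\epsilon x}^{-1}(-\log X_1)=1-\epsilon\,\gamma_x(-\log X_1)$, whence conjugation (resp.\ left multiplication) by this element is $\mathrm{id}-\epsilon\,\mathrm{ad}_{\gamma_x(-\log X_1)}$ (resp.\ $\mathrm{id}-\epsilon\,\ell_{\gamma_x(-\log X_1)}$); combining with Lemma-Definition \ref{lemma:2:5:12042020} and \eqref{def:der:M:1705}, \eqref{def:der:W:11032020} gives the claimed identities.

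Granting this, (b), (c) and (d) will follow by substituting $(\mu,g)=(1+\nu\epsilon,1+\epsilon x)$ into the defining conditions and extracting the coefficient of $\epsilon$. For (b): $(g|\log X_i)=\epsilon(x|\log X_i)$, $\mu^2=1+2\nu\epsilon$, and $1+24(g|\log X_0\log X_1)=1+24\epsilon(x|\log X_0\log X_1)$, so Definition \ref{def:G:B:quad:1321} linearises to $(x|\log X_0)=(x|\log X_1)=0$, $\nu=12(x|\log X_0\log X_1)$. For (c) and (d): using the infinitesimal formula together with $(\mathrm{id}+\epsilon D)^{\otimes2}=\mathrm{id}+\epsilon(D\otimes\mathrm{id}+\mathrm{id}\otimes D)\pmod{\epsilon^2}$ and the fact that the coproducts $\hat\Delta^{\mathcal W,\B}$, $\hat\Delta^{\mathcal M,\B}$ over $\mathbb Q[\epsilon]/(\epsilon^2)$ are base changes of $\hat\Delta^{\mathcal W,\B}_{\mathbb Q}$, $\hat\Delta^{\mathcal M,\B}_{\mathbb Q}$, the stabiliser equation becomes exactly the co-derivation identity displayed in (c), resp.\ (d). Part (e) will follow from the fact that the Lie algebra functor turns intersections of closed subgroup schemes into intersections: by Lemma-Definition \ref{lem:3:9:1705}, $\mathsf{DMR}^\B=\mathsf G^\B_{\mathrm{quad}}\cap\mathsf{Stab}(\hat\Delta^{\mathcal M,\B})$, hence $\mathfrak{dmr}^\B=\mathfrak g^\B_{\mathrm{quad}}\cap\mathfrak{stab}(\hat\Delta^{\mathcal M,\B})$; and $\mathsf{DMR}^\B_0$ is the intersection of $\mathsf{DMR}^\B$ with the kernel of $(\mu,g)\mapsto\mu$ (see the proof of Lemma-Definition \ref{lem:3:11:1705}), whose Lie algebra is $\mathfrak g^\B_0$, so $\mathfrak{dmr}^\B_0=\mathfrak g^\B_0\cap\mathfrak g^\B_{\mathrm{quad}}\cap\mathfrak{stab}(\hat\Delta^{\mathcal M,\B})$. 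Finally, for (a): being a Lie subalgebra of $\mathfrak g^\B$ is automatic from functoriality of $\mathrm{Lie}$, since all the groups in question are subgroup schemes of $\mathsf G^\B$ by \S\S\ref{sect:3:1}--\ref{sect:3:5}; and completeness of $\mathfrak g^\B_{\mathrm{quad}}$ and of $\mathfrak{stab}(\hat\Delta^{\mathcal W,\B})$, $\mathfrak{stab}(\hat\Delta^{\mathcal M,\B})$ holds since (b)--(d) exhibit them as kernels of continuous linear maps out of the complete Lie algebra $\mathfrak g^\B$, hence as closed subspaces, while completeness of $\mathfrak{dmr}^\B$ and $\mathfrak{dmr}^\B_0$ follows from (e), being finite intersections of complete subalgebras.

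The step I expect to be the main obstacle is the bookkeeping in the second paragraph: one must carefully propagate the first-order term in $\epsilon$ through the power maps $a\mapsto a^\mu$, through the distinction between the algebra automorphism $\mathrm{aut}^{(1)}$ and the module automorphism $\mathrm{aut}^{(10)}$, and above all through the $\Gamma$-twist, in order to identify the linearisation with the derivations ${}^\Gamma\!\!\mathrm{der}$ of \eqref{def:der:M:1705}--\eqref{def:der:W:11032020} on the nose rather than up to a stray correction term. Once that identification is in hand, (b)--(e) and the completeness assertions of (a) are formal.
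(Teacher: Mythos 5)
Your proposal is correct and follows essentially the same route as the paper: the paper's proof simply states that (b)--(d) are "obtained by linearization", that (e) follows from the group-level intersection identities of Lemma-Definitions \ref{lem:3:9:1705} and \ref{lem:3:11:1705}, and that (a) follows from (b)--(e); your dual-number computation of the infinitesimal automorphisms is exactly the linearization the paper leaves implicit, carried out correctly.
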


\proof (b), (c), (d) are obtained by linearization. (e) follows from the equalities $\mathsf{DMR}^\B(\mathbf k)
=\mathsf G^\B_{\mathrm{quad}}(\mathbf k)\cap\mathsf{Stab}(\hat\Delta^{\mathcal M,\DR/\B})(\mathbf k)$ and 
$\mathsf{DMR}^\B_0(\mathbf k)=\mathsf{DMR}^\B(\mathbf k)\cap(\mathcal G(\hat{\mathcal V}^\B),\circledast)$
(see Lemmas \ref{lem:3:9:1705}, \ref{lem:3:11:1705}). (a) follows from (b)-(e). \hfill\qed\medskip 

\begin{rem} The endomorphisms $\mathrm{der}_{(\nu,x)}^{?}$, $^{\Gamma}\!\mathrm{der}_{(\nu,x)}^{??}$ are Lie 
algebraic analogues of the automorphisms $\mathrm{aut}_{(\mu,g)}^{?}$, $^{\Gamma}\!\mathrm{aut}_{(\mu,g)}^{??}$,
where $?$ (resp. $??$) takes the values $(\mathcal X,\B,\alpha)$, with $(\mathcal X,\alpha)$ in $\{(\mathcal V,(1)),(\mathcal V,(10)),
(\mathcal W,(1)),(\mathcal M,(10))\}$ (resp. in $\{(\mathcal W,(1)),(\mathcal M,(10))\}$), in particular, any of the maps taking 
$(\nu,x)$ to one of these endomorphisms is a representation of $\mathfrak g^\B$. 
\end{rem}

\begin{cor} The following inclusions between Lie subalgebras of $\mathfrak g^\B$ hold: 
(a) $\mathfrak{gt}^{\mathrm{op}}\subset\mathfrak{dmr}^\B$, 
(b) $\mathfrak{stab}(\hat\Delta^{\mathcal M,\B})\subset\mathfrak{stab}(\hat\Delta^{\mathcal W,\B})$. 
\end{cor}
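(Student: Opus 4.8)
The plan is to deduce both inclusions from the corresponding group-scheme inclusions of Theorem \ref{thm:3:13:1705}, (b), by passing to Lie algebras. Recall that the Lie algebra functor $\mathsf G(\mathbf k)\mapsto\mathrm{Ker}(\mathsf G(\mathbb Q[\epsilon]/(\epsilon^2))\to\mathsf G(\mathbb Q))$ is functorial: a morphism of $\mathbb Q$-group schemes induces a $\mathbb Q$-linear Lie algebra morphism, and it sends a closed subgroup scheme to a Lie subalgebra, preserving inclusions. So the strategy for (a) is simply to apply this functor to the inclusion $\mathsf{GT}^{\mathrm{op}}(\mathbf k)\subset\mathsf{DMR}^\B(\mathbf k)$ of $\mathbb Q$-group schemes, and for (b) to apply it to $\mathsf{Stab}(\hat\Delta^{\mathcal M,\B})(\mathbf k)\subset\mathsf{Stab}(\hat\Delta^{\mathcal W,\B})(\mathbf k)$; by definition of the Lie algebras $\mathfrak{gt}^{\mathrm{op}}$, $\mathfrak{dmr}^\B$, $\mathfrak{stab}(\hat\Delta^{\mathcal M,\B})$, $\mathfrak{stab}(\hat\Delta^{\mathcal W,\B})$ as the Lie algebras of these very group schemes, the images of the inclusion maps are precisely the asserted inclusions of Lie subalgebras of $\mathfrak g^\B$.

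In more detail, for (a): evaluating the group-scheme inclusion on $\mathbf k=\mathbb Q[\epsilon]/(\epsilon^2)$ gives an inclusion $\mathsf{GT}^{\mathrm{op}}(\mathbb Q[\epsilon]/(\epsilon^2))\subset\mathsf{DMR}^\B(\mathbb Q[\epsilon]/(\epsilon^2))$ compatible with the reduction maps to $\mathbb Q$-points, hence an inclusion of kernels $\mathfrak{gt}^{\mathrm{op}}\subset\mathfrak{dmr}^\B$ inside $\mathfrak g^\B=\mathrm{Ker}(\mathsf G^\B(\mathbb Q[\epsilon]/(\epsilon^2))\to\mathsf G^\B(\mathbb Q))$. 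The same argument verbatim, applied to the third inclusion in Theorem \ref{thm:3:13:1705}, (b), yields (b). Alternatively, one may invoke the linearized descriptions already recorded in the preceding Lemma: part (b) there identifies $\mathfrak g^\B_{\mathrm{quad}}$ by explicit linear/quadratic equations, parts (c) and (d) identify $\mathfrak{stab}(\hat\Delta^{\mathcal W,\B})$ and $\mathfrak{stab}(\hat\Delta^{\mathcal M,\B})$ as solution sets of linear equations on $\mathfrak g^\B$, and part (e) gives $\mathfrak{dmr}^\B=\mathfrak g^\B_{\mathrm{quad}}\cap\mathfrak{stab}(\hat\Delta^{\mathcal M,\B})$; then (b) of the corollary is visibly the Lie-algebraic shadow of Lemma \ref{lem:1:10:1305} and inclusion (3) in \S\ref{sect:1:3:0207}, and (a) follows once one knows $\mathfrak{gt}^{\mathrm{op}}$ satisfies the quadratic relation and stabilizes $\hat\Delta^{\mathcal M,\B}$, which is the linearization of the inclusion $\mathsf{GT}^{\mathrm{op}}(\mathbf k)\subset\mathsf{DMR}^\B(\mathbf k)$.

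I expect essentially no obstacle: the content has already been done at the level of group schemes in Theorem \ref{thm:3:13:1705}, (b), and the Lie algebra functor is exact enough on closed immersions to transport inclusions. The only point requiring a word of care is that one is using that the Lie algebras named in the statement are \emph{defined} to be the Lie algebras of the group schemes appearing in Theorem \ref{thm:3:13:1705}, (b) — this is exactly the setup of \S\ref{sect:3:8} — so there is nothing to check beyond functoriality. Thus the proof reduces to: ``Apply the Lie algebra functor to the inclusions of $\mathbb Q$-group schemes of Theorem \ref{thm:3:13:1705}, (b)'', possibly together with a pointer to the explicit descriptions in the Lemma above for an independent verification.

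\proof Both inclusions are obtained by linearizing the corresponding inclusions of $\mathbb Q$-group schemes in Theorem \ref{thm:3:13:1705}, (b). The functor sending a $\mathbb Q$-group scheme $\mathbf k\mapsto\mathsf G(\mathbf k)$ to its Lie algebra $\mathrm{Ker}(\mathsf G(\mathbb Q[\epsilon]/(\epsilon^2))\to\mathsf G(\mathbb Q))$ takes an inclusion of $\mathbb Q$-group schemes to an inclusion of $\mathbb Q$-Lie algebras. Applying it to $\mathsf{GT}^{\mathrm{op}}(\mathbf k)\subset\mathsf{DMR}^\B(\mathbf k)$ and to $\mathsf{Stab}(\hat\Delta^{\mathcal M,\B})(\mathbf k)\subset\mathsf{Stab}(\hat\Delta^{\mathcal W,\B})(\mathbf k)$ yields, by the very definition in \S\ref{sect:3:8} of $\mathfrak{gt}^{\mathrm{op}}$, $\mathfrak{dmr}^\B$, $\mathfrak{stab}(\hat\Delta^{\mathcal M,\B})$ and $\mathfrak{stab}(\hat\Delta^{\mathcal W,\B})$ as the Lie algebras of these group schemes, the inclusions (a) $\mathfrak{gt}^{\mathrm{op}}\subset\mathfrak{dmr}^\B$ and (b) $\mathfrak{stab}(\hat\Delta^{\mathcal M,\B})\subset\mathfrak{stab}(\hat\Delta^{\mathcal W,\B})$ as Lie subalgebras of $\mathfrak g^\B$. \hfill\qed\medskip
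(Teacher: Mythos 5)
Your proof is correct and matches the paper's approach: the paper's own proof is the one-line statement that the corollary "follows directly from Theorem \ref{thm:3:13:1705}," which is precisely your argument of applying the Lie algebra functor $\mathsf G\mapsto\mathrm{Ker}(\mathsf G(\mathbb Q[\epsilon]/(\epsilon^2))\to\mathsf G(\mathbb Q))$ to the group-scheme inclusions of part (b) of that theorem. You have simply spelled out the functoriality the paper leaves implicit.
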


\proof This follows directly from Theorem \ref{thm:3:13:1705}. \hfill\qed\medskip 

\subsection{Relation with Hopf algebra and coalgebra isomorphism bitorsors}\label{rwhacaib}

Recall that a {\it module-coalgebra} over a (topological) $\mathbf k$-Hopf algebra $(W,\Delta_W)$ is a  coassociative counital (topological)
$\mathbf k$-coalgebra $(M,\Delta_M)$, equipped with a map $W\otimes M\to M$ which both defines an action of $W$ (viewed as an associative 
$\mathbf k$-algebra) on $M$ (viewed as a $\mathbf k$-module) and a morphism of counital $\mathbf k$-coalgebras ($W\otimes M$ being equipped 
with the tensor product coalgebra structure). Denote by $\mathbf k\operatorname{-HAMC}$ the category of pairs $((W,\Delta_W),(M,\Delta_M))$
of a $\mathbf k$-Hopf algebra and a module-coalgebra over it. It fits in a commutative diagram of forgetful functors 
\begin{equation}\label{six:categories}
    \xymatrix{\mathbf k\operatorname{-coalg}\ar[d]&\ar[l]\mathbf k\operatorname{-HAMC} \ar[r]\ar[d]& \mathbf k\operatorname{-Hopf}\ar[d]\\ \mathbf 
k\operatorname{-mod}&\ar[l]\mathbf k\operatorname{-alg-mod}\ar[r]& \mathbf k\operatorname{-alg}}
\end{equation}
where $\mathbf k$-Hopf (resp. $\mathbf k$-coalg, $\mathbf k$-mod) is the category of $\mathbf k$-Hopf algebras (resp. $\mathbf k$-coalgebras, 
$\mathbf k$-modules), in which the vertical functors are faithful. A pair of isomorphic objects 
in $\mathbf k\operatorname{-HAMC}$ then induces a commutative diagram of isomorphism bitorsors, where the vertical maps are bitorsor inclusions. 

If $\mathcal C$ is a category and $\omega\mapsto X^\omega$ is a map $\{\B,\DR\}\to\mathrm{Ob}(\mathcal C)$, let us denote by 
$\mathrm{Iso}_{\mathcal C}(X^{\DR/\B})$ the bitorsor of isomorphisms $\mathrm{Iso}_{\mathcal C}(X^\B,X^{\DR})$. If $X^\omega$ is given by 
a tuple $(A^\omega,B^\omega,\ldots)$, we set $(A,B,\ldots)^\omega:=(A^\omega,B^\omega,\ldots)$. 

\begin{lem}\label{lem:cartesianity}
Let $((W^\omega,\Delta_W^\omega),(M^\omega,\Delta_M^\omega))$, $\omega\in\{\B,\DR\}$ be a pair of isomorphic objects in 
$\mathbf k\operatorname{-HAMC}$ such that $M^\B$ is free of rank one over $W^\B$ with generator $1_M^\B$ and such that 
$\Delta_M^\B(1_M^\B)\in((W^\B)^{\otimes2})^\times\cdot(1_M^\B)^{\otimes2}$. Then 
the commutative diagram of bitorsors 
$$
\xymatrix{\mathrm{Iso}_{\mathbf k\operatorname{-coalg}}((M,\Delta_M)^{\DR/\B})\ar[d]&\ar[l]\ar[d]
\mathrm{Iso}_{\mathbf k\operatorname{-HAMC}}(((W,\Delta_W),(M,\Delta_M))^{\DR/\B})
\\\mathrm{Iso}_{\mathbf k\operatorname{-mod}}(M^{\DR/\B})&\mathrm{Iso}_{\mathbf k\operatorname{-alg-mod}}((W,M)^{\DR/\B})\ar[l]}
$$
induced by the left square of \eqref{six:categories} is Cartesian (see Definition \ref{def:Cartesian}).  
\end{lem}

\begin{proof} The argument follows that of \cite{EF2}, \S3.4. 
Let $(c_W,c_M)\in \mathrm{Iso}_{\mathbf k\operatorname{-alg-mod}}((W,M)^{\DR/\B})$ be such that $c_M\in
 \mathrm{Iso}_{\mathbf k\operatorname{-coalg}}((M,\Delta_M)^{\DR/\B})$. Then for $w\in W^\B$, 
\begin{align*}
& c_W^{\otimes2}(\Delta_W^\B(w))\cdot 
c_M^{\otimes2}(\Delta_M^\B(1_M^\B))=
c_M^{\otimes2}(\Delta_W^\B(w)\cdot \Delta_M^\B(1_M^\B))=
c_M^{\otimes2}(\Delta_M^\B(w\cdot 1_M^\B))=\Delta_M^\DR(c_M(w\cdot 
1_M^\B))
\\ & =
\Delta_M^\DR(c_W(w)\cdot c_M(1^\B_M))=
\Delta_W^\DR(c_W(w))\cdot \Delta_M^\DR(c_M(1^\B_M))=
\Delta_W^\DR(c_W(w))\cdot c_M^{\otimes2}(\Delta_M^\B(1_M^\B))
\end{align*}
by the various axioms. Then $c_M^{\otimes2}(\Delta_M^\B(1_M^\B))\in c_M^{\otimes2}(((W^\B)^{\otimes2})^\times\cdot(1_M^\B)^{\otimes2})
=c_W^{\otimes2}(((W^\B)^{\otimes2})^\times)\cdot c_M(1_M^\B)^{\otimes2}\subset ((W^\DR)^{\otimes2})^\times\cdot c_M(1_M^\B)^{\otimes2}$
and which together with the fact that $c_M(1_M^\B)$ is a generator of $M^\DR$ as a free $W^\DR$-module implies 
$c_W^{\otimes2}(\Delta_W^\B(w))=\Delta_W^\DR(c_W(w))$ therefore 
therefore $c_W^{\otimes2}\circ\Delta_W^\B\circ c_W^{-1}=\Delta_W^\DR$. 
It follows that the Hopf algebra structure on $W^\B$ pulled back from the Hopf algebra structure of $W^\DR$ has coproduct $\Delta_W^\B$. 
The uniqueness of the counit in a coalgebra in a symmetric monoidal category (proved by dualyzing the standard argument proving the 
uniqueness of the unit in an algebra) and of the antipode in a bialgebra in such a category (based on the argument of \cite{Sw}, p. 71) 
then imply that the pull-back on $W^\B$ of the Hopf algebra structure of $W^\DR$ is the Hopf algebra structure of $W^\B$, therefore 
$c_W\in\mathrm{Iso}_{\mathbf k\operatorname{-Hopf}}(W^{\DR/\B})$ so 
$(c_W,c_M)\in \mathrm{Iso}_{\mathbf k\operatorname{-HAMC}}(((W,\Delta_W),(M,\Delta_M))^{\DR/\B})$. 
\end{proof}

\begin{lem}\label{lem:pre:diag}
(a) For $\omega\in\{\B,\DR\}$, $((\hat{\mathcal W}^\omega,\Delta^{\mathcal W,\omega}),(\hat{\mathcal M}^\omega,\Delta^{\mathcal M,\omega}))$ is an 
object in $\mathbf k\operatorname{-HAMC}$. 

(b) The map $(\mu,\Phi)\mapsto(^\Gamma\mathrm{comp}^{\mathcal W,(1)}_{(\mu,\Phi)},^\Gamma\mathrm{comp}^{\mathcal M,(10)}_{(\mu,\Phi)})$ 
defines a bitorsor morphism $\mathsf G^{\DR,\B}(\mathbf k)\to\mathrm{Iso}_{\mathbf k\operatorname{-alg-mod}}
((\hat{\mathcal W},\hat{\mathcal M})^{\DR/\B})$. Its composition with the bitorsor morphism to 
$\mathrm{Iso}_{\mathbf k\operatorname{-alg}}(\hat{\mathcal W}^{\DR/\B})$ (resp. 
$\mathrm{Iso}_{\mathbf k\operatorname{-mod}}(\hat{\mathcal M}^{\DR/\B})$) is 
$(\mu,\Phi)\mapsto\ ^\Gamma\mathrm{comp}^{\mathcal M,(1)}_{(\mu,\Phi)}$. 
\end{lem}

\begin{proof} The $\omega=\DR$ part of (a) follows from \S\S1.1 and 1.2 in \cite{EF1}, and its $\omega=\B$ part follows from 
\S\S2.1 and 2.4 in {\it loc. cit}.  (b) follows from \cite{EF2}, Lemmas 1.21 to 1.24. 
\end{proof}

Lemma \ref{lem:pre:diag} leads to a diagram of bitorsors
\begin{equation}\label{seven:bitorsors}
    \xymatrix{\mathrm{Iso}_{\mathbf k\operatorname{-coalg}}
((\hat{\mathcal M},\hat\Delta^{\mathcal M})^{\DR/\B})\ar@{^{(}->}[d]&\ar[l]
\mathrm{Iso}_{\mathbf k\operatorname{-HAMC}}(((\hat{\mathcal W},\hat\Delta^{\mathcal W}),
(\hat{\mathcal M},\hat\Delta^{\mathcal M}))^{\DR/\B})\ar[r]\ar@{^{(}->}[d]& 
\mathrm{Iso}_{\mathbf k\operatorname{-Hopf}}((\hat{\mathcal W},\hat\Delta^{\mathcal W})^{\DR/\B})\ar@{^{(}->}[d]\\ 
\mathrm{Iso}_{\mathbf k\operatorname{-mod}}(\hat{\mathcal M}^{\DR/\B})&\ar[l]
\mathrm{Iso}_{\mathbf k\operatorname{-alg-mod}}((\hat{\mathcal W},\hat{\mathcal M})^{\DR/\B})\ar[r]&
\mathrm{Iso}_{\mathbf k\operatorname{-alg}}(\hat{\mathcal W}^{\DR/\B})\\ & \mathsf G^{\DR,\B}(\mathbf k)\ar[u]& }
\end{equation}
which, by denoting each of the bitorsors $\mathrm{Iso}_{\mathbf k\operatorname{-}\mathcal C}(X^{\DR/\B})$ of \eqref{seven:bitorsors}
by $T_{\mathcal C}$ leads to  a diagram of preimage subbitorsors of $\mathsf G^{\DR,\B}(\mathbf k)$: 
\begin{equation}\label{diagofsubbitorsors}
\mathsf G^{\DR,\B}(\mathbf k)\times_{T_{\operatorname{mod}}}
T_{\operatorname{coalg}}\supset 
\mathsf G^{\DR,\B}(\mathbf k)\times_{T_{\operatorname{alg-mod}}}
T_{\operatorname{HAMC}}\subset
\mathsf G^{\DR,\B}(\mathbf k)\times_{T_{\operatorname{alg}}}
T_{\operatorname{Hopf}}
\end{equation}


\begin{lem}\label{lemma:identifications:subbitorsors}
(a) The subbitorsor $\mathsf G^{\DR,\B}(\mathbf k)\times_{T_{\operatorname{mod}}}T_{\operatorname{coalg}}
$ of $\mathsf G^{\DR,\B}(\mathbf k)$ coincides with 
$\mathsf{Stab}(\hat\Delta^{\mathcal M,\DR/\B})(\mathbf k)$. 

(b) The subbitorsor $\mathsf G^{\DR,\B}(\mathbf k)\times_{T_{\operatorname{alg}}}T_{\operatorname{Hopf}}$ 
of $\mathsf G^{\DR,\B}(\mathbf k)$ coincides with 
$\mathsf{Stab}(\hat\Delta^{\mathcal W,\DR/\B})(\mathbf k)$. 

(c) The subbitorsors $\mathsf G^{\DR,\B}(\mathbf k)\times_{T_{\operatorname{mod}}}T_{\operatorname{coalg}}$ and 
$\mathsf G^{\DR,\B}(\mathbf k)\times_{T_{\operatorname{alg-mod}}}T_{\operatorname{HAMC}}$
of  $\mathsf G^{\DR,\B}(\mathbf k)$ are equal. 
\end{lem}

\begin{proof} (a) Let $(\mu,\Phi)\in\mathsf G^{\DR,\B}(\mathbf k)$. Then $(\mu,\Phi)
\in\mathsf G^{\DR,\B}(\mathbf k)\times_{T_{\operatorname{mod}}}T_{\operatorname{coalg}}$ iff 
$^\Gamma\mathrm{comp}_{(\mu,\Phi)}^{\mathcal M,(10)} : (\hat{\mathcal M}^\B,\hat\Delta^{\mathcal M,\B})
 \to(\hat{\mathcal M}^\DR,\hat\Delta^{\mathcal M,\DR})$ is a coalgebra isomorphism. This implies the equality 
 $(^\Gamma\mathrm{comp}_{(\mu,\Phi)}^{\mathcal M,(10)})^{\otimes2}\circ\hat\Delta^{\mathcal M,\B}
 \circ(^\Gamma\mathrm{comp}_{(\mu,\Phi)}^{\mathcal M,(10)})^{-1}=\hat\Delta^{\mathcal M,\DR}$. 
 Therefore $(\mu,\Phi)\in\mathsf{Stab}(\hat\Delta^{\mathcal M,\DR/\B})$.  
Conversely, if $(\mu,\Phi)\in\mathsf{Stab}(\hat\Delta^{\mathcal M,\DR/\B})$, then the counital coalgebra structure on 
$\hat{\mathcal M}^\B$ pulled back from that of $\hat{\mathcal M}^\DR$ has coproduct $\hat\Delta^{\mathcal M,\B}$. 
By the uniqueness of the counit in a coalgebra, the counit of the pulled back coalgebra structure necessarily coincides with that of 
$\hat{\mathcal M}^\B$, therefore $^\Gamma\mathrm{comp}_{(\mu,\Phi)}^{\mathcal M,(10)}$ is a coalgebra isomorphism, so that 
$(\mu,\Phi)\in\mathsf G^{\DR,\B}(\mathbf k)\times_{T_{\operatorname{mod}}}T_{\operatorname{coalg}}$.  

(b) The proof is similar to (a), using in addition to the uniqueness of the counit in a coalgebra, the uniqueness of the antipode in a 
bialgebra (see the argument of Lemma \ref{lem:cartesianity}). 

(c) Let  $(\mu,\Phi)\in\mathsf G^{\DR,\B}(\mathbf k)$. Then $(\mu,\Phi)\in\mathsf G^{\DR,\B}(\mathbf 
k)\times_{T_{\operatorname{alg-mod}}}T_{\operatorname{HAMC}}$ iff 
$(^\Gamma\mathrm{comp}_{(\mu,\Phi)}^{\mathcal W,(1)},^\Gamma\mathrm{comp}_{(\mu,\Phi)}^{\mathcal M,(10)})$
belongs to $T_{\operatorname{HAMC}}$. By Lemma \ref{lem:cartesianity}, this condition is equivalent to
the conjunction of 
$$
(^\Gamma\mathrm{comp}_{(\mu,\Phi)}^{\mathcal W,(1)},^\Gamma\mathrm{comp}_{(\mu,\Phi)}^{\mathcal M,(10)})\in T_{\operatorname{alg-mod}}
$$
and $^\Gamma\mathrm{comp}_{(\mu,\Phi)}^{\mathcal W,(1)}\in T_{\operatorname{Hopf}}$. 
Since $(^\Gamma\mathrm{comp}_{(\mu,\Phi)}^{\mathcal W,(1)},^\Gamma\mathrm{comp}_{(\mu,\Phi)}^{\mathcal M,(10)})$ belongs to 
$T_{\operatorname{alg-mod}}$, this conjunction is equivalent to $^\Gamma\mathrm{comp}_{(\mu,\Phi)}^{\mathcal W,(1)}\in T_{\operatorname{Hopf}}$, 
i.e. $(\mu,\Phi)\in\mathsf G^{\DR,\B}(\mathbf 
k)\times_{T_{\operatorname{alg}}}T_{\operatorname{Hopf}}$. 
\end{proof}

The combination of Lemma \ref{lemma:identifications:subbitorsors} and \eqref{diagofsubbitorsors} then enables one to recover the   
inclusion $\mathsf{Stab}(\hat\Delta^{\mathcal M,\DR/\B}) \subset \mathsf{Stab}(\hat\Delta^{\mathcal W,\DR/\B})$ of subbitorsors of 
$\mathsf G^{\DR,\B}(\mathbf k)$ (see Theorem \ref{thm:3:13:1705}). 

One the other hand, it follows from Lemma \ref{lemma:identifications:subbitorsors} (a), (b)  that the bitorsor inclusion 
$\mathsf M(\mathbf k)\subset \mathsf{Stab}(\hat\Delta^{\mathcal M,\DR})(\mathbf k)$  (see Theorem \ref{thm:3:13:1705}), which 
essentially relies on the geometric interpretation of $\Delta^{\mathcal M,\DR}$ (see \cite{EF1}), is equivalent to the existence 
of a bitorsor morphism $\mathsf M(\mathbf k)\to\mathrm{Iso}_{\mathbf k\operatorname{-HAMC}}(((\hat{\mathcal W},\hat\Delta^{\mathcal W}),
(\hat{\mathcal M},\hat\Delta^{\mathcal M}))^{\DR/\B})$ such that the 
diagram 
$$
\xymatrix{\mathsf M(\mathbf k)\ar[r]\ar@{^{(}->}[d] & \mathrm{Iso}_{\mathbf k\operatorname{-HAMC}}(((\hat{\mathcal W},\hat\Delta^{\mathcal W}),
(\hat{\mathcal M},\hat\Delta^{\mathcal M}))^{\DR/\B})\ar@{^{(}->}[d]\\
\mathsf G^{\DR,\B}(\mathbf k)\ar[r]& \mathrm{Iso}_{\mathbf k\operatorname{-alg-mod}}((\hat{\mathcal W},
\hat{\mathcal M})^{\DR/\B})}
$$
commutes. 




\section{Equivalent definitions of $\mathsf{DMR}^\B(\mathbf k)$ and its Lie algebra}\label{sect:4:0307}

In this section, we prove that the group $\mathsf{DMR}^\B(\mathbf k)$
can be given by a definition, alternative to Lemma-Definition \ref{lem:3:9:1705} (\S\ref{sect:4:1:0307}, 
Theorem \ref{thm:4:5:1606}). This result should be viewed as a `Betti' counterpart to 
\cite{EF0}, Theorem 1.2 and \cite{EF2}, Lemma 3.8. In \S\ref{sect:4:2:0307}, we draw the Lie algebraic
consequence of this result, which is a counterpart of \cite{EF0}, Theorem 3.1 and 
\cite{EF2}, Corollary 3.12, (b). 

\subsection{Equivalent definition of $\mathsf{DMR}^\B(\mathbf k)$}\label{sect:4:1:0307}

The following results are analogues of Lemmas 1.17 and 3.5 from \cite{EF2}. 

\begin{lem}\label{fact:A}
The map  $(\mu,g)\mapsto\ ^\Gamma\!\mathrm{aut}^{\mathcal M,(10),\B}_{(\mu,g)}$  exhibits the following compatibility 
with the map $(\lambda,\varphi)\mapsto\ ^\Gamma\!\mathrm{comp}^{\mathcal M,(10)}_{(\lambda,\varphi)}$  and with the 
right action of $\mathsf G^\B(\mathbf k)$ on $\mathsf G^\DR(\mathbf k)$ (see Definition \ref{def:2:12:1244}): for 
$(\lambda,\varphi)\in\mathsf G^\DR(\mathbf k)$ and $(\mu,g)\in\mathsf G^\B(\mathbf k)$, one has 
$^\Gamma\!\mathrm{comp}^{\mathcal M,(10)}_{(\lambda,\varphi)\circledast\mathrm{iso}^G(\mu,g)}
=\ ^\Gamma\!\mathrm{comp}^{\mathcal M,(10)}_{(\lambda,\varphi)}\circ
\ ^\Gamma\!\mathrm{aut}^{\mathcal M,(10),\B}_{(\mu,g)}$.  
\end{lem}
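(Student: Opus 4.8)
The claimed identity involves three ingredients: the automorphism ${}^\Gamma\!\mathrm{aut}^{\mathcal M,(10),\B}_{(\mu,g)}$ built in Lemma-Definition \ref{lem:2:9:0105}, the isomorphism ${}^\Gamma\!\mathrm{comp}^{\mathcal M,(10)}_{(\lambda,\varphi)}={}^\Gamma\!\mathrm{aut}^{\mathcal M,(10),\DR}_{(\lambda,\varphi)}\circ\mathrm{iso}^{\mathcal M}$ recalled just before Lemma \ref{lem:2104}, and the right action of $\mathsf G^\B(\mathbf k)$ on $\mathsf G^\DR(\mathbf k)$, which by Definition \ref{def:2:12:1244} sends $((\lambda,\varphi),(\mu,g))$ to $(\lambda,\varphi)\circledast\mathrm{iso}^{\mathsf G}(\mu,g)$. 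So after unwinding ${}^\Gamma\!\mathrm{comp}$ on both sides, the statement to prove becomes
$$
{}^\Gamma\!\mathrm{aut}^{\mathcal M,(10),\DR}_{(\lambda,\varphi)\circledast\mathrm{iso}^{\mathsf G}(\mu,g)}\circ\mathrm{iso}^{\mathcal M}
={}^\Gamma\!\mathrm{aut}^{\mathcal M,(10),\DR}_{(\lambda,\varphi)}\circ\mathrm{iso}^{\mathcal M}\circ{}^\Gamma\!\mathrm{aut}^{\mathcal M,(10),\B}_{(\mu,g)}.
$$
The first thing I would do is invoke that $\mathsf G^\DR(\mathbf k)\to\mathrm{Aut}_{\mathbf k\text{-alg-mod}}(O^\DR_{\mathcal W,\mathcal M})$, $(\lambda,\varphi)\mapsto({}^\Gamma\!\mathrm{aut}^{\mathcal W,(1),\DR}_{(\lambda,\varphi)},{}^\Gamma\!\mathrm{aut}^{\mathcal M,(10),\DR}_{(\lambda,\varphi)})$, is a group morphism for $\circledast$ (Lemma-Definition \ref{lem:2:9:0105} in the DR setting, i.e. \cite{EF2}, Lemma 1.14). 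Hence the left-hand side equals ${}^\Gamma\!\mathrm{aut}^{\mathcal M,(10),\DR}_{(\lambda,\varphi)}\circ{}^\Gamma\!\mathrm{aut}^{\mathcal M,(10),\DR}_{\mathrm{iso}^{\mathsf G}(\mu,g)}\circ\mathrm{iso}^{\mathcal M}$, and after cancelling the common factor ${}^\Gamma\!\mathrm{aut}^{\mathcal M,(10),\DR}_{(\lambda,\varphi)}$ it suffices to prove the purely `comparison' identity
$$
{}^\Gamma\!\mathrm{aut}^{\mathcal M,(10),\DR}_{\mathrm{iso}^{\mathsf G}(\mu,g)}\circ\mathrm{iso}^{\mathcal M}
=\mathrm{iso}^{\mathcal M}\circ{}^\Gamma\!\mathrm{aut}^{\mathcal M,(10),\B}_{(\mu,g)},
$$
i.e. that $\mathrm{iso}^{\mathcal M}$ intertwines the Betti and de Rham $\Gamma$-twisted module automorphisms.

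\textbf{Reducing to the untwisted, $\mathcal V$-level statement.}
The twist is built from $\mathrm{aut}^{\mathcal M,(10),?}$ by left multiplication by $\Gamma_g^{-1}(-\mathrm{log}X_1)$ on the Betti side and by $\Gamma_\varphi^{-1}(-e_1)$ on the de Rham side (following Lemma-Definition \ref{lem:2:9:0105} and \cite{EF2}, Definition 1.15). So I would peel the twist off: the intertwining identity for ${}^\Gamma\!\mathrm{aut}^{\mathcal M,(10)}$ follows from (i) the corresponding intertwining identity for the untwisted $\mathrm{aut}^{\mathcal M,(10)}$, namely $\mathrm{aut}^{\mathcal M,(10),\DR}_{\mathrm{iso}^{\mathsf G}(\mu,g)}\circ\mathrm{iso}^{\mathcal M}=\mathrm{iso}^{\mathcal M}\circ\mathrm{aut}^{\mathcal M,(10),\B}_{(\mu,g)}$, together with (ii) the compatibility of the $\Gamma$-series, $\Gamma_{\mathrm{iso}^{\mathcal G}(g)}(t)=\Gamma_g(t)$, and the fact that $\mathrm{iso}^{\mathcal W}(\mathrm{log}X_1)=e_1$, which together give $\mathrm{iso}^{\mathcal M}\circ\ell_{\Gamma_g^{-1}(-\mathrm{log}X_1)}=\ell_{\Gamma_{\varphi}^{-1}(-e_1)}\circ\mathrm{iso}^{\mathcal M}$ (using that $\mathrm{iso}^{\mathcal M}$ is $\mathrm{iso}^{\mathcal W}$-semilinear). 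Both (i) and (ii) are exactly the identities already isolated in the proof of Lemma \ref{lem:2104}, where it is checked that $\mathrm{aut}^{\mathcal V,(\alpha),\DR}_{\mathrm{iso}^{\mathsf G}(\mu,g)}=\mathrm{iso}^{\mathcal V}\circ\mathrm{aut}^{\mathcal V,(\alpha),\B}_{(\mu,g)}\circ(\mathrm{iso}^{\mathcal V})^{-1}$ for $\alpha\in\{1,10\}$ and that $\Gamma_{\mathrm{iso}^{\mathcal G}(g)}=\Gamma_g$; passing from the $\mathcal V$-statement to the $\mathcal M$-statement is just compatibility with the quotient map $\hat{\mathcal V}^?\to\hat{\mathcal M}^?$, which is part of the construction of $\mathrm{iso}^{\mathcal M}$ in \cite{EF1}, \S3.3 and Lemma \ref{lemma:2:5:12042020}.

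\textbf{Alternative route and main obstacle.}
Alternatively, one can argue entirely structurally: by Lemma \ref{lem:2104} the data assemble into a bitorsor morphism $\mathsf G^{\DR,\B}(\mathbf k)\to\mathrm{Bitor}(O^\DR_{\mathcal W,\mathcal M},O^\B_{\mathcal W,\mathcal M})$, and the displayed identity is then nothing but the statement that the underlying map $(\mu,g)\mapsto({}^\Gamma\!\mathrm{comp}^{\mathcal W,(1)}_{(\mu,g)},{}^\Gamma\!\mathrm{comp}^{\mathcal M,(10)}_{(\mu,g)})$ from $\mathsf G^{\DR,\B}(\mathbf k)$ to $\mathrm{Iso}$ respects the right $\mathsf G^\B(\mathbf k)$-action — which is precisely the compatibility built into the notion of bitorsor morphism. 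In the write-up I would use whichever of these is shortest, probably the structural one, simply citing the bitorsor morphism of Lemma \ref{lem:2104} and the definition of its right action. The only point requiring genuine care — the `main obstacle' such as it is — is bookkeeping: making sure the left/right conventions and the order of composition in $({}^\Gamma\!\mathrm{comp})\circ({}^\Gamma\!\mathrm{aut})$ match the direction of the right action $(\lambda,\varphi)\cdot(\mu,g)=(\lambda,\varphi)\circledast\mathrm{iso}^{\mathsf G}(\mu,g)$, since this is exactly the kind of identity where an inadvertent swap of $\circledast$-factors would reverse the composition. Once that is pinned down, the proof is immediate, as the statement is a formal consequence of Lemmas \ref{lem:2104} and \ref{lem:2:9:0105} (and its de Rham analogue from \cite{EF2}).
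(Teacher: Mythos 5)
Your proof is correct and follows essentially the same route as the paper's: the paper likewise combines the identity ${}^\Gamma\!\mathrm{aut}^{\mathcal M,(10),\B}_{(\mu,g)}=(\mathrm{iso}^{\mathcal M})^{-1}\circ{}^\Gamma\!\mathrm{aut}^{\mathcal M,(10),\DR}_{\mathrm{iso}^{\mathsf G}(\mu,g)}\circ\mathrm{iso}^{\mathcal M}$ (which you derive from the $\mathcal V$-level identities of Lemma \ref{lem:2104}), the expression of ${}^\Gamma\!\mathrm{comp}^{\mathcal M,(10)}$ in terms of ${}^\Gamma\!\mathrm{aut}^{\mathcal M,(10),\DR}$ and $\mathrm{iso}^{\mathcal M}$, and the group-morphism property of $(\lambda,\varphi)\mapsto{}^\Gamma\!\mathrm{aut}^{\mathcal M,(10),\DR}_{(\lambda,\varphi)}$. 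The only quibble is notational: in your intertwining step the de Rham twist should be $\ell_{\Gamma^{-1}_{\mathrm{iso}^{\mathcal G}(g)}(-e_1)}$ (equal to $\ell_{\Gamma_g^{-1}(-e_1)}$ by the $\Gamma$-compatibility), not $\ell_{\Gamma_\varphi^{-1}(-e_1)}$ with the $\varphi$ of the ambient pair $(\lambda,\varphi)$.
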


\proof One combines the identity $^\Gamma\!\mathrm{aut}^{\mathcal M,(10),\B}_{(\mu,g)}=
(\mathrm{iso}^{\mathcal M})^{-1}\circ\  ^\Gamma\!\mathrm{aut}^{\mathcal M,(10),\DR}_{\mathrm{iso}^G(\mu,g)}
\circ \mathrm{iso}^{\mathcal M}$ with the identity (1.7.2) from the proof of Lemma 1.17 in \cite{EF2} and with the group 
morphism property of the map $(\mathsf G^\DR(\mathbf k),\circledast)\to\mathrm{Aut}_{\mathbf k\operatorname{-mod}}
(\hat{\mathcal M}^\DR)$, $(\mu,g)\mapsto\  ^\Gamma\!\mathrm{aut}^{\mathcal M,(10),\DR}_{(\mu,g)}$ proved there. 
\hfill\medskip 

\begin{lem}\label{fact:B}
Let $(\mu,g)\in\mathsf G^\B(\mathbf k)$, then  
$^\Gamma\!\mathrm{aut}^{\mathcal M,(10),\B}_{(\mu,g)}(1_\B) 
=(\Gamma_g(-\mathrm{log}X_1)^{-1}\cdot g)\cdot 1_\B$. 
\end{lem}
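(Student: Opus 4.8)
The plan is to unwind the two-step definition of ${}^\Gamma\!\mathrm{aut}^{\mathcal M,(10),\B}_{(\mu,g)}$ given in Lemma-Definition \ref{lem:2:9:0105}, namely ${}^\Gamma\!\mathrm{aut}^{\mathcal M,(10),\B}_{(\mu,g)}=\ell_{\Gamma_g^{-1}(-\mathrm{log}X_1)}\circ\mathrm{aut}^{\mathcal M,(10),\B}_{(\mu,g)}$, and evaluate at $1_\B$. So first I would compute $\mathrm{aut}^{\mathcal M,(10),\B}_{(\mu,g)}(1_\B)$. Recall $1_\B$ is the class of $1\in\hat{\mathcal V}^\B$ in $\hat{\mathcal M}^\B=\hat{\mathcal V}^\B/\hat{\mathcal V}^\B(X_0-1)$, and by \eqref{aut:V:10:1705} one has $\mathrm{aut}^{\mathcal V,(10),\B}_{(\mu,g)}(1)=\mathrm{aut}^{\mathcal V,(1),\B}_{(\mu,g)}(1)\cdot g=1\cdot g=g$. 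Passing to the quotient module (using Lemma-Definition \ref{lemma:2:5:12042020}, which ensures $\mathrm{aut}^{\mathcal V,(10),\B}_{(\mu,g)}$ descends), this gives $\mathrm{aut}^{\mathcal M,(10),\B}_{(\mu,g)}(1_\B)=g\cdot 1_\B$, where $g\cdot 1_\B$ denotes the image of $g$ in $\hat{\mathcal M}^\B$ under the canonical projection, equivalently the action of $g\in\hat{\mathcal V}^\B$ on the generator $1_\B$.

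Next I would apply $\ell_{\Gamma_g^{-1}(-\mathrm{log}X_1)}$, i.e. left multiplication by $\Gamma_g(-\mathrm{log}X_1)^{-1}\in(\hat{\mathcal W}^\B)^\times$ (this element lies in $\hat{\mathcal W}^\B$ since $\Gamma_g\in\mathbf k[[t]]^\times$ and $\mathrm{log}X_1\in F^1\hat{\mathcal V}^\B$, and $-\mathrm{log}X_1$ generates a subalgebra contained in $\hat{\mathcal W}^\B=\mathbf k1\oplus\hat{\mathcal V}^\B(X_1-1)$ — here one should note $\mathrm{log}X_1=\mathrm{log}(1+(X_1-1))\in\hat{\mathcal V}^\B(X_1-1)$). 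This yields ${}^\Gamma\!\mathrm{aut}^{\mathcal M,(10),\B}_{(\mu,g)}(1_\B)=\Gamma_g(-\mathrm{log}X_1)^{-1}\cdot(g\cdot 1_\B)=(\Gamma_g(-\mathrm{log}X_1)^{-1}\cdot g)\cdot 1_\B$, the last equality being just associativity of the $\hat{\mathcal V}^\B$-module structure on $\hat{\mathcal M}^\B$. That is exactly the claimed identity.

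The only genuine point to be careful about — and the step I expect to take the most attention — is the bookkeeping between the algebra $\hat{\mathcal V}^\B$ and its quotient module $\hat{\mathcal M}^\B$: one must check that $\ell_{\Gamma_g^{-1}(-\mathrm{log}X_1)}$ on $\hat{\mathcal M}^\B$ is indeed induced by left multiplication on $\hat{\mathcal V}^\B$ (so that the factorization $\Gamma_g(-\mathrm{log}X_1)^{-1}\cdot(g\cdot 1_\B)=(\Gamma_g(-\mathrm{log}X_1)^{-1}\cdot g)\cdot 1_\B$ is legitimate), and that the notation $h\cdot 1_\B$ for $h\in\hat{\mathcal V}^\B$ in the statement matches the projection $\hat{\mathcal V}^\B\to\hat{\mathcal M}^\B$. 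Both are immediate from the definitions in \cite{EF1}, \S2.5 and from the way $\ell_{\Gamma_g^{-1}(-\mathrm{log}X_1)}$ enters Lemma-Definition \ref{lem:2:9:0105}. Otherwise the proof is a direct two-line computation, essentially the Betti transcription of the analogous de Rham computation in the proof of Lemma 3.5 of \cite{EF2}.

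\proof By Lemma-Definition \ref{lem:2:9:0105}, ${}^\Gamma\!\mathrm{aut}^{\mathcal M,(10),\B}_{(\mu,g)}=\ell_{\Gamma_g^{-1}(-\mathrm{log}X_1)}\circ\mathrm{aut}^{\mathcal M,(10),\B}_{(\mu,g)}$. By \eqref{aut:V:10:1705} and \eqref{aut:V:1:1705}, $\mathrm{aut}^{\mathcal V,(10),\B}_{(\mu,g)}(1)=\mathrm{aut}^{\mathcal V,(1),\B}_{(\mu,g)}(1)\cdot g=1\cdot g=g$ in $\hat{\mathcal V}^\B$; projecting to $\hat{\mathcal M}^\B$ and using Lemma-Definition \ref{lemma:2:5:12042020} gives $\mathrm{aut}^{\mathcal M,(10),\B}_{(\mu,g)}(1_\B)=g\cdot 1_\B$. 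Since $\mathrm{log}X_1\in\hat{\mathcal V}^\B(X_1-1)\subset\hat{\mathcal W}^\B$ and $\Gamma_g\in\mathbf k[[t]]^\times$, the element $\Gamma_g(-\mathrm{log}X_1)^{-1}$ lies in $(\hat{\mathcal W}^\B)^\times$, and $\ell_{\Gamma_g^{-1}(-\mathrm{log}X_1)}$ on $\hat{\mathcal M}^\B$ is the map induced by left multiplication by this element in $\hat{\mathcal V}^\B$. Hence
$$
{}^\Gamma\!\mathrm{aut}^{\mathcal M,(10),\B}_{(\mu,g)}(1_\B)=\Gamma_g(-\mathrm{log}X_1)^{-1}\cdot(g\cdot 1_\B)=(\Gamma_g(-\mathrm{log}X_1)^{-1}\cdot g)\cdot 1_\B,
$$
the last equality following from associativity of the $\hat{\mathcal V}^\B$-module structure on $\hat{\mathcal M}^\B$. \hfill\qed\medskip
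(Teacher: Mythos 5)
Your proof is correct and follows exactly the same route as the paper's: compute $\mathrm{aut}^{\mathcal V,(10),\B}_{(\mu,g)}(1)=g$ from \eqref{aut:V:10:1705}, descend to $\hat{\mathcal M}^\B$ via Lemma-Definition \ref{lemma:2:5:12042020} to get $\mathrm{aut}^{\mathcal M,(10),\B}_{(\mu,g)}(1_\B)=g\cdot 1_\B$, then apply the twist $\ell_{\Gamma_g^{-1}(-\mathrm{log}X_1)}$ from Lemma-Definition \ref{lem:2:9:0105}. You merely spell out the module-theoretic bookkeeping that the paper leaves implicit.
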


\proof  One has $^\Gamma\!\mathrm{aut}^{\mathcal V,(10),\B}_{(\mu,g)}(1_\B)=g$
by \eqref{aut:V:10:1705}, which by Lemma-Definition \ref{lemma:2:5:12042020} implies 
$\mathrm{aut}^{\mathcal M,(10),\B}_{(\mu,g)}(1_\B)=g\cdot 1_\B$. Then 
Lemma-Definition \ref{lem:2:9:0105} implies the result. \hfill\qed\medskip 

\begin{lem}\label{fact:C}
Let $(\lambda,\varphi)\in\mathsf M(\mathbf k)$. If $(\mu,g)\in\mathsf G^\B(\mathbf k)$, then the topological $\mathbf k$-module 
isomorphism
$^\Gamma\!\mathrm{comp}^{\mathcal M,(10)}_{(\lambda,\varphi)}:\hat{\mathcal M}^\B\to\hat{\mathcal M}^\DR$ takes 
$(\Gamma_g^{-1}(-\mathrm{log}X_1)\cdot g)\cdot 1_\B$ 
to $(\Gamma_{\varphi\circledast(\lambda\bullet\mathrm{iso}^{\mathcal G}(g))}^{-1}(-e_1)\cdot 
(\varphi\circledast(\lambda\bullet\mathrm{iso}^{\mathcal G}(g))))\cdot 1_\DR$ (with the notation as in \cite{EF2}, \S1.6). 
\end{lem}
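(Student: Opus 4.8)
The plan is to compute the image of $(\Gamma_g^{-1}(-\mathrm{log}X_1)\cdot g)\cdot 1_\B$ under $^\Gamma\!\mathrm{comp}^{\mathcal M,(10)}_{(\lambda,\varphi)}$ by decomposing the latter into more elementary pieces and tracking the element through each one. Recall from \S\ref{sect:2:4:2804} that $^\Gamma\!\mathrm{comp}^{\mathcal M,(10)}_{(\lambda,\varphi)}=\ ^\Gamma\!\mathrm{aut}^{\mathcal M,(10),\DR}_{(\lambda,\varphi)}\circ\mathrm{iso}^{\mathcal M}$. First I would apply $\mathrm{iso}^{\mathcal M}$: since $\mathrm{iso}^{\mathcal M}$ is the $\mathbf k$-module map induced by $\mathrm{iso}^{\mathcal V}$ (which sends $X_i\mapsto\exp(e_i)$, hence $\mathrm{log}X_i\mapsto e_i$), it sends $1_\B\mapsto 1_\DR$ and intertwines left multiplication, so the element is carried to $(\Gamma_{\mathrm{iso}^{\mathcal G}(g)}^{-1}(-e_1)\cdot\mathrm{iso}^{\mathcal G}(g))\cdot 1_\DR$, using also the identity $\Gamma_{\mathrm{iso}^{\mathcal G}(g)}(t)=\Gamma_g(t)$ already recorded in the proof of Lemma \ref{lem:2104}.

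Next I would apply $^\Gamma\!\mathrm{aut}^{\mathcal M,(10),\DR}_{(\lambda,\varphi)}$. The key is that the Betti computation of Lemma \ref{fact:B}, namely $^\Gamma\!\mathrm{aut}^{\mathcal M,(10),\B}_{(\mu,g)}(1_\B)=(\Gamma_g(-\mathrm{log}X_1)^{-1}\cdot g)\cdot 1_\B$, has a de Rham analogue which is exactly the content of Lemma 1.17 (and its proof) in \cite{EF2}: for $(\mu,h)\in\mathsf G^\DR(\mathbf k)$ one has $^\Gamma\!\mathrm{aut}^{\mathcal M,(10),\DR}_{(\mu,h)}(1_\DR)=(\Gamma_h^{-1}(-e_1)\cdot h)\cdot 1_\DR$. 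Moreover $^\Gamma\!\mathrm{aut}^{\mathcal M,(10),\DR}_{(\lambda,\varphi)}$ is a $\mathbf k$-module automorphism intertwining the left $\hat{\mathcal W}^\DR$-action with $^\Gamma\!\mathrm{aut}^{\mathcal W,(1),\DR}_{(\lambda,\varphi)}$, so applying it to $(\Gamma_{g}^{-1}(-e_1)\cdot\mathrm{iso}^{\mathcal G}(g))\cdot 1_\DR$ reduces — since $\Gamma_g^{-1}(-e_1)\cdot\mathrm{iso}^{\mathcal G}(g)$ is group-like — to computing $^\Gamma\!\mathrm{aut}^{\mathcal M,(10),\DR}_{(\lambda,\varphi)}$ applied to $h\cdot 1_\DR$ for a group-like $h$, which by the module-automorphism property and the $1_\DR$-formula equals $(\Gamma_{\varphi\circledast(\lambda\bullet h')}^{-1}(-e_1)\cdot(\varphi\circledast(\lambda\bullet h')))\cdot 1_\DR$ for the appropriate $h'$; here the $\lambda\bullet$ and $\circledast$ bookkeeping is precisely the de Rham analogue of the Betti composition law $\circledast$, as laid out in \cite{EF2}, \S1.6.

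The cleanest route is actually to avoid re-deriving the group-law bookkeeping and instead invoke Lemma \ref{fact:A} together with Lemma \ref{fact:B}. Indeed, by Lemma \ref{fact:B} the source element $(\Gamma_g^{-1}(-\mathrm{log}X_1)\cdot g)\cdot 1_\B$ equals $^\Gamma\!\mathrm{aut}^{\mathcal M,(10),\B}_{(\mu,g)}(1_\B)$, so its image under $^\Gamma\!\mathrm{comp}^{\mathcal M,(10)}_{(\lambda,\varphi)}$ is, by Lemma \ref{fact:A}, equal to $^\Gamma\!\mathrm{comp}^{\mathcal M,(10)}_{(\lambda,\varphi)\circledast\mathrm{iso}^G(\mu,g)}(1_\B)$; writing $(\lambda,\varphi)\circledast\mathrm{iso}^G(\mu,g)=(\lambda\mu,\psi)$ with $\psi=\varphi\circledast(\lambda\bullet\mathrm{iso}^{\mathcal G}(g))$ (by the definition of $\circledast$ and of $\mathrm{iso}^G$ in Lemma \ref{lem:gp:iso:1704}), one then applies the de Rham version of Lemma \ref{fact:B}, i.e. Lemma 1.17 of \cite{EF2}, to $^\Gamma\!\mathrm{comp}^{\mathcal M,(10)}_{(\lambda\mu,\psi)}(1_\B)=\ ^\Gamma\!\mathrm{aut}^{\mathcal M,(10),\DR}_{(\lambda\mu,\psi)}(1_\DR)$, getting $(\Gamma_\psi^{-1}(-e_1)\cdot\psi)\cdot 1_\DR$, which is the asserted formula. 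The main obstacle is keeping the semidirect-product action of $\mathbf k^\times$ (the $\lambda\bullet$ operation) consistent between the definition of $^\Gamma\!\mathrm{comp}^{\mathcal M,(10)}$ on $\hat{\mathcal M}^\B$ and the de Rham composition law, and checking that $^\Gamma\!\mathrm{comp}^{\mathcal M,(10)}_{(\lambda,\varphi)}(1_\B)$ is given by the $\mu=1,g=1$ specialization of Lemma \ref{fact:B}'s de Rham analogue — essentially verifying that $^\Gamma\!\mathrm{comp}$ and $^\Gamma\!\mathrm{aut}^{\DR}$ agree on $1_\B$ up to the identification $\mathrm{iso}^{\mathcal M}(1_\B)=1_\DR$; everything else is a routine transport along $\mathrm{iso}^{\mathcal M}$ and an application of Lemmas \ref{fact:A} and \ref{fact:B}.
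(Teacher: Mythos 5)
Your final argument --- rewriting the source element via Lemma \ref{fact:B} as $^\Gamma\!\mathrm{aut}^{\mathcal M,(10),\B}_{(\mu,g)}(1_\B)$, pushing it through $^\Gamma\!\mathrm{comp}^{\mathcal M,(10)}_{(\lambda,\varphi)}$ via Lemma \ref{fact:A}, and then evaluating $^\Gamma\!\mathrm{comp}^{\mathcal M,(10)}_{(\lambda\mu,\psi)}(1_\B)$ by the de Rham formula from \cite{EF2} --- is exactly the paper's proof, just written from the source toward the target instead of the reverse. The only nit is a citation: the evaluation of $^\Gamma\!\mathrm{comp}^{\mathcal M,(10)}_{(\lambda',\varphi')}(1_\B)$ is Lemma 3.5 of \cite{EF2} (the de Rham analogue of Lemma \ref{fact:B}), not Lemma 1.17 (which is the analogue of Lemma \ref{fact:A}).
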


\proof One has  
\begin{align*}
& (\Gamma_{\varphi\circledast(\lambda\bullet\mathrm{iso}^{\mathcal G}(g))}^{-1}(-e_1)\cdot 
(\varphi\circledast(\lambda\bullet\mathrm{iso}^{\mathcal G}(g))))\cdot 1_\DR
=\ ^\Gamma\!\mathrm{comp}^{\mathcal M,(10)}_{(\lambda,\varphi)\circledast\mathrm{iso}^G(\mu,g)}(1_\B) 
\\ & =\ ^\Gamma\!\mathrm{comp}^{\mathcal M,(10)}_{(\lambda,\varphi)}\circ 
\ ^\Gamma\!\mathrm{aut}^{\mathcal M,(10),\B}_{(\mu,g)}(1_\B) 
=\ ^\Gamma\!\mathrm{comp}^{\mathcal M,(10)}_\varphi((\Gamma_g^{-1}(-\mathrm{log}X_1)\cdot g)\cdot 1_\B),  
\end{align*}
where the first equality follows from applying Lemma 3.5 from \cite{EF2} to $(\lambda,\varphi)\circledast 
\mathrm{iso}^G(\mu,g)=(\lambda\mu,\varphi\circledast(\lambda\bullet\mathrm{iso}^{\mathcal G}(g)))$ (see \cite{EF2}, (1.6.2)), 
the second equality follows from applying Lemma \ref{fact:A} to $1_\B$, and the third one follows from applying 
Lemma \ref{fact:B} to $(\mu,g)$.  \hfill\qed\medskip 

\begin{lem}\label{fact:D}
Let $(\lambda,\varphi)\in\mathsf M(\mathbf k)$. The topological $\mathbf k$-module isomorphism
$^\Gamma\!\mathrm{comp}^{\mathcal M,(10)}_{(\lambda,\varphi)}:\hat{\mathcal M}^\B\stackrel{\sim}{\to} 
\hat{\mathcal M}^\DR$ restricts to a bijection $\mathcal G(\hat{\mathcal M}^\B)\stackrel{\sim}{\to}
\mathcal G(\hat{\mathcal M}^\DR)$, where $\mathcal G$ denotes the group-like parts of $\hat{\mathcal M}^\B$ 
(resp. $\hat{\mathcal M}^\DR$) for $\hat{\Delta}^{\mathcal M,\B}$ (resp. $\hat{\Delta}^{\mathcal M,\DR}$). 
\end{lem}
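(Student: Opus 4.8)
The plan is to deduce Lemma \ref{fact:D} from the previous lemmas together with the Tannakian/bitorsor origin of the various data, rather than by a direct computation with the coproducts. The key point is that $^\Gamma\!\mathrm{comp}^{\mathcal M,(10)}_{(\lambda,\varphi)}$ is one of the module components of a bitorsor morphism $\mathsf G^{\DR,\B}(\mathbf k)\to\mathrm{Bitor}(O^\DR_{\mathcal W,\mathcal M},O^\B_{\mathcal W,\mathcal M})$ (constructed via Lemma \ref{lem:2104} and the lemma following it), and in particular it is compatible with the algebra components $^\Gamma\!\mathrm{comp}^{\mathcal W,(1)}_{(\lambda,\varphi)}:\hat{\mathcal W}^\B\to\hat{\mathcal W}^\DR$ in the sense that it intertwines the module structures. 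Since $\mathcal G(\hat{\mathcal M}^?)$ is defined purely in terms of the coproduct $\hat\Delta^{\mathcal M,?}$ and the module $\hat{\mathcal M}^?$, and the comparison isomorphism respects all of these, group-likeness must be preserved on the nose.

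First I would recall, from the construction in \S\ref{sect:2:4:2804}, that $^\Gamma\!\mathrm{comp}^{\mathcal M,(10)}_{(\lambda,\varphi)}=\ ^\Gamma\!\mathrm{aut}^{\mathcal M,(10),\DR}_{(\lambda,\varphi)}\circ\mathrm{iso}^{\mathcal M}$, and that this is the module component of an isomorphism in $\mathbf k\text{-alg-mod}$ whose algebra component is $^\Gamma\!\mathrm{comp}^{\mathcal W,(1)}_{(\lambda,\varphi)}=\ ^\Gamma\!\mathrm{aut}^{\mathcal W,(1),\DR}_{(\lambda,\varphi)}\circ\mathrm{iso}^{\mathcal W}$. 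Second, I would observe that $\mathrm{iso}^{\mathcal M}$ is induced by the Hopf algebra isomorphism $\mathrm{iso}^{\mathcal V}$ (Lemma \ref{lem:iso:1704}) and hence intertwines $\hat\Delta^{\mathcal M,\B}$ with $\hat\Delta^{\mathcal M,\DR}$ in the appropriate sense, so it restricts to a bijection $\mathcal G(\hat{\mathcal M}^\B)\to\mathcal G(\hat{\mathcal M}^\DR)$. Third, I would reduce to showing that $^\Gamma\!\mathrm{aut}^{\mathcal M,(10),\DR}_{(\lambda,\varphi)}$ restricts to a bijection of $\mathcal G(\hat{\mathcal M}^\DR)$; this is precisely the de Rham analogue, which is contained in (or proved exactly as) the corresponding statement in \cite{EF2} (e.g.\ in the vicinity of Lemma 3.5 there), using that $(\lambda,\varphi)\in\mathsf M(\mathbf k)$ ensures $^\Gamma\!\mathrm{aut}^{\mathcal W,(1),\DR}_{(\lambda,\varphi)}$ is a Hopf algebra automorphism of $(\hat{\mathcal W}^\DR,\hat\Delta^{\mathcal W,\DR})$ and that $^\Gamma\!\mathrm{aut}^{\mathcal M,(10),\DR}_{(\lambda,\varphi)}$ is the compatible module automorphism, so that the pair commutes with $\hat\Delta^{\mathcal M,\DR}$.

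Concretely, the commutation of $^\Gamma\!\mathrm{comp}^{\mathcal M,(10)}_{(\lambda,\varphi)}$ with the coproducts is what makes $\mathcal G$ preserved: if $m\in\hat{\mathcal M}^\B$ satisfies $\hat\Delta^{\mathcal M,\B}(m)=m\otimes m$ (with the appropriate normalisation built into the definition of $\mathcal G(\hat{\mathcal M}^\B)$), then applying $(^\Gamma\!\mathrm{comp}^{\mathcal W,(1)}_{(\lambda,\varphi)})^{\otimes2}\circ\hat\Delta^{\mathcal M,\B}=\hat\Delta^{\mathcal M,\DR}\circ\ ^\Gamma\!\mathrm{comp}^{\mathcal M,(10)}_{(\lambda,\varphi)}$ gives $\hat\Delta^{\mathcal M,\DR}(^\Gamma\!\mathrm{comp}^{\mathcal M,(10)}_{(\lambda,\varphi)}(m))=\ ^\Gamma\!\mathrm{comp}^{\mathcal M,(10)}_{(\lambda,\varphi)}(m)\otimes\ ^\Gamma\!\mathrm{comp}^{\mathcal M,(10)}_{(\lambda,\varphi)}(m)$; the reverse inclusion follows since $^\Gamma\!\mathrm{comp}^{\mathcal M,(10)}_{(\lambda,\varphi)}$ is invertible with inverse of the same type (attached to $(\lambda,\varphi)^{-1}$, or to the inverse bitorsor element). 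The main obstacle I anticipate is bookkeeping: making the intertwining identity $(^\Gamma\!\mathrm{comp}^{\mathcal W,(1)}_{(\lambda,\varphi)})^{\otimes2}\circ\hat\Delta^{\mathcal M,\B}=\hat\Delta^{\mathcal M,\DR}\circ\ ^\Gamma\!\mathrm{comp}^{\mathcal M,(10)}_{(\lambda,\varphi)}$ precise requires carefully unwinding that $(\lambda,\varphi)\in\mathsf M(\mathbf k)$ (equivalently, lies in the relevant stabilizer) is exactly the condition guaranteeing this identity — this is where the associator hypothesis on $(\lambda,\varphi)$ is used, via the inclusion $\mathsf M(\mathbf k)\hookrightarrow\mathsf{DMR}^{\DR,\B}(\mathbf k)\subset\mathsf{Stab}(\hat\Delta^{\mathcal M,\DR/\B})(\mathbf k)$ recalled in \S\ref{sect:1:3:0207}.
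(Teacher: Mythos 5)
Your final paragraph is, in substance, the paper's proof: the lemma follows immediately from the intertwining identity
$({}^\Gamma\!\mathrm{comp}^{\mathcal M,(10)}_{(\lambda,\varphi)})^{\otimes2}\circ\hat\Delta^{\mathcal M,\B}=\hat\Delta^{\mathcal M,\DR}\circ{}^\Gamma\!\mathrm{comp}^{\mathcal M,(10)}_{(\lambda,\varphi)}$
(note the tensor square is of the module comparison ${}^\Gamma\!\mathrm{comp}^{\mathcal M,(10)}$, not of ${}^\Gamma\!\mathrm{comp}^{\mathcal W,(1)}$, since $\hat\Delta^{\mathcal M,\B}$ lands in $(\mathcal M^\B)^{\otimes2\wedge}$), and this identity is exactly the content of $(\lambda,\varphi)\in\mathsf M(\mathbf k)\subset\mathsf{Stab}(\hat\Delta^{\mathcal M,\DR/\B})(\mathbf k)$, i.e.\ of \cite{EF1}, Theorem 3.1. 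Once you have that identity, bijectivity of ${}^\Gamma\!\mathrm{comp}^{\mathcal M,(10)}_{(\lambda,\varphi)}$ gives the bijection of group-like parts in both directions, and nothing more is needed.

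However, the intermediate reduction in your second and third steps is wrong and should be deleted. You claim (i) that $\mathrm{iso}^{\mathcal M}$ intertwines $\hat\Delta^{\mathcal M,\B}$ with $\hat\Delta^{\mathcal M,\DR}$ because $\mathrm{iso}^{\mathcal V}$ is a Hopf algebra isomorphism, and (ii) that $(\lambda,\varphi)\in\mathsf M(\mathbf k)$ makes ${}^\Gamma\!\mathrm{aut}^{\mathcal W,(1),\DR}_{(\lambda,\varphi)}$ a Hopf automorphism of $(\hat{\mathcal W}^\DR,\hat\Delta^{\mathcal W,\DR})$ and ${}^\Gamma\!\mathrm{aut}^{\mathcal M,(10),\DR}_{(\lambda,\varphi)}$ a bijection of $\mathcal G(\hat{\mathcal M}^\DR)$. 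Neither holds. Lemma \ref{lem:iso:1704} concerns $\hat\Delta^{\mathcal V,\B}$ and $\hat\Delta^{\mathcal V,\DR}$; the harmonic coproducts $\hat\Delta^{\mathcal W,?},\hat\Delta^{\mathcal M,?}$ are different structures, and $\mathrm{iso}^{\mathcal W}$ sends the generators $Y^+_n=(X_0-1)^{n-1}X_0(1-X_1)$ to $(e^{e_0}-1)^{n-1}e^{e_0}(1-e^{e_1})$, not to the de Rham generators $y_n$, so it does not carry one harmonic coproduct to the other. If (i) held, the factorization ${}^\Gamma\!\mathrm{comp}^{\mathcal M,(10)}_{(\lambda,\varphi)}={}^\Gamma\!\mathrm{aut}^{\mathcal M,(10),\DR}_{(\lambda,\varphi)}\circ\mathrm{iso}^{\mathcal M}$ would show that the identity element $(1,1)$ lies in the torsor $\mathsf{Stab}(\hat\Delta^{\mathcal M,\DR/\B})(\mathbf k)$ and the main theorem of \cite{EF1} would be a triviality. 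Likewise (ii) asserts $\mathsf M(\mathbf k)\subset\mathsf{Stab}(\hat\Delta^{\mathcal M,\DR})(\mathbf k)$, i.e.\ membership in the stabilizer \emph{group}; associators lie only in the stabilizer \emph{torsor}, which does not contain the identity (e.g.\ $(\varphi|e_0e_1)=-\lambda^2/24\neq0$ for an associator). So the argument must go through the comparison map all at once, as in your last paragraph and in the paper's citation of \cite{EF1}, Theorem 3.1 (a), (b) and Definition 2.17, and cannot be split into an `$\mathrm{iso}$' part and an `$\mathrm{aut}$' part.
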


\proof This follows from the fact that $^\Gamma\!\mathrm{comp}^{\mathcal M,(10)}_{(\lambda,\varphi)}$ 
intertwines $\hat\Delta^{\mathcal M,\B}$ and $\hat\Delta^{\mathcal M,\DR}$, see \cite{EF1}, 
Theorem 3.1 (a), (b) and Definition 2.17. \hfill\qed\medskip 

\begin{thm}\label{thm:4:5:1606}
$\mathsf{DMR}^\B(\mathbf k)$ is equal to the set of $\mathsf G^\B(\mathbf k)$ of pairs $(\mu,g)$ satisfying the following conditions: 

$(1)$ $(\Gamma_g^{-1}(-\mathrm{log}X_1)\cdot g)\cdot 1_\B \in\mathcal G(\hat{\mathcal M}^\B)$, where $\Gamma_g$ is 
as in Definition \ref{def:Gamma:Betti}; 

$(2)$ $(g|\mathrm{log}X_0)=(g|\mathrm{log}X_1)=0$, $\mu^2=1+24(g|\mathrm{log}X_0\mathrm{log}X_1)$. 
\end{thm}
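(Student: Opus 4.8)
The plan is to transport the statement to the de~Rham side, where the analogous characterization of $\mathsf{DMR}^{\DR,\B}(\mathbf k)$ is available (\cite{EF2}, Lemma~3.8, itself a reformulation of the definition of \cite{Rac}), and then to pull it back through the bitorsor $\mathsf G^{\DR,\B}(\mathbf k)$, using the comparison maps of Lemmas~\ref{fact:A}--\ref{fact:D}. Note first that condition~$(2)$ is exactly the condition defining $\mathsf G^\B_{\mathrm{quad}}(\mathbf k)$ (Definition~\ref{def:G:B:quad:1321}), and that $\mathsf{DMR}^\B(\mathbf k)=\mathsf G^\B_{\mathrm{quad}}(\mathbf k)\cap\mathsf{Stab}(\hat\Delta^{\mathcal M,\B})(\mathbf k)$ by Lemma-Definition~\ref{lem:3:9:1705}, so the real content is the equivalence, for $(\mu,g)$ already in $\mathsf G^\B_{\mathrm{quad}}(\mathbf k)$, between membership in $\mathsf{Stab}(\hat\Delta^{\mathcal M,\B})(\mathbf k)$ and condition~$(1)$.

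First I would fix a base point. Since $\mathsf M(\mathbb Q)\neq\emptyset$ by \cite{Dr}, one has $\mathsf M(\mathbf k)\neq\emptyset$ by base change; choose $(\lambda,\varphi)\in\mathsf M(\mathbf k)$. By the relations recalled in \S\ref{sect:1:3:0207} and the proof of Lemma~\ref{lem:3:10:2105} one has $(\lambda,\varphi)\in\mathsf{DMR}^{\DR,\B}(\mathbf k)\subset\mathsf G^{\DR,\B}_{\mathrm{quad}}(\mathbf k)$. By Lemma~\ref{lem:3:10:2105}, the right action of $\mathsf G^\B(\mathbf k)$ on $\mathsf G^\DR(\mathbf k)$ restricts to a free and transitive action of $\mathsf{DMR}^\B(\mathbf k)$ on $\mathsf{DMR}^{\DR,\B}(\mathbf k)$; hence, the map $(\mu,g)\mapsto(\lambda,\varphi)\circledast\mathrm{iso}^{\mathsf G}(\mu,g)$ is injective and restricts to a bijection $\mathsf{DMR}^\B(\mathbf k)\stackrel{\sim}{\to}\mathsf{DMR}^{\DR,\B}(\mathbf k)$. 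By Lemma~\ref{lem:gp:iso:1704} and \cite{EF2}, (1.6.2), this map sends $(\mu,g)$ to $(\lambda\mu,\Psi)$ with $\Psi:=\varphi\circledast(\lambda\bullet\mathrm{iso}^{\mathcal G}(g))$. Therefore $(\mu,g)\in\mathsf{DMR}^\B(\mathbf k)$ if and only if $(\lambda\mu,\Psi)\in\mathsf{DMR}^{\DR,\B}(\mathbf k)$.

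Next I would apply the de~Rham criterion \cite{EF2}, Lemma~3.8: $(\lambda\mu,\Psi)\in\mathsf{DMR}^{\DR,\B}(\mathbf k)$ if and only if $(\mathrm A)$ $(\lambda\mu,\Psi)\in\mathsf G^{\DR,\B}_{\mathrm{quad}}(\mathbf k)$ and $(\mathrm B)$ $(\Gamma_\Psi^{-1}(-e_1)\cdot\Psi)\cdot 1_\DR\in\mathcal G(\hat{\mathcal M}^\DR)$, and then match these with $(2)$ and $(1)$. For $(\mathrm A)$: by Lemma~\ref{lemma:bitorsor:1}, $\mathsf G^{\DR,\B}_{\mathrm{quad}}(\mathbf k)$ is a subbitorsor of $\mathsf G^{\DR,\B}(\mathbf k)$ with right-acting group $\mathsf G^\B_{\mathrm{quad}}(\mathbf k)$; since $(\lambda,\varphi)\in\mathsf G^{\DR,\B}_{\mathrm{quad}}(\mathbf k)$, freeness and transitivity force $(\lambda,\varphi)\circledast\mathrm{iso}^{\mathsf G}(\mu,g)\in\mathsf G^{\DR,\B}_{\mathrm{quad}}(\mathbf k)$ to be equivalent to $(\mu,g)\in\mathsf G^\B_{\mathrm{quad}}(\mathbf k)$, i.e.\ to condition~$(2)$ by Definition~\ref{def:G:B:quad:1321}. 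For $(\mathrm B)$: by Lemma~\ref{fact:C}, the isomorphism ${}^\Gamma\!\mathrm{comp}^{\mathcal M,(10)}_{(\lambda,\varphi)}:\hat{\mathcal M}^\B\to\hat{\mathcal M}^\DR$ carries $(\Gamma_g^{-1}(-\mathrm{log}X_1)\cdot g)\cdot 1_\B$ to $(\Gamma_\Psi^{-1}(-e_1)\cdot\Psi)\cdot 1_\DR$, and by Lemma~\ref{fact:D} it restricts to a bijection $\mathcal G(\hat{\mathcal M}^\B)\stackrel{\sim}{\to}\mathcal G(\hat{\mathcal M}^\DR)$; hence $(\mathrm B)$ is equivalent to $(\Gamma_g^{-1}(-\mathrm{log}X_1)\cdot g)\cdot 1_\B\in\mathcal G(\hat{\mathcal M}^\B)$, which is condition~$(1)$. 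Combining, $(\mu,g)\in\mathsf{DMR}^\B(\mathbf k)$ if and only if $(1)$ and $(2)$ hold.

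The steps I expect to require the most care are: (i) checking that \cite{EF2}, Lemma~3.8 is available in a form that directly addresses the full torsor $\mathsf{DMR}^{\DR,\B}(\mathbf k)$ rather than only its slices $\mathsf{DMR}_\mu(\mathbf k)$ --- if needed, one reduces to the slices through the fibration over $\mathbf k^\times$ recalled in \S\ref{sect:3:5:3006}; and (ii) the bookkeeping in $(\mathrm A)$, where the right action interlaces $\mathrm{iso}^{\mathsf G}$ with the $\circledast$-product, so that the equivalence of the quadratic conditions on the two sides must be seen to come from the subbitorsor property of $\mathsf G^{\DR,\B}_{\mathrm{quad}}(\mathbf k)$ rather than from a term-by-term computation. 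The substance of the equivalence --- preservation of $\Gamma$-regularized group-likeness for $\hat\Delta^{\mathcal M}$ under the comparison --- is already encapsulated in Lemmas~\ref{fact:C} and~\ref{fact:D}, so the argument itself amounts to an assembly of facts established earlier in the paper.
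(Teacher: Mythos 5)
Your proposal is correct and follows essentially the same route as the paper's own proof: choose a base point in $\mathsf M(\mathbf k)$ lying in $\mathsf{DMR}^{\DR,\B}(\mathbf k)$, transport membership via the right torsor structure of Lemma \ref{lem:3:10:2105}, split the de Rham membership into the quadratic condition (matched with $(2)$ via the subbitorsor property of $\mathsf G^{\DR,\B}_{\mathrm{quad}}(\mathbf k)$) and the $\Gamma$-regularized group-likeness (matched with $(1)$ via Lemmas \ref{fact:C} and \ref{fact:D}). The only cosmetic difference is that the paper specializes the base point to an element coming from $\mathsf M_1(\mathbb Q)$ (so $\lambda=1$) and invokes \cite{EF2}, Definition 2.12 directly for the de Rham decomposition, rather than carrying a general $\lambda$ through the computation.
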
 

\proof It follows from \cite{Dr}, Proposition 5.3 that $\mathsf M_1(\mathbb Q)=\mathsf M(\mathbb Q)\cap
(\{1\}\times\mathcal G(\hat{\mathcal V}^\DR))$ is nonempty. Let us denote by $(1,\varphi)\in\mathsf M(\mathbf k)$ the image 
of an element of $\mathsf M_1(\mathbb Q)$. Then, by \cite{EF2}, Theorem 3.1 (a), 
$(1,\varphi)\in\mathsf{DMR}^{\DR,\B}(\mathbf k)$. 

Let $(\mu,g)\in G^\B(\mathbf k)$. The right torsor property of $\mathsf{DMR}^{\DR,\B}(\mathbf k)$ under the action of 
$\mathsf{DMR}^\B(\mathbf k)$ as in Definition \ref{def:2:12:1244} (see Lemma \ref{lem:3:10:2105}) implies the equivalence
$$
((\mu,g)\in\mathsf{DMR}^\B(\mathbf k))\iff ((1,\varphi)\circledast\mathrm{iso}^G(\mu,g)\in
\mathsf{DMR}^{\DR,\B}(\mathbf k));  
$$
the equality $(1,\varphi)\circledast\mathrm{iso}^G(\mu,g)=(\mu,\varphi\circledast\mathrm{iso}^{\mathcal G}(g))$ 
(see \cite{EF2}, (1.6.2)) and \cite{EF2}, Definition 2.12 implies the equivalence 
$$
((1,\varphi)\circledast\mathrm{iso}^G(\mu,g)\in
\mathsf{DMR}^{\DR,\B}(\mathbf k)) \iff (\text{(a) and (b)}),
$$
where (a), (b) are the following statements:  

(a) $(\Gamma^{-1}_{\varphi\circledast\mathrm{iso}^{\mathcal G}(g)}(-e_1)\cdot
(\varphi\circledast\mathrm{iso}^{\mathcal G}(g)))\cdot 1_\DR\in 
\mathcal G(\hat{\mathcal M}^\DR)$ 

(b) $(1,\varphi)\circledast\mathrm{iso}^G(\mu,g)\in\mathsf G^{\DR,\B}_{\mathrm{quad}}(\mathbf k)$.

The fact that $\mathsf G^{\DR,\B}_{\mathrm{quad}}(\mathbf k)_{\mathsf G^\B_{\mathrm{quad}}(\mathbf k)}$ 
is a subtorsor of the right torsor $\mathsf G^{\DR}(\mathbf k)_{\mathsf G^\B(\mathbf k)}$ and the 
inclusions $(1,\varphi)\in\mathsf{DMR}^{\DR,\B}(\mathbf k)\subset
\mathsf G^{\DR,\B}_{\mathrm{quad}}(\mathbf k)$ imply the equivalence 
$$
(\text{b})\iff ((\mu,g)\in \mathsf G_{\mathrm{quad}}^\B(\mathbf k)). 
$$
Lemmas \ref{fact:C} and \ref{fact:D} imply the equivalence 
$$
(\text{a})\iff[(\Gamma_g^{-1}(-e_1)\cdot g)\cdot 1_\B\in\mathcal G(\hat{\mathcal M}^\B)]. 
$$
The above equivalences combine into $(g\in\mathsf{DMR}^\B(\mathbf k))\iff 
[((\mu,g)\in\mathsf G_{\mathrm{quad}}^\B(\mathbf k))$ and $(\Gamma_g^{-1}(-e_1)\cdot g)\cdot 1_\B\in
\mathcal G(\hat{\mathcal M}^\B)]$, which by Definition \ref{def:G:B:quad:1321} yields the announced equivalence. 
\hfill\qed\medskip  

\begin{rem}
Be \cite{EF1} Lemma 9.5 and Remark 9.6, we have
$$
\Gamma_\varphi(t)\Gamma_\varphi(-t)=\frac{\mu t}{e^{\mu t/2}-e^{-\mu t/2}}
$$
for $(\mu, \varphi)\in\mathsf{DMR}^{\DR,\B}(\mathbf k)$.
While for $(\mu', \varphi'):=(\mu, \varphi)\cdot (\lambda,g)
\in\mathsf{DMR}^{\DR,\B}(\mathbf k)$
with $(\mu, \varphi)\in\mathsf{DMR}^{\DR,\B}(\mathbf k)$ and 
$(\lambda,g)\in\mathsf{DMR}^{\B}(\mathbf k)$,
we have $\mu'=\mu\lambda$ and
$(\varphi'|e_0^{k-1}e_1)=(\varphi|e_0^{k-1}e_1)+\mu^k(g|(\log X_0)^{k-1}(\log X_1))$.
Hence we have
$
\Gamma_{\varphi'}(t)\Gamma_{\varphi'}(-t)=
\Gamma_\varphi(t)\Gamma_g(\mu t)\Gamma_\varphi(-t)\Gamma_g(-\mu t),
$
from which we learn that 
$$
\Gamma_g(t)\Gamma_g(-t)=\lambda\frac{e^{t/2}-e^{-t/2}}{e^{\lambda t/2}-e^{-\lambda t/2}}
$$
for any $(\lambda,g)\in\mathsf{DMR}^{\B}(\mathbf k)$.
\end{rem}

\begin{rem}
Theorem \ref{thm:4:5:1606} should be related to the following set of results. In \cite{Rac}, it is proved 
that a subset of $\mathsf G^\DR(\mathbf k)$ defined by de Rham analogues of conditions (1) and (2) of 
Theorem \ref{thm:4:5:1606} is a subgroup. This result 
is used in \cite{EF0} to prove the equality of this subset with a stabilizer subgroup of $\mathsf G^\DR(\mathbf k)$
(up to degree $\leq 2$ conditions). Theorem \ref{thm:4:5:1606}, based
on \cite{EF0}, is the Betti counterpart of this result; it implies in particular the Betti analogue of 
the result of \cite{Rac}, namely that that the subset of $\mathsf G^\B(\mathbf k)$ defined by 
conditions (1) and (2) in Theorem \ref{thm:4:5:1606} is a subgroup. Whereas the proof of this result is based 
on those of \cite{EF0,Rac}, the authors do not know of a proof independent of these results. 
\end{rem}

\subsection{Equivalent definition of $\mathfrak{dmr}^\B$ }\label{sect:4:2:0307}

For $x\in\mathrm{Lie}F_2(\mathbb Q)$, set 
$$
\gamma_x(t):=\sum_{n\geq1}(-1)^{n+1}(x|(\mathrm{log}X_0)^{n-1}\mathrm{log}X_1)/n\in\mathbb Q[[t]]. 
$$

\begin{prop}
$\mathfrak{dmr}^\B=\{(\nu,x)\in\mathfrak g^\B|(x|\mathrm{log}X_0)=(x|\mathrm{log}X_1)=0$, 
$\nu=12(x|\mathrm{log}X_0\mathrm{log}X_1)$, $(x+\gamma_x(-\mathrm{log}X_1))\cdot 1_\B\in
\mathcal P(\hat{\mathcal M}^\B_{\mathbb Q})\}$, where 
$\mathcal P(\hat{\mathcal M}^\B_{\mathbb Q}):=\{m\in\hat{\mathcal M}^\B_{\mathbb Q}|
\hat\Delta^{\mathcal M,\B}(m)=m\otimes 1_\B+1_\B
\otimes m\}$.
\end{prop}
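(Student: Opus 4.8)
The plan is to obtain this description of $\mathfrak{dmr}^\B$ by linearizing Theorem \ref{thm:4:5:1606}, in exact parallel with the way conditions (b), (c), (d) of the preceding Lemma were said to follow "by linearization" from the group-level statements, and with the way \cite{EF0}, Theorem 3.1 and \cite{EF2}, Corollary 3.12, (b) are the Lie-algebraic counterparts of their group-level analogues. Concretely, $\mathfrak{dmr}^\B=\mathrm{Ker}(\mathsf{DMR}^\B(\mathbb Q[\epsilon]/(\epsilon^2))\to\mathsf{DMR}^\B(\mathbb Q))$, so I would take $(\mu,g)\in\mathsf G^\B(\mathbb Q[\epsilon]/(\epsilon^2))$ with image $(1,1)$ in $\mathsf G^\B(\mathbb Q)$, write $\mu=1+\nu\epsilon$ and $g=\exp(\epsilon x)=1+\epsilon x$ with $\nu\in\mathbb Q$ and $x\in\mathrm{Lie}F_2(\mathbb Q)$ (using $\epsilon^2=0$), and then impose conditions (1) and (2) of Theorem \ref{thm:4:5:1606} on this element, reading off the $\epsilon$-linear part.

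First I would treat condition (2): $(g|\mathrm{log}X_0)=(g|\mathrm{log}X_1)=0$ becomes $(x|\mathrm{log}X_0)=(x|\mathrm{log}X_1)=0$ directly since $g=1+\epsilon x$ and the constant term $1$ contributes nothing to these coefficients; and $\mu^2=1+24(g|\mathrm{log}X_0\mathrm{log}X_1)$ becomes $1+2\nu\epsilon=1+24\epsilon(x|\mathrm{log}X_0\mathrm{log}X_1)$, i.e. $\nu=12(x|\mathrm{log}X_0\mathrm{log}X_1)$. This already matches the first three conditions in the claimed description (and is consistent with part (b) of the preceding Lemma giving $\mathfrak g^\B_{\mathrm{quad}}$). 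Next I would linearize condition (1). The key input is that $\Gamma_g(t)$, as a function of $g$ via Definition \ref{def:Gamma:Betti}, linearizes to $\exp(\gamma_x(t))$: indeed $\log\Gamma_g(t)=\sum_{n\ge1}(-1)^{n+1}(g|(\mathrm{log}X_0)^{n-1}\mathrm{log}X_1)t^n/n$, and substituting $g=1+\epsilon x$ gives $\log\Gamma_g(t)=\epsilon\,\gamma_x(t)$ (the empty-word and other contributions from the constant $1$ vanish in these particular coefficients). Hence $\Gamma_g^{-1}(-\mathrm{log}X_1)=1-\epsilon\,\gamma_x(-\mathrm{log}X_1)$, and $\Gamma_g^{-1}(-\mathrm{log}X_1)\cdot g=(1-\epsilon\gamma_x(-\mathrm{log}X_1))(1+\epsilon x)=1+\epsilon(x+\gamma_x(-\mathrm{log}X_1))$. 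Therefore $(\Gamma_g^{-1}(-\mathrm{log}X_1)\cdot g)\cdot1_\B=1_\B+\epsilon\,(x+\gamma_x(-\mathrm{log}X_1))\cdot1_\B$.

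It remains to observe that the group-likeness condition $(\Gamma_g^{-1}(-\mathrm{log}X_1)\cdot g)\cdot1_\B\in\mathcal G(\hat{\mathcal M}^\B)$ applied to an element of the form $1_\B+\epsilon m$ is, after expanding $\hat\Delta^{\mathcal M,\B}(1_\B+\epsilon m)=(1_\B+\epsilon m)\otimes(1_\B+\epsilon m)$ modulo $\epsilon^2$ and using that $1_\B$ is group-like, exactly equivalent to $\hat\Delta^{\mathcal M,\B}(m)=m\otimes1_\B+1_\B\otimes m$, i.e. $m\in\mathcal P(\hat{\mathcal M}^\B_{\mathbb Q})$. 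Assembling these linearizations yields precisely the stated set. The only real subtlety — which I would treat carefully rather than wave away — is the base-change/functoriality bookkeeping: one must check that forming $\mathsf{DMR}^\B(\mathbb Q[\epsilon]/(\epsilon^2))$ and then taking the kernel is compatible with the termwise linearizations above, i.e. that the conditions defining $\mathsf{DMR}^\B$ over $\mathbb Q[\epsilon]/(\epsilon^2)$ are exactly conditions (1) and (2) of Theorem \ref{thm:4:5:1606} read over that ring; this is where one invokes that $\mathbf k\mapsto\mathsf{DMR}^\B(\mathbf k)$ is a $\mathbb Q$-group scheme (as asserted around \eqref{list:gp:schemes}) and that the defining data ($\Gamma$, $\hat\Delta^{\mathcal M,\B}$, $1_\B$, the pairings) are all defined over $\mathbb Q$ and base-change in the evident way. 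I expect this functoriality check, rather than any of the explicit first-order expansions, to be the main point requiring care, though it is entirely routine given the scheme structure established above.
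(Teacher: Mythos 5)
Your proposal is correct and follows exactly the route of the paper's (one-line) proof: identify $\mathfrak{dmr}^\B$ with $\mathrm{Ker}(\mathsf{DMR}^\B(\mathbb Q[\epsilon]/(\epsilon^2))\to\mathsf{DMR}^\B(\mathbb Q))$ and linearize conditions (1) and (2) of Theorem \ref{thm:4:5:1606}. The explicit first-order expansions you carry out (in particular $\Gamma_{1+\epsilon x}^{-1}(-\mathrm{log}X_1)\cdot(1+\epsilon x)=1+\epsilon(x+\gamma_x(-\mathrm{log}X_1))$ and the passage from group-likeness of $1_\B+\epsilon m$ to primitivity of $m$) are precisely what the paper leaves implicit.
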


\proof 
This follows from the combination of Theorem \ref{thm:4:5:1606} and the identification of $\mathfrak{dmr}^\B$ with the 
kernel of the group morphism $\mathsf{DMR}^\B(\mathbb Q[\epsilon]/(\epsilon^2))\to\mathsf{DMR}^\B(\mathbb Q)$. 
\hfill\qed\medskip 

\section{A discrete group $\mathrm{DMR}^\B$}\label{section:discrete:group}

In \S\ref{sect:5:1:3006}, we define a discrete group $\mathrm{DMR}^\B$, which is an analogue of the discrete counterpart
$\mathrm{GT}$ of the group scheme $\mathsf{GT}(-)$, and we compute it in \S\ref{sect:5:2:3006}
(see Proposition \ref{prop:comp:GdmrB:disc}). 

\subsection{Definition of $\mathrm{DMR}^\B$}\label{sect:5:1:3006}

Recall the group inclusions 
$$
\mathsf{GT}(\mathbb Q)^{\mathrm{op}}\hookrightarrow \mathsf{DMR}^\B(\mathbb Q)\hookrightarrow \mathbb Q^\times\ltimes F_2(\mathbb Q). 
$$
The last of these groups contains the semigroup $\{\pm1\}\ltimes F_2$. 
Then $\mathrm{GT}:=\mathsf{GT}(\mathbb Q)\cap(\{\pm1\}\ltimes F_2)^{\mathrm{op}}$ is a semigroup
(see \cite{Dr}, \S4), equal to $\{\pm1\}$ (see \cite{Dr}, Proposition 4.1). 

Define similarly a semigroup 
$$
\mathrm{DMR}^\B:=\mathsf{DMR}^\B(\mathbb Q)\cap(\{\pm1\}\ltimes F_2). 
$$

\subsection{Computation of $\mathrm{DMR}^\B$}\label{sect:5:2:3006}

Let $\mathrm{ev}^{\mathcal V}:\mathcal V^\B_{\mathbb Q}\to\mathbb Q F_1$ be the algebra morphism induced by 
$X_0\mapsto 1$, $X_1\mapsto X$ ($\mathbb Q F_1$ being the algebra of the free group 
$F_1$ with one generator $X$). Since it takes $X_0-1$ to $0$, it induces a 
module morphism $\mathrm{ev}^{\mathcal M}:\mathcal M^\B_{\mathbb Q}\to\mathbb QF_1$ compatible with 
the algebra morphism $\mathrm{ev}^{\mathcal V}$. We denote by $\mathrm{ev}^{\mathcal W}:
\mathcal W^\B_{\mathbb Q}\to\mathbb Q F_1$ the restriction of $\mathrm{ev}^{\mathcal V}$ to 
$\mathcal W^\B_{\mathbb Q}$. 
Then $\mathrm{ev}^{\mathcal M}$ is also compatible with the algebra morphism $\mathrm{ev}^{\mathcal W}$.

\begin{lem}
(a) $\mathrm{ev}^{\mathcal W}$ is a Hopf algebra morphism, its source being equipped with $\Delta^{\mathcal W,\B}$
and its target with the group Hopf algebra structure. 

(b) $\mathrm{ev}^{\mathcal M}$ is a coalgebra morphism, its source being equipped with $\Delta^{\mathcal M,\B}$
and its target with the same structure as above. 
\end{lem}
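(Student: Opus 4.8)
The plan is to deduce both parts formally from the fact that $\mathrm{ev}^{\mathcal V}$ is the morphism of Hopf algebras $\mathbb Q F_2\to\mathbb Q F_1$ obtained by applying the group-algebra functor to the group homomorphism $F_2\to F_1$, $X_0\mapsto 1$, $X_1\mapsto X$; thus $\mathrm{ev}^{\mathcal V}$ intertwines $\Delta^{\mathcal V,\B}$ with the group-algebra coproduct $\Delta_{\mathbb Q F_1}$ of $\mathbb Q F_1$, preserves augmentations and antipodes, and kills $X_0-1$. For (b): recall from \cite{EF1} that $\Delta^{\mathcal M,\B}$ is the coproduct induced on $\mathcal M^\B_{\mathbb Q}=\mathcal V^\B_{\mathbb Q}/\mathcal V^\B_{\mathbb Q}(X_0-1)$ by $\Delta^{\mathcal V,\B}$, which is legitimate since $\mathcal V^\B_{\mathbb Q}(X_0-1)$ is a coideal of $(\mathcal V^\B_{\mathbb Q},\Delta^{\mathcal V,\B})$ — this follows from the group-likeness of $X_0$ together with the counit annihilating $X_0-1$. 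As $\mathrm{ev}^{\mathcal V}$ is a coalgebra morphism with $\mathrm{ev}^{\mathcal V}(\mathcal V^\B_{\mathbb Q}(X_0-1))=\mathrm{ev}^{\mathcal V}(\mathcal V^\B_{\mathbb Q})\cdot\mathrm{ev}^{\mathcal V}(X_0-1)=0$, it factors through this quotient coalgebra; the factorization is $\mathrm{ev}^{\mathcal M}$, which is therefore a coalgebra morphism, its counit-compatibility being inherited from $\mathrm{ev}^{\mathcal V}$. This proves (b).

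For (a), the algebra part is clear: $\mathrm{ev}^{\mathcal W}$ is the restriction of the algebra morphism $\mathrm{ev}^{\mathcal V}$ to the subalgebra $\mathcal W^\B_{\mathbb Q}$, and counit-compatibility follows as before. For the coproduct part I would reduce to (b) via the $\mathbb Q$-linear isomorphism $\iota:\mathcal W^\B_{\mathbb Q}\stackrel{\sim}{\to}\mathcal M^\B_{\mathbb Q}$, $w\mapsto w\cdot 1_\B$ (freeness of rank one, \cite{EF1}). On the one hand, $\iota$ satisfies $(\iota\otimes\iota)\circ\Delta^{\mathcal W,\B}=\Delta^{\mathcal M,\B}\circ\iota$; this is the module-coproduct compatibility $\Delta^{\mathcal M,\B}(w\cdot m)=\Delta^{\mathcal W,\B}(w)\cdot\Delta^{\mathcal M,\B}(m)$ of \cite{EF1} evaluated at $m=1_\B$, using $\Delta^{\mathcal M,\B}(1_\B)=1_\B\otimes 1_\B$. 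On the other hand $\mathrm{ev}^{\mathcal M}\circ\iota=\mathrm{ev}^{\mathcal W}$, because $\mathrm{ev}^{\mathcal M}$ is a module map over $\mathrm{ev}^{\mathcal W}$ and $\mathrm{ev}^{\mathcal M}(1_\B)=1$. Substituting these two identities into the coproduct identity for $\mathrm{ev}^{\mathcal M}$ established in (b) gives $(\mathrm{ev}^{\mathcal W}\otimes\mathrm{ev}^{\mathcal W})\circ\Delta^{\mathcal W,\B}=\Delta_{\mathbb Q F_1}\circ\mathrm{ev}^{\mathcal W}$; since a bialgebra morphism of Hopf algebras automatically commutes with antipodes, $\mathrm{ev}^{\mathcal W}$ is a Hopf algebra morphism. (Alternatively, $\mathcal M^\B_{\mathbb Q}$ has a $\mathbb Q$-basis of group-like elements — the classes of the elements of $F_2$ — on which $\Delta^{\mathcal M,\B}$ and $\Delta_{\mathbb Q F_1}$ both act diagonally, so the identity in (b) can be checked directly by linearity and then transported through $\iota$.)

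The argument is formal once the structures of \cite{EF1} are recalled; the only point demanding care is the precise identification of $\Delta^{\mathcal M,\B}$ and $\Delta^{\mathcal W,\B}$ — respectively as the coproduct induced from $\Delta^{\mathcal V,\B}$ on the quotient, and as its transport along $\iota$ — rather than as a priori unrelated coproducts. Once these identifications are in place, the coideal check, the two intertwining relations, and the counit and antipode compatibilities are all routine.
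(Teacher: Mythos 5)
There is a fatal gap: you have misidentified the coproducts. You take $\Delta^{\mathcal M,\B}$ to be the coproduct induced on the quotient $\mathcal M^\B_{\mathbb Q}=\mathcal V^\B_{\mathbb Q}/\mathcal V^\B_{\mathbb Q}(X_0-1)$ by the group-algebra coproduct $\Delta^{\mathcal V,\B}$ of $\mathbb Q F_2$ (and, implicitly, $\Delta^{\mathcal W,\B}$ to be its restriction to $\mathcal W^\B_{\mathbb Q}$). That is not what these coproducts are. In \cite{EF1}, $\Delta^{\mathcal W,\B}$ is the \emph{Betti harmonic coproduct}, a genuinely different comultiplication on the algebra $\mathcal W^\B_{\mathbb Q}$: on the generators $Y^\pm_n=(X_0^{\pm1}-1)^{n-1}X_0^{\pm1}(1-X_1^{\pm1})$ it is given by the ``stuffle-type'' formula $Y^\pm_n\mapsto Y^\pm_n\otimes 1+1\otimes Y^\pm_n+\sum_{n'+n''=n}Y^\pm_{n'}\otimes Y^\pm_{n''}$, which is not the restriction of the group-algebra coproduct; and $\Delta^{\mathcal M,\B}$ is its transport along $w\mapsto w\cdot 1_\B$, not the quotient of $\Delta^{\mathcal V,\B}$. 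Your alternative argument makes the error visible: if $\Delta^{\mathcal M,\B}$ were induced from $\Delta^{\mathcal V,\B}$, every class $g\cdot 1_\B$ with $g\in F_2$ would be group-like for $\Delta^{\mathcal M,\B}$, whereas Lemma \ref{lemma:form:of:grouplike} of this very paper shows this holds only for $g=X_1^\alpha X_0^\beta$. (If your identification were correct, the stabilizer condition defining $\mathsf{DMR}^\B(\mathbf k)$ would be essentially vacuous, which would defeat the purpose of the whole construction.) So your proof of (b), and hence of the coproduct part of (a), does not get off the ground.

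What does survive from your argument is the reduction of (b) to (a): the identities $(\iota\otimes\iota)\circ\Delta^{\mathcal W,\B}=\Delta^{\mathcal M,\B}\circ\iota$ and $\mathrm{ev}^{\mathcal M}\circ\iota=\mathrm{ev}^{\mathcal W}$, with $\iota(w)=w\cdot 1_\B$, are correct and are exactly how the paper passes from (a) to (b) (via the commuting squares in its proof). But (a) itself cannot be obtained formally from properties of $\Delta^{\mathcal V,\B}$; it has to be checked against the harmonic coproduct. The paper does this by writing $\mathcal W^\B_{\mathbb Q}$ in terms of the generators $X_1^{\pm1}$ and $Y^\pm_n$, computing $\mathrm{ev}^{\mathcal W}$ on them, and comparing the explicit harmonic coproduct formulas recalled above with the group coproduct of $\mathbb Q F_1$ on the images. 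Any correct proof must engage with those formulas; a purely formal ``quotient coalgebra'' argument cannot work because the object being quotiented does not carry the coproduct you need.
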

 
\proof By \cite{EF1}, Propositions 2.3 and 2.4, $\mathcal W_{\mathbb Q}^\B$ is generated by the elements 
$Y_n^\pm$ ($n>0$), $X_1^{\pm1}$, where $Y_n^\pm:=(X_0^{\pm1}-1)^{n-1}X_0^{\pm1}(1-X_1^{\pm1})$; 
$\mathrm{ev}_{\mathcal W}$ is such that $X_1^{\pm1}\mapsto X^{\pm1}$, $Y_n^\pm\mapsto 0$ and the 
coproducts are such that  $X_1^{\pm1}\mapsto X_1^{\pm1}\otimes X_1^{\pm1}$, $Y_n^\pm\mapsto 
Y_n^\pm\otimes1+1\otimes Y_n^\pm+\sum_{n'+n''=n}Y_{n'}^\pm\otimes Y_{n''}^\pm$ and 
$X^{\pm1}\mapsto X^{\pm1}\otimes X^{\pm1}$; all this implies (a). (a) implies the 
commutativity of the right square in 
$$
\xymatrix{
\mathcal M^\B_{\mathbb Q}\ar_{\Delta^{\mathcal M,\B}}[d] & \ar_{(-)\cdot 1^\B}[l]\ar^{\mathrm{ev}^{\mathcal W}}[r]\mathcal W^\B_{\mathbb Q}\ar^{\Delta^{\mathcal W,\B}}[d]& \mathbb Q F_1
\ar^{\Delta_{\mathbb Q F_1}}[d]
\\ (\mathcal M^\B_{\mathbb Q})^{\otimes2}& \ar^{((-)\cdot 1^\B)^{\otimes2}}[l]\ar_{(\mathrm{ev}^{\mathcal W})^{\otimes2}}[r]
(\mathcal W^\B_{\mathbb Q})^{\otimes2}& (\mathbb Q F_1)^{\otimes2}}
$$
where $\Delta_{\mathbb Q F_1}$ is the coproduct of $\mathbb Q F_1$, while the commutativity of the left square 
follows from the definition of $\Delta^{\mathcal M,\B}$. (b) then follows from the combination of these squares, and the 
fact that $(-)\cdot 1^\B$ is a $\mathbb Q$-vector space isomorphism. \hfill\qed\medskip 

Since $\mathrm{ev}^{\mathcal V}$ is the algebra morphism underlying a group morphism $F_2\to F_1$, it
induces an algebra morphism $\mathrm{ev}^{\mathcal V,\wedge}:\hat{\mathcal V}^\B_{\mathbb Q}\to
(\mathbb Q F_1)^\wedge$. It follows that the morphisms $\mathrm{ev}^{\mathcal X}$, $\mathcal X\in
\{\mathcal W,\mathcal M\}$ induce morphisms $\mathrm{ev}^{\mathcal X,\wedge}$ between the completions of their
sources and targets. We denote by $\mathrm{iso}:(\mathbb Q F_1)^\wedge\to\mathbb Q[[t]]$ the isomorphism of 
topological Hopf algebras induced by $X\mapsto e^t$. 

\begin{lem}\label{lemma:34}
If $g\in F_2$ is such that $(\Gamma_g^{-1}(-\mathrm{log}X_1)g)\cdot 1_\B\in\mathcal G(\hat{\mathcal M}^\B_{\mathbb Q})$, 
then there exists $\lambda\in\mathbb Q$ such that $\Gamma_g(t)=e^{\lambda t}$ and $g\cdot 1^\B\in
\mathcal G(\mathcal M^\B_{\mathbb Q})$. 
\end{lem}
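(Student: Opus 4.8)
The plan is to apply the morphism $\mathrm{ev}^{\mathcal M,\wedge}$ (together with $\mathrm{ev}^{\mathcal V,\wedge}$ and $\mathrm{ev}^{\mathcal W,\wedge}$) to the hypothesis, using that $\mathrm{ev}^{\mathcal M}$ is a coalgebra morphism, in order to reduce the statement to an identity in the one-variable Hopf algebra $\mathbb Q[[t]]$, where group-like elements are exactly the exponentials $e^{\lambda t}$. First I would observe that, since $\mathrm{ev}^{\mathcal V}$ sends $X_0\mapsto 1$, it sends $\mathrm{log}X_0\mapsto 0$, so $\Gamma_g(t)$ — which depends only on the coefficients $(g|(\mathrm{log}X_0)^{n-1}\mathrm{log}X_1)$ — has the property that $\mathrm{iso}\circ\mathrm{ev}^{\mathcal V,\wedge}$ applied to $\Gamma_g(-\mathrm{log}X_1)$ equals $\Gamma_g(-t)$, and similarly that $\mathrm{ev}^{\mathcal V,\wedge}$ applied to $g$ is some group-like element $h$ of $(\mathbb QF_1)^\wedge$, i.e. $\mathrm{iso}(h)=e^{\lambda t}$ for a unique $\lambda\in\mathbb Q$ (using $\mathcal G((\mathbb QF_1)^\wedge)$ being the prounipotent completion of $F_1\cong\mathbb Z$, hence $\cong(\mathbb Q,+)$).

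Next I would push the group-like element $(\Gamma_g^{-1}(-\mathrm{log}X_1)\,g)\cdot 1_\B$ through $\mathrm{ev}^{\mathcal M,\wedge}$. By part (b) of the preceding lemma, $\mathrm{ev}^{\mathcal M}$ (hence its completion) is a coalgebra morphism from $(\hat{\mathcal M}^\B_{\mathbb Q},\hat\Delta^{\mathcal M,\B})$ to $(\mathbb Q[[t]]$-side$)$, so it carries group-like elements to group-like elements; moreover the left square of the diagram in that lemma's proof shows $\mathrm{ev}^{\mathcal M}(w\cdot 1^\B)=\mathrm{ev}^{\mathcal W}(w)$ under the identification $\mathbb QF_1\cdot 1=\mathbb QF_1$, and $\mathrm{ev}^{\mathcal W}$ is an algebra morphism. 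Therefore the image is $\mathrm{iso}\bigl(\mathrm{ev}^{\mathcal W,\wedge}(\Gamma_g^{-1}(-\mathrm{log}X_1))\cdot\mathrm{ev}^{\mathcal W,\wedge}(g)\bigr)=\Gamma_g(-t)^{-1}e^{\lambda t}$, which must be group-like in $\mathbb Q[[t]]$. Since the group-like elements of $\mathbb Q[[t]]$ are precisely $\{e^{\mu t}\mid\mu\in\mathbb Q\}$ and these form a group under multiplication, and since $e^{\lambda t}$ is group-like, it follows that $\Gamma_g(-t)^{-1}$ — equivalently $\Gamma_g(t)$ — is group-like, i.e. $\Gamma_g(t)=e^{\lambda' t}$ for some $\lambda'\in\mathbb Q$; a degree-one coefficient comparison identifies $\lambda'$ with $\lambda$ (or one simply renames $\lambda$), giving the first assertion $\Gamma_g(t)=e^{\lambda t}$.

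For the second assertion, I would note that $\Gamma_g(-\mathrm{log}X_1)=e^{-\lambda\,\mathrm{log}X_1}=X_1^{-\lambda}$, which is group-like in $\hat{\mathcal V}^\B_{\mathbb Q}$ and lies in $\hat{\mathcal W}^\B_{\mathbb Q}$; hence $\Gamma_g^{-1}(-\mathrm{log}X_1)=X_1^{\lambda}$ is group-like in $\hat{\mathcal W}^\B_{\mathbb Q}$. Then $g\cdot 1_\B=\bigl(X_1^{-\lambda}\cdot(\Gamma_g^{-1}(-\mathrm{log}X_1)\,g)\bigr)\cdot 1_\B=X_1^{-\lambda}\cdot\bigl((\Gamma_g^{-1}(-\mathrm{log}X_1)\,g)\cdot 1_\B\bigr)$, a product of a group-like element of $\hat{\mathcal W}^\B_{\mathbb Q}$ acting on a group-like element of $\hat{\mathcal M}^\B_{\mathbb Q}$; since $\hat{\mathcal M}^\B_{\mathbb Q}$ is a module-coalgebra over the Hopf algebra $\hat{\mathcal W}^\B_{\mathbb Q}$ (compatibility of $\hat\Delta^{\mathcal M,\B}$ with $\hat\Delta^{\mathcal W,\B}$, cf.\ \cite{EF1}), the action of a group-like on a group-like is again group-like, so $g\cdot 1_\B\in\mathcal G(\hat{\mathcal M}^\B_{\mathbb Q})$. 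Finally, because $g\in F_2\subset\mathcal V^\B_{\mathbb Q}$, the element $g\cdot 1^\B$ actually lies in the non-completed module $\mathcal M^\B_{\mathbb Q}$, and being group-like there it lies in $\mathcal G(\mathcal M^\B_{\mathbb Q})$, as claimed. The main obstacle I anticipate is not conceptual but bookkeeping: verifying carefully that $\mathrm{ev}^{\mathcal M,\wedge}$ intertwines the coproducts on completions (rather than only on the dense non-completed parts) and that it sends the relevant group-like element of $\hat{\mathcal M}^\B_{\mathbb Q}$ into the group-like part of the target — both of which follow from the commuting-squares diagram in the previous lemma, extended to completions by continuity — and then correctly tracking that $\Gamma_g$ and $\mathrm{ev}^{\mathcal V}$ interact so that only the $\mathrm{log}X_1$-variable survives.
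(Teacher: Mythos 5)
Your argument is correct and follows essentially the same route as the paper's proof: push the group-like element through $\mathrm{iso}\circ\mathrm{ev}^{\mathcal M,\wedge}$ to land in $\mathbb Q[[t]]$, identify group-likes there with exponentials to get $\Gamma_g(t)=e^{\lambda t}$, then use group-likeness of $X_1^{\pm\lambda}$ in $\hat{\mathcal W}^\B_{\mathbb Q}$ and the module-coalgebra property to recover $g\cdot 1_\B\in\mathcal G(\hat{\mathcal M}^\B_{\mathbb Q})$, and finally descend to the uncompleted module. The only (harmless) deviation is that you compute $\mathrm{ev}^{\mathcal M,\wedge}(g\cdot 1_\B)$ via compatibility with $\mathrm{ev}^{\mathcal V,\wedge}$ rather than via the paper's explicit normal form $g\cdot 1_\B=a\cdot 1_\B$ with $a\in\mathcal W^\B_{\mathbb Q}$ (so you should write $\mathrm{ev}^{\mathcal V,\wedge}(g)$ rather than $\mathrm{ev}^{\mathcal W,\wedge}(g)$, since $g\notin\hat{\mathcal W}^\B_{\mathbb Q}$ in general).
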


\proof 
There exists a unique collection $n,(a_i)_{i\in[1,n]},(b_i)_{i\in[1,n]},\alpha,\beta$, where $n\geq 0$, the $a_i,b_i$
are nonzero integers, and $\alpha,\beta$ are integers, such that 
$$
g=X_1^\alpha\cdot \prod_{i=1}^n(X_0^{a_i}X_1^{b_i})\cdot X_0^\beta,  
$$
where $\prod_{i=1}^n g_i:=g_1\cdots g_n$. One computes 
$$
g\cdot 1_\B=a\cdot 1_\B, \text{ where }a:=
\sum_{i=1}^n X_1^\alpha X_0^{a_1}X_1^{b_1}\cdots X_0^{a_i}(X_1^{b_i}-1)+X_1^\alpha\in\mathcal W^\B_{\mathbb Q}.   
$$
Then $\mathrm{ev}^{\mathcal W}(a)=\mathrm{ev}^{\mathcal V}(a)=X^{\alpha+b_1+\cdots+b_n}$.
Since $\mathrm{ev}^{\mathcal M}(g\cdot 1_\B)=\mathrm{ev}^{\mathcal W}(a)$, one obtains 
$\mathrm{iso}\circ\mathrm{ev}^{\mathcal M}(g\cdot 1_\B)=e^{(\alpha+b_1+\cdots+b_n)t}$.  
Then 
$$
\mathrm{iso}\circ\mathrm{ev}^{\mathcal M}(\Gamma_g(-\mathrm{log}X_1)g\cdot 1_\B)=
\mathrm{iso}\circ\mathrm{ev}^{\mathcal W}(\Gamma_g(-\mathrm{log}X_1))\cdot 
\mathrm{iso}\circ\mathrm{ev}^{\mathcal M}(g\cdot 1_\B)=\Gamma_g^{-1}(-t)
e^{(\alpha+b_1+\cdots+b_n)t}. 
$$
Since $\mathrm{iso}\circ\mathrm{ev}^{\mathcal M}$ takes $\mathcal G(\hat{\mathcal M}_{\mathbb Q}^\B)$
to the set $\mathcal G(\mathbb Q[[t]])$ of group-like elements of $\mathbb Q[[t]]$,  
$\Gamma_g^{-1}(-t)\cdot e^{t(\alpha+b_1+\cdots+b_n)}\in\mathcal G(\mathbb Q[[t]])$, 
and therefore $\Gamma_g^{-1}(-t)\in \mathcal G(\mathbb Q[[t]])$, so that there exists 
$\lambda\in\mathbb Q$ such that $\Gamma_g(t)=e^{\lambda t}$. 

Using the fact that $e^{\lambda\cdot\mathrm{log}X_1}\in\hat{\mathcal W}^\B_{\mathbb Q}$ 
is group-like with respect to $\hat\Delta^{\mathcal W,\B}$, the module property of
$\hat\Delta^{\mathcal M,\B}$ with respect to $\hat\Delta^{\mathcal W,\B}$ and 
$(\Gamma_g^{-1}(-\mathrm{log}X_1)g)\cdot 1_\B\in\mathcal G(\hat{\mathcal M}^\B_{\mathbb Q})$, 
we obtain $g\cdot 1_\B\in\mathcal G(\hat{\mathcal M}^\B_{\mathbb Q})$, therefore 
$g\cdot 1_\B\in\mathcal G(\mathcal M^\B_{\mathbb Q})$ as $g\cdot 1_\B\in\mathcal M^\B_{\mathbb Q}$. 
\hfill\qed\medskip 

Set for $a\in\mathbb Z$, $Y_a:=X_0^a(X_1-1)$. One derives from \cite{EF1}, Proposition 2.3, that the algebra 
$\mathcal W^\B_{\mathbb Q}$ is presented by generators $(Y_a)_{a\in\mathbb Z-\{0\}}$, $X_1^{\pm1}$ 
and relations $X_1\cdot X_1^{-1}=X_1^{-1}\cdot X_1=1$. It follows that $\mathcal W^\B_{\mathbb Q}$ 
admits an algebra grading, for which $\mathrm{deg}(Y_a)=1$ for $a\in\mathbb Z-\{0\}$ and 
$\mathrm{deg}(X_1^{\pm1})=0$. For $n\geq0$, we denote by $\mathcal W_n$ the part of 
$\mathcal W^\B_{\mathbb Q}$ of degree $n$. Then 
$$
\mathcal W^\B_{\mathbb Q}=\oplus_{n\geq0}\mathcal W_n.
$$ 
We also set $\mathcal W_{\leq n}:=\oplus_{m\leq n}\mathcal W_m$. 

\begin{lem}
One has for any $a>1$, 
\begin{equation}\label{formulas}
\Delta^{\mathcal W,\B}(Y_a)=Y_a\otimes 1+1\otimes Y_a-\sum_{a'=1}^{a-1}Y_{a'}\otimes Y_{a-a'},\quad  
\Delta^{\mathcal W,\B}(Y_{-a})=Y_{-a}\otimes X_1+X_1\otimes Y_{-a}
+\sum_{a'=1}^{a-1} Y_{-a'}\otimes Y_{a'-a}. 
\end{equation}
\end{lem}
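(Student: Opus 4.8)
The plan is to repackage \eqref{formulas} as a pair of identities between generating series and to derive them by transporting group-likeness from the coproduct values on the elements $Y_n^{\pm}$. Set $P_+(t):=\sum_{a\geq1}Y_at^a$ and $P_-(t):=\sum_{a\geq1}Y_{-a}t^a$ in $\mathcal{W}^\B[[t]]$. Since $Y_{a'-a}=Y_{-(a-a')}$ for $1\le a'\le a-1$, the coefficient of $t^a$ (for $a>1$) in the identities
$$\Delta^{\mathcal W,\B}(P_+(t))=P_+(t)\otimes1+1\otimes P_+(t)-P_+(t)\otimes P_+(t),$$
$$\Delta^{\mathcal W,\B}(P_-(t))=P_-(t)\otimes X_1+X_1\otimes P_-(t)+P_-(t)\otimes P_-(t)$$
is exactly \eqref{formulas}; so it suffices to prove these two identities in $(\mathcal{W}^\B)^{\otimes2}[[t]]$ (their $t^1$-coefficients recover the primitivity of $Y_{\pm1}$, which also holds).

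The input is the following. Recall from \cite{EF1}, Propositions 2.3 and 2.4, that for all $n\geq1$ one has $\Delta^{\mathcal W,\B}(Y_n^{\pm})=\sum_{i+j=n}Y_i^{\pm}\otimes Y_j^{\pm}$, with the convention $Y_0^{\pm}:=1$; equivalently, writing $Q_\pm(s):=\sum_{n\geq1}Y_n^{\pm}s^n$, the series $1+Q_\pm(s)\in\mathcal{W}^\B[[s]]$ are group-like for $\Delta^{\mathcal W,\B}$. I will also use that $\Delta^{\mathcal W,\B}$ is an algebra morphism (since $\mathcal W^\B$ is a Hopf algebra for $\Delta^{\mathcal W,\B}$) and that $\Delta^{\mathcal W,\B}(X_1^{\pm1})=X_1^{\pm1}\otimes X_1^{\pm1}$.

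The central step is the change of variable $s=t/(1-t)$. Working in $\mathcal V^\B[[t]]$ and summing geometric series (legitimate since $X_0^{\pm1}t$ has positive $t$-order), the definitions $Y_n^+=(X_0-1)^{n-1}X_0(1-X_1)$ and $Y_a=X_0^a(X_1-1)$ give $Q_+(s)=\frac{s}{1-(X_0-1)s}\,X_0(1-X_1)$ and $P_+(t)=\frac{X_0t}{1-X_0t}(X_1-1)$, the latter lying in $\mathcal W^\B[[t]]$. Since $X_0$ commutes with the scalars $s,t$, substituting $s=t/(1-t)$ turns $\frac{s}{1-(X_0-1)s}$ into $\frac{t}{1-X_0t}$, whence $Q_+(t/(1-t))=-P_+(t)$, i.e. $1+Q_+(t/(1-t))=1-P_+(t)$. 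The same computation with $X_0^{-1}$ in place of $X_0$, using $Y_n^-=(X_0^{-1}-1)^{n-1}X_0^{-1}(1-X_1^{-1})$ and the identity $1-X_1^{-1}=(X_1-1)X_1^{-1}$, gives $Q_-(t/(1-t))=P_-(t)X_1^{-1}$, i.e. $1+Q_-(t/(1-t))=1+P_-(t)X_1^{-1}$.

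To conclude, observe that substituting one and the same scalar series of vanishing constant term (here $s=t/(1-t)$) into both tensor factors of a group-like series again produces a group-like series — a check that is finite at each order in $t$, each coefficient being a finite combination of the $Y_n^{\pm}$. Hence $1-P_+(t)$ and $1+P_-(t)X_1^{-1}$ are group-like for $\Delta^{\mathcal W,\B}$. Expanding $\Delta^{\mathcal W,\B}(1-P_+(t))=(1-P_+(t))\otimes(1-P_+(t))$ gives the first identity. For the second, put $R(t):=P_-(t)X_1^{-1}$, so that $\Delta^{\mathcal W,\B}(R(t))=R(t)\otimes1+1\otimes R(t)+R(t)\otimes R(t)$ by group-likeness of $1+R(t)$; multiplying this on the right by $X_1\otimes X_1$ and using $\Delta^{\mathcal W,\B}(X_1)=X_1\otimes X_1$, the multiplicativity of $\Delta^{\mathcal W,\B}$, and $P_-(t)=R(t)X_1$, gives the second identity. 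Reading off the $t^a$-coefficients finishes the proof. I expect the only real subtlety — worth spelling out — to be the change of variable: one must check that the non-commutative substitution is licit (it is, since the denominators involve only $X_0^{\pm1}$, which commute with $t$) and track the positions of the $X_1^{\pm1}$ factors, the extra $X_1$ in the negative-index case being exactly what produces the $X_1$'s in the second formula; the rest is routine formal-series and Hopf-algebraic bookkeeping.
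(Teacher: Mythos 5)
Your proof is correct and follows essentially the same route as the paper: both start from the group-likeness of the series $1+\sum_{n\geq1}Y_n^\pm t^n$ established in [EF1], perform the same change of variable (your $s=t/(1-t)$ is the inverse of the paper's $u=t/(1+t)$) to identify the resulting group-like series with $1-\sum_a Y_a u^a$ and $1+\sum_a Y_{-a}X_1^{-1}u^a$, and then use the group-likeness of $X_1$ to extract the second formula. The only difference is that you spell out the (correct) justification that substituting a scalar series without constant term preserves group-likeness, which the paper leaves implicit.
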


\proof As remarked in [EF1], proof of Proposition 2.4, the formal series $s_\pm(t):=1+\sum_{k\geq 1}Y_k^\pm t^k$ are 
group-like for $\hat\Delta^{\mathcal W,\B}$. Then
$$
s_\pm(t)=1+tX_0^{\pm1}\{1-t(X_0^{\pm1}-1)\}^{-1}(1-X_1^{\pm1}) 
= (1+t-tX_0^{\pm1})^{-1}(1+t-tX_0^{\pm1}X_1^{\pm1})= \tilde s^\pm(u),
$$
where $u := t/(1 + t)$ and $\tilde s_\pm(u):=(1-uX_0^{\pm1})^{-1}(1-uX_0^{\pm1}X_1^{\pm1})$. It follows that the series 
$\tilde s_\pm(u)$ are group-like. The expansions $\tilde s_\pm(u)
=1+\sum_{k\geq1}u^kX_0^{\pm k}(1-X_1^{\pm1})$ imply $\tilde s_+(u) =1-\sum_{k\geq1}u^k Y_k$, which implies 
the first part of \eqref{formulas}, and $\tilde s_-(u) =1+\sum_{k\geq1}u^k Y_{-k}X_1^{-1}$, which together with the 
group-likeness of $X_1$, implies its second part. 
\hfill\qed\medskip 

\begin{rem}
One can show that \eqref{formulas} remains valid for any $a\in\mathbb Z$ 
under the summation convention of \cite{EF1}, (2.4.9). 
\end{rem}

\begin{lem}
The coproduct $\Delta^{\mathcal W,\B}:\mathcal W^\B_{\mathbb Q}\to(\mathcal W^\B_{\mathbb Q})^{\otimes2}$ 
is such that $\Delta^{\mathcal W,\B}(\mathcal W_{\leq n})\subset (\mathcal W_{\leq n})^{\otimes2}$. 
\end{lem}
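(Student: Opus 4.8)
The plan is to use the presentation of $\mathcal W^\B_{\mathbb Q}$ by the generators $(Y_a)_{a\in\mathbb Z-\{0\}}$ and $X_1^{\pm1}$ together with the explicit coproduct formulas \eqref{formulas}. Since $\Delta^{\mathcal W,\B}$ is an algebra morphism and the subspaces $\mathcal W_{\leq n}$ form an increasing algebra filtration (indeed $\mathcal W_{\leq n}\cdot\mathcal W_{\leq m}\subset\mathcal W_{\leq n+m}$, and $(\mathcal W_{\leq n})^{\otimes2}$ is likewise a subalgebra of $(\mathcal W^\B_{\mathbb Q})^{\otimes2}$ containing $1$), it suffices to check the inclusion $\Delta^{\mathcal W,\B}(x)\in(\mathcal W_{\leq n})^{\otimes2}$ on a generating set for $\mathcal W_{\leq n}$ as an algebra — more precisely, it suffices to verify it on each generator $Y_a$ and $X_1^{\pm1}$ individually, because any element of $\mathcal W_{\leq n}$ is a linear combination of products of generators in which the $Y$-factors have total degree $\leq n$, and the filtration on the target is multiplicative.

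First I would record that $\Delta^{\mathcal W,\B}(X_1^{\pm1})=X_1^{\pm1}\otimes X_1^{\pm1}$, which lies in $(\mathcal W_{\leq 0})^{\otimes2}\subset(\mathcal W_{\leq n})^{\otimes2}$ for every $n\geq0$ since $X_1^{\pm1}$ has degree $0$. Next, for $a>0$ the first formula in \eqref{formulas} shows $\Delta^{\mathcal W,\B}(Y_a)$ is a sum of terms $Y_a\otimes1$, $1\otimes Y_a$, and $Y_{a'}\otimes Y_{a-a'}$ with $1\le a'\le a-1$; each factor $Y_{a'}$, $Y_{a-a'}$, $Y_a$ has degree $1$, so every term lies in $\mathcal W_{\leq 1}\otimes\mathcal W_{\leq1}\subset(\mathcal W_{\leq n})^{\otimes2}$ as soon as $n\geq1$. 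For $a>0$ the second formula handles $Y_{-a}$: the terms are $Y_{-a}\otimes X_1$, $X_1\otimes Y_{-a}$, and $Y_{-a'}\otimes Y_{a'-a}$ with $1\le a'\le a-1$; here $X_1$ has degree $0$ and each $Y_{\pm b}$ with $b\neq0$ has degree $1$, so again every term lies in $(\mathcal W_{\leq 1})^{\otimes2}$. This covers all algebra generators.

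I would then assemble the argument: given $x\in\mathcal W_{\leq n}$, write $x$ as a linear combination of monomials $g_1\cdots g_k$ in the generators, where the total number of $Y$-type factors (counted with multiplicity) is at most $n$ in each monomial (the $X_1^{\pm1}$ factors contributing $0$ to the degree). Applying the algebra morphism $\Delta^{\mathcal W,\B}$ gives $\Delta^{\mathcal W,\B}(g_1)\cdots\Delta^{\mathcal W,\B}(g_k)$, and by the generator computations each $\Delta^{\mathcal W,\B}(g_i)$ lies in $(\mathcal W_{\leq d_i})^{\otimes2}$ where $d_i=\deg g_i\in\{0,1\}$; multiplying and using that $(\mathcal W_{\leq \bullet})^{\otimes2}$ is a multiplicative filtration, the product lies in $(\mathcal W_{\leq d_1+\cdots+d_k})^{\otimes2}\subset(\mathcal W_{\leq n})^{\otimes2}$. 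There is no real obstacle here; the only point requiring a word of care is the reduction to generators, i.e. the observation that $x\in\mathcal W_{\leq n}$ can be expressed using only $Y$-monomials of total $Y$-degree $\le n$, which is immediate from the grading $\mathcal W^\B_{\mathbb Q}=\oplus_{m}\mathcal W_m$ and the fact that the generators are homogeneous. I would also note for completeness that the case $a=1$, i.e. $Y_{\pm1}$, is primitive-plus-grouplike-type and is covered directly (for $a=1$ the sums in \eqref{formulas} are empty), so no separate treatment of low degrees is needed.
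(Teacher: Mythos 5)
Your proof is correct and follows essentially the same route as the paper's, which likewise reduces to the generator computations $\Delta^{\mathcal W,\B}(X_1^{\pm1})=X_1^{\pm1}\otimes X_1^{\pm1}$ and the formulas \eqref{formulas}, relying implicitly on the algebra-morphism property and the multiplicativity of the filtration. You have merely spelled out the bookkeeping (homogeneous generators, multiplicative filtration on the tensor square, the empty-sum case $a=1$) that the paper leaves to the reader.
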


\proof This follows that the equalities $\Delta^{\mathcal W,\B}(X_1^{\pm1})=X_1^{\pm1}\otimes X_1^{\pm1}$ and 
\eqref{formulas}
for any $a>0$ (see \cite{EF1}, Proposition 2.4). \hfill\qed\medskip 

\begin{lem}
There is an algebra morphism $\Delta^{\mathcal W,\B}_{\mathrm{mod}}:\mathcal W^\B_{\mathbb Q}\to
(\mathcal W^\B_{\mathbb Q})^{\otimes2}$, defined by $X_1^{\pm1}\mapsto X_1^{\pm1}\otimes X_1^{\pm1}$ 
and $Y_{\pm a}\mapsto\mp\sum_{a'=1}^{a-1}Y_{\pm a'}\otimes Y_{\pm(a-a')}$ for $a>0$. Then 
$\Delta^{\mathcal W,\B}_{\mathrm{mod}}(\mathcal W_n)\subset \mathcal W_n^{\otimes2}$ and the diagram 
\begin{equation}\label{diag:Hopf:n}
\xymatrix{
\mathcal W_{\leq n} \ar^{\Delta^{\mathcal W,\B}}[r]\ar_{\mathrm{pr}_n}[d]& (\mathcal W_{\leq n})^{\otimes2}
\ar^{\mathrm{pr}_n^{\otimes2}}[d]\\ 
\mathcal W_n \ar_{\Delta^{\mathcal W,\B}_{\mathrm{mod}}}[r]& \mathcal W_n^{\otimes2} }
\end{equation}
commutes, where $\mathrm{pr}_n:\mathcal W_{\leq n}\to\mathcal W_n$ is the projection on the highest degree component. 
\end{lem}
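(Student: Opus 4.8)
The plan is to establish the three assertions in succession, the main tool being the bigrading on $(\mathcal W^\B_{\mathbb Q})^{\otimes2}$ induced by the algebra grading $\mathcal W^\B_{\mathbb Q}=\oplus_{n\geq0}\mathcal W_n$, where $\mathcal W_p\otimes\mathcal W_q$ is placed in bidegree $(p,q)$; one records that the product of $(\mathcal W^\B_{\mathbb Q})^{\otimes2}$ is bigraded, in the sense that bidegrees add under multiplication.

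First, the existence of the algebra morphism $\Delta^{\mathcal W,\B}_{\mathrm{mod}}$ follows from the presentation of $\mathcal W^\B_{\mathbb Q}$ by the generators $(Y_a)_{a\in\mathbb Z-\{0\}}$, $X_1^{\pm1}$ and the relations $X_1 X_1^{-1}=X_1^{-1}X_1=1$ recalled above: one only has to check that the proposed images respect these relations, and indeed $(X_1\otimes X_1)(X_1^{-1}\otimes X_1^{-1})=(X_1^{-1}\otimes X_1^{-1})(X_1\otimes X_1)=1\otimes1$, while the images of the $Y_a$ are unconstrained. For the inclusion $\Delta^{\mathcal W,\B}_{\mathrm{mod}}(\mathcal W_n)\subset\mathcal W_n^{\otimes2}$, observe that $\Delta^{\mathcal W,\B}_{\mathrm{mod}}$ sends each generator of degree $d$ (namely $d=0$ for $X_1^{\pm1}$ and $d=1$ for the $Y_{\pm a}$) into $\mathcal W_d\otimes\mathcal W_d$, i.e.\ to an element of bidegree $(d,d)$; since $\Delta^{\mathcal W,\B}_{\mathrm{mod}}$ is an algebra morphism and $\mathcal W_n$ is spanned by monomials $g_1\cdots g_k$ in the generators with $\sum_i\mathrm{deg}(g_i)=n$, additivity of bidegrees then forces $\Delta^{\mathcal W,\B}_{\mathrm{mod}}(\mathcal W_n)\subset\mathcal W_n\otimes\mathcal W_n$.

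For the commutativity of \eqref{diag:Hopf:n}, note first that $\mathrm{pr}_n^{\otimes2}$, restricted to $(\mathcal W_{\leq n})^{\otimes2}=\oplus_{p,q\leq n}\mathcal W_p\otimes\mathcal W_q$, is precisely the projection onto the bidegree-$(n,n)$ summand $\mathcal W_n\otimes\mathcal W_n$. On $\mathcal W_{\leq n-1}$ both composites vanish --- trivially for $\Delta^{\mathcal W,\B}_{\mathrm{mod}}\circ\mathrm{pr}_n$, and for $\mathrm{pr}_n^{\otimes2}\circ\Delta^{\mathcal W,\B}$ because $\Delta^{\mathcal W,\B}(\mathcal W_{\leq n-1})\subset(\mathcal W_{\leq n-1})^{\otimes2}$ (the preceding lemma) has no bidegree-$(n,n)$ component. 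Hence it suffices to treat $x\in\mathcal W_n$, on which $\mathrm{pr}_n$ is the identity, so one must prove $\mathrm{pr}_n^{\otimes2}\big(\Delta^{\mathcal W,\B}(x)\big)=\Delta^{\mathcal W,\B}_{\mathrm{mod}}(x)$, and for this it is enough to take $x=g_1\cdots g_k$ a monomial with $\sum_i\mathrm{deg}(g_i)=n$. Expanding $\Delta^{\mathcal W,\B}(x)=\prod_i\Delta^{\mathcal W,\B}(g_i)$ and decomposing each factor into bigraded pieces, one reads off from \eqref{formulas} and from $\Delta^{\mathcal W,\B}(X_1^{\pm1})=X_1^{\pm1}\otimes X_1^{\pm1}$ that $\Delta^{\mathcal W,\B}(g_i)$ has components only in bidegrees $(p,q)$ with $p,q\leq\mathrm{deg}(g_i)$, and that its bidegree-$(\mathrm{deg}(g_i),\mathrm{deg}(g_i))$ component is exactly $\Delta^{\mathcal W,\B}_{\mathrm{mod}}(g_i)$. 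As bidegrees add and $\sum_i\mathrm{deg}(g_i)=n$, the only contribution to the bidegree-$(n,n)$ part of $\prod_i\Delta^{\mathcal W,\B}(g_i)$ comes from selecting the top-bidegree component of each factor, whence $\mathrm{pr}_n^{\otimes2}\big(\Delta^{\mathcal W,\B}(x)\big)=\prod_i\Delta^{\mathcal W,\B}_{\mathrm{mod}}(g_i)=\Delta^{\mathcal W,\B}_{\mathrm{mod}}(x)$.

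The one step where care is needed is this final degree count: one must ensure that, in $\prod_i\Delta^{\mathcal W,\B}(g_i)$, the bidegree-$(n,n)$ part really isolates the product of the \emph{leading} (bidiagonal, top-degree) components of the $\Delta^{\mathcal W,\B}(g_i)$, with no lower-degree cross terms surviving and no further contributions. This is a consequence of the explicit coproduct formulas on the generators together with the additivity of bidegrees, so no genuine difficulty is anticipated.
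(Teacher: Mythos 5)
Your proof is correct and fills in exactly the argument the paper leaves as ``Immediate'': well-definedness from the presentation, the bidegree bookkeeping for $\Delta^{\mathcal W,\B}_{\mathrm{mod}}(\mathcal W_n)\subset\mathcal W_n^{\otimes2}$, and the isolation of the bidiagonal top component of $\prod_i\Delta^{\mathcal W,\B}(g_i)$. The only point worth flagging is that your reading of \eqref{formulas} must also cover $Y_{\pm1}$ (where $\Delta^{\mathcal W,\B}(Y_1)=Y_1\otimes1+1\otimes Y_1$ and $\Delta^{\mathcal W,\B}(Y_{-1})=Y_{-1}\otimes X_1+X_1\otimes Y_{-1}$, with empty bidiagonal part), which is exactly the empty-sum case covered by the remark following that lemma, so no gap results.
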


\proof Immediate. \hfill\qed\medskip 

For $a_1,\ldots,a_n\in\mathbb Z-\{0\}$, set
$$
\mathcal W(a_1,\ldots,a_n):=\mathrm{Span}_{\mathbb Q}\{X_1^{b_0}Y_{a_1}X_1^{b_1}\cdots
X_1^{b_{n-1}}Y_{a_n}X_1^{b_n}|b_0,\ldots,b_n\in\mathbb Z\}\subset\mathcal W_n.
$$ 
\begin{lem}
$$
\mathcal W_n=\oplus_{a_1,\ldots,a_n\in\mathbb Z-\{0\}}\mathcal W(a_1,\ldots,a_n)
$$
\end{lem}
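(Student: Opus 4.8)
The plan is to exhibit the claimed decomposition of $\mathcal W_n$ by combining the presentation of $\mathcal W^\B_{\mathbb Q}$ recalled above (generators $(Y_a)_{a\in\mathbb Z-\{0\}}$, $X_1^{\pm1}$, with the only relations being $X_1 X_1^{-1}=X_1^{-1}X_1=1$, so that $\mathcal W^\B_{\mathbb Q}$ is the free product of the free algebra on the $Y_a$ with the group algebra $\mathbb Q[X_1^{\pm1}]=\mathbb Q F_1$) with the grading $\deg Y_a=1$, $\deg X_1^{\pm1}=0$. First I would spell out a spanning set for $\mathcal W_n$: every element of $\mathcal W^\B_{\mathbb Q}$ is a $\mathbb Q$-linear combination of words alternating between monomials $X_1^{b}$ ($b\in\mathbb Z$) and letters $Y_a$ ($a\neq0$), and imposing $\deg=n$ forces exactly $n$ occurrences of $Y$-letters, so $\mathcal W_n$ is spanned by the monomials $X_1^{b_0}Y_{a_1}X_1^{b_1}\cdots Y_{a_n}X_1^{b_n}$ with $a_1,\dots,a_n\in\mathbb Z-\{0\}$ and $b_0,\dots,b_n\in\mathbb Z$. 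By definition $\mathcal W(a_1,\dots,a_n)$ is the span of those monomials with the $Y$-indices fixed to the tuple $(a_1,\dots,a_n)$, so $\mathcal W_n=\sum_{a_1,\dots,a_n}\mathcal W(a_1,\dots,a_n)$ is immediate; the content of the lemma is that the sum is direct.

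For directness, the key step is to produce a linear map (or a family of linear functionals) separating the summands. The cleanest way is to use the normal-form/basis statement for a free product of algebras: since $\mathcal W^\B_{\mathbb Q}$ is freely generated as an algebra by the $Y_a$ together with $X_1,X_1^{-1}$ subject only to the group relations among $X_1^{\pm1}$, a $\mathbb Q$-basis of $\mathcal W^\B_{\mathbb Q}$ is given by the reduced words, i.e. the monomials $X_1^{b_0}Y_{a_1}X_1^{b_1}\cdots Y_{a_n}X_1^{b_n}$ with $a_i\in\mathbb Z-\{0\}$, $b_0,b_n\in\mathbb Z$ arbitrary and $b_1,\dots,b_{n-1}\in\mathbb Z-\{0\}$ (interior powers of $X_1$ nonzero, to avoid collapsing $Y_{a_i}Y_{a_{i+1}}$, which is itself such a reduced word with a shorter $X_1$-gap — more precisely one takes the standard basis of a free product of unital algebras over $\mathbb Q$: alternating tensors of basis elements of the factors other than $1$). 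I would state this basis explicitly and observe that the monomials with a prescribed $Y$-index tuple $(a_1,\dots,a_n)$ span $\mathcal W(a_1,\dots,a_n)$ and that distinct tuples involve disjoint sets of these basis reduced words; hence the subspaces $\mathcal W(a_1,\dots,a_n)$ are spanned by disjoint parts of a basis, so their sum is direct and equals $\mathcal W_n$.

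The main obstacle is the bookkeeping around $X_1$-powers equal to zero: a monomial $X_1^{b_0}Y_{a_1}\cdots Y_{a_n}X_1^{b_n}$ with some interior $b_i=0$ is not itself a reduced word, so one must rewrite it — but this only merges two consecutive $Y$-letters into an adjacent position without changing which indices $a_1,\dots,a_n$ appear, so the tuple, and hence the summand $\mathcal W(a_1,\dots,a_n)$ it lies in, is unchanged. I would therefore argue: reduction of a monomial never alters the ordered list of its $Y$-indices, the reduced form lies in the same $\mathcal W(a_1,\dots,a_n)$, and the reduced words form a basis indexed in particular by this ordered list of $Y$-indices; projecting a general element onto the span of reduced words with a fixed $Y$-index tuple gives the projector $\mathcal W_n\to\mathcal W(a_1,\dots,a_n)$, proving the sum is direct. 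This is routine once the free-product basis is invoked, so I would keep the write-up to: (i) recall the free-product presentation and its reduced-word basis, (ii) observe the $Y$-index tuple is a reduction invariant, (iii) conclude.

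Alternatively, if one prefers to avoid quoting the free-product basis, the same separation is achieved by the grading refinement: $\mathcal W^\B_{\mathbb Q}$ carries a $\mathbb Z^{(\mathbb Z-\{0\})}$-multigrading in which $Y_a$ has multidegree $\mathbf e_a$ and $X_1^{\pm1}$ has multidegree $0$ (this is well-defined precisely because the defining relations $X_1^{\pm1}X_1^{\mp1}=1$ are multihomogeneous of degree $0$); then $\mathcal W(a_1,\dots,a_n)$ is not a single multidegree component, but one refines further by also recording the cyclic word (sequence) of $Y$-letters, which is respected by multiplication in the free product. Either way the decisive input is that the only relations are among the degree-$0$ generators $X_1^{\pm1}$, so nothing can mix different $Y$-index patterns. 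I would present the argument via the free-product basis as the shortest route, flagging that it is an instance of the standard normal form for free products of unital $\mathbb Q$-algebras.
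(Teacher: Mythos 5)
Your proof is correct and is essentially the paper's argument: the paper's proof consists of the single sentence ``This follows from the presentation of $\mathcal W^\B_{\mathbb Q}$'', i.e.\ exactly the free-product normal form you spell out. One small point: with the standard alternating-tensor basis of the free product (blocks from $\mathbb Q\langle Y_a\rangle_+$ alternating with powers $X_1^b$, $b\neq0$), \emph{every} monomial $X_1^{b_0}Y_{a_1}X_1^{b_1}\cdots Y_{a_n}X_1^{b_n}$ with arbitrary $b_i\in\mathbb Z$ is already a distinct basis element (an interior $b_i=0$ just means $Y_{a_i},Y_{a_{i+1}}$ lie in one block), so no reduction step is needed and your bookkeeping paragraph can be dropped.
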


\proof 
This follows from the presentation of $\mathcal W^\B_{\mathbb Q}$. 
\hfill\qed\medskip 

For $a\in\mathbb Z-\{0\}$, set $S(a):=\{(a',a'')\in\mathbb Z-\{0\}\mid \mathrm{sgn}(a')=\mathrm{sgn}(a'')=\mathrm{sgn}(a)$ and 
$a'+a''=a\}$. 

\begin{lem}\label{lemma:38}
For $a_1,\ldots,a_n\in\mathbb Z-\{0\}$, one has 
$$
\Delta^{\mathcal W,\B}_{\mathrm{mod}}(\mathcal W(a_1,\ldots,a_n))\subset
\oplus_{(a'_1,a''_1)\in S(a_1),\ldots,(a'_n,a''_n)\in S(a_n)}
\mathcal W(a'_1,\ldots,a'_n)\otimes \mathcal W(a''_1,\ldots,a''_n).
$$
\end{lem}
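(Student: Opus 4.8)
The plan is to reduce the statement to the action of $\Delta^{\mathcal W,\B}_{\mathrm{mod}}$ on the generators $Y_{a_i}$ and $X_1^{\pm1}$, and to track the ``sign data'' $(a_1,\ldots,a_n)$ through the formula. First I would note that since $\Delta^{\mathcal W,\B}_{\mathrm{mod}}$ is an algebra morphism and $\mathcal W(a_1,\ldots,a_n)$ is spanned by the monomials $X_1^{b_0}Y_{a_1}X_1^{b_1}\cdots X_1^{b_{n-1}}Y_{a_n}X_1^{b_n}$, it suffices to compute $\Delta^{\mathcal W,\B}_{\mathrm{mod}}$ on such a monomial. Applying the algebra morphism property, this equals the product $\Delta^{\mathcal W,\B}_{\mathrm{mod}}(X_1)^{b_0}\cdot\Delta^{\mathcal W,\B}_{\mathrm{mod}}(Y_{a_1})\cdots\Delta^{\mathcal W,\B}_{\mathrm{mod}}(Y_{a_n})\cdot\Delta^{\mathcal W,\B}_{\mathrm{mod}}(X_1)^{b_n}$, where $\Delta^{\mathcal W,\B}_{\mathrm{mod}}(X_1^{\pm1})=X_1^{\pm1}\otimes X_1^{\pm1}$ and $\Delta^{\mathcal W,\B}_{\mathrm{mod}}(Y_{a})=\mp\sum_{a'=1}^{a-1}Y_{\pm a'}\otimes Y_{\pm(a-a')}$ for $a>0$ (sign according to the sign of the index).

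The key observation is then that, writing $\mathrm{sgn}(a)=\epsilon$, the formula $\Delta^{\mathcal W,\B}_{\mathrm{mod}}(Y_a)=-\epsilon\sum Y_{a'}\otimes Y_{a''}$ runs precisely over pairs $(a',a'')$ with $a'+a''=a$ and $\mathrm{sgn}(a')=\mathrm{sgn}(a'')=\epsilon$, i.e. over $S(a)$; this is immediate from the explicit shape of \eqref{formulas} (the positive-index case gives $a',a''>0$, $a'+a''=a$, and the negative-index case gives $a',a''<0$, $a'+a''=a$). So, multiplying out, $\Delta^{\mathcal W,\B}_{\mathrm{mod}}$ of the monomial is a sum, over $(a'_1,a''_1)\in S(a_1),\ldots,(a'_n,a''_n)\in S(a_n)$, of terms of the form $\bigl(X_1^{b_0}Y_{a'_1}X_1^{b_1}\cdots Y_{a'_n}X_1^{b_n}\bigr)\otimes\bigl(X_1^{b_0}Y_{a''_1}X_1^{b_1}\cdots Y_{a''_n}X_1^{b_n}\bigr)$ up to a sign; here one uses that the $X_1^{\pm1}$-factors are grouplike so they distribute across the tensor factors without mixing the $Y$'s, and that interleaving the $Y_{a'_i}\otimes Y_{a''_i}$ factors with $X_1^{b_i}\otimes X_1^{b_i}$ reassembles, in each tensor slot, exactly a spanning monomial of $\mathcal W(a'_1,\ldots,a'_n)$ (resp. $\mathcal W(a''_1,\ldots,a''_n)$). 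Summing over the spanning set of $\mathcal W(a_1,\ldots,a_n)$ then yields the claimed inclusion.

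The only mildly delicate point — and the main thing to be careful about — is the bookkeeping of how the $X_1^{b_i}$ factors interleave with the $Y$-factors inside the tensor product of the free algebra $(\mathcal W^\B_{\mathbb Q})^{\otimes2}$: one must check that after multiplying $(X_1^{b_{i-1}}\otimes X_1^{b_{i-1}})\cdot(Y_{a'_i}\otimes Y_{a''_i})\cdot(X_1^{b_i}\otimes X_1^{b_i})$ and so on, the first tensor component is literally a word of the form $X_1^{b_0}Y_{a'_1}X_1^{b_1}\cdots X_1^{b_{n-1}}Y_{a'_n}X_1^{b_n}$, hence lies in $\mathcal W(a'_1,\ldots,a'_n)$, and symmetrically for the second component. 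This is a routine but honest check that multiplication in $A\otimes A$ is componentwise, so no genuine obstacle arises; accordingly the proof is short. I would simply write:

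\proof This follows from the algebra morphism property of $\Delta^{\mathcal W,\B}_{\mathrm{mod}}$, from the fact that $X_1^{\pm1}$ is grouplike for it, and from the observation that $\Delta^{\mathcal W,\B}_{\mathrm{mod}}(Y_a)=-\mathrm{sgn}(a)\sum_{(a',a'')\in S(a)}Y_{a'}\otimes Y_{a''}$, applied to the spanning monomials $X_1^{b_0}Y_{a_1}X_1^{b_1}\cdots X_1^{b_{n-1}}Y_{a_n}X_1^{b_n}$ of $\mathcal W(a_1,\ldots,a_n)$, together with the componentwise nature of multiplication in $(\mathcal W^\B_{\mathbb Q})^{\otimes 2}$. \hfill\qed\medskip
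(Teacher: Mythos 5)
Your proof is correct and is essentially the paper's argument: the paper's proof is the one-line ``This follows from \eqref{formulas}'', i.e.\ precisely the observation that the defining formula $\Delta^{\mathcal W,\B}_{\mathrm{mod}}(Y_{\pm a})=\mp\sum_{a'=1}^{a-1}Y_{\pm a'}\otimes Y_{\pm(a-a')}$ ranges exactly over $S(\pm a)$, combined with the algebra-morphism property and the grouplikeness of $X_1^{\pm1}$ applied to the spanning monomials. Your write-up just makes the componentwise bookkeeping explicit, and also handles the (empty) case $a_i=\pm1$ correctly since then $S(a_i)=\emptyset$ and $\Delta^{\mathcal W,\B}_{\mathrm{mod}}(Y_{a_i})=0$.
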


\proof This follows from (\ref{formulas}). \hfill\qed\medskip 

\begin{lem}\label{lemma:form:of:grouplike}
Let $g\in F_2$. Then $g\cdot 1_\B\in\mathcal G(\mathcal M^\B_{\mathbb Q})$ 
iff there exist $\alpha,\beta\in\mathbb Z$, such that $g=X_1^\alpha X_0^\beta$. 
\end{lem}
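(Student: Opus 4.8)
The statement to be proved (Lemma~\ref{lemma:form:of:grouplike}) is an iff; one direction is essentially trivial and the other requires the structural input accumulated in Lemmas~\ref{lemma:34}--\ref{lemma:38}. First I would dispose of the easy direction: if $g=X_1^\alpha X_0^\beta$, then $g\cdot 1_\B = X_1^\alpha X_0^\beta\cdot 1_\B = X_1^\alpha\cdot 1_\B$ since $X_0^\beta\cdot 1_\B = 1_\B$ (the submodule $\hat{\mathcal V}^\B(X_0-1)$ is killed, so $X_0$ acts trivially on $1_\B$, hence so does $X_0^\beta$); and $X_1^\alpha\cdot 1_\B$ is group-like for $\hat\Delta^{\mathcal M,\B}$ because $X_1^\alpha$ is group-like for $\hat\Delta^{\mathcal W,\B}$ (it is a group element of $F_1\subset\mathcal W^\B_{\mathbb Q}$, whose coproduct is the group-like one) and $1_\B$ is group-like for $\hat\Delta^{\mathcal M,\B}$, so by the module compatibility of $\hat\Delta^{\mathcal M,\B}$ over $\hat\Delta^{\mathcal W,\B}$ the product is group-like. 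Since $X_1^\alpha\cdot 1_\B$ lies in the (discrete) module $\mathcal M^\B_{\mathbb Q}$, it lies in $\mathcal G(\mathcal M^\B_{\mathbb Q})$.

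For the converse, write $g$ in the normal form already used in the proof of Lemma~\ref{lemma:34}, namely $g = X_1^\alpha\cdot\prod_{i=1}^n (X_0^{a_i}X_1^{b_i})\cdot X_0^\beta$ with $a_i,b_i\neq 0$, and recall from that proof the computation $g\cdot 1_\B = a\cdot 1_\B$ with $a = \sum_{i=1}^n X_1^\alpha X_0^{a_1}X_1^{b_1}\cdots X_0^{a_i}(X_1^{b_i}-1) + X_1^\alpha\in\mathcal W^\B_{\mathbb Q}$. The goal is to show that if $a\cdot 1_\B$ is group-like then $n=0$. The plan is to pass to the top-degree component with respect to the grading on $\mathcal W^\B_{\mathbb Q}$ introduced before Lemma~\ref{formulas}, in which $\deg(Y_{\pm k})=1$ and $\deg(X_1^{\pm1})=0$. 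Rewriting each factor $X_0^{a_i}(X_1^{b_i}-1)$ in terms of the $Y_c$'s and $X_1$'s shows that the degree-$n$ part of $a$ is a single nonzero element of $\mathcal W(a_1,\dots,a_n)$ (up to the $X_1$-powers), and is the highest-degree part since $n$ is the maximal length. Group-likeness of $a\cdot 1_\B$ for $\hat\Delta^{\mathcal M,\B}$ forces, via the commuting square of Lemma in~\eqref{diag:Hopf:n} and the fact that $(-)\cdot 1_\B$ intertwines $\Delta^{\mathcal W,\B}$ with $\Delta^{\mathcal M,\B}$, that $\mathrm{pr}_n(a)$ be group-like for $\Delta^{\mathcal W,\B}_{\mathrm{mod}}$: indeed the highest-degree part of $\Delta^{\mathcal W,\B}(a) = a\otimes a$ (modulo the identification) is $\Delta^{\mathcal W,\B}_{\mathrm{mod}}(\mathrm{pr}_n a) = \mathrm{pr}_n(a)\otimes\mathrm{pr}_0(a) + \mathrm{pr}_0(a)\otimes\mathrm{pr}_n(a) + (\text{cross terms})$, but $\mathrm{pr}_0(a)=X_1^\alpha$, and by Lemma~\ref{lemma:38} the image $\Delta^{\mathcal W,\B}_{\mathrm{mod}}(\mathrm{pr}_n a)$ lands in $\bigoplus \mathcal W(a'_1,\dots,a'_n)\otimes\mathcal W(a''_1,\dots,a''_n)$ with all $a'_j,a''_j\neq 0$, hence has no component in $\mathcal W_0\otimes\mathcal W_n$ or $\mathcal W_n\otimes\mathcal W_0$ unless it vanishes.

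Putting this together: comparing the $\mathcal W_n\otimes\mathcal W_0$ and $\mathcal W_0\otimes\mathcal W_n$ graded components of the group-likeness relation at top total degree $n$ shows that $\mathrm{pr}_n(a)\otimes X_1^\alpha + X_1^\alpha\otimes\mathrm{pr}_n(a)$ must equal the projection of $\Delta^{\mathcal W,\B}_{\mathrm{mod}}(\mathrm{pr}_n a)$ to those components, which by Lemma~\ref{lemma:38} is zero; hence $\mathrm{pr}_n(a)=0$ if $n\geq 1$, contradicting that $\mathrm{pr}_n(a)$ is a nonzero element of $\bigoplus_{a_i\neq 0}\mathcal W(a_1,\dots,a_n)$. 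Therefore $n=0$, i.e.\ $g = X_1^\alpha X_0^\beta$, as claimed. \textbf{The main obstacle} I anticipate is the bookkeeping in expressing $\mathrm{pr}_n(a)$ precisely in the $\mathcal W(a_1,\dots,a_n)$ decomposition — one must check that collapsing $X_0^{a_i}(X_1^{b_i}-1)$ into $Y$-variables does not accidentally lower the degree below $n$ or produce cancellations, and that the leading term genuinely survives; once that is pinned down, the grading argument via \eqref{diag:Hopf:n} and Lemma~\ref{lemma:38} is formal.
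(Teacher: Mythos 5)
Your overall strategy is the paper's: reduce $g$ to the normal form $X_1^\alpha\prod_i(X_0^{a_i}X_1^{b_i})X_0^\beta$, transport group-likeness to the element $w(g)\in\mathcal W_{\leq n}$, pass to the top-degree component $\mathrm{pr}_n(w(g))$, check it is a nonzero element of $\mathcal W(a_1,\ldots,a_n)$, and use Lemma \ref{lemma:38} to get a contradiction when $n\geq 1$. The easy direction is also fine. But the mechanism by which you extract the contradiction is wrong. You compare the $\mathcal W_n\otimes\mathcal W_0$ and $\mathcal W_0\otimes\mathcal W_n$ components of the relation $\Delta^{\mathcal W,\B}(a)=a\otimes a$ and claim that the corresponding components of the left-hand side are computed by $\Delta^{\mathcal W,\B}_{\mathrm{mod}}(\mathrm{pr}_n a)$ and hence vanish. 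Neither claim holds. The commuting square \eqref{diag:Hopf:n} only identifies the image of $\Delta^{\mathcal W,\B}(a)$ under $\mathrm{pr}_n^{\otimes 2}$, i.e.\ its $\mathcal W_n\otimes\mathcal W_n$ component, with $\Delta^{\mathcal W,\B}_{\mathrm{mod}}(\mathrm{pr}_n a)$; it says nothing about the $(n,0)$ and $(0,n)$ components. Moreover those components of $\Delta^{\mathcal W,\B}(a)$ are genuinely nonzero: by \eqref{formulas}, $\Delta^{\mathcal W,\B}(Y_a)$ contains the bidegree-$(1,0)$ term $Y_a\otimes 1$ (resp.\ $Y_{-a}\otimes X_1$), so the $(n,0)$ component of $\Delta^{\mathcal W,\B}(\mathrm{pr}_n a)$ is of the form $(\text{nonzero element of }\mathcal W(a_1,\ldots,a_n))\otimes X_1^{k}$, which is perfectly capable of matching $\mathrm{pr}_n(a)\otimes X_1^\alpha$. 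So no contradiction comes out of these components, and your conclusion $\mathrm{pr}_n(a)=0$ does not follow. (Your intermediate sentence equating $\Delta^{\mathcal W,\B}_{\mathrm{mod}}(\mathrm{pr}_n a)$ with $\mathrm{pr}_n(a)\otimes\mathrm{pr}_0(a)+\mathrm{pr}_0(a)\otimes\mathrm{pr}_n(a)+\cdots$ is already inconsistent with the bigrading: the left side lies in $\mathcal W_n\otimes\mathcal W_n$.)

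The correct comparison — the one the paper makes and the one Lemma \ref{lemma:38} is built for — is in bidegree $(n,n)$. Applying $\mathrm{pr}_n^{\otimes2}$ to $\Delta^{\mathcal W,\B}(w(g))=w(g)\otimes w(g)$ and using \eqref{diag:Hopf:n} gives $\Delta^{\mathcal W,\B}_{\mathrm{mod}}(\mathrm{pr}_n(w(g)))=\mathrm{pr}_n(w(g))^{\otimes2}$, an equality in $\mathcal W_n^{\otimes2}$. The left-hand side lies, by Lemma \ref{lemma:38}, in the sum of the summands $\mathcal W(a'_1,\ldots,a'_n)\otimes\mathcal W(a''_1,\ldots,a''_n)$ with $(a'_i,a''_i)\in S(a_i)$, while the right-hand side lies in the single summand $\mathcal W(a_1,\ldots,a_n)\otimes\mathcal W(a_1,\ldots,a_n)$; since $(a,a)\notin S(a)$ for $a\neq0$, these summands are disjoint in the direct sum decomposition of $\mathcal W_n^{\otimes2}$, so both sides vanish, contradicting $\mathrm{pr}_n(w(g))\neq0$. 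With this replacement your argument closes; the bookkeeping you flagged about $\mathrm{pr}_n(w(g))$ surviving is handled exactly as you expect (it equals $Y_{a_1}\varphi_{b_1}(X_1)\cdots Y_{a_n}\varphi_{b_n}(X_1)$ with $\varphi_b\neq0$).
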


\proof The group $\mathbb Z^2$ acts on the set $F_2$ by $(\alpha,\beta)\bullet g:=X_1^\alpha g X_0^\beta$. 
For $n\geq0$, set 
$$
(F_2)_n:=\{X_0^{a_1}X_1^{b_1}\cdots X_0^{a_n}X_1^{b_n}\mid
a_1,\ldots,b_n\in\mathbb Z-\{0\}\}\subset F_2. 
$$
Then the composition 
$$
\sqcup_{n\geq0}(F_2)_n\to F_2\to F_2/\mathbb Z^2
$$
is a bijection. 

Set $S:=\{g\in F_2\mid g\cdot 1_\B\in\mathcal G(\mathcal M^\B_{\mathbb Q})\}$. One checks that $S$ is stable 
under the action of $\mathbb Z^2$. It follows that 
\begin{equation}\label{equation:S}
S=\sqcup_{n\geq0}\mathbb Z^2\bullet((F_2)_n\cap S).
\end{equation} 

One has 
\begin{equation}\label{equation:S:zero}
(F_2)_0\cap S=\{e\}. 
\end{equation} 
We now compute $(F_2)_n\cap S$ for $n>0$. 
Let $g\in(F_2)_n$. Let $a_1,b_1,\ldots,a_n,b_n\in\mathbb Z-\{0\}$ be such that 
$g=X_0^{a_1}X_1^{b_1}\cdots X_0^{a_n}X_1^{b_n}$ and set
\begin{equation}\label{equation:w(g)}
w(g):=1+\sum_{i=1}^n X_0^{a_1}X_1^{b_1}\cdots X_0^{a_i}(X_1^{b_i}-1)\in\mathcal W^\B_{\mathbb Q}. 
\end{equation} 
The summand in the right-hand side of \eqref{equation:w(g)} corresponding to index $i$ belongs to $\mathcal W_{\leq i}$ 
and $1\in\mathcal W_0$. It follows that 
$w(g) \in\mathcal W_{\leq n}$ and that 
$$
w(g) \equiv X_0^{a_1}X_1^{b_1}\cdots X_0^{a_n}(X_1^{b_n}-1)
\text{ mod }\mathcal W_{\leq n-1}. 
$$
Moreover the elements $X_0^{a_1}X_1^{b_1}\cdots X_0^{a_n}(X_1^{b_n}-1)$ and $X_0^{a_1}(X_1^{b_1}-1)\cdots 
X_0^{a_n}(X_1^{b_n}-1)$ of $\mathcal W_{\leq n}$ are equivalent mod $\mathcal W_{\leq n-1}$. All this implies that 
\begin{equation}\label{eqn}
w(g)\equiv Y_{a_1}\cdot \varphi_{b_1}(X_1)\cdots Y_{a_n}\cdot\varphi_{b_n}(X_1)
\text{ mod }\mathcal W_{\leq n-1},  
\end{equation} 
where for $b\in\mathbb Z-\{0\}$, we set $\varphi_b(t):=(t^b-1)/(t-1)\in\mathbb Z[t,t^{-1}]$. 
(\ref{eqn}) and the fact that for $b\neq0$, $\varphi_b\neq0$ implies that 
\begin{equation}\label{eq:nonzero}
\mathrm{pr}_n(w(g))\in\mathcal W(a_1,\ldots,a_n)-\{0\}.
\end{equation} 

Assume now that $g\in(F_2)_n\cap S$. One has $g\cdot 1_\B=w(g)\cdot 1_\B$, therefore $g\cdot 1_\B
\in\mathcal G(\mathcal M^\B_{\mathbb Q})$ is equivalent to the group-likeness of $w(g)\in\mathcal W^\B_{\mathbb Q}$ 
for $\Delta^{\mathcal W,\B}$. Since $w(g)\in\mathcal W_{\leq n}$, the diagram (\ref{diag:Hopf:n}) implies
\begin{equation}\label{strat:equation}
\Delta^{\mathcal M,\B}_{\mathrm{mod}}(\mathrm{pr}_n(w(g)))
=\mathrm{pr}_n(w(g))^{\otimes2}
\end{equation} 
(equality in $\mathcal W_n^{\otimes2}$).
By Lemma \ref{lemma:38}, the left-hand side of (\ref{strat:equation}) belongs to 
$$
\oplus_{(a'_1,a''_1)\in S(a_1),\ldots,(a'_n,a''_n)\in S(a_n)}\mathcal W(a'_1,\ldots,a'_n)\otimes \mathcal W(a''_1,\ldots,a''_n)
\subset (\mathcal W_n)^{\otimes2}, 
$$
while the right-hand side belongs to 
$$
\mathcal W(a_1,\ldots,a_n)\otimes \mathcal W(a_1,\ldots,a_n)
\subset (\mathcal W_n)^{\otimes2}.  
$$
By the direct sum decomposition 
$$
\mathcal W_n^{\otimes2}=\oplus_{((a_1,b_1),\ldots,(a_n,b_n))\in((\mathbb Z-\{0\})^2)^n}
\mathcal W(a_1,\ldots,a_n)\otimes \mathcal W(b_1,\ldots,b_n)
$$
and since $((a_1,a_1),\ldots,(a_n,a_n))\notin S(a_1)\times\cdots\times S(a_n)$ (as $(a,a)\notin S(a)$ for any $a\neq0$), 
both sides of (\ref{strat:equation}) should be zero, which contradicts (\ref{eq:nonzero}). All this implies that 
$(F_2)_n\cap S=\emptyset$ for $n>0$. Together with (\ref{equation:S}) and (\ref{equation:S:zero}), this implies 
Lemma \ref{lemma:form:of:grouplike}. \hfill\qed\medskip  

\begin{prop}\label{prop:comp:GdmrB:disc} 
There is an isomorphism $\mathrm{DMR}^\B\simeq\{\pm1\}$. 
\end{prop}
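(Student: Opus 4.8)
The plan is to combine Theorem~\ref{thm:4:5:1606} with Lemmas~\ref{lemma:34} and~\ref{lemma:form:of:grouplike}, so that the determination of $\mathrm{DMR}^\B$ reduces to a short computation of low-degree coefficients.

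First I would take an element $(\mu,g)\in\mathrm{DMR}^\B$, so that $\mu\in\{\pm1\}$, $g\in F_2$, and $(\mu,g)$ satisfies conditions $(1)$ and $(2)$ of Theorem~\ref{thm:4:5:1606}. Condition $(1)$ is exactly the hypothesis of Lemma~\ref{lemma:34}, which therefore yields $g\cdot 1_\B\in\mathcal G(\mathcal M^\B_{\mathbb Q})$ (and incidentally $\Gamma_g(t)=e^{\lambda t}$ for some $\lambda\in\mathbb Q$, which we will not need). Lemma~\ref{lemma:form:of:grouplike} then forces $g=X_1^\alpha X_0^\beta$ for some $\alpha,\beta\in\mathbb Z$.

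Next I would read off $\alpha$ and $\beta$ from condition $(2)$. Writing $g=X_1^\alpha X_0^\beta=\exp(\alpha\log X_1)\exp(\beta\log X_0)$, the degree-one part of $g$ in the free topological algebra on $\log X_0,\log X_1$ is $\alpha\log X_1+\beta\log X_0$, so $(g|\log X_0)=\beta$ and $(g|\log X_1)=\alpha$; the vanishing conditions in $(2)$ give $\alpha=\beta=0$, i.e. $g=1$, and then the remaining equation in $(2)$ reads $\mu^2=1+24(1|\log X_0\log X_1)=1$, which holds automatically. Conversely, for each $\mu\in\{\pm1\}$ the pair $(\mu,1)$ satisfies $(1)$ and $(2)$: by Definition~\ref{def:Gamma:Betti} one has $\Gamma_1(t)=1$, hence $(\Gamma_1^{-1}(-\log X_1)\cdot 1)\cdot 1_\B=1_\B$, which is group-like for $\hat\Delta^{\mathcal M,\B}$ because $1_\B$ is the image of the group-like element $1\in\hat{\mathcal V}^\B$ under the module quotient; and the conditions in $(2)$ hold trivially. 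Thus $\mathrm{DMR}^\B=\{(1,1),(-1,1)\}$ as a set.

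Finally, I would identify the structure: by the definition of $\circledast$ (Lemma~\ref{lem:2:1:2606}) one has $(\mu,1)\circledast(\mu',1)=(\mu\mu',\mathrm{aut}^{\mathcal V,(10),\B}_{(\mu,1)}(1))=(\mu\mu',1)$, since an algebra automorphism fixes $1$, so $\mathrm{DMR}^\B$ is the group $\{\pm1\}$ under multiplication and $(\mu,1)\mapsto\mu$ is the asserted isomorphism. The only step needing slight care is the converse inclusion, which — once Theorem~\ref{thm:4:5:1606} is invoked — amounts to the group-likeness of $1_\B$; everything else is an immediate consequence of the two computational lemmas already established, so I do not anticipate a genuine obstacle here.
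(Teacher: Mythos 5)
Your proposal is correct and follows essentially the same route as the paper: reduce via Theorem~\ref{thm:4:5:1606} to conditions (1) and (2), apply Lemmas~\ref{lemma:34} and~\ref{lemma:form:of:grouplike} to force $g=X_1^\alpha X_0^\beta$, and then use the linear conditions to get $g=1$. The only (immaterial) imprecision is your justification that $1_\B$ is group-like: the coproduct $\hat\Delta^{\mathcal M,\B}$ is not induced from the quotient of $\hat\Delta^{\mathcal V,\B}$, but rather is normalized by $\hat\Delta^{\mathcal M,\B}(1_\B)=1_\B\otimes 1_\B$ by its construction as a module coproduct over $\hat\Delta^{\mathcal W,\B}$, so the conclusion stands.
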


\proof One obviously has $\{(\pm1,1)\}\subset \mathrm{DMR}^\B$. Let us prove the opposite inclusion. 
Let 
$$
(\mu,g)\in \mathsf{DMR}^\B(\mathbb Q)\cap (\{\pm1\}^\times \times F_2). 
$$
By Lemmas \ref{lemma:34} and \ref{lemma:form:of:grouplike}, the condition that 
$\Gamma_g^{-1}(-\mathrm{log}X_1)g\cdot 1_\B\in\mathcal G(\hat{\mathcal M}^\B_{\mathbb Q})$
implies that for some $\alpha,\beta\in\mathbb Z$, one has 
$g=X_1^\alpha X_0^\beta$. The conditions $(g|\mathrm{log}X_0)=(g|\mathrm{log}X_1)=0$
then imply $\alpha=\beta=0$, therefore $g=1$.  This proves Proposition \ref{prop:comp:GdmrB:disc}. 
\hfill\qed\medskip 

\begin{rem}
Using the proof of Proposition \ref{prop:comp:GdmrB:disc}, one can prove the stronger result 
$\mathsf{DMR}^\B(\mathbb Q)\cap (\mathbb Q^\times \times F_2)=\{\pm1\}$. 
Indeed, this proof implies that if $(\mu,g)$ belongs to this intersection, then 
$g=1$. The condition $\mu^2=1+24(g|\mathrm{log}X_0\mathrm{log}X_1)$
then implies that $\mu=\pm1$. 
\end{rem}

\begin{rem}
Proposition \ref{prop:comp:GdmrB:disc} is consistent with the conjectural equality of Lie algebras 
$\mathfrak{grt}_1=\mathfrak{dmr}_0$. Indeed, this equality is equivalent to $\mathsf{DMR}^\DR(-)=\mathsf{GRT}(-)$, 
which via the isomorphism $i_{(1,\Phi)}$, $\Phi\in\mathsf M_1(\mathbb Q)$  is 
equivalent to $\mathsf{DMR}^\B(-)=\mathsf{GT}(-)$, which upon taking rational points and intersecting with $\{\pm1\}\times F_2$
implies the equality $\mathrm{DMR}^\B=\mathrm{GT}$, which is Proposition \ref{prop:comp:GdmrB:disc}. 
\end{rem}

\section{Pro-$p$ aspects}\label{section:pro-l}

In this section, we first recall some material on the relation between the pro-$p$ and prounipotent completions of 
discrete groups (\S\ref{section:plapucodg}), $p$ being a prime number. In \S\ref{subsection:rugtl}, we recall the definition of the pro-$p$ 
analogue $\mathrm{GT}_p$ of the  Grothendieck-Teichm\"uller group, and we use the results of 
\S\ref{section:plapucodg} to prove a statement of \cite{Dr} on the relations of $\mathrm{GT}_p$ 
with $\mathsf{GT}(\mathbb Q_p)$ (Corollary \ref{cor:gtlsubsetgtql}); we also make precise the relation between 
$\mathrm{GT}_p$ and the semigroup $\underline{\mathrm{GT}}_p$ introduced in \cite{Dr} (Proposition \ref{cor:1:12}). 
We then define a group $\mathrm{DMR}_p^\B$ (see Definition \ref{defin:GpdmrB}) and show that it fits in a commutative diagram, 
which makes it into a natural pro-$p$ analogue of the group scheme $\mathsf{DMR}^\B(-)$ (\S\ref{section:final}). 

\subsection{Pro-$p$ and prounipotent completions of discrete groups}\label{section:plapucodg}

\subsubsection{A morphism $\Gamma^{(p)}\to\Gamma(\mathbb Q_p)$}

If $\Gamma$ is a group, we denote by $\Gamma^{(p)}$ its pro-$p$ completion. If $\mathbf k$ is a $\mathbb Q$-algebra, 
we denote by $\Gamma(\mathbf k)$ the group of $\mathbf k$-points of its prounipotent completion. We also denote by 
$\mathrm{Lie}(\Gamma)$ the Lie algebra of this prounipotent completion.  

\begin{lem}[\cite{HM}, Lemma A.7]
Suppose that $\Gamma$ is finitely generated discrete group, then there is a continuous homomorphism 
$\Gamma^{(p)}\to\Gamma(\mathbb Q_p)$ compatible with the morphisms from $\Gamma$ to its source and target. 
\end{lem}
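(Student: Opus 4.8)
The plan is to realize $\Gamma(\mathbb{Q}_p)$ as an inverse limit of $\mathbb{Q}_p$-points of finite-dimensional unipotent groups, to show that the closure of the image of $\Gamma$ in each of these is a pro-$p$ group, and then to invoke the universal property of the pro-$p$ completion. Concretely, one writes the prounipotent completion of $\Gamma$ as $\varprojlim_i U_i$, where the $U_i$ run over the finite-dimensional unipotent algebraic groups over $\mathbb{Q}_p$ through which $\Gamma$ maps with Zariski-dense image, so that $\Gamma(\mathbb{Q}_p)=\varprojlim_i U_i(\mathbb{Q}_p)$ compatibly with the canonical maps out of $\Gamma$. Fixing $i$ and letting $\Gamma_i\subset U_i(\mathbb{Q}_p)$ be the image of $\Gamma$, finiteness of generation of $\Gamma$ makes $\Gamma_i$ finitely generated as well.

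The key input is a purely local statement: for $U$ a unipotent algebraic group over $\mathbb{Q}_p$, every finitely generated subgroup of $U(\mathbb{Q}_p)$ has compact closure, and every compact subgroup of $U(\mathbb{Q}_p)$ is pro-$p$. For the first assertion, a finitely generated subgroup $H$ of $U(\mathbb{Q}_p)$ is finitely generated nilpotent, hence polycyclic; arguing by induction on the length of a polycyclic series $H=H_1\trianglerighteq H_2\trianglerighteq\cdots\trianglerighteq 1$ with $H_j/H_{j+1}$ cyclic, one uses that $\overline{H_2}$ is compact and normal in $\overline{H}$, that the closure of a cyclic subgroup $\langle\exp X\rangle$ equals $\exp(\mathbb{Z}_p X)$ and is therefore compact (the exponential being polynomial here and $\mathbb{Z}_p$ compact), and that the product of a compact subgroup with a compact normal subgroup is again a compact subgroup, containing $H=\langle h_1\rangle H_2$. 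For the second assertion, embed $U$ in the group of strictly upper unitriangular $m\times m$ matrices; a compact subgroup $C\subset U(\mathbb{Q}_p)$ stabilizes a $\mathbb{Z}_p$-lattice in $\mathbb{Q}_p^m$, hence after conjugation lies in $\mathrm{GL}_m(\mathbb{Z}_p)$; the congruence kernel $C\cap(1+p\,\mathrm{M}_m(\mathbb{Z}_p))$ is pro-$p$, while the image of $C$ in $\mathrm{GL}_m(\mathbb{F}_p)$ is a finite group of unipotent matrices, hence of $p$-power exponent, hence a finite $p$-group; an extension of a finite $p$-group by a pro-$p$ group is pro-$p$.

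Granting this, $\overline{\Gamma_i}$, being a closed --- hence compact --- subgroup of $U_i(\mathbb{Q}_p)$, is pro-$p$, so the universal property of the pro-$p$ completion yields a unique continuous homomorphism $\phi_i:\Gamma^{(p)}\to\overline{\Gamma_i}\hookrightarrow U_i(\mathbb{Q}_p)$ compatible with the maps out of $\Gamma$. By this uniqueness the $\phi_i$ commute with the transition maps of the system $(U_i)_i$, hence assemble into a continuous homomorphism $\phi:\Gamma^{(p)}\to\varprojlim_i U_i(\mathbb{Q}_p)=\Gamma(\mathbb{Q}_p)$ whose composite with $\Gamma\to\Gamma^{(p)}$ is the canonical map $\Gamma\to\Gamma(\mathbb{Q}_p)$. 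I expect the only genuine work to be the local statement of the second paragraph --- in particular that a compact subgroup of a $p$-adic unipotent group is pro-$p$ --- the assembly step being formal; one could alternatively bypass the matrix argument by observing that $U(\mathbb{Q}_p)$, in coordinates of the second kind adapted to the lower central series, is an increasing union of congruence-type compact open subgroups, each manifestly pro-$p$.
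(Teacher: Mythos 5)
Your argument is correct and is essentially the proof of the cited result [HM, Lemma A.7], which the paper does not reprove: pass to the finite-dimensional unipotent quotients $U_i$, show that the closure of the finitely generated image of $\Gamma$ in $U_i(\mathbb Q_p)$ is compact and that compact subgroups of $p$-adic unipotent groups are pro-$p$, then invoke the universal property of the pro-$p$ completion and the uniqueness of the resulting maps to pass to the inverse limit. One cosmetic slip: in your final paragraph, ``being a closed --- hence compact --- subgroup of $U_i(\mathbb Q_p)$'' should read ``being compact by the first assertion of the local statement,'' since closed subgroups of $U_i(\mathbb Q_p)$ are of course not compact in general.
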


When $\Gamma$ is the free group $F_n$, this gives
a continuous homomorphism $F_n^{(p)}\to F_n(\mathbb Q_p)$. 

\subsubsection{Injectivity of $F_n^{(p)}\to F_n(\mathbb Q_p)$}

Let $\Gamma$ be a group. Define the $\mathbb Z_p$-algebra of $\Gamma$, denoted 
$\mathbb Z_p[\![\Gamma]\!]$, to be the inverse limit of the group algebras of 
the quotients of $\Gamma$ which are $p$-groups with coefficients in 
$\mathbb Z_p$. This is a topological Hopf algebra.
(When $\Gamma$ is a pro-$p$ group,  $\mathbb Z_p[\![\Gamma]\!]$ coincides with the object introduced in 
\cite{Serre}, p. ~7.) 

If $H$ is a (topological) Hopf algebra, we denote by 
$\mathcal G(H)$ the group of its group-like elements.

\begin{lem}\label{lemma:gplikes:l-adic}
The group $\mathcal G(\mathbb Z_p[\![\Gamma]\!])$ is equal to $\Gamma^{(p)}$. 
\end{lem}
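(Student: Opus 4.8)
The plan is to prove $\mathcal G(\mathbb Z_p[\![\Gamma]\!]) = \Gamma^{(p)}$ by reducing to the case of finite $p$-groups and passing to the inverse limit. First I would set up notation: writing $\mathbb Z_p[\![\Gamma]\!] = \varprojlim_N \mathbb Z_p[\Gamma/N]$, where $N$ runs over normal subgroups of $\Gamma$ with $\Gamma/N$ a finite $p$-group, and noting that since group-like elements are detected componentwise (a group-like element of an inverse limit of Hopf algebras is a compatible family of group-like elements), one has $\mathcal G(\mathbb Z_p[\![\Gamma]\!]) = \varprojlim_N \mathcal G(\mathbb Z_p[\Gamma/N])$. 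On the other side, $\Gamma^{(p)} = \varprojlim_N \Gamma/N$ by definition. So the statement reduces to the assertion that for a finite $p$-group $P$, the group-like elements of the group algebra $\mathbb Z_p[P]$ are exactly the elements of $P$ itself (viewed inside $\mathbb Z_p[P]$).

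The heart of the argument is therefore the following finite statement: if $P$ is a finite $p$-group, then $\mathcal G(\mathbb Z_p[P]) = P$. I would prove this as follows. An element $x = \sum_{g\in P} x_g\, g$ with $x_g \in \mathbb Z_p$ is group-like iff $\Delta(x) = x\otimes x$ and $\varepsilon(x) = 1$, where $\Delta(g) = g\otimes g$. Computing $\Delta(x) = \sum_g x_g\, g\otimes g$ and $x\otimes x = \sum_{g,h} x_g x_h\, g\otimes h$, and using that the $g\otimes h$ form a $\mathbb Z_p$-basis of $\mathbb Z_p[P]^{\otimes 2}$, the condition $\Delta(x)=x\otimes x$ becomes: $x_g x_h = 0$ for $g\neq h$, and $x_g = x_g^2$ for all $g$. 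The equation $x_g = x_g^2$ in the (reduced, e.g.\ integral-domain) ring $\mathbb Z_p$ forces each $x_g \in \{0,1\}$, and the orthogonality $x_g x_h = 0$ for $g\neq h$ together with $\varepsilon(x)=\sum_g x_g = 1$ forces exactly one $x_g$ to equal $1$ and the rest to be $0$. Hence $x = g$ for some $g\in P$, proving $\mathcal G(\mathbb Z_p[P]) = P$.

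Finally I would assemble the pieces: the inverse-limit identification $\mathcal G(\mathbb Z_p[\![\Gamma]\!]) = \varprojlim_N \mathcal G(\mathbb Z_p[\Gamma/N]) = \varprojlim_N \Gamma/N = \Gamma^{(p)}$, checking that all identifications are compatible with the transition maps (the algebra maps $\mathbb Z_p[\Gamma/N] \to \mathbb Z_p[\Gamma/N']$ for $N\subset N'$ restrict to the group quotient maps $\Gamma/N \to \Gamma/N'$ on group-like parts, which is immediate). I expect the only genuinely delicate point to be the claim that group-like elements of an inverse limit of topological Hopf algebras form the inverse limit of the group-like parts: this needs the observation that $(\varprojlim_N A_N)^{\otimes 2}$ should be interpreted as the completed tensor product $\varprojlim_N (A_N \otimes A_N)$ for the comultiplication to be defined, after which the componentwise description of group-likes is formal. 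Everything else is elementary; the finite $p$-group computation is the conceptual core, and it crucially uses that $\mathbb Z_p$ has no nontrivial idempotents (indeed no zero divisors), which is why the coefficient ring $\mathbb Z_p$ rather than an arbitrary ring is used here.
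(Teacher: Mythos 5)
Your proof is correct and follows essentially the same route as the paper: identify $\mathcal G(\mathbb Z_p[\![\Gamma]\!])$ with the inverse limit of the $\mathcal G(\mathbb Z_p[\Gamma/N])$ over the finite $p$-group quotients, and use $\mathcal G(\mathbb Z_p[P])=P$ for a finite $p$-group $P$. The only difference is that you spell out the coefficient computation behind $\mathcal G(\mathbb Z_p[P])=P$ (which the paper simply asserts), and that computation is valid.
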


\proof The group $\mathcal G(\mathbb Z_p[\![\Gamma]\!])$ is 
the inverse limit of the groups of group-like elements 
of the group algebras $\mathbb Z_pK$, where 
$K$ runs over all the quotients of $\Gamma$ which are $p$-groups. As 
$\mathcal G(\mathbb Z_pK)$ is equal to $K$, 
$\mathcal G(\mathbb Z_p[\![\Gamma]\!])$ is equal to the inverse 
limit of the finite quotients of $\Gamma$ which are $p$-groups, therefore to $\Gamma^{(p)}$. 
\hfill\qed\medskip 
 
\begin{lem}\label{lemma:F_n^l}
Let $A(n) :=\mathbb Z_p\langle\langle
t_1,\ldots,t_n\rangle\rangle$ 
be the algebra of associative formal power series 
in variables $t_1,\ldots,t_n$ with coefficients in 
$\mathbb Z_p$, equipped with the topology of convergence 
of coefficients. Then $A(n)$ has a Hopf algebra structure
with coproduct given by $t_i\mapsto t_i\otimes1+1\otimes t_i+t_i\otimes t_i$
for $i=1,\ldots,n$. Let $F_n$ be the free group over 
generators $X_1,\ldots,X_n$. There is an isomorphism 
$$
F_n^{(p)}\simeq\mathcal G(A(n))
$$ 
induced by $X_i\mapsto 1+t_i$ for $i=1,\ldots,n$. 
\end{lem}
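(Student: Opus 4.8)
The plan is to reduce the statement to Lemma \ref{lemma:gplikes:l-adic}: it suffices to produce an isomorphism of topological Hopf algebras $\mathbb Z_p[\![F_n]\!]\xrightarrow{\sim}A(n)$ sending the image of $X_i$ to $1+t_i$, for then passing to group-like elements gives $F_n^{(p)}=\mathcal G(\mathbb Z_p[\![F_n]\!])\xrightarrow{\sim}\mathcal G(A(n))$ by Lemma \ref{lemma:gplikes:l-adic}.

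First I would record two structural facts. On the $A(n)$ side: with $\mathfrak m:=(p,t_1,\dots,t_n)$, the topology of coefficientwise convergence coincides with the $\mathfrak m$-adic topology, $A(n)$ is $\mathfrak m$-adically complete, each $A(n)/\mathfrak m^k$ is a finite ring, and $1+\mathfrak m/\mathfrak m^k\subset(A(n)/\mathfrak m^k)^\times$ is a finite $p$-group (it is filtered by the $1+\mathfrak m^j/\mathfrak m^k$ with successive quotients the finite $\mathbb F_p$-vector spaces $\mathfrak m^j/\mathfrak m^{j+1}$, since $p\in\mathfrak m$); moreover $1+t_i$ is a unit, is group-like for the given coproduct, and lies in $1+\mathfrak m$. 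On the group-algebra side: for any finite $p$-group $P$ the augmentation ideal $I_P\subset\mathbb Z_p[P]$ satisfies $I_P^N\subset p\,\mathbb Z_p[P]$ for some $N$ (the augmentation ideal of $\mathbb F_p[P]$ being nilpotent), so $I_P$ is topologically nilpotent while $\mathbb Z_p[P]$ is $p$-adically complete.

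Next I would construct two mutually inverse continuous homomorphisms, using the Magnus homomorphism $\mu\colon\mathbb Z_p[F_n]\to A(n)$, $X_i\mapsto 1+t_i$. Given a finite $p$-group quotient $\pi\colon F_n\twoheadrightarrow P$, the assignment $t_i\mapsto\pi(X_i)-1\in I_P$ extends, by topological nilpotence of $I_P$ and $p$-adic completeness of $\mathbb Z_p[P]$, to a continuous $\mathbb Z_p$-algebra homomorphism $A(n)\to\mathbb Z_p[P]$; these are compatible with the transition maps among the $P$'s (both sides are ring maps agreeing on the topological generators $t_i$), hence assemble into a continuous homomorphism $A(n)\to\mathbb Z_p[\![F_n]\!]$. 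Conversely, for each $k$ the homomorphism $F_n\to(A(n)/\mathfrak m^k)^\times$, $X_i\mapsto 1+t_i$, takes values in the finite $p$-group $1+\mathfrak m/\mathfrak m^k$, hence factors through a finite $p$-group quotient $F_n\twoheadrightarrow P_k$; since the $\mu$-image in $A(n)$ of every element of $\ker(F_n\to P_k)$ lies in $1+\mathfrak m^k$, the composite $\mathbb Z_p[F_n]\xrightarrow{\mu}A(n)\to A(n)/\mathfrak m^k$ descends to a homomorphism $\mathbb Z_p[P_k]\to A(n)/\mathfrak m^k$, and as these are compatible when $k$ grows (with $P_{k+1}\twoheadrightarrow P_k$), composing with the projections $\mathbb Z_p[\![F_n]\!]\to\mathbb Z_p[P_k]$ assembles them into a continuous homomorphism $\mathbb Z_p[\![F_n]\!]\to A(n)$. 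Both composites of the two maps restrict to the identity on the dense subring $\mathbb Z_p[F_n]$ (where everything is tautological, $X_i\mapsto 1+t_i\mapsto X_i$ and conversely), hence are the identity, so the maps are inverse isomorphisms of topological rings. Since $X_i$ and $1+t_i$ are group-like and topologically generate the two algebras, the isomorphism automatically respects coproducts, counits and antipodes, so it is an isomorphism of topological Hopf algebras, and we conclude by the first paragraph.

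The point needing the most care is the construction of $\mathbb Z_p[\![F_n]\!]\to A(n)$: one must check that $1+\mathfrak m/\mathfrak m^k$ really is a finite $p$-group, that the descended maps $\mathbb Z_p[P_k]\to A(n)/\mathfrak m^k$ are well defined and form a morphism of inverse systems, and that the $P_k$ are legitimate terms of the system defining $\mathbb Z_p[\![F_n]\!]$. Everything else is the standard Magnus-expansion formalism; in particular the injectivity of $F_n\hookrightarrow A(n)^\times$, obtained by composing the classical integral Magnus embedding with $\mathbb Z\langle\langle t_1,\dots,t_n\rangle\rangle\hookrightarrow A(n)$, is only needed if one wishes to see the comparison maps directly as injections rather than deducing the isomorphism from the density argument above.
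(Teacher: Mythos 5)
Your proof is correct and follows the same route as the paper: reduce to Lemma \ref{lemma:gplikes:l-adic} via an isomorphism of topological Hopf algebras $\mathbb Z_p[\![F_n]\!]\simeq A(n)$ carrying the class of $X_i$ to $1+t_i$, then pass to group-like elements. The only difference is that the paper simply cites Serre (\emph{Cohomologie galoisienne}, \S I.1.5, Proposition 7) for that isomorphism, whereas you construct the mutually inverse continuous homomorphisms explicitly; your construction is sound.
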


\proof This follows from Lemma \ref{lemma:gplikes:l-adic}
combined with the isomorphism $\mathbb Z_p[\![F_n^{(p)}]\!]
\simeq A(n)$, see \cite{Serre},  \S I.1.5. Proposition 7. 
\hfill\qed\medskip  

\begin{lem}\label{lemma:injectivity:free}
The map $F_n^{(p)}\to F_n(\mathbb Q_p)$ is injective. 
\end{lem}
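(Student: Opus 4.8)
The plan is to factor the map $F_n^{(p)} \to F_n(\mathbb{Q}_p)$ through the group-like elements of a suitable pair of Hopf algebras and to exploit the fact that the $\mathbb{Z}_p$-coefficient formal power series algebra embeds into the $\mathbb{Q}_p$-coefficient one. Concretely, recall from Lemma~\ref{lemma:F_n^l} that $F_n^{(p)} \simeq \mathcal{G}(A(n))$, where $A(n) = \mathbb{Z}_p\langle\langle t_1,\ldots,t_n\rangle\rangle$ with the coproduct $t_i \mapsto t_i \otimes 1 + 1 \otimes t_i + t_i \otimes t_i$. On the other side, the prounipotent completion $F_n(\mathbb{Q}_p)$ is identified with the group $\mathcal{G}(\mathbb{Q}_p\langle\langle u_1,\ldots,u_n\rangle\rangle)$ of group-like elements of the completed free associative $\mathbb{Q}_p$-algebra on primitive generators $u_i$ (the Magnus/Malcev description), via $X_i \mapsto \exp(u_i)$.

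First I would set up the comparison between these two Hopf algebras. There is a natural continuous $\mathbb{Z}_p$-algebra homomorphism $A(n) \to \mathbb{Q}_p\langle\langle u_1,\ldots,u_n\rangle\rangle$ sending $1 + t_i \mapsto \exp(u_i)$, equivalently $t_i \mapsto \exp(u_i) - 1 = u_i + u_i^2/2 + \cdots$; this is visibly a morphism of topological Hopf algebras, because it carries the grouplike element $1+t_i$ to the grouplike element $\exp(u_i)$. I would then check that this algebra map is \emph{injective}: since $\exp(u_i) - 1$ has leading term $u_i$, the images of the $t_i$ generate topologically a subalgebra on which the map is the completion of the classical embedding $\mathbb{Z}_p\langle t_1,\ldots,t_n\rangle \hookrightarrow \mathbb{Q}_p\langle u_1,\ldots,u_n\rangle$ of free associative algebras over the inclusion $\mathbb{Z}_p \hookrightarrow \mathbb{Q}_p$, and injectivity is preserved on passing to the completions by a degreewise argument (the associated graded map is the inclusion of a free $\mathbb{Z}_p$-module into a $\mathbb{Q}_p$-vector space).

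Next I would observe that this injective Hopf algebra morphism induces an injection on grouplike elements, $\mathcal{G}(A(n)) \hookrightarrow \mathcal{G}(\mathbb{Q}_p\langle\langle u_1,\ldots,u_n\rangle\rangle)$, since a grouplike element is determined by its image under an injective map. Finally I would verify that under the identifications of Lemma~\ref{lemma:F_n^l} and of the Magnus embedding, this map on grouplikes is exactly the canonical morphism $F_n^{(p)} \to F_n(\mathbb{Q}_p)$ of the previous subsection: both are characterized by sending $X_i \in F_n$ to the same element (the common image of $X_i$ in either completion), and the morphism of the previous subsection is the unique continuous homomorphism compatible with the maps from $F_n$. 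Hence $F_n^{(p)} \to F_n(\mathbb{Q}_p)$ is injective.

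The main obstacle I anticipate is the bookkeeping in the injectivity of the algebra map $A(n) \to \mathbb{Q}_p\langle\langle u_1,\ldots,u_n\rangle\rangle$: one must be a little careful that completing the source with respect to the coefficient topology (convergence of $\mathbb{Z}_p$-coefficients) versus the augmentation-ideal topology are compatible, and that no $p$-adic convergence subtlety causes a nonzero element of $A(n)$ to map to zero. The cleanest route is to filter both sides by total degree in the generators, note that the map respects these filtrations, and check injectivity on each graded piece, where it reduces to the elementary statement that a finite-rank free $\mathbb{Z}_p$-module maps injectively into a $\mathbb{Q}_p$-vector space; since each graded piece is finitely generated, no completion issue arises at the graded level, and an element in the kernel would have to vanish in every degree.
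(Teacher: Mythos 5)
Your proof is correct and follows essentially the same route as the paper: identify $F_n^{(p)}$ with $\mathcal G(A(n))$ via Lemma \ref{lemma:F_n^l}, identify $F_n(\mathbb Q_p)$ with the group-like elements of $\mathbb Q_p\langle\langle u_1,\ldots,u_n\rangle\rangle$, and deduce injectivity from the topological Hopf algebra inclusion $t_i\mapsto e^{u_i}-1$. The only difference is that you spell out the injectivity of that inclusion (via the word-length filtration) and the compatibility with the canonical map, both of which the paper leaves implicit; your verifications are sound.
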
  
 
\proof If $\mathbf k$ is a $\mathbb Q$-algebra, there is 
an isomorphism $(\mathbf kF_n)^\wedge
\simeq \mathbf k\langle\langle u_1,\ldots,u_n\rangle\rangle$, where each $u_i$ is primitive. 
Moreover, $F_n(\mathbf k)=\mathcal G((\mathbf kF_n)^\wedge)$, 
therefore 
$$
F_n(\mathbf k)=\mathcal G(\mathbf k\langle\langle 
u_1,\ldots,u_n\rangle\rangle). 
$$
The result now follows from the specialization of this result 
for $\mathbf k=\mathbb Q_p$, from the topological Hopf algebra 
inclusion $\mathbb Z_p\langle\langle
t_1,\ldots,t_n\rangle\rangle \subset \mathbb Q_p\langle\langle 
u_1,\ldots,u_n\rangle\rangle$ given by $t_i\mapsto e^{u_i}-1$, and from Lemma 
\ref{lemma:F_n^l}. \hfill\qed\medskip 

\subsubsection{Exact sequences of pro-$p$ completions}

\begin{lem}[\cite{Ih:G}, see also \cite{And}] \label{lemma:Ihara}
Let $1\to N\to G\to H\to 1$ be an
exact sequence of discrete groups such that (i) 
$(G, N) = (N, N)$, and (ii)
$N$ is a free group of finite rank greater than $1$. Then the induced sequence of pro-$p$
completions $1\to N^{(p)}\to G^{(p)}\to H^{(p)}\to 1$
is exact. 
\end{lem}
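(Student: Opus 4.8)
The plan is to split the statement into a formal part and a substantial part. \emph{Right-exactness is automatic.} The map $G^{(p)}\to H^{(p)}$ is surjective, and by the universal property of the pro-$p$ completion the quotient of $G^{(p)}$ by the closed normal subgroup generated by the image of $N$ is the pro-$p$ completion of $G/N=H$; as $N^{(p)}$ is compact, its image in $G^{(p)}$ is closed, hence equals that closed normal subgroup, i.e. $\ker(G^{(p)}\to H^{(p)})$. So the real content is the injectivity of $N^{(p)}\to G^{(p)}$, which I would restate as: every normal subgroup of $N$ of $p$-power index contains $N\cap G'$ for some normal subgroup $G'$ of $G$ of $p$-power index. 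Since $N$ is free, the mod-$p$ lower central (Zassenhaus) filtration $(D_i(N))_{i\ge1}$ is cofinal among the normal subgroups of $N$ of $p$-power index and has trivial intersection, so it is enough to show that for every $i$ the finite $p$-group $N/D_i(N)$ embeds into a finite $p$-quotient of $G$.

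Next I would extract what the hypotheses give. As $N$ is free, the associated graded $\bigoplus_k \gamma_k(N)/\gamma_{k+1}(N)$ of its lower central series is the free Lie ring over $\mathbb Z$ generated by $N^{\mathrm{ab}}$; hypothesis (i) says $G$, hence $H$, acts trivially on $N^{\mathrm{ab}}$, so by functoriality of the free Lie ring $G$ acts trivially on every graded piece, and in particular on every step $D_j(N)/D_{j+1}(N)$ of the Zassenhaus filtration. Hence the conjugation map $G\to\mathrm{Aut}(N/D_i(N))$ has image in the group of automorphisms of the finite $p$-group $N/D_i(N)$ that are trivial on each step of a central series, and this group is itself a finite $p$-group; so the image $\Pi_i$ of $G$ is a finite $p$-group. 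The point that remains is to turn this into an actual finite $p$-quotient of $G$ on which $N/D_i(N)$ is faithful, and here hypothesis (ii), through $Z(N)=1$ (valid since $\mathrm{rank}(N)\ge2$), is what makes the extension $1\to N\to G\to H\to 1$ and the central extensions $1\to D_i(N)/D_{i+1}(N)\to N/D_{i+1}(N)\to N/D_i(N)\to 1$ tractable.

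I expect this last step to be the main obstacle, the underlying difficulty being that a central extension by a finite $p$-group need not be detected by its finite $p$-quotients, so one must use the freeness of $N$ to rule this out. I would argue by induction on $i$: given an embedding of $N/D_i(N)$ into a finite $p$-quotient of $G$, pass to $G_1:=\ker(G\to\Pi_{i+1})$, of $p$-power index, over which the conjugation action on $N/D_{i+1}(N)$ is inner, so that an appropriate characteristic quotient of $G_1$ presents $D_i(N)/D_{i+1}(N)$ as a central elementary abelian $p$-subgroup of an extension of a finite-$p$-index subgroup of $H$; using that $G_1\cap N$ is again free of rank $\ge2$ together with $Z(N)=1$, produce a finite $p$-quotient of $G_1$ on which $D_i(N)/D_{i+1}(N)$ is faithful, and induce it up to $G$ through the coset action on $G/G_1$, obtaining a homomorphism of $G$ into a wreath product of two finite $p$-groups, which is again a finite $p$-group. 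Assembling this with the datum at level $i$ yields the embedding of $N/D_{i+1}(N)$ and closes the induction; making the central-extension step precise is exactly the computation of \cite{Ih:G} (see also \cite{And}). Once injectivity of $N^{(p)}\to G^{(p)}$ is in hand, exactness of the full sequence follows from the formal part.
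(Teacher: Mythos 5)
The paper does not actually prove this lemma --- it only cites \cite{Ih:G} and \cite{And} --- so I am judging your proposal on its own terms against the standard argument. Your reductions are correct and well organized: exactness at $G^{(p)}$ and $H^{(p)}$ is indeed formal; injectivity of $N^{(p)}\to G^{(p)}$ does reduce to producing, for each $i$, a normal subgroup $G_0\trianglelefteq G$ of $p$-power index with $N\cap G_0\subseteq D_i(N)$; the Zassenhaus filtration is cofinal with trivial intersection because $N$ is free of finite rank; and hypothesis (i), via the fact that $\mathrm{gr}\,N=\bigoplus_j D_j(N)/D_{j+1}(N)$ is generated in degree one, forces the image $\Pi_i$ of the conjugation map $G\to\mathrm{Aut}(N/D_i(N))$ to be a finite $p$-group (stability groups of central series of a finite $p$-group are $p$-groups). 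Up to that point the argument is sound.

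The gap is the final step, which is where all the content sits. What you must control is $N\cap\ker\bigl(G\to\mathrm{Aut}(N/D_j(N))\bigr)$, i.e.\ the preimage in $N$ of the center $Z(N/D_j(N))$; your wreath-product induction never establishes its key claim (``produce a finite $p$-quotient of $G_1$ on which $D_i/D_{i+1}$ is faithful'') and instead defers it to ``the computation of \cite{Ih:G}'', which is circular in a proof of the lemma. The missing idea is a direct bound on that center, and it makes the induction and the wreath products unnecessary. Since $N$ is free, $\mathrm{gr}\,N$ is the free restricted $\mathbb F_p$-Lie algebra on $\mathrm{rank}(N)\geq 2$ generators (this is where (ii) enters); embedding it into the free associative algebra, whose center consists of scalars, shows that every nonzero homogeneous element of positive degree has nonzero bracket with some degree-one generator. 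Hence if $x\in D_j(N)\setminus D_{j+1}(N)$ with $j\leq i-1$, there is a free generator $y$ of $N$ with $(x,y)\notin D_{j+2}(N)\supseteq D_{i+1}(N)$, so the image of $x$ in $N/D_{i+1}(N)$ is not central. Therefore $Z(N/D_{i+1}(N))\subseteq D_i(N)/D_{i+1}(N)$, and $G_0:=\ker\bigl(G\to\mathrm{Aut}(N/D_{i+1}(N))\bigr)$ --- of $p$-power index by your step on $\Pi_{i+1}$ --- satisfies $N\cap G_0\subseteq D_i(N)$. With this one addition your outline closes into a complete proof.
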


For $n\geq3$, let $K_n$ be the Artin pure braid group with $n$ strands. It is presented by generators 
$x_{ij}$, $1\leq i<j\leq n$ and relations 
$$
(a_{ijk},x_{ij})=(a_{ijk},x_{ik})=(a_{ijk},x_{jk})=1, \quad i<j<k, \quad a_{ijk}=x_{ij}x_{ik}x_{jk}, 
$$
$$
(x_{ij},x_{kl})=
(x_{il},x_{jk})=1, \quad (x_{ik},x_{ij}^{-1}x_{jl}x_{ij})\quad if\quad i<j<k<l.  
$$
For any $i\in[\![1,n]\!]$, the elements $x_{i1},\ldots,x_{in}$ of $K_n$ generate a subgroup 
isomorphic to $F_{n-1}$, and we have an exact sequence 
\begin{equation}\label{exact:K}
1\to F_{n-1}\to K_n\to K_{n-1}\to 1.  
\end{equation}

\begin{lem}
If $n\geq 3$, then the exact sequence \eqref{exact:K} induces a short exact sequence of pro-$p$ groups:
$$
1\to F_{n-1}^{(p)}\to K_n^{(p)}\to K_{n-1}^{(p)}\to 1. 
$$
\end{lem}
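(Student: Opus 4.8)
The plan is to apply Lemma \ref{lemma:Ihara} to the sequence \eqref{exact:K}, i.e.\ with $G=K_n$, $N=F_{n-1}$ and $H=K_{n-1}$, where $F_{n-1}$ denotes the kernel of the projection $K_n\to K_{n-1}$; by the Fadell--Neuwirth description this kernel is the free group on the generators $x_{1n},\dots,x_{n-1,n}$. It then remains to verify conditions (i) and (ii) of that lemma. Condition (ii) is immediate: $F_{n-1}$ is free of rank $n-1$, and the hypothesis $n\geq3$ gives $n-1\geq2>1$.

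The main point is condition (i), the equality $(K_n,F_{n-1})=(F_{n-1},F_{n-1})$. The inclusion $\supseteq$ is clear, so what has to be shown is $(K_n,F_{n-1})\subseteq(F_{n-1},F_{n-1})$; since $F_{n-1}$ is normal in $K_n$, this amounts to the statement that the conjugation action of $K_n$ on the abelianization $F_{n-1}^{\mathrm{ab}}$ is trivial. Indeed, if this holds, then for $g\in K_n$ and $x\in F_{n-1}$ the commutator $(g,x)$ lies in $F_{n-1}$ and has trivial image in $F_{n-1}^{\mathrm{ab}}$, hence lies in $(F_{n-1},F_{n-1})$. Now inner automorphisms act trivially on $F_{n-1}^{\mathrm{ab}}$, so the action factors through $K_n/F_{n-1}=K_{n-1}$; and the action of each standard generator $x_{ij}$ ($1\leq i<j\leq n-1$) on the free generators $x_{1n},\dots,x_{n-1,n}$ of $F_{n-1}$ is given by the Artin-type conjugation formulas, which, because a pure braid permutes no strand, send each $x_{kn}$ to a conjugate of $x_{kn}$ itself. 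Hence $K_{n-1}$ acts trivially on $F_{n-1}^{\mathrm{ab}}\cong\mathbb Z^{n-1}$. (Equivalently, this is the triviality of the monodromy of the Fadell--Neuwirth fibration $\mathrm{Conf}_n(\mathbb C)\to\mathrm{Conf}_{n-1}(\mathbb C)$ on the first homology of the fiber $\mathbb C\setminus\{n-1\ \text{points}\}$.)

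With (i) and (ii) established, Lemma \ref{lemma:Ihara} yields the asserted short exact sequence $1\to F_{n-1}^{(p)}\to K_n^{(p)}\to K_{n-1}^{(p)}\to1$ of pro-$p$ groups. The only real content is the verification of (i), i.e.\ the triviality of the $K_{n-1}$-action on $H_1$ of the fiber; I expect this to be the sole obstacle, and it can be handled either via the explicit pure-braid conjugation formulas or via the monodromy argument above.
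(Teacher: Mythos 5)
Your proof is correct, and it follows the paper's skeleton (apply Lemma \ref{lemma:Ihara} to \eqref{exact:K}, with condition (ii) immediate from $n\geq 3$), but it verifies the only substantive hypothesis, condition (i), by a genuinely different argument. The paper deduces $(K_n,F_{n-1})\subseteq(F_{n-1},F_{n-1})$ from Ihara's decomposition $P_{n+1}=\langle\bar x_{1i},\ldots,\bar x_{in}\rangle\cdot C(\bar x_{ij})$ (lifted to $K_n$ through the center): writing any $k\in K_n$ as $f\cdot c$ with $f$ in the free subgroup and $c$ centralizing $x_{ij}$, one gets $(k,x_{ij})=(f,x_{ij})\in(F_{n-1},F_{n-1})$. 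You instead observe that (i) is equivalent to the triviality of the $K_n$-action on $F_{n-1}^{\mathrm{ab}}$, which factors through $K_{n-1}$ and is trivial because the monodromy of the Fadell--Neuwirth fibration permutes the homology classes of the boundary loops according to the (trivial) strand permutation. Your route is more elementary and self-contained in that it avoids citing Ihara's Proposition 2.3.1, at the cost of invoking either the explicit Artin conjugation formulas or the topological monodromy picture. One small point you should make explicit if you use the algebraic version: the Artin formulas give $x_{ij}x_{kn}x_{ij}^{-1}=wx_{kn}w^{-1}$ with $w$ a word in $x_{1n},\ldots,x_{n-1,n}$, i.e.\ $w\in F_{n-1}$ — this is what makes the image in $F_{n-1}^{\mathrm{ab}}$ unchanged; a conjugate by a general element of $K_n$ would beg the question. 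The monodromy formulation in your parenthetical already handles this cleanly.
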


\begin{proof} Let $P_{n+1}$ be the pure sphere group of the sphere with $n+1$ marked points (cf. \cite{EF1}).
It is known that $P_{n+1}$ is isomorphic to the quotient $K_n/Z(K_n)^2$, where $Z(K_n)$
is the center of $K_n$. 

In \cite{Ih:G}, Proposition 2.3.1, it is shown that for any $j\in[\![1,n]\!]\setminus\{i\}$, 
$P_{n+1}$ is equal to the product 
$\langle \bar x_{1i},\ldots,\bar x_{in}\rangle\cdot C(\bar x_{ij})$, where 
the projection map $K_n\to P_{n+1}$ is denoted $g\mapsto \bar g$, and where 
$C(\bar x_{ij})$ is the centralizer subgroup of $\bar x_{ij}$.

Since $Z(K_n)^2$ is contained in $C(x_{ij})$, $K_n$ is equal to the product $\langle x_{1i},\ldots,x_{in}\rangle
\cdot C(x_{ij})$, where $C(x_{ij})$ is the centralizer subgroup of $x_{ij}$.

Then any $k\in K_n$ can be expressed as $f\cdot c$, where $f\in \langle x_{1i},\ldots,x_{in}\rangle$ and $c\in C(x_{ij})$. 
Then $(k,x_{ij})=(f\cdot c,x_{ij})=(f,x_{ij})\in (F_{n-1},F_{n-1})$. As this holds for any $j\in[\![1,n]\!]\setminus\{i\}$, 
one obtains $(K_n,F_{n-1})\subset (F_{n-1},F_{n-1})$, therefore the equality of these subgroups of $K_n$ as the opposite inclusion 
is obvious. 

One can therefore apply Lemma \ref{lemma:Ihara} to the exact sequence (\ref{exact:K}), which yields the result.  
\end{proof}

\subsubsection{Exact sequences of prounipotent completions}

\begin{lem}
Let $\mathbf k$ be a $\mathbb Q$-algebra and let $n\geq3$.
The exact sequence \eqref{exact:K} induces a short exact sequence of groups:
$$
1\to F_{n-1}(\mathbf k)\to K_n(\mathbf k)\to K_{n-1}(\mathbf k)\to 1. 
$$
\end{lem}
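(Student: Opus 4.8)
The plan is to deduce the exactness of the prounipotent sequence from the exactness of the discrete sequence \eqref{exact:K} together with standard facts about prounipotent (Malcev) completions. The general principle is that the prounipotent completion functor, while not exact on arbitrary group extensions, is exact on extensions $1\to N\to G\to H\to 1$ in which $N$ is finitely generated and the conjugation action of $G$ on $H_1(N,\mathbb Q)$ is such that the associated graded of the lower central series of $N$ behaves well; more precisely, the relevant input is that \eqref{exact:K} is a split extension (a section $K_{n-1}\to K_n$ exists, obtained by "adding a strand far from the others", i.e. forgetting the last strand has a group-theoretic section), and that $F_{n-1}$ is finitely generated with torsion-free-nilpotent-good behaviour. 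For split extensions the statement is elementary: prounipotent completion sends a semidirect product $N\rtimes H$ to $N(\mathbf k)\rtimes H(\mathbf k)$ because the Hopf-algebra / complete-group-algebra description is visibly compatible with the splitting.

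First I would record that the forgetful map $K_n\to K_{n-1}$ (erasing the $n$-th strand) admits a group-theoretic section $s:K_{n-1}\to K_n$, so \eqref{exact:K} is a split short exact sequence $K_n\cong F_{n-1}\rtimes K_{n-1}$. Next I would invoke the functoriality of prounipotent completion over $\mathbf k$: applying it to the split exact sequence yields a sequence $1\to F_{n-1}(\mathbf k)\to K_n(\mathbf k)\to K_{n-1}(\mathbf k)\to 1$ in which the first map is injective (its composite with the retraction induced by $s$ is the identity on $F_{n-1}(\mathbf k)$) and the last map is surjective (it has the section $s(\mathbf k)$). The only remaining point is exactness in the middle: that $\ker(K_n(\mathbf k)\to K_{n-1}(\mathbf k))$ equals the image of $F_{n-1}(\mathbf k)$. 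For a split extension this can be seen at the level of complete Hopf algebras: writing $K_n(\mathbf k)=\mathcal G((\mathbf k K_n)^\wedge)$ and using that $(\mathbf kK_n)^\wedge$ is, compatibly with the $\mathbf k$-algebra filtrations, isomorphic to a "crossed product" of $(\mathbf kF_{n-1})^\wedge$ with $(\mathbf kK_{n-1})^\wedge$, the kernel on group-like elements is exactly $\mathcal G((\mathbf kF_{n-1})^\wedge)=F_{n-1}(\mathbf k)$.

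A cleaner alternative, which I would actually prefer to write up, is to reduce to the case $\mathbf k=\mathbb Q$ and then extend scalars. Over $\mathbb Q$ the prounipotent completion of a split extension with finitely generated kernel is well known to be exact (see e.g.\ the discussion of relative completions / the exactness results used for braid and mapping class groups); alternatively one passes to Lie algebras, where the functor $\Gamma\mapsto\mathrm{Lie}(\Gamma)$ of Malcev Lie algebras is exact on split extensions because $\mathrm{Lie}(F_{n-1})$ is free and $H_1$ controls everything, giving $1\to\mathrm{Lie}(F_{n-1})\to\mathrm{Lie}(K_n)\to\mathrm{Lie}(K_{n-1})\to 1$; exponentiating recovers the group-level statement over $\mathbb Q$. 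Then for a general $\mathbb Q$-algebra $\mathbf k$ one has $K(\mathbf k)=\exp(\mathrm{Lie}(K)\hat\otimes_{\mathbb Q}\mathbf k)$ for each of $F_{n-1},K_n,K_{n-1}$, and tensoring the exact sequence of pro-nilpotent $\mathbb Q$-Lie algebras with $\mathbf k$ (completed tensor product) preserves exactness, whence the desired short exact sequence of groups.

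The main obstacle is the middle exactness: injectivity and surjectivity are formal from the existence of a section, but identifying the kernel requires a genuine input — either the freeness of $\mathrm{Lie}(F_{n-1})$ plus right-exactness of completed tensor product, or the Hopf-algebra computation of $\mathbb Q[\![\,K_n\,]\!]$ as a crossed product. I expect to phrase it via the Lie algebra argument, citing the standard exactness of Malcev completion on split extensions with free finitely generated kernel; no delicate estimate is needed beyond that, since unlike the pro-$p$ case (Lemma \ref{lemma:Ihara}), here the characteristic-zero situation makes the relevant functor behave much better, and the hypothesis "$(G,N)=(N,N)$" established in the previous lemma is not even required — splitness already suffices.
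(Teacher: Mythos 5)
Your reduction to $\mathbf k=\mathbb Q$ followed by extension of scalars is fine, but the core of the argument rests on a principle that is false: prounipotent (Malcev) completion is \emph{not} exact on split extensions $1\to N\to G\to H\to 1$ with $N$ free and finitely generated, and splitness does not give injectivity of $N(\mathbb Q)\to G(\mathbb Q)$ either. A section of $G\to H$ produces a retraction of $G$ onto $H$, not onto $N$; the set-theoretic projection $n\cdot s(h)\mapsto n$ is not a homomorphism, so the ``composite with the retraction induced by $s$'' you invoke for injectivity does not exist. Concretely, for the Klein bottle group $G=\mathbb Z\rtimes\mathbb Z$ (generators $a,b$ with $aba^{-1}=b^{-1}$, $N=\langle b\rangle$) one has $b^2\in(G,G)$, every torsion-free nilpotent quotient of $G$ kills $b$, and $G(\mathbb Q)\cong\mathbb Q$ with $N(\mathbb Q)=\mathbb Q$ mapping to $0$; the extension is split with free kernel, yet $G(\mathbb Q)\not\cong N(\mathbb Q)\rtimes H(\mathbb Q)$ and the completed sequence is nowhere near exact. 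Rank-two versions exist as well (take $F_2\rtimes_\phi\mathbb Z$ with $\phi$ acting on $H_1(F_2)$ without eigenvalue $1$, so that $H_1(N,\mathbb Q)$ dies in $H_1(G,\mathbb Q)$). The same examples defeat the claimed crossed-product description of $(\mathbf kG)^\wedge$: the topology induced on $\mathbf kN$ by the augmentation ideal of $\mathbf kG$ is strictly coarser than its own $I$-adic topology unless $H$ acts unipotently on $N^{\mathrm{ab}}\otimes\mathbb Q$. That unipotence (here, triviality) is exactly the hypothesis you declare ``not even required'': the identity $(K_n,F_{n-1})=(F_{n-1},F_{n-1})$, proved in the preceding lemma via Ihara's centralizer decomposition, is the substantive input, in the prounipotent setting just as in the pro-$p$ one.

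With that input restored, a relative-completion or crossed-product argument in the spirit of your write-up can be made to work, but the paper takes a different and shorter route which bypasses general completion theory altogether: it transports the problem through Drinfeld's associator isomorphisms $b^P_\Phi:\mathrm{Lie}(K_n)\to\hat{\mathfrak t}_n$, which are compatible with the strand-forgetting maps, and reads off exactness from the manifest exactness of $0\to\hat{\mathfrak f}_{n-1}\to\hat{\mathfrak t}_n\to\hat{\mathfrak t}_{n-1}\to0$, where $\hat{\mathfrak f}_{n-1}$ is the completed free Lie algebra on the $t_{ij}$, $j\neq i$. The only point to verify is that $b^P_\Phi$ restricts to an isomorphism $\mathrm{Lie}(F_{n-1})\to\hat{\mathfrak f}_{n-1}$, which is checked on abelianizations since both sides are (completions of) free Lie algebras. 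If you want to keep your approach, you must state and use the triviality of the conjugation action of $K_n$ on $F_{n-1}^{\mathrm{ab}}$ and cite (or prove) an exactness criterion for Malcev completion under that hypothesis, rather than appealing to splitness.
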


\begin{proof} 
According to \cite{Dr}, any associator $\Phi\in\mathsf M_1(\mathbb Q)$ and  
parenthesization $P$ of a word with $n$ identical letters gives rise to an isomorphism 
$b^P_\Phi:\mathrm{Lie}(K_n)\to\hat{\mathfrak t}_n$, where $\mathfrak t_n$ is the graded $\mathbb Q$-Lie 
algebra with degree one generators $t_{ij}$, $i\neq j\in[\![1,n]\!]$ and relations $t_{ji}=t_{ij}$, 
$[t_{ik}+t_{jk},t_{ij}]=0$, and $[t_{ij},t_{kl}]=0$ for $i,j,k,l$ all distinct, and where $\hat{\mathfrak t}_n$
is its degree completion.

The morphisms of (\ref{exact:K}) induce Lie algebra morphisms $\mathrm{Lie}(F_{n-1})\to \mathrm{Lie}(K_n)$ and 
$\mathrm{Lie}(K_n)\to \mathrm{Lie}(K_{n-1})$. One has a commutative diagram 
$$
\xymatrix{\mathrm{Lie}(K_n) \ar[d]\ar^{b^P_\Phi}[r]& \hat{\mathfrak t}_n\ar[d]\\ 
\mathrm{Lie}(K_{n-1}) \ar^{b^{P^i}_\Phi}[r]& \hat{\mathfrak t}_{n-1} }
$$
where $P^i$ is $P$ with the $i$-th letter erased, and where the right vertical arrow is induced by the 
morphism $\mathfrak t_n\to\mathfrak t_{n-1}$, $t_{ia}\mapsto 0$, $t_{ab}\mapsto t_{f(a)f(b)}$
for $a,b\in[\![1,n]\!]-\{i\}$, $f$ being the increasing bijection $[\![1,n]\!]-\{i\}\simeq[\![1,n-1]\!]$. 

It follows from this diagram that $b_\Phi^P$ restricts to a Lie algebra morphism $\mathrm{Lie}(F_{n-1})\to\hat{\mathfrak f}_{n-1}$, 
where $\hat{\mathfrak f}_{n-1}$ is the kernel of $\hat{\mathfrak t}_n\to\hat{\mathfrak t}_{n-1}$, which is the 
degree completion of the kernel of $\mathfrak t_n\to\mathfrak t_{n-1}$, a Lie algebra freely generated by the 
$t_{ij}$, $j\in[\![1,n]\!]-\{i\}$. The abelianization of this morphism can be shown to be an isomorphism, therefore 
$\mathrm{Lie}(F_{n-1})\to\hat{\mathfrak f}_{n-1}$ is an isomorphism. 

In the following diagram
$$
\xymatrix{
0\ar[r] & \hat{\mathfrak f}_{n-1}\ar[r]\ar[d] & \hat{\mathfrak t}_n\ar[r]\ar[d] & 
\hat{\mathfrak t}_{n-1}\ar[r]\ar[d] & 0\\
0\ar[r] & \mathrm{Lie}F_{n-1}\ar[r] & \mathrm{Lie}K_n\ar[r] & 
\mathrm{Lie}K_{n-1}\ar[r] & 0}
$$
the top sequence is exact. Since the vertical arrows are isomorphisms 
and since the squares commute, it follows that the bottom sequence is exact. The result follows. 
\end{proof}

\subsubsection{Injectivity of $K_n^{(p)}\to K_n(\mathbb Q_p)$}

\begin{lem}
For any $n\geq2$, the map $K_n^{(p)}\to K_n(\mathbb Q_p)$ is injective.
\end{lem}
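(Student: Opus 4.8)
The plan is to prove this by induction on $n$, using the short exact sequences for both pro-$p$ and prounipotent completions of the pure braid groups that have just been established, together with the five lemma. The base case $n=2$ is immediate: $K_2 \simeq \mathbb Z$, whose pro-$p$ completion is $\mathbb Z_p$ and whose prounipotent completion over $\mathbb Q_p$ is $\mathbb Q_p$, and the natural map $\mathbb Z_p \to \mathbb Q_p$ is injective. (One could equally start the induction at $n=1$, where $K_1$ is trivial, or invoke the injectivity of $F_1^{(p)} \to F_1(\mathbb Q_p)$, which is the $n=1$ case of Lemma \ref{lemma:injectivity:free}.)

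For the inductive step, suppose $n \geq 3$ and that $K_{n-1}^{(p)} \to K_{n-1}(\mathbb Q_p)$ is injective. I would assemble the commutative diagram with exact rows
$$
\xymatrix{
1\ar[r] & F_{n-1}^{(p)}\ar[r]\ar[d] & K_n^{(p)}\ar[r]\ar[d] & K_{n-1}^{(p)}\ar[r]\ar[d] & 1\\
1\ar[r] & F_{n-1}(\mathbb Q_p)\ar[r] & K_n(\mathbb Q_p)\ar[r] & K_{n-1}(\mathbb Q_p)\ar[r] & 1
}
$$
where the top row is the short exact sequence of pro-$p$ completions established just above and the bottom row is the short exact sequence of prounipotent completions (the $\mathbf k = \mathbb Q_p$ case of the Lemma on prounipotent completions). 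The vertical maps are the canonical comparison morphisms of the preceding subsection, and commutativity follows from their naturality with respect to group homomorphisms. The left vertical map $F_{n-1}^{(p)} \to F_{n-1}(\mathbb Q_p)$ is injective by Lemma \ref{lemma:injectivity:free}, and the right vertical map is injective by the inductive hypothesis. A diagram chase (the relevant half of the five lemma: if the outer two verticals are injective and the rows are exact, the middle vertical is injective) then yields injectivity of $K_n^{(p)} \to K_n(\mathbb Q_p)$.

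The main point requiring care is simply that all the ingredients genuinely fit together: one must check that the comparison morphisms $\Gamma^{(p)} \to \Gamma(\mathbb Q_p)$ are functorial in $\Gamma$ so that the squares commute, and that the two exact sequences being used are compatible, i.e. that they are both induced by the \emph{same} exact sequence \eqref{exact:K} of discrete groups via the respective completion functors. Both of these are essentially formal given what has been proved: the first is part of the content of \cite{HM}, Lemma A.7, and the second is how the two short exact sequences were constructed in the two immediately preceding lemmas. I do not expect any genuine obstacle here; the proof is a short diagram chase once the two exact rows and the naturality are in place. It may be worth remarking that the same argument shows injectivity of $K_n^{(p)} \to K_n(\mathbb Q)$-type statements more generally, but only the $\mathbb Q_p$ case is needed for the pro-$p$ discussion of $\mathrm{GT}_p$ that follows.
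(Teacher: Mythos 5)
Your proof is correct and is essentially identical to the paper's: the same induction on $n$ with base case $K_2\simeq\mathbb Z$, the same commutative ladder of the two short exact sequences, and the same half of the five lemma using Lemma \ref{lemma:injectivity:free} for the kernel and the inductive hypothesis for the quotient. The remarks on naturality of the comparison maps are correct and are implicit in the paper's phrase ``natural morphism between two exact sequences.''
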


\begin{proof}
The statement is obvious for $n=2$ as $K_2\simeq\mathbb Z$. 
One then proceeds by induction over $n$. Assume that the statement holds for $n-1$, then 
we have a natural morphism between two exact sequences, 
which makes the following diagram commutative 
$$
\xymatrix{
1\ar[r] & F_{n-1}^{(p)} \ar[d]\ar[r] & K_n^{(p)}\ar[d]\ar[r] & K_{n-1}^{(p)}\ar[d]\ar[r] & 1 \\
1\ar[r] & F_{n-1}(\mathbb Q_p) \ar[r] & K_n(\mathbb Q_p)\ar[r] & K_{n-1}(\mathbb Q_p)\ar[r] & 1 .
}
$$
The leftmost vertical map is injective by Lemma \ref{lemma:injectivity:free} and the rightmost vertical map is as well 
by the induction assumption, which shows that the middle vertical map is injective. 
\end{proof}

\subsection{Results on $\mathrm{GT}_p$}\label{subsection:rugtl}

Let $F_2$ be the free group with generators $X_0,X_1$ (see \S\ref{sect:2:1:1:2606}). 

A semigroup structure is defined on $\mathbb Z_p\times F_2^{(p)}$ by $(\lambda_1,f_1)
\circledast(\lambda_2,f_2)=(\lambda_1\lambda_2,f)$, where 
$$
f(X_0,X_1)=f_2(f_1(X_0,X_1)X_0^{\lambda_1}f_1(X_0,X_1)^{-1},X_1^{\lambda_1})f_1(X_0,X_1)
$$
(this is the opposite of the product of \cite{Dr}, (4.11), with $X_0,X_1$ replacing $X,Y$). 

\begin{lem}\label{prop:invertible}
$(\mathbb Z_p^\times\times F_2^{(p)},\circledast)$ is a subgroup of $(\mathsf G^\B(\mathbb Q_p),\circledast)$
(see Lemma \ref{lem:2:1:2606}). 
\end{lem}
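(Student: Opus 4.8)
The plan is to embed $\mathbb Z_p^\times\times F_2^{(p)}$ into $\mathsf G^\B(\mathbb Q_p)=\mathbb Q_p^\times\times\mathcal G(\hat{\mathcal V}^\B_{\mathbb Q_p})$ (which is a group by Lemma~\ref{lem:2:1:2606}), show the embedding is a morphism for $\circledast$, and check that its image is closed under inversion. First I would introduce the topological $\mathbb Z_p$-algebra $A:=\mathbb Z_p\langle\langle X_0-1,X_1-1\rangle\rangle$. By Lemma~\ref{lemma:F_n^l} one has $F_2^{(p)}=\mathcal G(A)$, and by the proof of Lemma~\ref{lemma:injectivity:free} the assignment $X_i-1\mapsto X_i-1$ realizes $A$ as a topological $\mathbb Z_p$-subalgebra of $\hat{\mathcal V}^\B_{\mathbb Q_p}=(\mathbb Q_pF_2)^\wedge$ in such a way that $\mathcal G(A)=F_2^{(p)}\hookrightarrow\mathcal G(\hat{\mathcal V}^\B_{\mathbb Q_p})=F_2(\mathbb Q_p)$ is the canonical map. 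Moreover $A$ is a \emph{sub-Hopf-algebra} of $\hat{\mathcal V}^\B_{\mathbb Q_p}$, because $\hat\Delta^{\mathcal V,\B}(X_i-1)=(X_i-1)\otimes X_i+1\otimes(X_i-1)$ and the antipode of $X_i-1$ have coefficients in $\mathbb Z_p$. Together with $\mathbb Z_p^\times\subset\mathbb Q_p^\times$ this yields an injection of sets $j:\mathbb Z_p^\times\times F_2^{(p)}\hookrightarrow\mathsf G^\B(\mathbb Q_p)$, $(\mu,g)\mapsto(\mu,g)$.

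Next I would check that $j$ intertwines the products $\circledast$. The crucial remark is that for $\mu\in\mathbb Z_p^\times$ and $g\in\mathcal G(A)$ the Hopf automorphism $\mathrm{aut}^{\mathcal V,(1),\B}_{(\mu,g)}$ of $\hat{\mathcal V}^\B_{\mathbb Q_p}$ maps $A$ into $A$: it sends $X_1\mapsto X_1^\mu$ and $X_0\mapsto gX_0^\mu g^{-1}$, and $X_i^\mu=\sum_{k\geq0}\binom{\mu}{k}(X_i-1)^k\in A$ since $\binom{\mu}{k}\in\mathbb Z_p$, while $g,g^{-1}\in\mathcal G(A)$; as $X_0,X_1$ topologically generate $A$ and the map is continuous, it preserves $A$. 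Since a continuous $\mathbb Z_p$-algebra endomorphism of $A=\mathbb Z_p[\![F_2^{(p)}]\!]$ taking $X_0,X_1$ into $\mathcal G(A)$ induces on $\mathcal G(A)=F_2^{(p)}$ the pro-$p$ group endomorphism with the same values on the generators (expand any $g'\in\mathcal G(A)$ as a limit of words in $X_0,X_1$), the product $(\mu,g)\circledast(\mu',g')=(\mu\mu',\mathrm{aut}^{\mathcal V,(1),\B}_{(\mu,g)}(g')\cdot g)$ of $\mathsf G^\B(\mathbb Q_p)$ (Lemma~\ref{lem:2:1:2606} and~\eqref{aut:V:10:1705}) reproduces the formula $(\mu\mu',\,g'(gX_0^\mu g^{-1},X_1^\mu)\cdot g)$ defining $\circledast$ on $\mathbb Z_p\times F_2^{(p)}$; the remaining point is the compatibility of the $\mathbb Z_p$-power operations on $F_2^{(p)}$ and on $A$, which once more follows from continuity of $\mu\mapsto\binom{\mu}{k}$. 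Since $\mathbb Z_p^\times\times F_2^{(p)}$ is a sub-semigroup of $(\mathbb Z_p\times F_2^{(p)},\circledast)$ (as $\mathbb Z_p^\times$ is stable under multiplication) and $\circledast$ is associative on both sides, $j$ is an injective morphism of semigroups, so its image is a sub-semigroup of $\mathsf G^\B(\mathbb Q_p)$ isomorphic to $(\mathbb Z_p^\times\times F_2^{(p)},\circledast)$.

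The main obstacle, and the last step, is to show the image of $j$ is closed under inversion; it visibly contains the identity $(1,1)$. The inverse of $(\mu,g)$ in the group $\mathsf G^\B(\mathbb Q_p)$ is $(\mu^{-1},\tilde g)$ with $\mu^{-1}\in\mathbb Z_p^\times$ and $\tilde g=(\mathrm{aut}^{\mathcal V,(1),\B}_{(\mu,g)})^{-1}(g^{-1})$, so it is enough to prove that $\sigma:=\mathrm{aut}^{\mathcal V,(1),\B}_{(\mu,g)}$ restricts to an \emph{automorphism} of $A$: then $\sigma^{-1}$ is a Hopf automorphism of $A$ and sends $g^{-1}\in\mathcal G(A)$ into $\mathcal G(A)=F_2^{(p)}$. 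This is where $\mu\in\mathbb Z_p^\times$ is used. The endomorphism $\sigma|_A$ preserves the degree filtration $F^\bullet A$ (because $\sigma(X_i)$ is group-like, so $\sigma(X_i)-1\in F^1A$), and modulo $F^2A$ one has $\sigma(X_i-1)\equiv\mu(X_i-1)$ (conjugation by $g$ being trivial to this order); hence $\mathrm{gr}(\sigma)$ is the graded endomorphism of $\mathrm{gr}\,A=\mathbb Z_p\langle\overline{X_0-1},\overline{X_1-1}\rangle$ scaling each generator by $\mu$, so $\mathrm{gr}^n(\sigma)$ is multiplication by $\mu^n\in\mathbb Z_p^\times$, which is bijective. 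As $A$ is complete and separated for $F^\bullet A$, a filtration-preserving endomorphism with bijective associated graded is an automorphism; thus $\sigma|_A$ is an automorphism, as required. Combining the three steps, $j$ exhibits $(\mathbb Z_p^\times\times F_2^{(p)},\circledast)$ as a subgroup of $(\mathsf G^\B(\mathbb Q_p),\circledast)$. (Note that the argument genuinely breaks for non-unit $\mu$, matching the fact that $(\mathbb Z_p\times F_2^{(p)},\circledast)$ is only a semigroup.)
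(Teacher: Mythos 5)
Your proof is correct, and it reaches the conclusion by a genuinely different organization of the key step. The paper embeds $\mathbb Z_p^\times\times F_2^{(p)}$ into the ambient monoid $\mathbb Q_p^\times\times\mathbb Q_p\langle\langle t_0,t_1\rangle\rangle^\times$ (with $\circledast$ extended by the explicit substitution formula), writes it as the intersection of $\mathsf G^\B(\mathbb Q_p)$ with $\mathbb Z_p^\times\times(1+\mathbb Z_p\langle\langle t_0,t_1\rangle\rangle_0)$, and proves the latter is closed under inversion by a degree-by-degree induction establishing the integrality of the inverse series. You instead concentrate the whole difficulty in the assertion that $\sigma=\mathrm{aut}^{\mathcal V,(1),\B}_{(\mu,g)}$ restricts to an \emph{automorphism} of the integral form $A=\mathbb Z_p\langle\langle X_0-1,X_1-1\rangle\rangle$, deduced from the standard fact that a filtration-preserving endomorphism of a complete separated filtered algebra with bijective associated graded is bijective; here $\mathrm{gr}^n(\sigma)=\mu^n\,\mathrm{id}$, invertible exactly because $\mu\in\mathbb Z_p^\times$. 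The computational core is the same --- the paper's induction on $a\in 1+\mathbb Z_p\langle\langle t_0,t_1\rangle\rangle_0+\mathbb Q_p\langle\langle t_0,t_1\rangle\rangle_{>n}$ is in substance the surjectivity half of your associated-graded lemma --- but your packaging avoids introducing the auxiliary $\circledast$ on all of $\mathbb Q_p^\times\times\mathbb Q_p\langle\langle t_0,t_1\rangle\rangle^\times$ and makes fully transparent why the statement fails for $\mu\in\mathbb Z_p\setminus\mathbb Z_p^\times$, where only a semigroup survives. The price is the extra bookkeeping you (rightly) spend checking that your embedding $j$ intertwines the pro-$p$ product of \S\ref{subsection:rugtl} with the product of Lemma \ref{lem:2:1:2606}; the paper gets that identification essentially for free from the substitution $X_i=1+t_i$ inside the ambient monoid. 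Both proofs rest on the same input, namely $F_2^{(p)}=\mathcal G(A)$ from Lemma \ref{lemma:F_n^l}.
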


\proof Recall the subsets $F_2(\mathbb Q_p)=\mathcal G(\mathbb Q_p\langle\langle t_0,t_1\rangle\rangle)$ 
and $1+\mathbb Z_p\langle\langle t_0,t_1\rangle\rangle_0$ of $\mathbb Q_p\langle\langle t_0,t_1\rangle\rangle^\times$. 
By Lemma \ref{lemma:F_n^l}, one has 
$$
F_2^{(p)}=\mathcal G(\mathbb Q_p\langle\langle t_0,t_1\rangle\rangle)\cap(1+\mathbb Z_p\langle\langle t_0,t_1\rangle\rangle_0) 
\subset \mathbb Q_p\langle\langle t_0,t_1\rangle\rangle^\times,  
$$
therefore
$$
\mathbb Z_p^\times\times F_2^{(p)}=(\mathbb Q_p^\times\times\mathcal G(\mathbb Q_p\langle\langle t_0,t_1\rangle\rangle))
\cap(\mathbb Z_p^\times\times(1+\mathbb Z_p\langle\langle t_0,t_1\rangle\rangle_0)) 
\subset \mathbb Q_p^\times\times\mathbb Q_p\langle\langle t_0,t_1\rangle\rangle^\times,  
$$ 
Then $\mathbb Q_p^\times\times\mathbb Q_p\langle\langle t_0,t_1\rangle\rangle^\times$ is equipped with the group structure 
$$
(\lambda,a)\circledast(\mu,b):=(\lambda\mu,a(t_0,t_1)b((1+t_0)^\lambda-1,a^{-1}((1+t_1)^\lambda-1)a)),  
$$
and $\mathbb Q_p^\times\times\mathcal G(\mathbb Q_p\langle\langle t_0,t_1\rangle\rangle$ is then a subgroup, 
which identifies with $\mathsf G^\B(\mathbb Q_p)$ under the identifications $X_i=1+t_i$, $i=0,1$. 

$\mathbb Z_p^\times\times(1+\mathbb Z_p\langle\langle t_0,t_1\rangle\rangle_0)$ is a sub-semigroup of 
$(\mathbb Q_p^\times\times\mathbb Q_p\langle\langle t_0,t_1\rangle\rangle^\times,\circledast)$. 
Let $(\mu,b)\in \mathbb Z_p^\times\times(1+\mathbb Z_p\langle\langle t_0,t_1\rangle\rangle_0)$ and let 
$(1/\mu,a)\in
\mathbb Q_p^\times\times\mathbb Q_p\langle\langle t_0,t_1\rangle\rangle^\times$ be its inverse. One has then 
\begin{equation}\label{eq:a:b}
a(t_0,t_1)b((1+t_0)^{1/\lambda}-1,a^{-1}[(1+t_1)^{1/\lambda}-1]a)=1. 
\end{equation}
One shows by induction on $n$ that 
\begin{equation}\label{approx:a}
a\in 1+\mathbb Z_p\langle\langle t_0,t_1\rangle\rangle_0+\mathbb Q_p\langle\langle t_0,t_1\rangle\rangle_{>n}. 
\end{equation}
For $n=0$, this follows from (\ref{eq:a:b}). Assume that (\ref{approx:a}) holds for $n$. Then 
$$
b((1+t_0)^{1/\lambda}-1,a^{-1}[(1+t_1)^{1/\lambda}-1]a)
\in1+\mathbb Z_p\langle\langle t_0,t_1\rangle\rangle_0+\mathbb Q_p\langle\langle t_0,t_1\rangle
\rangle_{>n+1}.
$$ 
(\ref{eq:a:b}) then implies (\ref{approx:a}) with $n$ replaced by $n+1$. Finally $a\in1+\mathbb Z_p\langle\langle t_0,t_1\rangle\rangle_0$. 
It follows that
$\mathbb Z_p^\times\times(1+\mathbb Z_p\langle\langle t_0,t_1\rangle\rangle_0)$ is a subgroup of the group
$(\mathbb Q_p^\times\times\mathbb Q_p\langle\langle t_0,t_1\rangle\rangle^\times,\circledast)$. 
It follows that $\mathbb Z_p^\times\times F_2^{(p)}$ is the intersection of two subgroups of this group, 
which implies the result. 
\hfill\qed\medskip 

\begin{lem}\label{invertibles:in:sd:product}
The subgroup of invertible elements of $(\mathbb Z_p\times F_2^{(p)},\circledast)$ is $\mathbb Z_p^\times\times F_2^{(p)}$. 
\end{lem}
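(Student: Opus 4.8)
The plan is to first observe that $(\mathbb{Z}_p\times F_2^{(p)},\circledast)$ is in fact a monoid, so that ``invertible element'' is meaningful: plugging $\lambda_1=1$ and $f_1=1$ (resp.\ $\lambda_2=1$ and $f_2=1$) into the defining formula for $\circledast$ shows $(1,1)\circledast(\lambda,f)=(\lambda,f)=(\lambda,f)\circledast(1,1)$, so $(1,1)$ is a two-sided identity. Hence the set $U$ of invertible elements of this monoid is a group, and the task is to identify $U$ with $\mathbb{Z}_p^\times\times F_2^{(p)}$, which I will do by establishing the two inclusions separately.

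For $U\subseteq\mathbb{Z}_p^\times\times F_2^{(p)}$: the first-coordinate projection $(\lambda,f)\mapsto\lambda$ is a morphism of monoids from $(\mathbb{Z}_p\times F_2^{(p)},\circledast)$ to the multiplicative monoid $(\mathbb{Z}_p,\cdot)$, since the first component of $(\lambda_1,f_1)\circledast(\lambda_2,f_2)$ is simply $\lambda_1\lambda_2$. A monoid morphism carries invertible elements to invertible elements, and the invertible elements of $(\mathbb{Z}_p,\cdot)$ are exactly $\mathbb{Z}_p^\times$; therefore every $(\lambda,f)\in U$ has $\lambda\in\mathbb{Z}_p^\times$, and since $f\in F_2^{(p)}$ by construction, $(\lambda,f)\in\mathbb{Z}_p^\times\times F_2^{(p)}$.

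For the reverse inclusion, I will invoke Lemma \ref{prop:invertible}, which asserts that $(\mathbb{Z}_p^\times\times F_2^{(p)},\circledast)$ is a subgroup of $(\mathsf G^\B(\mathbb{Q}_p),\circledast)$; in particular it is a group under $\circledast$, with identity $(1,1)$ (the identity of $\mathsf G^\B(\mathbb{Q}_p)$, by Lemma \ref{lem:2:1:2606}). Consequently each element of $\mathbb{Z}_p^\times\times F_2^{(p)}$ admits a $\circledast$-inverse which again lies in $\mathbb{Z}_p^\times\times F_2^{(p)}\subseteq\mathbb{Z}_p\times F_2^{(p)}$, so it belongs to $U$. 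Combining the two inclusions gives $U=\mathbb{Z}_p^\times\times F_2^{(p)}$.

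I do not anticipate a genuine obstacle here: the substantive content is entirely packaged in Lemma \ref{prop:invertible}, already proved above, and the remaining verifications — that $(1,1)$ is a two-sided identity and that the first-coordinate projection is multiplicative — are immediate from the explicit formula for $\circledast$.
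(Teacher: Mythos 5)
Your proof is correct and follows essentially the same route as the paper, whose entire proof is ``This follows from Lemma \ref{prop:invertible}'': the substantive reverse inclusion is delegated to that lemma, exactly as you do. Your explicit verification that $(1,1)$ is a two-sided identity and that the first-coordinate projection forces $\lambda\in\mathbb Z_p^\times$ just spells out the routine half that the paper leaves implicit.
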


\proof This follows from Lemma \ref{prop:invertible}.  
\hfill\qed\medskip 

Consider the morphisms from $F_2$ to various groups given by the 
following table: 

\begin{center}
\begin{tabular}{|c|c|c|c|c|c|c|c|c|c|c|}
  \hline
name of morphism & $\theta$ & $\kappa$ & $\alpha_1$& $\alpha_2$& $\alpha_3$& $\alpha_4$& $\alpha_5$  \\
  \hline
 target group & $F_2$ & $F_2$ & $K_4$ & $K_4$ & $K_4$ & $K_4$ & $K_4$   \\ 
\hline   
image of $X_0$  & $X_1$ & $X_1$ & $x_{23}x_{24}$ &  $x_{12}$& $x_{23}$   &  $x_{34}$ & $x_{12}x_{13}$   \\
  \hline image of $X_1$ & $X_0$ & $(X_0X_1)^{-1}$  & $x_{12}$ & $x_{23}$ & $x_{34}$ & $x_{13}x_{23}$ &  $x_{24}x_{34}$  \\ \hline
\end{tabular}
\end{center}
The pro-$p$ completions of these morphisms are denoted in the same way. 

In \cite{Dr}, p. 846, $\underline{\mathrm{GT}}_p$ is defined as the set of all $(\lambda,f)\in(1+2\mathbb Z_p)\times F_2^{(p)}$ 
such that 
\begin{equation}\label{defining:conditions}
f\theta(f)=1, \quad \kappa^2(f)(X_0X_1)^{-m} \kappa(f) X_1^m f X_0^m=1, \quad\alpha_1(f)\alpha_3(f)\alpha_5(f)\alpha_2(f)\alpha_4(f)=1, 
\end{equation}
where $m=(\lambda-1)/2$ (equalities in $F_2^{(p)}$ and $K_4^{(p)}$) and $\mathsf{GT}(\mathbb Q_p)$ is the set of all 
$(\lambda,f)\in\mathbb Q_p^\times\times F_2(\mathbb Q_p)$ such that the same identities hold in $F_2(\mathbb Q_p)$ 
and $K_4(\mathbb Q_p)$. 

The subset $\underline{\mathrm{GT}}_p\subset\mathbb Z_p\times F_2^{(p)}$ is shown to be a sub-semigroup; it is equipped 
with the structure opposite to that induced by  $\mathbb Z_p\times F_2^{(p)}$. The group  ${\mathrm{GT}}_p\subset \underline{\mathrm{GT}}_p$ is then defined to be the group 
of invertible elements in $\underline{\mathrm{GT}}_p$.

\begin{cor}\label{cor:13/07/2018}
${\mathrm{GT}}_p=\underline{\mathrm{GT}}_p\cap(\mathbb Z_p^\times\times F_2^{(p)})$. 
\end{cor}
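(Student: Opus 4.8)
The plan is to deduce Corollary \ref{cor:13/07/2018} from the general description of groups of invertible elements in sub-semigroups, exactly in the spirit of the two preceding lemmas. First I would recall the setup: $\underline{\mathrm{GT}}_p$ is a sub-semigroup of $(\mathbb Z_p\times F_2^{(p)},\circledast)$ (equipped with the opposite structure), and by definition $\mathrm{GT}_p$ is its group of invertible elements. By Lemma \ref{invertibles:in:sd:product}, the group of invertible elements of the ambient semigroup $(\mathbb Z_p\times F_2^{(p)},\circledast)$ is exactly $\mathbb Z_p^\times\times F_2^{(p)}$. So one inclusion is immediate: any element of $\mathrm{GT}_p$ is invertible in $\mathbb Z_p\times F_2^{(p)}$, hence lies in $\mathbb Z_p^\times\times F_2^{(p)}$, and of course it lies in $\underline{\mathrm{GT}}_p$; thus $\mathrm{GT}_p\subset\underline{\mathrm{GT}}_p\cap(\mathbb Z_p^\times\times F_2^{(p)})$.

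For the reverse inclusion, let $(\lambda,f)\in\underline{\mathrm{GT}}_p\cap(\mathbb Z_p^\times\times F_2^{(p)})$. Since $(\lambda,f)\in\mathbb Z_p^\times\times F_2^{(p)}$, by Lemma \ref{invertibles:in:sd:product} it has a two-sided inverse $(\lambda^{-1},g)$ in $(\mathbb Z_p\times F_2^{(p)},\circledast)$. The key point to check is that this inverse again lies in $\underline{\mathrm{GT}}_p$; once this is known, $(\lambda,f)$ is invertible inside the semigroup $\underline{\mathrm{GT}}_p$, hence belongs to $\mathrm{GT}_p$ by definition. To see that the inverse lies in $\underline{\mathrm{GT}}_p$, I would argue as in the proof of Lemma \ref{prop:invertible}: the defining conditions \eqref{defining:conditions} of $\underline{\mathrm{GT}}_p$ say precisely that $(\lambda,f)$, viewed inside $\mathsf G^\B(\mathbb Q_p)$ via the inclusion of Lemma \ref{prop:invertible}, lies in $\mathsf{GT}(\mathbb Q_p)$; indeed $\mathsf{GT}(\mathbb Q_p)$ is defined by the same identities over $\mathbb Q_p$. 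Now $\mathsf{GT}(\mathbb Q_p)$ is a group (the Grothendieck--Teichm\"uller group over $\mathbb Q_p$), so the inverse of $(\lambda,f)$ taken in $\mathsf G^\B(\mathbb Q_p)$ again lies in $\mathsf{GT}(\mathbb Q_p)$; by uniqueness of inverses this coincides with $(\lambda^{-1},g)\in\mathbb Z_p^\times\times F_2^{(p)}$. Hence $(\lambda^{-1},g)$ satisfies the conditions \eqref{defining:conditions} over $\mathbb Q_p$, and since its components lie in $F_2^{(p)}$ and $K_n^{(p)}$ which inject into $F_2(\mathbb Q_p)$ and $K_n(\mathbb Q_p)$ (Lemmas \ref{lemma:injectivity:free} and the injectivity of $K_n^{(p)}\to K_n(\mathbb Q_p)$), the identities already hold in $F_2^{(p)}$ and $K_4^{(p)}$; one also checks $\lambda^{-1}\in 1+2\mathbb Z_p$ since $\lambda\in 1+2\mathbb Z_p\cap\mathbb Z_p^\times$ and $1+2\mathbb Z_p$ is closed under inversion in $\mathbb Z_p^\times$. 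Therefore $(\lambda^{-1},g)\in\underline{\mathrm{GT}}_p$, completing the argument.

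The main obstacle I anticipate is the bookkeeping around the injectivity of the pro-$p$ completions into the prounipotent ones: one must be careful that an identity among elements of $K_4^{(p)}$ that is known to hold in $K_4(\mathbb Q_p)$ does indeed descend, which is exactly where the injectivity of $K_n^{(p)}\to K_n(\mathbb Q_p)$ is used, together with the compatibility of all the morphisms $\theta,\kappa,\alpha_i$ with the maps to the prounipotent completions. A secondary point requiring a brief check is that the semidirect-product structure identifying $(\mathbb Z_p^\times\times F_2^{(p)},\circledast)$ with a subgroup of $(\mathsf G^\B(\mathbb Q_p),\circledast)$ is compatible with the opposite-product conventions used to define $\underline{\mathrm{GT}}_p$ versus the product used to define $\mathsf{GT}(\mathbb Q_p)$, so that "inverse taken in one group" agrees with "inverse taken in the other"; this is routine but should be stated. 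Everything else is formal manipulation with groups of invertible elements in sub-semigroups.
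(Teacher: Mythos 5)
Your proposal is correct, and it is in fact more complete than the argument the paper records. The paper's proof of Corollary \ref{cor:13/07/2018} is the one-line observation that the statement ``follows from Lemma \ref{invertibles:in:sd:product} and the definition of $\mathrm{GT}_p$''; taken literally, that formal argument only yields the inclusion $\mathrm{GT}_p\subset\underline{\mathrm{GT}}_p\cap(\mathbb Z_p^\times\times F_2^{(p)})$, since for a general submonoid $S$ of a monoid $M$ one does not have $U(S)=S\cap U(M)$ (the ambient inverse of an element of $S\cap U(M)$ need not lie in $S$). The substance you add --- that the inverse $(\lambda^{-1},g)$ computed in $(\mathbb Z_p^\times\times F_2^{(p)},\circledast)$ again satisfies the defining relations \eqref{defining:conditions}, proved by passing to $\mathsf{GT}(\mathbb Q_p)$, inverting there, and descending via the injectivity of $F_2^{(p)}\to F_2(\mathbb Q_p)$ and $K_4^{(p)}\to K_4(\mathbb Q_p)$ --- is exactly what is needed to close the reverse inclusion, and it is sound: the forward passage to $\mathsf{GT}(\mathbb Q_p)$ uses only functoriality of the completions (not injectivity, so your phrase ``say precisely'' is slightly too strong there), the group property of $\mathsf{GT}(\mathbb Q_p)$ is Drinfeld's, and the descent of the identities uses the injectivity lemmas proved earlier in \S\ref{section:plapucodg}. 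Note that this detour essentially anticipates the content of Proposition \ref{prop:coup:de:fil}, whereas the paper proves that proposition \emph{after} the corollary (and implicitly relies on the corollary in doing so); your argument avoids circularity because you rederive the equivalence of the pro-$p$ and prounipotent conditions directly from the injectivity statements rather than citing the proposition. The only points worth tightening are the $p$-dependent bookkeeping for $\lambda^{-1}\in 1+2\mathbb Z_p$ (trivial for $p$ odd since $1+2\mathbb Z_p=\mathbb Z_p$, and $1+2\mathbb Z_2=\mathbb Z_2^\times$ for $p=2$) and the remark, which you already flag, that the opposite-product convention on $\underline{\mathrm{GT}}_p$ does not affect which elements are invertible.
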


\proof This follows from Lemma \ref{invertibles:in:sd:product} and the definition of ${\mathrm{GT}}_p$.  
\hfill\qed\medskip

\begin{prop}\label{prop:coup:de:fil}
${\mathrm{GT}}_p={\mathsf{GT}}(\mathbb Q_p)\cap(\mathbb Z_p^\times\times F_2^{(p)})$. 
\end{prop}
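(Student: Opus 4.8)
The plan is to derive this from Corollary \ref{cor:13/07/2018}, which already gives ${\mathrm{GT}}_p=\underline{\mathrm{GT}}_p\cap(\mathbb Z_p^\times\times F_2^{(p)})$, together with the comparison between the pro-$p$ and prounipotent defining equations. So it suffices to show that, for $(\lambda,f)\in\mathbb Z_p^\times\times F_2^{(p)}$, membership in $\underline{\mathrm{GT}}_p$ is equivalent to membership in ${\mathsf{GT}}(\mathbb Q_p)$. The key point is that the defining conditions \eqref{defining:conditions} for $\underline{\mathrm{GT}}_p$ are equalities in $F_2^{(p)}$ and $K_4^{(p)}$, while those for ${\mathsf{GT}}(\mathbb Q_p)$ are the same equalities in $F_2(\mathbb Q_p)$ and $K_4(\mathbb Q_p)$; since we have shown that the natural maps $F_2^{(p)}\to F_2(\mathbb Q_p)$ (Lemma \ref{lemma:injectivity:free}) and $K_4^{(p)}\to K_4(\mathbb Q_p)$ (the last lemma of \S\ref{section:plapucodg}) are injective and compatible with the morphisms $\theta,\kappa,\alpha_1,\dots,\alpha_5$, an identity in the pro-$p$ group holds if and only if its image holds in the prounipotent group.

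Concretely, first I would observe that the morphisms $\theta,\kappa\colon F_2\to F_2$ and $\alpha_i\colon F_2\to K_4$ induce, on the one hand, continuous morphisms of pro-$p$ completions $F_2^{(p)}\to F_2^{(p)}$, $F_2^{(p)}\to K_4^{(p)}$, and on the other hand morphisms of prounipotent completions $F_2(\mathbb Q_p)\to F_2(\mathbb Q_p)$, $F_2(\mathbb Q_p)\to K_4(\mathbb Q_p)$; by functoriality these fit into commutative squares with the vertical comparison maps $F_2^{(p)}\to F_2(\mathbb Q_p)$ and $K_4^{(p)}\to K_4(\mathbb Q_p)$. Next, for $(\lambda,f)\in\mathbb Z_p^\times\times F_2^{(p)}$ with image $(\lambda,\bar f)\in\mathbb Q_p^\times\times F_2(\mathbb Q_p)$, I would note that the left-hand sides of the three equations in \eqref{defining:conditions} are sent by the comparison maps to the left-hand sides of the corresponding equations defining ${\mathsf{GT}}(\mathbb Q_p)$ (using that $m=(\lambda-1)/2$ is the same in both settings). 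By injectivity of $F_2^{(p)}\to F_2(\mathbb Q_p)$ and $K_4^{(p)}\to K_4(\mathbb Q_p)$, each pro-$p$ equation holds if and only if its prounipotent image holds. Hence $(\lambda,f)\in\underline{\mathrm{GT}}_p$ iff $(\lambda,\bar f)\in{\mathsf{GT}}(\mathbb Q_p)$; combined with $F_2^{(p)}\subset F_2(\mathbb Q_p)$ this gives $\underline{\mathrm{GT}}_p\cap(\mathbb Z_p^\times\times F_2^{(p)})={\mathsf{GT}}(\mathbb Q_p)\cap(\mathbb Z_p^\times\times F_2^{(p)})$, and the proposition follows from Corollary \ref{cor:13/07/2018}.

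One subtlety to address carefully is the range condition: elements of $\underline{\mathrm{GT}}_p$ have $\lambda\in 1+2\mathbb Z_p$ while the intersection in the statement only restricts to $\lambda\in\mathbb Z_p^\times$; one must check that imposing the equations \eqref{defining:conditions} for $(\lambda,f)\in\mathbb Z_p^\times\times F_2^{(p)}$ already forces $\lambda\in 1+2\mathbb Z_p$ (equivalently, that the same is true on the prounipotent side when one requires $f\in F_2^{(p)}$), so that the two intersections genuinely agree; this can be read off from the first equation $f\theta(f)=1$ applied modulo the commutator subgroup, as in \cite{Dr}. The main obstacle I expect is not conceptual but bookkeeping: making sure the comparison maps are genuinely compatible with all seven auxiliary morphisms $\theta,\kappa,\alpha_1,\dots,\alpha_5$ and with the semidirect-product operation $\circledast$, so that the pro-$p$ defining system maps term by term to the prounipotent one; once this compatibility is in place, the argument is a short diagram chase using the already-established injectivity statements.
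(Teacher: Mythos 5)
Your proposal is correct and follows essentially the same route as the paper: both arguments rest on the injectivity of $F_2^{(p)}\to F_2(\mathbb Q_p)$ and $K_4^{(p)}\to K_4(\mathbb Q_p)$ together with the compatibility of pro-$p$ and prounipotent completions with the morphisms $\theta,\kappa,\alpha_1,\dots,\alpha_5$, so that an element of $\mathbb Z_p^\times\times F_2^{(p)}$ satisfies the pro-$p$ version of \eqref{defining:conditions} iff its image satisfies the prounipotent version. The only simplification you are missing is the range condition: there is no need to extract $\lambda\in 1+2\mathbb Z_p$ from the equation $f\theta(f)=1$, since $(1+2\mathbb Z_p)\cap\mathbb Z_p^\times=\mathbb Z_p^\times$ for every $p$ (because $1+2\mathbb Z_2=\mathbb Z_2^\times$ and $1+2\mathbb Z_p=\mathbb Z_p$ for $p\neq 2$), which is exactly how the paper disposes of this point.
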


\proof ${\mathsf{GT}}(\mathbb Q_p)$ is the subset of $\mathbb Q_p^\times\times F_2(\mathbb Q_p)$
defined by the prounipotent versions of the conditions (\ref{defining:conditions}), while ${\mathrm{GT}}_p$
is the subset of $((1+2\mathbb Z_p)\cap\mathbb Z_p^\times)\times F_2^{(p)}=\mathbb Z_p^\times\times F_2^{(p)}$ 
defined by the pro-$p$ versions of the same conditions (the equality follows from $1+2\mathbb Z_p=\mathbb Z_p^\times$ for $p=2$, and 
$1+2\mathbb Z_p=\mathbb Z_p$ for $p\neq2$).  
Moreover, the inclusion $K_4^{(p)}\subset K_4(\mathbb Q_p)$ and the compatibilities of the pro-$p$ and prounipotent
completions of a given group morphism imply that an element of $\mathbb Z_p^\times\times F_2^{(p)}$ satisfies the 
pro-$p$ version of (\ref{defining:conditions}) iff its image in $\mathbb Q_p^\times\times F_2(\mathbb Q_p)$
satisfies its prounipotent version. 
\hfill\qed\medskip 

\begin{rem}
Proposition \ref{prop:coup:de:fil} shows that ${\mathrm{GT}}_p$ is the intersection of two subgroups of 
$\mathsf G^\B(\mathbb Q_p)^{\mathrm{op}}$, 
and is therefore a group. 
\end{rem}

\begin{cor} [\cite{Dr}, p. 846]\label{cor:gtlsubsetgtql}
${\mathrm{GT}}_p\subset {\mathsf{GT}}(\mathbb Q_p)$. 
\end{cor}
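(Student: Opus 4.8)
The plan is to deduce Corollary \ref{cor:gtlsubsetgtql} immediately from Proposition \ref{prop:coup:de:fil}. Indeed, Proposition \ref{prop:coup:de:fil} asserts the equality ${\mathrm{GT}}_p={\mathsf{GT}}(\mathbb Q_p)\cap(\mathbb Z_p^\times\times F_2^{(p)})$ inside the group $\mathsf G^\B(\mathbb Q_p)^{\mathrm{op}}$. The right-hand side is, by its very form, a subset of ${\mathsf{GT}}(\mathbb Q_p)$, so the inclusion ${\mathrm{GT}}_p\subset{\mathsf{GT}}(\mathbb Q_p)$ follows at once.

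In slightly more detail, I would first recall that by Proposition \ref{prop:invertible} and Lemma \ref{prop:invertible}, the set $\mathbb Z_p^\times\times F_2^{(p)}$ is a subgroup of $\mathsf G^\B(\mathbb Q_p)$ (equivalently of its opposite), and that by definition ${\mathsf{GT}}(\mathbb Q_p)$ is a subgroup of $\mathsf G^\B(\mathbb Q_p)^{\mathrm{op}}$; the map $F_2^{(p)}\to F_2(\mathbb Q_p)$ used to form the intersection is the one coming from Lemma \ref{lemma:injectivity:free}, and it is injective, so the intersection is literally taken inside a common ambient group. Then the statement of Proposition \ref{prop:coup:de:fil} exhibits ${\mathrm{GT}}_p$ as this intersection, and projecting the equality onto its first factor of the intersection gives ${\mathrm{GT}}_p\subset{\mathsf{GT}}(\mathbb Q_p)$.

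There is no real obstacle here: the corollary is a one-line consequence of the proposition, the only content being the observation that ${\mathsf{GT}}(\mathbb Q_p)\cap(\mathbb Z_p^\times\times F_2^{(p)})\subset{\mathsf{GT}}(\mathbb Q_p)$, which is trivial. All the work has already been done in establishing Proposition \ref{prop:coup:de:fil}, whose proof rests in turn on the injectivity statements for $F_2^{(p)}\to F_2(\mathbb Q_p)$ and $K_4^{(p)}\to K_4(\mathbb Q_p)$ (Lemmas \ref{lemma:injectivity:free} and the injectivity lemma for $K_n$) together with the compatibility of pro-$p$ and prounipotent completions of a group morphism with the defining equations \eqref{defining:conditions}. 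Hence the proof of the corollary is simply: ``This follows from Proposition \ref{prop:coup:de:fil}.''

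\proof This is an immediate consequence of Proposition \ref{prop:coup:de:fil}, since the right-hand side ${\mathsf{GT}}(\mathbb Q_p)\cap(\mathbb Z_p^\times\times F_2^{(p)})$ of the equality established there is contained in ${\mathsf{GT}}(\mathbb Q_p)$. \hfill\qed\medskip
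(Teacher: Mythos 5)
Your proof is correct and matches the paper's own argument exactly: the corollary is an immediate consequence of Proposition \ref{prop:coup:de:fil}, since an intersection with ${\mathsf{GT}}(\mathbb Q_p)$ is trivially contained in ${\mathsf{GT}}(\mathbb Q_p)$. Your additional remarks correctly identify that all the substance lives in the proposition and the injectivity lemmas it relies on.
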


\proof This immediately follows from Proposition \ref{prop:coup:de:fil}. 
\hfill\qed\medskip 

\begin{prop}\label{cor:1:12}
$\mathrm{GT}_p=\underline{\mathrm{GT}}_p$ for $p=2$ and $\mathrm{GT}_p=\underline{\mathrm{GT}}_p\times_{\mathbb Z_p}
\mathbb Z_p^\times$ for $p>2$. 
\end{prop}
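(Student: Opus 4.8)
The plan is to unwind the definitions of $\mathrm{GT}_p$ and $\underline{\mathrm{GT}}_p$ as subsets of the semigroup $(\mathbb Z_p\times F_2^{(p)},\circledast)$, keeping track of the constraint $\lambda\in 1+2\mathbb Z_p$ that appears in the definition of $\underline{\mathrm{GT}}_p$. By Corollary \ref{cor:13/07/2018}, we have $\mathrm{GT}_p=\underline{\mathrm{GT}}_p\cap(\mathbb Z_p^\times\times F_2^{(p)})$, so the whole statement reduces to comparing the $\lambda$-component constraint ``$\lambda\in 1+2\mathbb Z_p$'' (from $\underline{\mathrm{GT}}_p$) with ``$\lambda\in\mathbb Z_p^\times$'' (from the invertibility condition). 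The $f$-component plays no role in this comparison since the defining equations \eqref{defining:conditions} are imposed identically on both sides.

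First I would treat the case $p=2$: here $1+2\mathbb Z_2=\mathbb Z_2^\times$ (an element of $\mathbb Z_2$ is a unit iff it is odd iff it lies in $1+2\mathbb Z_2$), so the constraint defining $\underline{\mathrm{GT}}_2$ already forces $\lambda$ to be invertible. Hence every element of $\underline{\mathrm{GT}}_2$ lies in $\mathbb Z_2^\times\times F_2^{(2)}$, and by Lemma \ref{invertibles:in:sd:product} is therefore invertible in $(\mathbb Z_2\times F_2^{(2)},\circledast)$ — and one checks its inverse again satisfies \eqref{defining:conditions}, which is exactly the content of $\underline{\mathrm{GT}}_2$ being closed under the inversion available in the ambient group; alternatively this is immediate from Corollary \ref{cor:13/07/2018} together with the identity $\underline{\mathrm{GT}}_2\subset\mathbb Z_2^\times\times F_2^{(2)}$. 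So $\mathrm{GT}_2=\underline{\mathrm{GT}}_2$.

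For $p>2$: here $1+2\mathbb Z_p=\mathbb Z_p$ (since $2$ is a unit, $2\mathbb Z_p=\mathbb Z_p$), so $\underline{\mathrm{GT}}_p$ is the subset of $\mathbb Z_p\times F_2^{(p)}$ cut out by \eqref{defining:conditions}, with \emph{no} restriction on $\lambda$. Meanwhile $\mathrm{GT}_p$ is cut out by the same conditions inside $\mathbb Z_p^\times\times F_2^{(p)}$, by Corollary \ref{cor:13/07/2018}. Thus $\mathrm{GT}_p$ is precisely the preimage of $\mathbb Z_p^\times\subset\mathbb Z_p$ under the first-coordinate projection $\underline{\mathrm{GT}}_p\to\mathbb Z_p$, which is the fibre product $\underline{\mathrm{GT}}_p\times_{\mathbb Z_p}\mathbb Z_p^\times$. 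One should note that the first-coordinate map $(\lambda,f)\mapsto\lambda$ is a semigroup morphism $\underline{\mathrm{GT}}_p\to(\mathbb Z_p,\cdot)$ (clear from the formula for $\circledast$), which makes the fibre-product description meaningful, and that the resulting set is indeed the group of invertible elements since membership in $\mathbb Z_p^\times\times F_2^{(p)}$ is, by Lemma \ref{invertibles:in:sd:product}, equivalent to invertibility in $(\mathbb Z_p\times F_2^{(p)},\circledast)$, hence (restricting \eqref{defining:conditions}) to invertibility in $\underline{\mathrm{GT}}_p$.

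I do not anticipate a serious obstacle here: the proof is essentially bookkeeping with the two arithmetic identities $1+2\mathbb Z_2=\mathbb Z_2^\times$ and $1+2\mathbb Z_p=\mathbb Z_p$ for $p>2$, combined with Corollary \ref{cor:13/07/2018} and Lemma \ref{invertibles:in:sd:product}. The only mild subtlety worth spelling out is why, in the $p>2$ case, one genuinely gets a proper fibre product rather than an equality — i.e.\ that $\underline{\mathrm{GT}}_p$ really does contain elements with $\lambda\notin\mathbb Z_p^\times$ (equivalently $\lambda\in p\mathbb Z_p$); but for the stated identity of sets this is not even needed, as both descriptions are literally ``the locus of \eqref{defining:conditions} with $\lambda$ constrained to lie in $S$'' for $S=\mathbb Z_p^\times$ on one side and $S=\mathbb Z_p$ (intersected back with $\mathbb Z_p^\times$) on the other. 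So the argument is just: apply Corollary \ref{cor:13/07/2018}, then substitute the value of $1+2\mathbb Z_p$ according to the parity of $p$.
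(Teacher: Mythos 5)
Your argument is correct and coincides with the paper's own proof, which likewise derives the statement from Corollary \ref{cor:13/07/2018} combined with the identities $1+2\mathbb Z_2=\mathbb Z_2^\times$ and $1+2\mathbb Z_p=\mathbb Z_p$ for $p>2$. The extra remarks you add (that the first-coordinate projection is a semigroup morphism, making the fibre product meaningful) are harmless elaborations of the same bookkeeping.
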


\proof This follows from Corollary \ref{cor:13/07/2018} together with $1+2\mathbb Z_p=\mathbb Z_p^\times$ for $p=2$, and 
$1+2\mathbb Z_p=\mathbb Z_p$ for $p\neq2$. 
\hfill\qed\medskip 

\subsection{A pro-$p$ analogue $\mathrm{DMR}_p^\B$ of the group scheme $\mathsf{DMR}^\B(-)$}\label{section:final}

\begin{defn}\label{defin:GpdmrB} One sets 
$$
\mathrm{DMR}_p^\B:=\mathsf{DMR}^\B(\mathbb Q_p)\cap(\mathbb Z_p^\times\times F_2^{(p)}). 
$$
\end{defn}

\begin{lem}
$\mathrm{DMR}_p^\B$ is a subgroup of $\mathsf G^\B(\mathbb Q_p)
=(\mathbb Q_p^\times\times F_2(\mathbb Q_p),\circledast)$. 
\end{lem}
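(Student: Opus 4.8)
The plan is to exhibit $\mathrm{DMR}_p^\B$ as an intersection of two subgroups of the ambient group $\mathsf G^\B(\mathbb Q_p)=(\mathbb Q_p^\times\times F_2(\mathbb Q_p),\circledast)$ (the latter group structure being that of Lemma \ref{lem:2:1:2606} specialized to $\mathbf k=\mathbb Q_p$), and to conclude by the elementary remark that an intersection of subgroups of a group is again a subgroup.

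First I would recall from Definition \ref{defin:GpdmrB} that $\mathrm{DMR}_p^\B=\mathsf{DMR}^\B(\mathbb Q_p)\cap(\mathbb Z_p^\times\times F_2^{(p)})$, where both sets on the right are to be viewed inside $\mathsf G^\B(\mathbb Q_p)$: the set $\mathsf{DMR}^\B(\mathbb Q_p)$ makes sense and lies in $\mathsf G^\B(\mathbb Q_p)$ because $\mathbb Q_p$ is a $\mathbb Q$-algebra, and $\mathbb Z_p^\times\times F_2^{(p)}$ is carried into $\mathbb Q_p^\times\times F_2(\mathbb Q_p)=\mathsf G^\B(\mathbb Q_p)$ by the evident map built from $\mathbb Z_p^\times\subset\mathbb Q_p^\times$ and from the injection $F_2^{(p)}\hookrightarrow F_2(\mathbb Q_p)$ of Lemma \ref{lemma:injectivity:free}.

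Next I would invoke the two subgroup statements already proved in the paper. By Lemma-Definition \ref{lem:3:9:1705}, applied with $\mathbf k=\mathbb Q_p$, the subset $\mathsf{DMR}^\B(\mathbb Q_p)$ is a subgroup of $(\mathsf G^\B(\mathbb Q_p),\circledast)$. By Lemma \ref{prop:invertible}, the set $\mathbb Z_p^\times\times F_2^{(p)}$, equipped with the product $\circledast$ of \S\ref{subsection:rugtl}, is a subgroup of $(\mathsf G^\B(\mathbb Q_p),\circledast)$; in particular the two incarnations of $\circledast$ coincide on the overlap, so no discrepancy between a product and its opposite intervenes. Hence $\mathrm{DMR}_p^\B$, being the intersection of these two subgroups of $\mathsf G^\B(\mathbb Q_p)$, is itself a subgroup of $\mathsf G^\B(\mathbb Q_p)$.

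There is no real obstacle at this stage: the substantive work is already contained in Lemma \ref{prop:invertible} (whose proof realizes $\mathbb Z_p^\times\times F_2^{(p)}$ as an intersection of two subgroups of a suitable noncommutative power series group) and in Lemma-Definition \ref{lem:3:9:1705}. The only point that deserves an explicit word is the bookkeeping of the second paragraph, namely that both $\mathsf{DMR}^\B(\mathbb Q_p)$ and $\mathbb Z_p^\times\times F_2^{(p)}$ are being regarded as subsets of one and the same group equipped with one and the same group law; once this is granted, the argument is immediate.
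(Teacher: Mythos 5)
Your proof is correct and is essentially the paper's own argument: the paper likewise notes that $\mathrm{DMR}_p^\B$ is by definition the intersection of the two subgroups $\mathsf{DMR}^\B(\mathbb Q_p)$ (Lemma-Definition \ref{lem:3:9:1705}) and $\mathbb Z_p^\times\times F_2^{(p)}$ (Lemma \ref{prop:invertible}) of $(\mathbb Q_p^\times\times F_2(\mathbb Q_p),\circledast)$. Your extra remarks on the embedding via Lemma \ref{lemma:injectivity:free} are accurate but not a departure from the paper's route.
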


\proof This follows from the fact that both $\mathsf{DMR}^\B(\mathbb Q_p)$ and $\mathbb Z_p^\times\times F_2^{(p)}$ 
are subgroups of $(\mathbb Q_p^\times\times F_2(\mathbb Q_p),\circledast)$. \hfill\qed\medskip 

\begin{prop}
The natural inclusions yield the following commutative diagram of groups, in which both squares are Cartesian 
$$
\xymatrix{
\mathrm{GT}_p^{\mathrm{op}}\ar@{^{(}->}[r]\ar@{^{(}->}[d] & \mathrm{DMR}_p^\B\ar@{^{(}->}[r]\ar@{^{(}->}[d] &
(\mathbb Z_p^\times\times F_2^{(p)},\circledast)\ar@{^{(}->}[d] \\ 
\mathsf{GT}(\mathbb Q_p)^{\mathrm{op}}\ar@{^{(}->}[r]& \mathsf{DMR}^\B(\mathbb Q_p)\ar@{^{(}->}[r]& 
(\mathbb Q_p^\times\times F_2(\mathbb Q_p),\circledast)}
$$
\end{prop}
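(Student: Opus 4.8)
The plan is to realize all six groups in the diagram as subgroups of the single ambient group $\mathsf G^\B(\mathbb Q_p)=(\mathbb Q_p^\times\times F_2(\mathbb Q_p),\circledast)$, so that every arrow becomes a subgroup inclusion; then commutativity is automatic, and each square being Cartesian reduces to an intersection identity among subgroups. First I would collect the membership facts. We have $\mathsf{DMR}^\B(\mathbb Q_p)\subset\mathsf G^\B(\mathbb Q_p)$ from Lemma-Definition \ref{lem:3:9:1705} (with $\mathbf k=\mathbb Q_p$), and $\mathsf{GT}(\mathbb Q_p)^{\mathrm{op}}\subset\mathsf{DMR}^\B(\mathbb Q_p)$, hence $\subset\mathsf G^\B(\mathbb Q_p)$, from Theorem \ref{thm:3:13:1705}, (b) (with $\mathbf k=\mathbb Q_p$). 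Next, $(\mathbb Z_p^\times\times F_2^{(p)},\circledast)\subset\mathsf G^\B(\mathbb Q_p)$ by Lemma \ref{prop:invertible}, the underlying inclusion of sets using the injectivity of $F_2^{(p)}\to F_2(\mathbb Q_p)$ (Lemma \ref{lemma:injectivity:free}). Then the preceding Lemma gives that $\mathrm{DMR}_p^\B$ is a subgroup of $\mathsf G^\B(\mathbb Q_p)$, contained both in $\mathsf{DMR}^\B(\mathbb Q_p)$ and in $(\mathbb Z_p^\times\times F_2^{(p)},\circledast)$ by Definition \ref{defin:GpdmrB}; and $\mathrm{GT}_p^{\mathrm{op}}$ is a subgroup of $\mathsf G^\B(\mathbb Q_p)$ (it is a group by the Remark following Proposition \ref{prop:coup:de:fil}), contained in $\mathsf{GT}(\mathbb Q_p)^{\mathrm{op}}$ by Corollary \ref{cor:gtlsubsetgtql}, in $(\mathbb Z_p^\times\times F_2^{(p)},\circledast)$ by its definition, and hence in $\mathrm{DMR}_p^\B$. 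All arrows are thus inclusions of subgroups of $\mathsf G^\B(\mathbb Q_p)$, so the diagram commutes.

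Next I would isolate the elementary fact carrying the rest of the argument: if $A,B,C$ are subgroups of a group $D$ with $A\subset B$ and $A\subset C$, then the commutative square with arrows $A\hookrightarrow B\hookrightarrow D$ and $A\hookrightarrow C\hookrightarrow D$ is Cartesian if and only if $A=B\cap C$ inside $D$. Indeed the fibre product $B\times_D C$ is $\{(b,c)\in B\times C\mid b=c\ \text{in}\ D\}$, which is canonically identified with the subgroup $B\cap C$ of $D$, and under this identification the comparison map $A\to B\times_D C$ becomes the inclusion $A\hookrightarrow B\cap C$; it is an isomorphism exactly when $A=B\cap C$.

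Applying this with $D=\mathsf G^\B(\mathbb Q_p)$: the right square is Cartesian because $\mathrm{DMR}_p^\B=\mathsf{DMR}^\B(\mathbb Q_p)\cap(\mathbb Z_p^\times\times F_2^{(p)})$, which is exactly Definition \ref{defin:GpdmrB}. For the left square, I would verify $\mathrm{GT}_p^{\mathrm{op}}=\mathsf{GT}(\mathbb Q_p)^{\mathrm{op}}\cap\mathrm{DMR}_p^\B$: by Definition \ref{defin:GpdmrB} the right-hand side equals $\mathsf{GT}(\mathbb Q_p)^{\mathrm{op}}\cap\mathsf{DMR}^\B(\mathbb Q_p)\cap(\mathbb Z_p^\times\times F_2^{(p)})$, which by the inclusion $\mathsf{GT}(\mathbb Q_p)^{\mathrm{op}}\subset\mathsf{DMR}^\B(\mathbb Q_p)$ simplifies to $\mathsf{GT}(\mathbb Q_p)^{\mathrm{op}}\cap(\mathbb Z_p^\times\times F_2^{(p)})$, and this is $\mathrm{GT}_p^{\mathrm{op}}$ by Proposition \ref{prop:coup:de:fil}.

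I do not expect a genuine obstacle: the whole proof is an assembly of statements already proved, the one substantive input being Proposition \ref{prop:coup:de:fil}. The only points to handle carefully are the bookkeeping of the opposite group structures — in particular, the set equality of Proposition \ref{prop:coup:de:fil} is unaffected by replacing $\circledast$ with $\circledast^{\mathrm{op}}$, so it transfers to the ``op'' groups appearing here — and recording that all six objects are genuine groups (so that the diagram is one of groups and group morphisms), which is the preceding Lemma for $\mathrm{DMR}_p^\B$ and the Remark after Proposition \ref{prop:coup:de:fil} for $\mathrm{GT}_p$.
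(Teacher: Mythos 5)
Your proposal is correct and follows essentially the same route as the paper: both establish the right square as Cartesian directly from Definition \ref{defin:GpdmrB}, and both verify the left square via the same chain of intersections $\mathsf{GT}(\mathbb Q_p)^{\mathrm{op}}\cap\mathrm{DMR}_p^\B=\mathsf{GT}(\mathbb Q_p)^{\mathrm{op}}\cap(\mathbb Z_p^\times\times F_2^{(p)})=\mathrm{GT}_p^{\mathrm{op}}$, using the inclusion $\mathsf{GT}(-)^{\mathrm{op}}\subset\mathsf{DMR}^\B(-)$ and Proposition \ref{prop:coup:de:fil}. The extra bookkeeping you supply (realizing everything inside $\mathsf G^\B(\mathbb Q_p)$ and the subgroup-intersection criterion for Cartesian squares) is implicit in the paper's argument.
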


\proof The fact that the right square is Cartesian follows from the definition of $\mathrm{DMR}_p^\B$. Then 
\begin{align*}
& \mathsf{GT}(\mathbb Q_p)^{\mathrm{op}}\cap \mathrm{DMR}_p^\B= 
\mathsf{GT}(\mathbb Q_p)^{\mathrm{op}}\cap (\mathsf{DMR}^\B(\mathbb Q_p)\cap (\mathbb Z_p^\times\times F_2^{(p)})) 
\text{ (by the definition of }\mathrm{DMR}_p^\B)  \\
& =\mathsf{GT}(\mathbb Q_p)^{\mathrm{op}}\cap (\mathbb Z_p^\times\times F_2^{(p)}) 
\text{ (by the inclusion of group schemes }\mathsf{GT}(-)^{\mathrm{op}}\subset \mathsf{DMR}^\B(-)) 
\\
& = \mathrm{GT}_p^{\mathrm{op}}\text{ (by Proposition \ref{prop:coup:de:fil}),}
\end{align*}
so that the left square is Cartesian. 
\hfill\qed\medskip 

\setcounter{section}{-1}

\end{document}